\begin{document}
\setlength{\baselineskip}{0.54cm}	

\title{Espaces de Berkovich sur~$\Z$ : \'etude locale}
\alttitle{Berkovich spaces over~$\Z$: local study}
\author{J\'er\^ome Poineau}
\address{Institut de recherche math\'ematique avanc\'ee, 7, rue Ren\'e Descartes, 67084 Strasbourg, France}
\email{jerome.poineau@math.unistra.fr}
\thanks{L'auteur est membre du projet jeunes chercheurs \og Berko \fg\ de l'Agence Nationale de la Recherche.}

\date{\today}

\subjclass{14G22, 14G25, 32B05, 32P05, 13E05, 13H05}
\keywords{espaces de Berkovich, g\'eom\'etrie analytique globale, th\'eor\`emes de Weierstra\ss, noeth\'erianit\'e, r\'egularit\'e, excellence, prolongement analytique, coh\'erence}
\altkeywords{Berkovich spaces, global analytic geometry, Weierstra{\ss} theorems, noetherianity, regularity, excellence, analytic continuation, coherence}

\begin{abstract}
\selectlanguage{french}
Nous \'etudions les propri\'et\'es locales des espaces de Berkovich sur~$\Z$. \`A l'aide de th\'eor\`emes de Weierstra{\ss}, nous montrons que les anneaux locaux de ces espaces sont noeth\'eriens, r\'eguliers dans le cas des espaces affines et excellents. Nous prouvons \'egalement que le faisceau structural est coh\'erent. Nos m\'ethodes s'adaptent \`a d'autres anneaux de base (corps valu\'es, anneaux de valuation discr\`ete, anneaux d'entiers de corps de nombres, etc.) et traitent de fa\c{c}on unifi\'ee espaces complexes et $p$-adiques.
\end{abstract}

\begin{altabstract}
\selectlanguage{english}
We investigate the local properties of Berkovich spaces over~$\Z$. Using Weierstra{\ss} theorems, we prove that the local rings of those spaces are noetherian, regular in the case of affine spaces and excellent. We also show that the structure sheaf is coherent. Our methods apply over other base rings (valued fields, discrete valuation rings, rings of integers of number fields, etc.) and provide a unified treatment of complex and $p$-adic spaces.
\end{altabstract}

\maketitle

\section*{Introduction}

Les fondations de la g\'eom\'etrie analytique complexe \'etaient \'etablies depuis longtemps lorsque John Tate a propos\'e, en 1961, la d\'efinition d'une g\'eom\'etrie analytique $p$-adique poss\'edant des propri\'et\'es analogues (\textit{cf.}~\cite{Tate}). Ce sujet a, par la suite, suscit\'e un vif int\'er\^et et plusieurs g\'eom\'etries $p$-adiques (ou sur d'autres corps valu\'es complets) ont vu le jour, chacune poss\'edant ses sp\'ecificit\'es. Mentionnons les th\'eories de Michel Raynaud, o\`u les espaces sont vus comme des fibres g\'en\'eriques de sch\'emas formels (\textit{cf.}~\cite{tableronde}), et de Roland Huber, qui permet de retrouver les points du topos (\textit{cf}.~\cite{Huber1} et~\cite{Huber2}). 

Dans ce texte, nous nous int\'eresserons \`a une autre de ces g\'eom\'etries : celle d\'evelopp\'ee par Vladimir G.~Berkovich (\textit{cf.}~\cite{rouge} et~\cite{bleu}). Outre ses nombreux succ\`es dans des domaines vari\'es (programme de Langlands, int\'egration motivique, dynamique, th\'eorie de Hodge $p$-adique, etc.), elle poss\`ede la particularit\'e int\'eressante de permettre de d\'efinir des espaces analytiques sur tout anneau de Banach. En particulier, les espaces de Berkovich sur le corps des nombres complexes~$\C$ (muni de la valeur absolue usuelle~$|.|_{\infty}$) existent et ne sont autres que les espaces analytiques complexes usuels. Mieux encore, on peut d\'efinir des espaces de Berkovich sur l'anneau des entiers~$\Z$ (\'egalement muni de la valeur absolue usuelle~$|.|_{\infty}$) et l'on obtient alors des espaces naturellement fibr\'es en espaces analytiques complexes et $p$-adiques. Pourtant, en dehors du cas des espaces d\'efinis sur un corps valu\'e complet, bien peu de r\'esultats sont connus. Signalons tout de m\^eme l'ouvrage~\cite{asterisque} o\`u le cas de la droite de Berkovich sur~$\Z$ est trait\'e de fa\c{c}on approfondie.

Dans cet article, nous nous proposons de contribuer \`a l'\'etude locale des espaces de Berkovich de dimension sup\'erieure sur~$\Z$ (ou, plus g\'en\'eralement, sur un anneau d'entiers de corps de nombres), espaces que nous imaginons comme des fibrations en espaces affines au-dessus du spectre analytique~$\Mc(\Z)$. En g\'eom\'etrie analytique complexe, cette \'etude repose de fa\c{c}on cruciale sur le th\'eor\`eme de division de Weierstra{\ss}, un th\'eor\`eme qui permet de diviser une fonction analytique par une autre, au voisinage d'un point, de fa\c{c}on \`a obtenir un reste polynomial. Si, dans ce dernier cadre, tous les points sont d\'efinis sur le corps de base~$\C$, il en va tout autrement dans notre situation, o\`u le corps r\'esiduel~$\Hs(x)$ d'un point~$x$ peut \^etre une extension non triviale du corps~$\Hs(b)$ sur lequel sa fibre est d\'efinie. Dans ce texte, nous d\'emontrons un th\'eor\`eme de division de Weierstra{\ss} pour les points dits rigides des fibres (ceux en lesquels l'extension $\Hs(x)/\Hs(b)$ est alg\'ebrique). 

Le th\'eor\`eme de division de Weierstra{\ss} est l'ingr\'edient essentiel permettant d'acc\'eder \`a la structure des anneaux locaux. Nous poursuivons notre travail en d\'emontrant que ceux de l'espace~$\E{n}{\Z}$ sont noeth\'eriens et r\'eguliers, puis que le faisceau structural est coh\'erent, g\'en\'eralisant ainsi en toute dimension les r\'esultats de~\cite{asterisque}. Nous les compl\'etons encore en d\'emontrant que les anneaux locaux sont excellents. Pour des espaces analytiques g\'en\'eraux sur~$\Z$, ou un anneau d'entiers de corps de nombres, ces r\'esultats entra\^{\i}nent imm\'ediatement que les anneaux locaux sont excellents et que le faisceau structural est coh\'erent. Ces anneaux locaux \'etant \'egalement hens\'eliens (\textit{cf.}~\cite{asterisque}), ils v\'erifient l'approximation d'Artin. Nous donnons un exemple concret d'application de ce r\'esultat.

Signalons pour finir que, bien que notre int\'er\^et r\'esid\^at principalement dans l'\'etude des espaces analytiques sur un anneau d'entiers de corps de nombres, nous nous sommes efforc\'es, lorsque cela ne compliquait pas outre mesure les preuves, d'\'enoncer les r\'esultats dans une g\'en\'eralit\'e maximale en t\^achant en particulier de limiter les conditions impos\'ees \`a l'anneau de Banach~$\As$ sur lequel les espaces sont d\'efinis. Ainsi le th\'eor\`eme de division de Weierstra{\ss} vaut-il sur un anneau de Banach quelconque (avec cependant une condition technique lorsque la fibre contenant le point~$x$ est d\'efinie sur un corps~$\Hs(b)$ imparfait et trivialement valu\'e). Quant aux r\'esultats de noeth\'erianit\'e, r\'egularit\'e et coh\'erence pour l'espace affine analytique~$\E{n}{\As}$, nous les d\'emontrons lorsque l'anneau de Banach~$\As$ appartient \`a une classe qui contient non seulement les anneaux d'entiers de corps de nombres mais aussi les corps valu\'es et les anneaux de valuation discr\`ete. 

Pr\'ecisons qu'\`a l'exception de celles utilis\'ees pour d\'emontrer l'excellence, nos m\'ethodes, inspir\'ees de celles de la g\'eom\'etrie analytique complexe, ne n\'ecessitent aucune connaissance \textit{a priori} de la g\'eom\'etrie des espaces de Berkovich sur un corps d\'epassant celle des r\'esultats classiques en dimension~$1$. En particulier, nous obtenons de nouvelles preuves des r\'esultats de noeth\'erianit\'e et r\'egularit\'e pour les anneaux locaux des espaces~$\E{n}{k}$, o\`u~$k$ est un corps valu\'e complet, ainsi que de la coh\'erence du faisceau structural, par des m\'ethodes qui traitent de fa\c{c}on unifi\'ee corps archim\'ediens et corps ultram\'etriques.

\medskip

Ajoutons quelques mots concernant la structure du texte. Dans la premi\`ere section, nous rappelons quelques notations et d\'efinitions, ainsi que des r\'esultats \'el\'ementaires, ayant trait aux espaces de Berkovich sur un anneau de Banach, tir\'es de~\cite{rouge} et~\cite{asterisque}. Ainsi que nous l'avons expliqu\'e plus haut, nous souhaitons \'etudier les points rigides des fibres. Pour ce faire, un moyen naturel consiste \`a les envoyer, par un morphisme fini, sur des points rationnels. C'est pourquoi nous consacrons les sections~$2$ et~$3$ \`a \'etablir quelques propri\'et\'es d'alg\`ebres et de morphismes finis, moyennant deux conditions techniques~$(D)$ et~$(N)$, par des arguments adapt\'es de~\cite{asterisque}. Dans les deux sections suivantes, nous \'etudions les conditions introduites : nous montrons que la condition~$(D)$ est toujours v\'erifi\'ee et donnons des crit\`eres pratiques pour que la condition~$(N)$ le soit. Nous \'etablissons en passant le fait que les corps r\'esiduels~$\kappa(x)$ des points des espaces de Berkovich sont des corps hens\'eliens. \`A la section~$6$, nous regroupons les r\'esultats obtenus pour relier l'anneau local en un point rigide \`a l'anneau local en un point rationnel, puis, \`a la section~$7$, d\'emontrons le th\'eor\`eme de division de Weierstra{\ss} annonc\'e. Enfin, dans les trois derni\`eres sections, nous utilisons ce r\'esultat pour \'etudier de fa\c{c}on pr\'ecise le faisceau structural de l'espace affine sur certains anneaux de Banach (corps valu\'es, anneaux de valuation discr\`ete, anneaux d'entiers de corps de nombres, notamment). Nous d\'emontrons que les germes de ce faisceau sont des anneaux locaux noeth\'eriens et r\'eguliers, excellents si l'anneau de base est de caract\'eristique nulle, puis que le faisceau lui-m\^eme est coh\'erent et noeth\'erien au sens de M.~Kashiwara.

\bigskip

\noindent {\bf Remerciements}

Nous remercions Antoine Chambert-Loir et Antoine Ducros pour leurs remarques et conseils.

\section{Rappels}

Fixons un anneau de Banach~$(\As,\|.\|)$ et un entier~$n\in\N$. Au chapitre~1 de~\cite{rouge}, V.~Berkovich explique comment d\'efinir un espace localement annel\'e~$\E{n}{\As}$ (not\'e \'egalement~$\Ms(\As)$ si $n=0$), qu'il appelle \textbf{espace affine analytique} de dimension~$n$ sur~$\As$. Ensemblistement, ses points sont les semi-normes multiplicatives sur l'anneau de polyn\^omes $\As[\bT]=\As[T_{1},\dots,T_{n}]$ dont la restriction \`a~$\As$ est born\'ee par la norme donn\'ee~$\|.\|$. Pour tout point~$x$ de cet espace, associ\'e \`a une semi-norme multiplicative~$|.|_{x}$, on d\'efinit un corps r\'esiduel~$\Hs(x)$ de la fa\c{c}on suivante. L'ensemble $\p_{x} = \{f\in \As[\bT]\,|\, |f|_{x} = 0\}$ est un id\'eal premier de~$\As[\bT]$ et~$|.|_{x}$ induit une valeur absolue sur le corps $\Frac(\As[\bT]/\p_{x})$. Son compl\'et\'e~$\Hs(x)$ sera le corps r\'esiduel voulu. Remarquons que nous disposons d'une \og application d'\'evaluation \fg\ naturelle $f \in \As[\bT] \mapsto f(x) \in \Hs(x)$.

V.~Berkovich munit alors l'ensemble~$\E{n}{\As}$ ainsi construit de la topologie la plus faible qui rende continues les applications $|.|_{x} \in \E{n}{\As} \mapsto |f|_{x} \in\R$, pour $f \in \As[\bT]$. Le fait que les corps r\'esiduels~$\Hs(x)$ soient munis d'une valeur absolue permet de d\'efinir une notion de convergence uniforme, puis de fonction analytique. Pr\'ecis\'ement, les fonctions analytiques sur un ouvert~$U$ de~$\E{n}{\As}$ sont les applications
\[f : U \to \bigsqcup_{x\in U} \Hs(x)\]
telles que, pour tout $x\in U$, on ait $f(x) \in\Hs(x)$ et qui sont localement limites uniformes de fractions rationnelles (c'est-\`a-dire d'\'el\'ements de l'anneau total des fractions de~$\As[\bT]$) sans p\^oles. Ces constructions d\'efinissent un faisceau, not\'e~$\Os_{\E{n}{\As}}$.\footnote{D\`es le chapitre~2 de~\cite{rouge}, V.~Berkovich se restreint au cas o\`u l'anneau de Banach~$(\As,\|.\|)$ appartient \`a une certaine classe : celle des alg\`ebres affino\"{\i}des sur un corps valu\'e~$k$. Les d\'efinitions de l'ensemble~$\E{n}{\As}$ et de sa topologie sont inchang\'ees. En revanche, le faisceau structural ne co\"{\i}ncide avec celui que nous venons de d\'ecrire que lorsque l'alg\`ebre affino\"{\i}de est r\'eduite.}

Signalons que nous nous permettrons encore d'\'ecrire $\Os_{\E{n}{\As}}(V)$ lorsque~$V$ n'est pas ouvert. En g\'en\'eral, il faudrait le d\'efinir comme l'anneau des sections continues de l'espace \'etal\'e correspondant au-dessus de~$V$. Si~$V$ est une partie compacte de~$\E{n}{\As}$, cela revient \`a poser 
\[\Os_{\E{n}{\As}}(V) = \varinjlim_{U \supset V} \Os_{\E{n}{\As}}(U),\] 
o\`u~$U$ d\'ecrit l'ensemble des voisinages ouverts de~$V$. 

Soit~$x$ un point de~$\E{n}{\As}$. Nous dirons qu'un \'el\'ement~$f$ de~$\Os_{\E{n}{\As},x}$ est \mbox{\textbf{d\'efini}} sur une partie~$U$ de~$\E{n}{\As}$ contenant~$x$ s'il poss\`ede un ant\'ec\'edent par le morphisme $\Os_{\E{n}{\As}}(U) \to \Os_{\E{n}{\As},x}$.

Signalons qu'\`a la fin du premier chapitre de~\cite{rouge}, V.~Berkovich propose une d\'efinition g\'en\'erale d'espace analytique sur~$\As$.\footnote{Dans le cas o\`u~$\As=\C$, cette d\'efinition redonne les espaces analytiques complexes usuels. Dans le cas o\`u~$k$ est un corps ultram\'etrique complet, en revanche, elle est plus restrictive que celle que V.~Berkovich propose par la suite. Toutefois, elle permet de retrouver les analytifi\'ees de vari\'et\'es alg\'ebriques et, plus g\'en\'eralement, les bons espaces sans bord.} Un espace localement annel\'e~$(V,\Os_{V})$ est appel\'e mod\`ele local d'un espace analytique sur~$\As$ s'il existe un entier~$n$, un ouvert~$U$ de~$\E{n}{\As}$ et un faisceau~$\Is$ d'id\'eaux de type fini de~$\Os_{U}$ tels que $(V,\Os_{V})$ soit isomorphe au support du faisceau~$\Os_{U}/\Is$, muni du faisceau~$\Os_{U}/\Is$. Un \textbf{espace analytique sur~$\As$} est un espace localement annel\'e qui poss\`ede un recouvrement par des ouverts isomorphes \`a un mod\`ele local.

\medskip

Nous allons maintenant rappeler divers notions et r\'esultats tir\'es de~\cite{asterisque}. Nous y renvoyons le lecteur int\'eress\'e par des pr\'ecisions suppl\'ementaires.

Soit~$n\in\N$. Soit~$V$ une partie compacte de~$\E{n}{\As}$. On note~$\Ks(V)$ l'anneau des fractions rationnelles sans p\^oles sur~$V$ et~$\Bs(V)$ son compl\'et\'e pour la norme uniforme~$\|.\|_{V}$ sur~$V$. Posons $\Bs(V)^\dag = \varinjlim_{W\supset V} \Bs(W)$, o\`u~$W$ d\'ecrit l'ensemble des voisinages compacts de~$V$ dans~$\E{n}{\As}$. Remarquons que, pour tout point~$x$ de~$\E{n}{\As}$, nous avons
\[\Bs(\{x\})^\dag =  \varinjlim_{W\ni x} \Bs(W) \simeq \Os_{\E{n}{\As},x}.\]

Soit~$x$ un point de~$\E{n}{\As}$. Nous dirons qu'un \'el\'ement~$f$ de~$\Os_{\E{n}{\As},x}$ est \mbox{\textbf{$\Bs$-d\'efini}} (resp. \textbf{$\Bs^\dag$-d\'efini}) sur un voisinage compact~$V$ de~$x$ s'il poss\`ede un ant\'ec\'edent par le morphisme $\Bs(V) \to \Os_{\E{n}{\As},x}$ (resp. $\Bs^\dag(V) \to \Os_{\E{n}{\As},x}$).

Rappelons quelques d\'efinitions tir\'ees de~\cite{asterisque}, \S 1.2. On dit qu'une partie compacte~$V$ de~$\E{n}{\As}$ est \textbf{rationnelle} s'il existe un entier~$p$, des polyn\^omes $P_{1},\ldots,P_{p},Q$ de $\As[T_{1},\ldots,T_{n}]$ ne s'annulant pas simultan\'ement sur~$V$ et des nombres r\'eels \mbox{$r_{1},\ldots,r_{p}>0$} tels que
\[V = \bigcap_{1\le i\le p} \left\{ x\in \E{n}{\As}\, \big|\, |P_{i}(x)|\le r_{i}\, |Q(x)|\right\}.\]
On dit qu'une partie compacte~$V$ de~$\E{n}{\As}$ est \textbf{spectralement convexe} si le morphisme naturel
\[\varphi : \Mc(\Bs(V)) \to \E{n}{\As}\]
induit un hom\'eomorphisme entre $\Mc(\Bs(V))$ et~$V$ et si le morphisme induit
\[\varphi^{-1}\big(\mathring{V}\big) \to \mathring{V}\]
est un isomorphisme d'espace annel\'es. 

\begin{prop}
Soit~$V$ un compact rationnel de~$\E{n}{\As}$. Alors~$V$ est spectralement convexe et, pour toute partie compacte~$U$ de~$V$, le morphisme naturel 
\[\Bs(U)\quad (\textrm{dans } \E{n}{\As}) \to \Bs(U)\quad (\textrm{dans } \Bs(V))\]
est un isomorphisme. 
\end{prop}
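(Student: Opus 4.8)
The plan is to establish the statement in three moves: first that $\varphi$ is a homeomorphism onto~$V$, then the transitivity isomorphism for the algebras~$\Bs(U)$, and finally the isomorphism of ringed spaces over~$\mathring{V}$, which I expect to follow formally from the second point. To begin, I would check that $\varphi$ is well defined and lands in~$V$. Since every $x\in V$ satisfies $|a|_{x}\le\|a\|$ for $a\in\As$, one has $\|a\|_{V}\le\|a\|$, so restricting a bounded multiplicative seminorm on~$\Bs(V)$ to~$\As[\bT]$ yields a genuine point of~$\E{n}{\As}$. Because~$V$ is rational, $Q$ has no zero on~$V$ (a zero of~$Q$ would, through the inequalities $|P_{i}|\le r_{i}\,|Q|$, be a common zero of all the~$P_{i}$ and of~$Q$), hence $P_{i}/Q\in\Bs(V)$ with $\|P_{i}/Q\|_{V}\le r_{i}$; any bounded seminorm therefore satisfies $|P_{i}|\le r_{i}\,|Q|$, so its image lies in~$V$. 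Conversely, each $x\in V$ gives a seminorm on~$\Ks(V)$ bounded by $\|.\|_{V}$ (as $|f|_{x}\le\|f\|_{V}$), which extends to~$\Bs(V)$, so $\varphi$ is onto~$V$; and $\varphi$ is injective since a bounded multiplicative seminorm on~$\Bs(V)$ is determined by its restriction to the dense subring~$\Ks(V)$, itself determined by its values on~$\As[\bT]$. A continuous bijection from the compact space~$\Mc(\Bs(V))$ to the compact Hausdorff space~$V$ is then a homeomorphism.

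The heart of the argument is the second assertion. I would fix a compact $U\subseteq V$ and compare~$\Ks(U)$ computed in~$\E{n}{\As}$ with~$\Ks(U)$ computed in~$\Mc(\Bs(V))$, the natural map sending a rational function in the~$T_{i}$ to the corresponding fraction of elements of~$\Bs(V)$. By the homeomorphism above the two uniform norms~$\|.\|_{U}$ agree on the image, since $|f(\varphi^{-1}(x))|=|f|_{x}$ for $f\in\As[\bT]$, so the map is isometric and it suffices to prove that its image is dense. A general element of~$\Ks(U)$ on the~$\Mc(\Bs(V))$ side is a fraction $g/h$ with $g,h\in\Bs(V)$ and $h$ invertible on~$U$, so that $\inf_{U}|h|=c>0$. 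Approximating $g$ and $h$ in~$\|.\|_{V}$ by elements $g',h'$ of the dense subring~$\Ks(V)$ with $\|h'-h\|_{V}<c$ ensures that $h'$ has no zero on~$U$, so $g'/h'$ is a rational function without poles on~$U$, belongs to the image, and converges to $g/h$ uniformly on~$U$. An isometry with dense image induces an isometric isomorphism of completions, which is precisely the desired isomorphism $\Bs(U)\to\Bs(U)$.

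It then remains to deduce spectral convexity. Using the identification $\Os_{\E{n}{\As},x}\simeq\Bs(\{x\})^\dag=\varinjlim_{W\ni x}\Bs(W)$ recalled above, together with its analogue inside~$\Mc(\Bs(V))$, the morphism of structure sheaves induced by~$\varphi$ over~$\mathring{V}$ is, at a point $x\in\mathring{V}$, the colimit over compact neighborhoods $W\subseteq V$ of~$x$ of the isomorphisms of the previous paragraph. Since such~$W$ are cofinal among all compact neighborhoods of~$x$ (because $x$ is interior to~$V$) and correspond under the homeomorphism to the compact neighborhoods of~$\varphi^{-1}(x)$ in~$\Mc(\Bs(V))$, the induced map on stalks is an isomorphism, whence the ringed-space isomorphism $\varphi^{-1}(\mathring{V})\to\mathring{V}$.

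I expect the main obstacle to be the density step of the middle paragraph: controlling the approximation so that the approximating denominators remain invertible on~$U$, and making precise the identification of ``rational functions without poles on~$U$'' on the two sides, that is, matching the total fraction ring of~$\Bs(V)$ and its relevant localizations with~$\Ks(U)$ computed in~$\E{n}{\As}$. Once this transitivity of~$\Bs$ is in hand, both the homeomorphism and the passage to stalks are essentially formal.
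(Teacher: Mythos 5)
The paper states this proposition without proof, recalling it from~\cite{asterisque}, \S~1.2, so there is nothing to compare line by line; your argument follows the standard route one finds there and is essentially correct: the homeomorphism via a continuous bijection from a compact space to a Hausdorff one (using $\|P_i/Q\|_V\le r_i$ to land in~$V$, and density of~$\Ks(V)$ in~$\Bs(V)$ for injectivity), transitivity of~$\Bs$ by an isometry with dense image, and the ringed-space statement on~$\mathring V$ by cofinality of compact neighborhoods contained in~$V$ together with $\Os_{\E{n}{\As},x}\simeq\varinjlim_{W\ni x}\Bs(W)$. The only real delicacy is the one you flag yourself --- the bookkeeping needed to match the two versions of~$\Ks(U)$ inside total fraction rings (ensuring the approximating denominators~$h'$ and the images of denominators in~$\Bs(V)$ are non-zero-divisors, which is automatic when~$\As[\bT]$ is a domain) --- and this is a matter of conventions inherited from~\cite{asterisque} rather than a gap in the argument.
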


Exposons maintenant quelques r\'esultats du paragraphe~\S~2.1 de~\cite{asterisque} (en nous contentant de la dimension~$1$). Soit $t>0$. D\'efinissons l'alg\`ebre $\As\la |T|\le t\ra$ comme l'alg\`ebre constitu\'ee des s\'eries de la forme $\sum_{n\in\N} a_{n} T^n$, o\`u $(a_{n})_{n\ge 0}$ est une suite d'\'el\'ements de~$\As$ telle que la s\'erie $\sum_{n\in\N} \|a_{n}\| t^n$ converge. Cette alg\`ebre est compl\`ete pour la norme d\'efinie par 
\[\left\|\sum_{n\ge 0} a_{n} T^n\right\|_{t} = \sum_{n\ge 0} \|a_{n}\| t^n.\]

Soient $s,t\in\R$ v\'erifiant $0< s\le t$. D\'efinissons l'alg\`ebre $\As\la s\le |T|\le t\ra$ comme l'alg\`ebre constitu\'ee des s\'eries de la forme $\sum_{n\in\Z} a_{n} T^n$, o\`u $(a_{n})_{n\ge 0}$ est une suite d'\'el\'ements de~$\As$ telle que la famille $(\|a_{n}\| \max(s^n,t^n))_{n\in\Z}$ est sommable. Cette alg\`ebre est compl\`ete pour la norme d\'efinie par 
\[\left\|\sum_{n\in\Z} a_{n} T^n\right\|_{s,t} = \sum_{n\in\Z} \|a_{n}\| \max(s^n,t^n).\]
Nous prolongeons cette d\'efinition en posant $\As\la 0\le  |T|\le t\ra = \As\la |T|\le t\ra$ et \mbox{$\|.\|_{0,t} = \|.\|_{t}$}.

Posons~$X=\E{1}{\As}$ (avec variable~$T$), $B=\Mc(\As)$ et notons $\pi : X \to B$ le morphisme de projection. Pour toute partie~$V$ de~$B$ et tous $s,t\in\R$, d\'efinissons
\begin{align*}
\overset{\circ}{D}_{V}(t) &= \{x\in X\, |\, \pi(x)\in V, |T(x)|< t\},\\
\overline{D}_{V}(t) &= \{x\in X\, |\, \pi(x)\in V, |T(x)|\le t\},\\
\overset{\circ}{C}_{V}(s,t) &= \{x\in X\, |\, \pi(x)\in V, s < |T(x)|< t\},\\
\overline{C}_{V}(s,t) &= \{x\in X\, |\, \pi(x)\in V, s\le |T(x)|\le t\}.
\end{align*}
Lorsque~$V=B$, nous supprimerons l'indice dans la notation. Si $V$~est un singleton~$\{b\}$, nous omettrons les accolades.

Comme on s'y attend, les spectres des alg\`ebres $\As\la |T|\le t\ra$, pour $t>0$, et \mbox{$\As\la s\le |T|\le t\ra$}, pour $t\ge s >0$ ou $t>s=0$, sont canoniquement isomorphes \`a $\overline{D}(t)$ et $\overline{C}(s,t)$ respectivement. 

Indiquons maintenant une relation entre norme en tant que s\'erie et norme uniforme. Pour \'eviter de distinguer les cas entre disques et couronnes, introduisons la relation suivante sur~$\R_{+}$ : nous posons $s\prec t $ si $s<t$ ou $s=0$.

Rappelons \'egalement que l'anneau de Banach~$(\As,\|.\|)$ est dit \textbf{uniforme} si sa norme est \og power-multiplicative \fg, c'est-\`a-dire qu'elle commute \`a l'op\'eration d'\'el\'evation \`a une puissance enti\`ere ou, de mani\`ere \'equivalente, qu'elle co\"{\i}ncide avec la norme uniforme sur~$\Mc(\As)$. Il est souvent commode de supposer que l'anneau de Banach~$(\As,\|.\|)$ est uniforme et c'est une hypoth\`ese sous laquelle nous nous sommes constamment plac\'es dans~\cite{asterisque}. Remarquons qu'elle impose \`a l'anneau~$\As$ d'\^etre r\'eduit et permet d'identifier canoniquement~$\As[\bT]$ \`a un sous-anneau de~$\Os_{\E{n}{\As}}(\E{n}{\As})$ (\textit{cf.}~\cite{asterisque}, lemme~1.1.25). Cette hypoth\`ese montre son utilit\'e lorsque l'on cherche \`a d\'ecrire explicitement des anneaux de fonctions.

\begin{prop}\label{prop:comparaisonnormes}
Supposons que l'anneau de Banach~$(\As,\|.\|)$ est uniforme. Soient $s,u\in\R_{+}$ et $t,v \in \R_{+}^*$ tels que $s\prec u \le v <t$. Pour tout \'el\'ement~$f$ de $\As\la s\le |T|\le t\ra$, nous avons
\[\|f\|_{u,v} \le \left(\frac{s}{u-s} + \frac{t}{t-v}\right) \|f\|_{\overline{C}(s,t)},\]
avec la convention que $s/(u-s)=0$ si $s=0$.
\end{prop}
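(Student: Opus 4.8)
The plan is to estimate the series norm $\|f\|_{u,v}=\sum_{n\in\Z}\|a_{n}\|\max(u^n,v^n)$ of $f=\sum_{n\in\Z}a_{n}T^n$ term by term, and then to recognise the resulting bound as the sum of two geometric series. The whole argument rests on the following \emph{coefficient estimate}: for every $n\in\Z$ and every real number $r$ with $s\le r\le t$ and $r>0$, one has
\[
\|a_{n}\|\, r^n \le \|f\|_{\overline{C}(s,t)}.
\]
Once this is granted, I would use $r=t$ for the indices $n\ge 0$ and $r=s$ for the indices $n<0$ (the latter value being available precisely when $s>0$, that is, exactly when negative indices occur), and then sum.

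To prove the coefficient estimate I would first pass to the fibres of $\pi\colon X\to B$. Since $\As$ is uniform, its norm coincides with the uniform norm on $B=\Mc(\As)$, so that $\|a_{n}\|=\sup_{b\in B}|a_{n}(b)|$; likewise $\|f\|_{\overline{C}(s,t)}=\sup_{b\in B}\|f_{b}\|_{\overline{C}_{b}(s,t)}$, where $f_{b}=\sum_{n}a_{n}(b)\,T^n$ denotes the image of~$f$ in the annulus over the complete valued field~$\Hs(b)$ (here I use that $\overline{C}(s,t)$ is the disjoint union of its fibres $\overline{C}_{b}(s,t)$ and that $|f(x)|=|f_{b}(x)|$ for $x$ over~$b$). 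It therefore suffices to prove, for each~$b$, that $|a_{n}(b)|\,r^n\le \|f_{b}\|_{\overline{C}_{b}(s,t)}$, and here the argument splits according to the nature of~$\Hs(b)$. When~$\Hs(b)$ is ultrametric, the Gauss point~$\eta_{b,r}$ of radius~$r$ belongs to~$\overline{C}_{b}(s,t)$ and satisfies $|f_{b}(\eta_{b,r})|=\max_{m}|a_{m}(b)|\,r^m\ge |a_{n}(b)|\,r^n$, whence the claim. When~$\Hs(b)$ is archimedean, the fibre is a genuine complex annulus and the same inequality is nothing but the Cauchy estimate for the Laurent coefficients on the circle $\{|T|=r\}$; the twist by a power~$|.|_{\infty}^{\varepsilon}$ possibly carried by the absolute value of~$\Hs(b)$ is harmless, since raising Cauchy's inequality to the power~$\varepsilon$ leaves the exponent of~$r$ untouched.

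Granting the estimate, I would split the summation. For $n\ge 0$ we have $\max(u^n,v^n)=v^n$ and, taking $r=t$, $\|a_{n}\|\le t^{-n}\|f\|_{\overline{C}(s,t)}$, so that
\[
\sum_{n\ge 0}\|a_{n}\|\max(u^n,v^n)\le \|f\|_{\overline{C}(s,t)}\sum_{n\ge 0}\Big(\frac{v}{t}\Big)^n=\frac{t}{t-v}\,\|f\|_{\overline{C}(s,t)},
\]
the series converging because $v<t$. For $n<0$ we have $\max(u^n,v^n)=u^n$ and, taking $r=s$, $\|a_{n}\|\le s^{-n}\|f\|_{\overline{C}(s,t)}$, so that
\[
\sum_{n< 0}\|a_{n}\|\max(u^n,v^n)\le \|f\|_{\overline{C}(s,t)}\sum_{m\ge 1}\Big(\frac{s}{u}\Big)^m=\frac{s}{u-s}\,\|f\|_{\overline{C}(s,t)},
\]
the series converging because $s\prec u$ forces $s<u$. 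Adding the two bounds gives the announced inequality; when $s=0$ there are no negative indices, the second sum is empty, and the first summand vanishes in accordance with the stated convention.

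The main obstacle is the fibrewise coefficient estimate, and specifically the requirement to treat ultrametric and archimedean residue fields in a single sweep: the reduction to fibres via uniformity and the final geometric summation are routine, but one must verify carefully that the Gauss point of radius~$r$ genuinely lies in~$\overline{C}_{b}(s,t)$ in the ultrametric case and that the classical Cauchy bound survives the $\varepsilon$-rescaling of the absolute value in the archimedean case.
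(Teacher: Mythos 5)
Your proof is correct and follows essentially the same route as the paper's: reduce to the fibres (using uniformity to identify the norm on~$\As$ with the sup over~$\Mc(\As)$), bound each coefficient by the uniform norm via the Gauss-point description of the norm in the ultrametric case and the Cauchy estimate (stable under the $\varepsilon$-twist) in the archimedean case, then sum the two geometric series. The paper's proof is only sketched, deferring details to~\cite{asterisque}, proposition~2.1.3, but the ingredients are the same.
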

\begin{proof}
Il s'agit de majorer la norme en tant que s\'erie par la norme uniforme, autrement dit, de majorer les coefficients d'une s\'erie en fonction de sa norme uniforme sur un disque ou une couronne. On se ram\`ene au cas o\`u~$\As$ est un corps valu\'e en consid\'erant une fibre sur laquelle le maximum de la fonction est atteint. On utilise ensuite soit la description explicite de la norme sur les disques ou les couronnes, si le corps est ultram\'etrique, soit la formule de Cauchy, s'il est archim\'edien. Nous renvoyons \`a la proposition~2.1.3 de~\cite{asterisque} pour les d\'etails (et la g\'en\'eralisation en dimension sup\'erieure).
\end{proof}

Nous pouvons d\'eduire de ce r\'esultat une description de l'anneau~$\Bs$ d'une couronne compacte.

\begin{prop}\label{prop:Bcouronne}
Supposons que l'anneau de Banach~$(\As,\|.\|)$ est uniforme. Soient $s,t\in\R$ tels que $0\le s\le t$. Alors le morphisme naturel
\[\varinjlim_{s'\prec s\le t < t'} \As\la s'\le |T|\le t'\ra \to \Bs(\overline{C}(s,t))^\dag\]
est un isomorphisme.
\end{prop}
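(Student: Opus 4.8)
The plan is to present both sides as filtered colimits indexed by the annuli $\overline{C}(s',t')$ with $s'\prec s\le t<t'$, and, for each such pair, to play the Laurent-series algebra $\As\la s'\le|T|\le t'\ra$ against the uniform completion $\Bs(\overline{C}(s',t'))$ by means of Proposition~\ref{prop:comparaisonnormes}. The first point is that these annuli are cofinal among the compact neighbourhoods of $\overline{C}(s,t)$ in $X=\E{1}{\As}$: given such a neighbourhood $W$, the sets $\overline{C}(s',t')\smallsetminus\mathring{W}$ form, as $s'\nearrow s$ and $t'\searrow t$ (with $s'=0$ throughout if $s=0$), a decreasing family of compacts with empty intersection, so one of them is empty and $\overline{C}(s',t')\subset W$. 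Hence $\Bs(\overline{C}(s,t))^\dag=\varinjlim_{s'\prec s\le t<t'}\Bs(\overline{C}(s',t'))$, and the map under study is $\varinjlim\alpha_{s',t'}$, where $\alpha_{s',t'}\colon\As\la s'\le|T|\le t'\ra\to\Bs(\overline{C}(s',t'))$ is the natural morphism; it is norm-decreasing because $\|f\|_{\overline{C}(s',t')}\le\|f\|_{s',t'}$.

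The essential local fact is that every rational fraction without poles on the closed annulus already lies in the series algebra, i.e. $\Ks(\overline{C}(s',t'))\subseteq\As\la s'\le|T|\le t'\ra$. Writing such a fraction as $P/Q$ with $Q$ nowhere zero on $\overline{C}(s',t')$, the zero locus of $Q$ is closed and disjoint from this compact, so $Q$ remains nowhere zero on a slightly larger annulus $\overline{C}(s'-\varepsilon,t'+\varepsilon)$; as the spectrum of $\As\la s'-\varepsilon\le|T|\le t'+\varepsilon\ra$ is exactly that annulus, a function not vanishing on it is a unit, whence $P/Q$ is a Laurent series there and a fortiori on $\overline{C}(s',t')$. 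In particular $\alpha_{s',t'}$ has dense image, and the Laurent polynomials are dense in both $\As\la s'\le|T|\le t'\ra$ and $\Bs(\overline{C}(s',t'))$.

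To invert the colimit I would, for a fixed $(s',t')$, pick $s'\prec s''\le t''<t'$ and build a backward map $\beta\colon\Bs(\overline{C}(s',t'))\to\As\la s''\le|T|\le t''\ra$. By the inclusion just proved, $\Ks(\overline{C}(s',t'))$ maps into $\As\la s''\le|T|\le t''\ra$, and Proposition~\ref{prop:comparaisonnormes} (with outer annulus $\overline{C}(s',t')$ and $u=s''$, $v=t''$) bounds $\|\cdot\|_{s'',t''}$ by a fixed multiple of $\|\cdot\|_{\overline{C}(s',t')}$ there; since $\Ks(\overline{C}(s',t'))$ is dense in $\Bs(\overline{C}(s',t'))$, this extends to the desired continuous $\beta$. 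On the dense space of Laurent polynomials the composites $\beta\circ\alpha$ and $\alpha\circ\beta$ coincide with the respective transition (restriction) maps of the two colimit systems, hence everywhere by continuity; passing to the colimit, $\varinjlim\alpha$ and $\varinjlim\beta$ are mutually inverse, which yields the isomorphism.

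The crux, and the step I expect to resist, is the invertibility input of the second paragraph: that a polynomial not vanishing on an annulus is a unit in the corresponding series algebra, which is not uniform, so this is a genuine spectral statement rather than a formality. Everything else is the soft comparison of two intertwined colimits, Proposition~\ref{prop:comparaisonnormes} furnishing the single quantitative estimate needed, namely the reverse bound on a slightly smaller annulus. Two degeneracies deserve attention along the way: the case $s=0$, where the annuli become discs and only $t'$ varies, handled uniformly thanks to the convention $s/(u-s)=0$; and the possible disconnectedness of $\Mc(\As)$, which makes the Laurent expansion of something like $1/(T-a)$ vary from fibre to fibre and is exactly why one must argue through invertibility rather than a naive fibrewise expansion.
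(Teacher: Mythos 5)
Your argument is correct and follows essentially the same route as the paper: the two key ingredients are identical, namely the identification of $\overline{C}(s',t')$ with the spectrum of $\As\la s'\le |T|\le t'\ra$ (so that a denominator without zeros there is a unit, which is exactly Berkovich's corollaire~1.2.4 invoked in the paper), and Proposition~\ref{prop:comparaisonnormes} to convert uniform convergence into convergence for the series norm on a slightly smaller annulus. The only difference is presentational: the paper simply observes that injectivity is not at issue and proves surjectivity directly, whereas you package the same estimates into mutually inverse maps between the two colimit systems.
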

\begin{proof}
Il suffit de montrer que ce morphisme est surjectif. Soit~$f$ un \'el\'ement de $\Bs(\overline{C}(s,t))^\dag$. Il existe $s',t'\in\R$ v\'erifiant $s'\prec s\le t < t'$ tels que $f\in \Bs(\overline{C}(s',t'))$. Par d\'efinition, nous pouvons \'ecrire la fonction~$f$ comme limite uniforme de fractions $P_{n}/Q_{n}$, avec $P_{n},Q_{n}\in\As[T]$, les polyn\^omes~$Q_{n}$ ne s'annulant pas sur~$\overline{C}(s',t')$. 

Comme indiqu\'e pr\'ecedemment, cette couronne n'est autre que le spectre de l'alg\`ebre $\As\la s'\le |T|\le t'\ra$. D'apr\`es le corollaire~1.2.4 de~\cite{rouge}, les polyn\^omes~$Q_{n}$ sont inversibles dans cette alg\`ebre. Par cons\'equent, $(P_{n}/Q_{n})_{n\ge 0}$ est une suite d'\'el\'ements de $\As\la s'\le |T|\le t'\ra$ qui converge vers~$f$ pour la norme uniforme sur~$\overline{C}(s',t')$. La proposition pr\'ec\'edente assure qu'elle converge dans $\As\la s''\le |T|\le t''\ra$ pour la norme~$\|.\|_{s'',t''}$, quels que soient $s'',t''\in\R$ v\'erifiant $s'\prec s'' \prec s$ et $t < t'' < t'$. On en d\'eduit le r\'esultat voulu.
\end{proof}

Si~$W$ est une partie compacte et spectralement convexe de~$\E{n}{\As}$, nous avons muni l'anneau~$\Bs(W)$ de la norme uniforme sur $W = \Mc(\Bs(W))$. Par cons\'equent, dans ce cas, l'anneau de Banach~$\Bs(W)$ est toujours uniforme.

\begin{coro}
Soit~$V$ une partie compacte de~$B$ qui poss\`ede un syst\`eme fondamental de voisinages spectralement convexes. Soient $s,t\in\R$ tels que \mbox{$0\le s\le t$}. Alors le morphisme naturel
\[\varinjlim_{W\supset V,s'\prec s\le t < t'} \Bs(W)\la s'\le |T|\le t'\ra \to \Bs(\overline{C}_{V}(s,t))^\dag,\]
o\`u~$W$ d\'ecrit l'ensemble des voisinages compacts de~$V$ dans~$B$, est un isomorphisme.
\end{coro}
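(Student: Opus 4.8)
Le plan est de se ramener \`a la proposition~\ref{prop:Bcouronne} en rempla\c{c}ant l'anneau de base~$\As$ par l'anneau~$\Bs(W)$, pour chaque voisinage compact spectralement convexe~$W$ de~$V$, puis de passer \`a la limite inductive sur~$W$. Je commencerais par fixer un tel~$W$. D'apr\`es la remarque qui pr\'ec\`ede, l'anneau de Banach~$\Bs(W)$, muni de la norme uniforme sur $W=\Mc(\Bs(W))$, est uniforme, de sorte que la proposition~\ref{prop:Bcouronne} s'applique \`a l'espace~$\E{1}{\Bs(W)}$~: elle fournit un isomorphisme entre $\varinjlim_{s'\prec s\le t<t'} \Bs(W)\la s'\le |T|\le t'\ra$ et l'anneau~$\Bs(\overline{C}(s,t))^\dag$ calcul\'e dans~$\E{1}{\Bs(W)}$. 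C'est l\`a que sert la remarque sur l'uniformit\'e de~$\Bs(W)$, qui conditionne l'emploi de la proposition pr\'ec\'edente.

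La difficult\'e principale sera alors d'identifier ce dernier anneau \`a un anneau de fonctions relatif dans~$X=\E{1}{\As}$. Le morphisme $\As\to\Bs(W)$ induit un morphisme $\E{1}{\Bs(W)}\to X$ qui, gr\^ace \`a la convexit\'e spectrale de~$W$, r\'ealise un hom\'eomorphisme de~$\Mc(\Bs(W))$ sur~$W$ pr\'eservant la coordonn\'ee~$T$~; la couronne~$\overline{C}(s',t')$ de~$\E{1}{\Bs(W)}$, spectre de l'alg\`ebre~$\Bs(W)\la s'\le |T|\le t'\ra$, s'envoie ainsi hom\'eomorphiquement sur la couronne relative~$\overline{C}_{W}(s',t')$ de~$X$. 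Je montrerais que le morphisme naturel $\Bs(W)\la s'\le |T|\le t'\ra \to \Bs(\overline{C}_{W}(s',t'))$ (anneau calcul\'e dans~$X$) est un isomorphisme~: c'est un \'enonc\'e de changement de base que j'\'etablirais \`a partir de la convexit\'e spectrale de~$W$ et de l'invariance des anneaux~$\Bs$ par passage \`a l'anneau des fonctions d'un compact rationnel (proposition rappel\'ee plus haut sur les compacts rationnels), les couronnes \'etant rationnelles dans leurs espaces respectifs. Il resterait \`a v\'erifier que les couronnes relatives~$\overline{C}_{W}(s',t')$, lorsque~$W$ d\'ecrit le syst\`eme fondamental de voisinages spectralement convexes de~$V$ et que $s'\prec s\le t<t'$, forment un syst\`eme fondamental de voisinages compacts de~$\overline{C}_{V}(s,t)$ dans~$X$~; ceci r\'esulte de la description de la topologie de~$X$ et donne $\Bs(\overline{C}_{V}(s,t))^\dag \simeq \varinjlim_{W,\,s'\prec s\le t<t'} \Bs(\overline{C}_{W}(s',t'))$.

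Enfin, je prendrais la limite inductive sur~$W$ des isomorphismes obtenus \`a l'\'etape pr\'ec\'edente. \`A gauche appara\^{\i}t la limite inductive double $\varinjlim_{W,\,s'\prec s\le t<t'} \Bs(W)\la s'\le |T|\le t'\ra$, c'est-\`a-dire le membre de gauche de l'\'enonc\'e~; \`a droite, l'\'ecriture de~$\Bs(\overline{C}(s,t))^\dag$ relatif \`a~$\E{1}{\Bs(W)}$ comme $\varinjlim_{s'\prec s\le t<t'} \Bs(\overline{C}_{W}(s',t'))$, combin\'ee \`a la cofinalit\'e ci-dessus, redonne~$\Bs(\overline{C}_{V}(s,t))^\dag$. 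Une limite inductive filtrante d'isomorphismes \'etant un isomorphisme, le morphisme naturel de l'\'enonc\'e, qui co\"{\i}ncide avec ce compos\'e, est un isomorphisme. Le point d\'elicat reste l'identification de changement de base \'evoqu\'ee au deuxi\`eme paragraphe, les autres v\'erifications \'etant de nature formelle.
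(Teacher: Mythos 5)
Votre démonstration est correcte et suit exactement la voie que le texte sous-entend : le corollaire y est laissé sans preuve précisément parce que la remarque qui le précède (l'anneau de Banach~$\Bs(W)$, muni de la norme uniforme sur $W=\Mc(\Bs(W))$, est toujours uniforme) permet d'appliquer la proposition~\ref{prop:Bcouronne} au-dessus de chaque voisinage spectralement convexe~$W$ puis de passer à la limite inductive. L'identification de changement de base que vous signalez comme le point délicat est bien celle qu'il faut vérifier, et elle se traite comme vous l'indiquez à l'aide de la convexité spectrale et de la rationalité des couronnes.
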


Remarquons que le r\'esultat vaut, en particulier, lorsque la partie~$V$ est un compact rationnel ou un point. 

\begin{coro}\label{cor:dvptpoint}
Soit~$b$ un point de~$B$. Notons~$0_{b}$ le point~$0$ de la fibre~$X_{b}$. Alors le morphisme naturel
\[\varinjlim_{V\ni b,t>0} \Bs(V)\la |T|\le t\ra \to \Os_{X,0_{b}},\]
o\`u~$V$ d\'ecrit l'ensemble des voisinages compacts de~$b$ dans~$B$, est un isomorphisme.
\end{coro}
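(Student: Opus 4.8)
The plan is to deduce Corollary~\ref{cor:dvptpoint} from the previous corollary by specializing to the case where $V=\{b\}$ is a single point and $s=0$, $t>0$, so that the compact $\overline{C}_{V}(s,t)$ becomes the closed disc $\overline{D}_{b}(t)$. First I would observe that a point~$b$ of~$B=\Mc(\As)$ always possesses a fundamental system of spectrally convex compact neighborhoods: indeed, as remarked just after the previous corollary, a point (viewed as a compact subset of~$B$) is covered by that remark, so the hypothesis of the previous corollary is satisfied with $V=\{b\}$. With $s=0$, the relation $s'\prec s$ forces $s'=0$ as well, and by the convention $\As\la 0\le|T|\le t'\ra = \As\la|T|\le t'\ra$, the annuli collapse to discs. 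Hence the general isomorphism
\[
\varinjlim_{W\supset\{b\},\,0\prec t<t'} \Bs(W)\la |T|\le t'\ra \;\xrightarrow{\ \sim\ }\; \Bs(\overline{D}_{b}(t))^\dag
\]
holds, where~$W$ now ranges over compact neighborhoods of~$b$ in~$B$.

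Next I would reindex the colimit. The key point is that, since we take an inductive limit over all compact neighborhoods~$V\ni b$ \emph{and} all radii $t>0$ simultaneously, the auxiliary parameter~$t'$ (with $t<t'$) becomes redundant: the doubly-indexed system $(\Bs(W)\la|T|\le t'\ra)_{W,t'}$ is cofinal in, and has the same colimit as, the system indexed simply by pairs $(V,t)$ with $V\ni b$ and $t>0$. Thus
\[
\varinjlim_{V\ni b,\,t>0} \Bs(V)\la |T|\le t\ra \;\simeq\; \varinjlim_{t>0}\Bs(\overline{D}_{b}(t))^\dag.
\]
Finally I would identify the right-hand side with the local ring~$\Os_{X,0_{b}}$. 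By the displayed isomorphism recalled early in the excerpt, namely $\Bs(\{x\})^\dag\simeq\Os_{\E{n}{\As},x}$ applied to the point $x=0_{b}$ of $X=\E{1}{\As}$, together with the fact that $\{0_{b}\}=\bigcap_{t>0}\overline{D}_{b}(t)$ and that the discs $\overline{D}_{b}(t)$ form a fundamental system of compact neighborhoods of~$0_{b}$ in~$X$, passing to the colimit over~$t$ yields $\varinjlim_{t>0}\Bs(\overline{D}_{b}(t))^\dag\simeq\Os_{X,0_{b}}$.

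The main obstacle I anticipate is the careful verification of the cofinality and colimit-exchange assertion in the second step: one must check that the transition maps in the various inductive systems are compatible and that shrinking~$V$ to~$\{b\}$ while letting~$t'\to t$ does not lose or create sections. This is essentially a bookkeeping argument about interchanging two directed colimits, but it requires confirming that the neighborhoods $\overline{D}_{V}(t')$ (for $V\ni b$ a compact neighborhood and $t'>t$) indeed form a fundamental system of compact neighborhoods of the closed disc $\overline{D}_{b}(t)$ in~$X$, so that the dagger construction $\Bs(-)^\dag$ on the right matches the colimit over~$(W,t')$ on the left. Once this identification of neighborhood bases is established, the rest is formal.
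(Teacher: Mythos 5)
Your argument is correct and is essentially the paper's: the corollary is a direct specialization of the preceding one to the case where the compact part is the single point $\{b\}$, except that the paper takes $s=t=0$ at once, so that $\overline{C}_{b}(0,0)=\{0_{b}\}$ and the identification $\Bs(\{0_{b}\})^\dag\simeq\Os_{X,0_{b}}$ recalled in Section~1 finishes the proof without your intermediate colimit over~$t$. One phrasing caveat: the discs $\overline{D}_{b}(t)$ are \emph{not} neighborhoods of~$0_{b}$ in~$X$ (they lie in the fibre~$X_{b}$); what your last step actually uses, and what is true, is that every compact neighborhood of~$0_{b}$ in~$X$ contains some $\overline{D}_{b}(t)$ and conversely every compact neighborhood of some $\overline{D}_{b}(t)$ is one of~$0_{b}$, so the two directed systems entering the $\Bs(\cdot)^\dag$ colimits are mutually cofinal and $\varinjlim_{t>0}\Bs(\overline{D}_{b}(t))^\dag\simeq\Bs(\{0_{b}\})^\dag$.
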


\begin{coro}\label{cor:dvptdisque}
Soit~$b$ un point de~$B$. Soit~$r\ge 0$. Alors le morphisme naturel
\[\varinjlim_{V\ni b,t>r} \Bs(V)\la |T|\le t\ra \to \Os_{X}(\overline{D}_{b}(r)),\]
o\`u~$V$ d\'ecrit l'ensemble des voisinages compacts de~$b$ dans~$B$, est un isomorphisme.
\end{coro}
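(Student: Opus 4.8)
Le plan est de d\'eduire l'\'enonc\'e du corollaire qui pr\'ec\`ede le corollaire~\ref{cor:dvptpoint}, appliqu\'e \`a la partie compacte $V=\{b\}$ de~$B$ — un point poss\`ede bien un syst\`eme fondamental de voisinages spectralement convexes, ainsi qu'il a \'et\'e remarqu\'e — avec $s=0$ et $t=r$. La couronne d\'eg\'en\'er\'ee $\overline{C}_{b}(0,r)$ co\"{\i}ncide avec le disque $\overline{D}_{b}(r)$, et la condition $s'\prec 0$ impose $s'=0$ ; compte tenu de la convention $\As\la 0\le |T|\le t\ra=\As\la |T|\le t\ra$, le terme de gauche de ce corollaire se r\'eduit alors pr\'ecis\'ement \`a la source de notre \'enonc\'e. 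On obtient ainsi un isomorphisme
\[\varinjlim_{V\ni b,\, t>r} \Bs(V)\la |T|\le t\ra \xrightarrow{\ \sim\ } \Bs(\overline{D}_{b}(r))^\dag,\]
et il suffit donc de v\'erifier que le morphisme naturel $\Bs(\overline{D}_{b}(r))^\dag \to \Os_{X}(\overline{D}_{b}(r))$ est un isomorphisme. Pour $r=0$, on retrouve exactement le corollaire~\ref{cor:dvptpoint}.

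Rappelons que $\Os_{X}(\overline{D}_{b}(r))=\varinjlim_{U}\Os_{X}(U)$, o\`u~$U$ parcourt les voisinages ouverts de $\overline{D}_{b}(r)$ dans~$X$, tandis que $\Bs(\overline{D}_{b}(r))^\dag=\varinjlim_{W}\Bs(W)$, o\`u~$W$ parcourt les voisinages compacts. La premi\`ere \'etape consiste \`a \'etablir, par un argument de type lemme du tube reposant sur la compacit\'e du disque ferm\'e de Berkovich $\overline{D}_{b}(r)$ de la fibre~$X_{b}$ et sur la continuit\'e de~$\pi$ et de~$|T|$, que les disques relatifs $\overline{D}_{V}(t)$, pour~$V$ voisinage compact de~$b$ dans~$B$ et $t>r$, forment un syst\`eme fondamental de voisinages compacts de $\overline{D}_{b}(r)$ dans~$X$ (et leurs int\'erieurs un syst\`eme fondamental de voisinages ouverts). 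Les deux limites ci-dessus se calculent donc le long de cette famille cofinale.

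Le morphisme $\Bs(\overline{D}_{b}(r))^\dag\to\Os_{X}(\overline{D}_{b}(r))$ est clair, tout \'el\'ement de~$\Bs(W)$ d\'efinissant une fonction analytique sur l'int\'erieur de~$W$. Pour construire sa r\'eciproque, on part d'une fonction~$f$ analytique sur un voisinage d'un disque relatif $\overline{D}_{V}(t)$ et l'on cherche \`a la r\'ealiser comme \'el\'ement de $\Bs(\overline{D}_{V'}(t'))$ pour un disque l\'eg\`erement plus petit. Quitte \`a choisir~$V$ convenablement, ce disque relatif est spectralement convexe (ce que l'on v\'erifie comme pour les compacts rationnels, \`a l'aide de la proposition affirmant qu'un compact rationnel est spectralement convexe et que~$\Bs$ y est intrins\`eque) : le faisceau structural de $\Mc(\Bs(\overline{D}_{V}(t)))$ s'identifie alors \`a celui de~$X$ sur les int\'erieurs, et l'on peut en d\'eduire que~$f$ est limite uniforme, \emph{sur le disque tout entier}, de fractions rationnelles sans p\^oles. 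En passant \`a la limite sur~$V$ et~$t$, les deux morphismes ainsi construits sont inverses l'un de l'autre, d'o\`u l'isomorphisme recherch\'e.

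Le point d\'elicat est pr\'ecis\'ement cette derni\`ere \'etape d'approximation jusqu'au bord : il s'agit de garantir qu'une fonction analytique au voisinage d'un disque relatif en est limite uniforme \emph{sur ce disque}, et non seulement sur son int\'erieur, de fractions sans p\^oles — sans quoi $\Os_{X}$ et $\Bs^\dag$ diff\'ereraient. Dans le cas d'un point (corollaire~\ref{cor:dvptpoint}) cette difficult\'e est absente, un germe analytique \'etant automatiquement limite uniforme sur un petit voisinage compact ; c'est la convexit\'e spectrale des disques relatifs, conjugu\'ee \`a la description explicite des alg\`ebres de disques et de couronnes fournie par la proposition~\ref{prop:Bcouronne} appliqu\'ee fibre \`a fibre (approximation \`a la Runge dans la fibre archim\'edienne, analogue ultram\'etrique sinon), qui permet de contr\^oler cette approximation lorsque le rayon est strictement positif. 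Le lemme du tube, une fois acquise la compacit\'e de~$\overline{D}_{b}(r)$, ne pr\'esente quant \`a lui aucune difficult\'e.
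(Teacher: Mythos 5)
Votre r\'eduction initiale est correcte : en appliquant le corollaire qui suit la proposition~\ref{prop:Bcouronne} \`a la partie compacte $\{b\}$ avec $s=0$ et $t=r$, on identifie bien la source du morphisme de l'\'enonc\'e \`a $\Bs(\overline{D}_{b}(r))^\dag$, et tout revient \`a montrer que le morphisme $\Bs(\overline{D}_{b}(r))^\dag \to \Os_{X}(\overline{D}_{b}(r))$ est un isomorphisme. Mais c'est pr\'ecis\'ement l\`a que r\'eside toute la difficult\'e, et votre argument ne la r\'esout pas : l'affirmation selon laquelle une fonction analytique au voisinage d'un disque relatif en est limite uniforme, \emph{sur le disque tout entier}, de fractions rationnelles sans p\^oles \'equivaut \`a la surjectivit\'e cherch\'ee, et vous la justifiez par un appel \`a la convexit\'e spectrale des disques relatifs $\overline{D}_{V}(t)$ (que le texte n'\'etablit que pour les compacts rationnels, donc pas pour un voisinage compact~$V$ quelconque de~$b$) et \`a une approximation de type Runge qui n'est d\'emontr\'ee nulle part dans ce cadre. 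Le point d\'elicat est donc correctement nomm\'e, mais seulement d\'eplac\'e, pas franchi.

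La preuve du texte contourne enti\`erement cette approximation globale. Seule la surjectivit\'e pose probl\`eme : \'etant donn\'e $f \in \Os_{X}(\overline{D}_{V}(t))$ avec $t>r$, on applique le corollaire~\ref{cor:dvptpoint} au point~$0_{b}$ pour obtenir un repr\'esentant $g \in \Bs(W)\la |T|\le t_{0}\ra$ du germe de~$f$ en~$0_{b}$, avec $t_{0}>0$ \'eventuellement tr\`es petit ; c'est l\`a que l'isomorphisme $\Bs(\{x\})^\dag \simeq \Os_{X,x}$, imm\'ediat pour un point, remplace l'approximation jusqu'au bord. On montre ensuite, fibre \`a fibre au-dessus de $c\in W$, \`a l'aide de la description explicite des sections sur le disque $\overline{D}_{c}(t)$ sur le corps valu\'e complet~$\Hs(c)$, que cette s\'erie converge en fait jusqu'au rayon~$t$ et y repr\'esente~$f$, d'o\`u, par une majoration uniforme des coefficients, $g \in \Bs(W)\la |T|\le t\ra$. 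Si vous souhaitez conserver votre plan, c'est cet argument (germe au centre, puis propagation fibre \`a fibre) qu'il faut substituer \`a l'\'etape d'approximation que vous laissez en suspens.
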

\begin{proof}
Seule la surjectivit\'e pose probl\`eme. Soit $f \in \Os_{X}(\overline{D}_{b}(r))$. Il existe un voisinage~$V$ de~$b$ et un nombre r\'eel $t>r$ tel que $f \in \Os_{X}(\overline{D}_{V}(t))$. On utilise le corollaire pr\'ec\'edent pour trouver un \'el\'ement~$g$ de $\Bs(W)\la |T|\le t_{0}\ra$, avec $W \subset V$ et $t_{0}>0$, dont le germe en~$0_{b}$ est~$f$. En utilisant la description explicite des sections sur $\overline{D}_{c}(t)$, pour tout $c\in W$, on montre que $g \in \Hs(c)\la |T|\le t\ra$. On en d\'eduit que $g \in \Bs(W)\la |T|\le t\ra$.
\end{proof}

En appliquant ce corollaire fibre \`a fibre et en identifiant les coefficients des d\'eveloppements, nous obtenons le r\'esultat suivant. Pour toute partie compacte~$W$ de~$B$ et tout nombre r\'eel~$t>0$, nous noterons $\Os(W)\la |T|\le t\ra$ l'alg\`ebre constitu\'ee des s\'eries de la forme  $\sum_{n\in\N} a_{n} T^n$, o\`u $(a_{n})_{n\ge 0}$ est une suite d'\'el\'ements de~$\Os(W)$ telle que la s\'erie $\sum_{n\in\N} \|a_{n}\|_{W} t^n$ converge.

\begin{coro}
Soit~$V$ une partie de~$B$. Soit~$r\ge 0$. Alors le morphisme naturel
\[\varinjlim_{W\supset V,t>r} \Os(W)\la |T|\le t\ra \to \Os_{X}(\overline{D}_{V}(r)),\]
o\`u~$W$ d\'ecrit l'ensemble des voisinages compacts de~$V$ dans~$B$, est un isomorphisme.
\end{coro}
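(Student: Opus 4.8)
The plan is to exploit naturality and reduce the claim to bijectivity, building an explicit inverse by applying the point-wise statement of Corollary~\ref{cor:dvptdisque} fibre by fibre and then gluing. The device that makes the gluing work is the uniqueness of the power-series expansion on each fibre: over a field the coefficients of a convergent series are completely determined by the function it represents, so the coefficient functions produced locally over different points of~$V$ agree automatically on overlaps, and the whole point of the statement is to turn this uniqueness into a single global expansion.

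For surjectivity I would start from $f \in \Os_{X}(\overline{D}_{V}(r))$ and, for each $b\in V$, restrict~$f$ above the fibre disc~$\overline{D}_{b}(r)$. Corollary~\ref{cor:dvptdisque} then yields a compact neighbourhood~$V_{b}$ of~$b$, a radius $t_{b}>r$ and coefficients $a_{n,b}\in\Bs(V_{b})$ with $\sum_{n}\|a_{n,b}\|_{V_{b}}\,t_{b}^{n}<\infty$ representing~$f$ above $\overline{D}_{V_{b}}(t_{b})$ (after shrinking); being uniform limits of pole-free rational fractions, these $a_{n,b}$ are honest sections of~$\Os$ on $\mathring{V}_{b}$. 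Comparing expansions at a common point $c\in V_{b}\cap V_{b'}$, the values $a_{n,b}(c)$ and $a_{n,b'}(c)$ are both the $n$-th coefficient of $f|_{X_{c}}$, hence equal, so the $a_{n,b}$ glue into functions~$a_{n}$ on the open set $\Omega=\bigcup_{b\in V}\mathring{V}_{b}\supset V$.

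The step I expect to be the real obstacle is producing, from this purely local data, a \emph{single} compact neighbourhood carrying a \emph{convergent} expansion, rather than an uncontrolled union of fibre-wise ones. I would choose a compact neighbourhood~$W$ of~$V$ inside~$\Omega$, extract by compactness a finite subcover $\mathring{V}_{b_{1}},\dots,\mathring{V}_{b_{k}}$ of~$W$, set $t=\min_{i} t_{b_{i}}>r$, and bound $\sum_{n}\|a_{n}\|_{W}\,t^{n}\le\sum_{i}\sum_{n}\|a_{n,b_{i}}\|_{V_{b_{i}}}\,t_{b_{i}}^{n}<\infty$ using $\|a_{n}\|_{W}\le\max_{i}\|a_{n}\|_{V_{b_{i}}}$; this is precisely where compactness converts fibre-wise convergence into a uniform one, placing $\sum_{n}a_{n}T^{n}$ in $\Os(W)\la|T|\le t\ra$. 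For injectivity I would argue dually: if $\sum_{n}a_{n}T^{n}$ dies in $\Os_{X}(\overline{D}_{V}(r))$, its germ at each~$0_{b}$ ($b\in V$) vanishes, so by Corollary~\ref{cor:dvptpoint} the function vanishes on a full neighbourhood $\overline{D}_{V_{b}}(\tau_{b})$ of~$0_{b}$ in~$X$ with $\tau_{b}>0$; restricting to each fibre $X_{c}$ over $c\in V_{b}$ then forces $a_{n}(c)=0$ for all~$n$ at once, so all coefficients vanish simultaneously on a neighbourhood of~$V$ and the element is already zero in the colimit.
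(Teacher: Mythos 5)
Votre démonstration est correcte et suit exactement la voie indiquée par le texte : le papier se contente en effet de la phrase \og en appliquant ce corollaire fibre \`a fibre et en identifiant les coefficients des d\'eveloppements \fg, et votre r\'edaction (application du corollaire~\ref{cor:dvptdisque} en chaque point de~$V$, recollement des coefficients gr\^ace \`a l'unicit\'e du d\'eveloppement sur chaque fibre, puis extraction d'un sous-recouvrement fini pour obtenir un rayon~$t>r$ et une majoration uniforme de $\sum_{n}\|a_{n}\|_{W}\,t^{n}$) en est pr\'ecis\'ement le d\'eveloppement attendu. L'argument d'injectivit\'e par annulation simultan\'ee des coefficients sur un voisinage de~$V$ est \'egalement correct.
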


Dans l'\'etude qui suit, il est superflu d'exiger que l'anneau de Banach~$(\As,\|.\|)$ soit uniforme. En cas de besoin, nous pourrions de toute fa\c{c}on nous y ramener par le lemme suivant, dont la d\'emonstration ne pr\'esente aucune difficult\'e.

\begin{lemm}\label{lem:separecomplete}
Notons~$\hat{\As}$ le s\'epar\'e compl\'et\'e de l'anneau~$\As$ pour la semi-norme spectrale~$\|.\|_{\infty}$ sur~$\Ms(\As)$. Alors le morphisme naturel
\[\Mc(\hat{\As},\|.\|_{\infty}) \to \Mc(\As,\|.\|)\]
est un isomorphisme d'espaces annel\'es.
\end{lemm}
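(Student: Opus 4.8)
The plan is to prove the statement in two stages --- first identifying the underlying topological spaces, then the structure sheaves --- relying throughout on the observation that the geometry of $\Mc(\As)$ is sensitive only to the spectral seminorm $\|.\|_{\infty}$ and not to the finer norm $\|.\|$. The crucial point is that every $x \in \Mc(\As,\|.\|)$ is automatically bounded by $\|.\|_{\infty}$: for $a \in \As$ and every $m \ge 1$ we have $|a|_{x}^{m} = |a^{m}|_{x} \le \|a^{m}\|$, whence $|a|_{x} \le \|a^{m}\|^{1/m}$, and letting $m \to \infty$ gives $|a|_{x} \le \|a\|_{\infty}$. Since conversely $\|.\|_{\infty} \le \|.\|$, a multiplicative seminorm is bounded by $\|.\|$ if and only if it is bounded by $\|.\|_{\infty}$. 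Hence $\Mc(\As,\|.\|)$ and $\Mc(\As,\|.\|_{\infty})$ coincide as sets, and then as topological spaces since the topology is generated by the same evaluation functions. It thus remains to compare $\Mc(\hat{\As},\|.\|_{\infty})$ with $\Mc(\As,\|.\|_{\infty})$ through the map induced by the completion morphism $\As \to \hat{\As}$.

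First I would check that this map is a homeomorphism. Because $\As$ has dense image in $\hat{\As}$ and multiplicative seminorms are continuous, a point of $\Mc(\hat{\As})$ is determined by its restriction to $\As$, which gives injectivity; moreover the evaluation functions attached to elements of $\hat{\As}$ are uniform limits of those attached to elements of $\As$, so the two topologies match. For surjectivity, a point $x$ of $\Mc(\As,\|.\|_{\infty})$ satisfies $|a|_{x} \le \|a\|_{\infty}$, hence kills the kernel of $\|.\|_{\infty}$ and is Lipschitz for $\|.\|_{\infty}$; it therefore factors through the separated quotient and extends uniquely, by continuity, to a seminorm on $\hat{\As}$, which stays multiplicative because multiplicativity passes to limits and stays bounded by $\|.\|_{\infty}$. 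This extension restricts to $x$, so the map is bijective, and together with the topological statement it is a homeomorphism.

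It remains to identify the structure sheaves $\Os_{\Mc(\As)}$ and $\Os_{\Mc(\hat{\As})}$ along this homeomorphism, and this is where the only real care is needed. The residue fields are unchanged: the image of $\As/\p_{x}$ in $\hat{\As}/\p_{\tilde{x}}$ is dense for $|.|_{x}$, so $\Frac(\As/\p_{x})$ and $\Frac(\hat{\As}/\p_{\tilde{x}})$ have the same completion $\Hs(x)$. The hard part will be showing that the algebras $\Bs(V)$ attached to a compact $V$ agree; but since the uniform norm $\|.\|_{V}$ is determined by $\|.\|_{\infty}$ and $\As$ is dense in $\hat{\As}$, every pole-free rational function over $\hat{\As}$ on $V$ is a $\|.\|_{V}$-limit of pole-free rational functions over $\As$, so the natural map $\Bs_{\As}(V) \to \Bs_{\hat{\As}}(V)$ is an isometric isomorphism. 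Passing to local uniform limits, the sheaves of analytic functions are canonically identified and the natural morphism on structure sheaves is an isomorphism. The whole argument reduces to density of $\As$ in $\hat{\As}$ and the coincidence of the uniform norms, which is why it presents no genuine difficulty.
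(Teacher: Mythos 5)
Your argument is correct, and it is exactly the routine argument the paper has in mind: the paper gives no proof at all, stating only that the lemma \og ne pr\'esente aucune difficult\'e \fg. The two key observations you isolate --- that a multiplicative seminorm bounded by $\|.\|$ is automatically bounded by $\|.\|_{\infty}$ (so the spectrum only sees the spectral seminorm), and that density of $\As$ in $\hat{\As}$ identifies points, residue fields, and the rings of uniform limits of pole-free rational fractions --- are precisely what makes the statement immediate, so your write-up is a faithful expansion of the omitted proof.
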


\begin{center}{\rule{4cm}{0.2mm}}\end{center}

\medskip

\textsl{Pour toute la suite du texte, nous fixons un anneau de Banach $(\As,\|.\|)$. Nous notons $B=\Mc(\As)$, $X=\E{1}{\As}$ et $\pi : X \to B$ le morphisme de projection. Pour tout $n\in\N$, nous notons $X_{n} = \E{n}{\As}$.}

\section{Th\'eor\`eme de Weierstra{\ss} global}

Dans cette section, nous reprenons, en les pr\'ecisant, les r\'esultats de~\cite{asterisque}, \S 5.2. Les preuves ne demandant que des modifications mineures, nous ne nous y attarderons pas. Insistons sur le fait que nous ne requ\'erons aucune hypoth\`ese d'uniformit\'e sur l'anneau de Banach~$(\As,\|.\|)$.

\medskip

Soient $d\in\N$ et $G \in \As[T]$ un polyn\^ome unitaire de degr\'e~$d$. 

\begin{theo}[Th\'eor\`eme de division de Weierstra{\ss} global]\label{theo:Wglobal}
Il existe un nombre r\'eel~$v>0$ v\'erifiant la propri\'et\'e suivante : pour toute $\As$-alg\`ebre de Banach~$\As'$ telle que le morphisme structural $\As\to \As'$ diminue les normes, pour tout nombre r\'eel~$w\ge v$ et tout \'el\'ement~$F$ de $\As'\of{\la}{|T|\le w}{\ra}$, il existe un unique couple $(Q,R) \in \As'\of{\la}{|T|\le w}{\ra}^2$ tel que 
\begin{enumerate}[i)]
\item $F=QG+R$;
\item $R$ soit un polyn\^ome de degr\'e strictement inf\'erieur \`a~$d$.
\end{enumerate}
En outre, il existe une constante~$C>0$ ind\'ependante de~$w$, $\As'$ et~$F$, telle que l'on ait les in\'egalit\'es
\[\left\{{\renewcommand{\arraystretch}{1.3}\begin{array}{rcl}
\|Q\|_{w} &\le& C\|F\|_{w}\ ;\\
\|R\|_{w} &\le& C\|F\|_{w}.
\end{array}}\right.\]
\end{theo}

Pour la suite de cette section, fixons une $\As$-alg\`ebre de Banach~$\As'$ telle que le morphisme structural $\As\to \As'$ diminue les normes. Nous munissons l'alg\`ebre quotient $\As'[T]/(G(T))$ de la semi-norme r\'esiduelle $\|.\|_{\As',w,\textrm{r\'es}}$ induite par la norme~$\|.\|_{\As',w}$ sur~$\As'[T]$. Par
d\'efinition, quel que soit~$F$ dans~$\As'[T]/(G(T))$, nous avons
\[\|F\|_{\As',w,\textrm{r\'es}}= \inf\left\{ \max_{0\le i\le e} \|a_{i}\| w^i,\ \sum_{i=0}^e a_{i}\, T^i=F \mod G, e\in\N \right\}.\]

Si~$\As'$ est une $\As$-alg\`ebre de la forme~$\Bs(U)$, o\`u~$U$ est une partie compacte de~$\Mc(\As)$, munie de la norme spectrale~$\|.\|_{U}$, nous noterons $\|.\|_{U,w,\textrm{r\'es}}$ la norme $\|.\|_{\Bs(U),w,\textrm{r\'es}}$.

\begin{coro}\label{cor:resnorme}
Pour tout nombre r\'eel~\mbox{$w\ge v$}, les propri\'et\'es suivantes sont satisfaites : 
\begin{enumerate}[i)]
\item la semi-norme~$\|.\|_{\As',w,\textrm{r\'es}}$ d\'efinie sur le quotient~$\As'[T]/(G(T))$ est une norme ;
\item l'anneau~$\As'[T]/(G(T))$ est complet pour la norme~$\|.\|_{\As',w,\textrm{r\'es}}$.
\end{enumerate}
\end{coro}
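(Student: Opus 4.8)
The plan is to recognize the residue norm on $\As'[T]/(G(T))$ as equivalent, up to a universal constant, to the norm of the unique Weierstra{\ss} remainder of degree $<d$, and then to read off both assertions from the fact that such remainders form a finite free Banach $\As'$-module. Since $G$ is monic of degree~$d$, ordinary Euclidean division already attaches to every class in $\As'[T]/(G(T))$ a unique representative~$R$ of degree $<d$; by the uniqueness clause of Theorem~\ref{theo:Wglobal}, and because $\As'[T]\subset\As'\la |T|\le w\ra$, this $R$ is also the Weierstra{\ss} remainder of \emph{every} representative of the class. The remainder map $[F]\mapsto R$ is thus a bijection of $\As'[T]/(G(T))$ onto the space of polynomials of degree $<d$, which is free of rank~$d$ over the complete algebra~$\As'$.

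First I would dispose of the easy inequality: as $R$ is itself a representative of its class, the definition of the residue norm gives $\|F\|_{\As',w,\textrm{r\'es}}\le \|R\|_{w}$ for every $w\ge v$. The crucial point is the reverse estimate, and this is exactly where the quantitative bound of Theorem~\ref{theo:Wglobal} enters. Applying the division theorem to an arbitrary polynomial representative~$F'$ of the class produces $F'=Q'G+R$ with the \emph{same}~$R$, together with $\|R\|_{w}\le C\,\|F'\|_{w}$ for a constant~$C$ independent of~$w$, $\As'$ and~$F'$. Passing to the infimum over all representatives yields $\|R\|_{w}\le C\,\|F\|_{\As',w,\textrm{r\'es}}$, so that for every $w\ge v$ one has the sandwich
\[\frac{1}{C}\,\|R\|_{w}\ \le\ \|F\|_{\As',w,\textrm{r\'es}}\ \le\ \|R\|_{w}.\]

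Both statements then follow formally. For~i), if $\|F\|_{\As',w,\textrm{r\'es}}=0$ the sandwich forces $\|R\|_{w}=0$, hence $R=0$ since $w>0$ and $\|.\|$ is a norm on~$\As'$; thus $[F]=0$ and the semi-norme is separated. For~ii), the remainder norm makes the space of polynomials of degree $<d$ isometric to $(\As')^{d}$ equipped with a norm equivalent to the product norm (again because $w>0$), hence complete as a finite product of copies of the Banach algebra~$\As'$; as $[F]\mapsto R$ is a bijection that is bi-Lipschitz for the residue norm and the remainder norm by the sandwich, $\As'[T]/(G(T))$ is complete for $\|.\|_{\As',w,\textrm{r\'es}}$ as well.

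The only genuinely delicate step is the reverse inequality, which rests entirely on the effective estimate $\|R\|_{w}\le C\,\|F'\|_{w}$; it is precisely here that the hypothesis $w\ge v$ is needed, preventing high-degree representatives from making the residue norm degenerate. Should one wish to compare the $\ell^1$-type norm~$\|.\|_{w}$ used in Theorem~\ref{theo:Wglobal} with the coefficientwise maximum, the passage is harmless: shrinking the radius from~$w$ to some $w'\in[v,w)$ bounds the $\|.\|_{w'}$-norm of a representative by its maximum norm at radius~$w$ through a convergent geometric series, after which the argument above applies and one compares the two radii on the finite-dimensional space of remainders.
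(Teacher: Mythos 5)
Your argument is correct and is precisely the derivation the paper intends: the paper gives no proof of this corollary (it defers to the reference for \S 5.2, the proofs requiring only ``modifications mineures''), but the statement is clearly meant to follow from the th\'eor\`eme~\ref{theo:Wglobal} exactly as you do it --- identify each class with its unique degree-$<d$ remainder via Euclidean division and the uniqueness clause, use the quantitative bound $\|R\|_{w}\le C\|F'\|_{w}$ to sandwich the residue semi-norm between two multiples of the norm on the free module $(\As')^{d}$, and read off separatedness and completeness. The one point to watch is the one you flag yourself: the residue semi-norm is defined via the coefficientwise maximum $\max_{i}\|a_{i}\|w^{i}$ while the th\'eor\`eme~\ref{theo:Wglobal} controls the $\ell^{1}$-type norm $\|.\|_{w}$, and your radius-shrinking fix $w'\in[v,w)$ is vacuous in the boundary case $w=v$; this is harmless (any constant larger than the $v$ of the theorem still satisfies its conclusion, so one may simply enlarge $v$ before stating the corollary), but it deserves a word.
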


Puisque le polyn\^ome~$G$ est unitaire et de degr\'e~$d$, l'application
\[n : \begin{array}{ccc}
\As'^d & \to & \As'[T]/(G(T))\\
(a_{0},\ldots,a_{d-1}) & \mapsto & \disp \sum_{i=0}^{d-1} a_{i}\, T^i 
\end{array}\]
est bijective. Nous noterons~$\|.\|_{\As'}$ la norme d\'efinie sur~$\As'^d$ en prenant le maximum des normes des coordonn\'ees. Nous pouvons alors d\'efinir une norme~$\|.\|_{\As',\textrm{div}}$ sur~$\As'[T]/(G(T))$ par la formule
\[\|.\|_{\As',\textrm{div}} = \|n^{-1}(.)\|_{\As'}.\]

Si~$\As'$ est une $\As$-alg\`ebre de la forme~$\Bs(U)$, o\`u~$U$ est une partie compacte de~$\Mc(\As)$, munie de la norme spectrale~$\|.\|_{U}$, nous noterons $\|.\|_{U,\textrm{div}}$ la norme $\|.\|_{\Bs(U),\textrm{div}}$.

\begin{coro}\label{cor:eqnormedivres}
Pour tout nombre r\'eel~\mbox{$w\ge v$} les normes~$\|.\|_{\As',\textrm{div}}$ et~$\|.\|_{\As',w,\textrm{r\'es}}$ d\'efinies sur~$\As'[T]/(G(T))$ sont \'equi\-va\-lentes. Plus pr\'ecis\'ement, pour tout \'el\'ement~$F$ de $\As'[T]/(G(T))$, nous avons
\[ \|F\|_{\As',w,\textrm{r\'es}} \le \|F\|_{\As',\textrm{div}} \le C\, \|F\|_{\As',w,\textrm{r\'es}}.\]
\end{coro}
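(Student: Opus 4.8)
The plan is to deduce both inequalities from the global Weierstra\ss\ division theorem (Theorem~\ref{theo:Wglobal}), exploiting the fact that dividing by the monic polynomial~$G$ produces the unique representative of degree~$<d$. Throughout I identify a class in $\As'[T]/(G(T))$ with its canonical representative $R=\sum_{i=0}^{d-1}a_{i}\,T^i$ obtained by Euclidean division by~$G$: by definition $\|F\|_{\As',\textrm{div}}$ reads off the coefficients of this representative, whereas $\|F\|_{\As',w,\textrm{r\'es}}$ is the infimum of the series-norms $\|.\|_{\As',w}$ over all polynomial representatives of~$F$. Both are $\As'$-module norms on the free rank-$d$ module $\As'[T]/(G(T))$, so it suffices to establish the two-sided comparison with the announced constants.

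For the left-hand inequality I would simply exhibit the canonical representative as an admissible competitor in the infimum defining $\|F\|_{\As',w,\textrm{r\'es}}$. Since $R$ is a polynomial of degree~$<d$ representing~$F$, its contribution is controlled by the quantity appearing in $\|F\|_{\As',\textrm{div}}$, which yields $\|F\|_{\As',w,\textrm{r\'es}}\le\|F\|_{\As',\textrm{div}}$ at once; no division theorem is needed here.

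The right-hand inequality is where Theorem~\ref{theo:Wglobal} enters and is the real content of the statement. Given any polynomial representative~$P$ of~$F$ whose series-norm approximates $\|F\|_{\As',w,\textrm{r\'es}}$, I would view~$P$ as an element of $\As'\la|T|\le w\ra$ (legitimate since $w\ge v$) and apply Weierstra\ss\ division: $P=QG+R$ with $\deg R<d$. The uniqueness clause forces~$R$ to be exactly the canonical representative of~$F$, and the theorem furnishes $\|R\|_{\As',w}\le C\,\|P\|_{\As',w}$ with the constant~$C$ \emph{independent of $w$, of $\As'$ and of~$F$}. Passing to the infimum over~$P$ then gives $\|F\|_{\As',\textrm{div}}\le C\,\|F\|_{\As',w,\textrm{r\'es}}$.

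The main obstacle, and essentially the only thing beyond Theorem~\ref{theo:Wglobal}, is the uniformity of~$C$: the point of the corollary is that the equivalence constant does not degenerate, and this is precisely the $w$-independence asserted by the division theorem. The rest is routine bookkeeping: comparing the series-norm $\|.\|_{\As',w}$ of Theorem~\ref{theo:Wglobal} with the maximum-type norms defining $\|.\|_{\As',\textrm{div}}$ and $\|.\|_{\As',w,\textrm{r\'es}}$ on the finite-rank module $\As'[T]/(G(T))$, and checking that these comparisons fold into~$C$ without spoiling its independence of the parameters. As a by-product, since $\As'^{d}$ is complete (because $\As'$ is a Banach algebra), the equivalence transports this completeness to $\As'[T]/(G(T))$ equipped with $\|.\|_{\As',w,\textrm{r\'es}}$, which is consistent with Corollary~\ref{cor:resnorme}.
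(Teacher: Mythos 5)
Your strategy is certainly the intended one: the paper gives no proof of this corollary (it is deferred to~\cite{asterisque}, \S~5.2), and the derivation is meant to be exactly what you sketch --- the canonical degree-$<d$ representative is a competitor in the infimum for one inequality, and the other comes from applying Theorem~\ref{theo:Wglobal} to an arbitrary representative, with the uniqueness clause identifying the remainder with the canonical representative and the bound $\|R\|_{w}\le C\|F\|_{w}$ supplying the $w$-, $\As'$- and $F$-independent constant.

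The gap is that the step you claim holds \og at once \fg\ and the one you call \og routine bookkeeping \fg\ are precisely where the content lies, because three non-matching norms on $\As'[T]$ are in play. As printed, $\|F\|_{\As',\textrm{div}}=\max_{i}\|a_{i}\|$ carries no weights, whereas the canonical representative only yields $\|F\|_{\As',w,\textrm{r\'es}}\le\max_{i}\|a_{i}\|\,w^{i}$; for $w>1$ (and one has $w\ge v\ge 1$ in general) this is \emph{not} bounded by $\max_{i}\|a_{i}\|$ --- already $F=T$, $G=T^{2}$ gives $\|F\|_{\As',w,\textrm{r\'es}}=w>1=\|F\|_{\As',\textrm{div}}$, since every representative has $T$-coefficient equal to~$1$. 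So the left-hand inequality does not follow from exhibiting the competitor; one must either read the weights $w^{i}$ into $\|.\|_{\textrm{div}}$ (i.e.\ take it to be the $\|.\|_{w}$-norm of the canonical representative, which is what the subsequent uses of the corollary effectively require) or accept an extra constant. Symmetrically, for the right-hand inequality, Theorem~\ref{theo:Wglobal} controls the \emph{sum}-norm $\|R\|_{w}=\sum_{i}\|a_{i}\|w^{i}$ by the sum-norm of the chosen representative~$P$, whereas $\|F\|_{\As',w,\textrm{r\'es}}$ is an infimum of \emph{max}-norms over representatives of unbounded degree; the termwise comparison costs a factor $\deg P+1$ that is not uniform over the infimum, so you cannot simply \og pass to the infimum over~$P$ \fg. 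The standard repair is to dominate the sum-norm at a strictly smaller radius by the max-norm at radius~$w$ via the geometric series $\sum_{j}(w'/w)^{j}$ (in the spirit of Proposition~\ref{prop:comparaisonnormes}), at the cost of enlarging~$v$ and the constant. Your argument is salvageable along these lines, but the normalizations must be tracked explicitly rather than folded into an unexamined~$C$.
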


\section{Morphismes finis}\label{section:morphismesfinis}

Soit $G\in\As[T]$ un polyn\^ome unitaire non constant. Consid\'erons le morphisme d'anneaux de Banach $\As \to \As[T]/(G(T))$ et le morphisme d'espaces annel\'es $\varphi : Z_{G} \to B$, o\`u~$Z_{G}$ d\'esigne le ferm\'e de Zariski de~$X$ d\'efini par~\mbox{$G=0$}, qu'il induit. C'est un morphisme fini dont nous allons \'etablir quelques propri\'et\'es. Nous suivons ici~\cite{asterisque}, \S~5.3.

Dans~\cite{asterisque}, d\'efinition~5.2.5, nous avons introduit une condition appel\'ee~$(R_{G})$. Nous la rempla\c{c}ons ici par la condition suivante. 

\begin{defi}
Soit~$U$ une partie compacte de~$B$. On dit que~$U$ satisfait la {\bf condition~$\boldsymbol{(N_{G})}$} si elle est spectralement convexe et s'il existe un nombre r\'eel~$v>0$ tel que, pour tout~$w\ge v$, la semi-norme~$\|.\|_{U,w,\textrm{r\'es}}$ soit \'equivalente \`a la norme spectrale sur~$\Bs(U)[T]/(G(T))$. 
\end{defi}

Dans~\cite{asterisque}, \S~5.3, nous ne nous sommes en r\'ealit\'e servi de~$(R_{G})$ que pour d\'emontrer~$(N_{G})$ (\textit{cf.}~proposition~5.2.7) et cette derni\`ere condition suffit dans tous les raisonnements.

\begin{rema}\label{rem:NG}
La condition~$(N_{G})$ \'enonce une \'equivalence entre deux normes sur un module de type fini sur l'anneau de Banach~$\Bs(U)$. L'on peut y penser comme une g\'en\'eralisation du r\'esultat classique d'\'equivalence des normes pour les espaces vectoriels de dimension finie sur un corps valu\'e complet~$K$ de valuation non triviale (que l'on ne peut appliquer que si la norme spectrale est bien une norme sur $K[T]/(G(T))$, autrement dit, lorsque le polyn\^ome~$G$ est sans facteurs multiples).

L'on peut \'egalement consid\'erer le cas o\`u l'alg\`ebre~$\Bs(U)$ est une alg\`ebre affino\"{\i}de. Une alg\`ebre finie sur une alg\`ebre affino\"{\i}de \'etant elle-m\^eme affino\"{\i}de, on retrouve alors un r\'esultat connu, dans le cas des alg\`ebres r\'eduites (\textit{cf.}~\cite{rouge}, proposition~2.1.4 (ii) et~\cite{BGR}, th\'eor\`eme~6.2.4/1).
\end{rema}

\begin{rema}\label{rem:degre1}
Remarquons, d\`es \`a pr\'esent, que la condition~$(N_{G})$ est satisfaite si le polyn\^ome~$G$ est de degr\'e~$1$, puisque l'on dispose alors d'un isomorphisme admissible $\Bs(U) \simeq \Bs(U)[T]/(G(T))$. Nous donnerons \`a la section~\ref{sec:N} des crit\`eres plus g\'en\'eraux.
\end{rema}

Nous reprenons ici la proposition~5.3.3 de~\cite{asterisque}.

\begin{prop}\label{prop:Bfini}
Soit~$U$ une partie compacte de~$B$ qui satisfait la condition~$(N_{G})$. Les normes~$\|.\|_{U,\textrm{div}}$, $\|.\|_{U,w,\textrm{r\'es}}$, pour $w\ge v$, et la norme spectrale sur $\Bs(U)[T]/(G(T))$ sont alors \'equivalentes. De plus, nous avons un isomorphisme admissible naturel
\[\Bs(U)[T]/(G(T)) \xrightarrow[]{\sim} \Bs(\varphi^{-1}(U)),\]
o\`u~$\Bs(\varphi^{-1}(U))$ est calcul\'e en consid\'erant~$\varphi^{-1}(U)$ comme partie de~$X$.
\end{prop}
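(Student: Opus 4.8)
The plan is to prove the two assertions in turn, the equivalence of norms being a quick consequence of the earlier corollaries and the isomorphism being the genuine point.

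For the norms, I would first observe that, since $U$ is spectrally convex, $\Bs(U)$ is a uniform Banach $\As$-algebra whose structural morphism $\As\to\Bs(U)$ is norm-decreasing, so Corollary~\ref{cor:eqnormedivres} applies with $\As'=\Bs(U)$. It yields, for every $w\ge v$, the inequalities $\|.\|_{U,w,\textrm{r\'es}}\le\|.\|_{U,\textrm{div}}\le C\,\|.\|_{U,w,\textrm{r\'es}}$, hence the equivalence of $\|.\|_{U,\textrm{div}}$ with each $\|.\|_{U,w,\textrm{r\'es}}$. Condition~$(N_{G})$ provides, by definition, the equivalence of $\|.\|_{U,w,\textrm{r\'es}}$ with the spectral norm on $\Bs(U)[T]/(G(T))$. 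Transitivity then gives the equivalence of all three norms.

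For the isomorphism, I would begin by identifying the spectra. The finite morphism $\Bs(U)\to\Bs(U)[T]/(G(T))$ induces a map $\Mc(\Bs(U)[T]/(G(T)))\to\Mc(\Bs(U))$, the target being $U$ by spectral convexity; a point of the source restricts on $\Bs(U)$ to a point $b\in U$ and is then given by a multiplicative seminorm on $\Hs(b)[T]/(G)$, that is, a point of the fibre $X_{b}$ lying on $Z_{G}$. This identifies $\Mc(\Bs(U)[T]/(G(T)))$ homeomorphically with $\varphi^{-1}(U)$, so that the spectral norm on $\Bs(U)[T]/(G(T))$ coincides with the uniform norm on $\varphi^{-1}(U)$. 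The natural $\As$-algebra map $\Bs(U)[T]/(G(T))\to\Bs(\varphi^{-1}(U))$, sending $T$ to the coordinate function on $X$ and each element of $\Bs(U)$ to its pullback along $\pi$, is therefore isometric for these norms, hence injective.

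It remains to prove surjectivity. Any rational function $P/Q$ without poles on $\varphi^{-1}(U)$, with $P,Q\in\As[T]$, has $Q\bmod G$ non-vanishing on $\Mc(\Bs(U)[T]/(G(T)))=\varphi^{-1}(U)$, hence invertible there (as in the proof of Proposition~\ref{prop:Bcouronne}); thus $P/Q$ lies in the image. The image therefore contains $\Ks(\varphi^{-1}(U))$, whose completion is $\Bs(\varphi^{-1}(U))$. Since $\Bs(U)[T]/(G(T))$ is complete for $\|.\|_{U,w,\textrm{r\'es}}$ by Corollary~\ref{cor:resnorme}, and this norm is equivalent to the spectral one, the isometric image is closed; being dense, it is everything. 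The map is thus a bijective isometry for the uniform norms, and the norm equivalences of the first step upgrade it to an admissible isomorphism. I expect the delicate point to be precisely the spectral identification $\Mc(\Bs(U)[T]/(G(T)))\simeq\varphi^{-1}(U)$ together with the coincidence of the algebraic and geometric uniform norms, for which spectral convexity and the behaviour of finite morphisms of Berkovich spectra are indispensable.
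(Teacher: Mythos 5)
Votre preuve est correcte et suit pour l'essentiel la m\^eme d\'emarche que celle sur laquelle le texte s'appuie (la proposition~5.3.3 de~\cite{asterisque}, que l'article cite sans la reproduire) : \'equivalence des normes via le corollaire~\ref{cor:eqnormedivres} et la condition~$(N_{G})$, identification de $\Mc(\Bs(U)[T]/(G(T)))$ avec $\varphi^{-1}(U)$, puis densit\'e de l'image de $\Ks(\varphi^{-1}(U))$ combin\'ee \`a la compl\'etude donn\'ee par le corollaire~\ref{cor:resnorme}. Le point que vous signalez vous-m\^eme comme d\'elicat (l'hom\'eomorphisme des spectres et la co\"{\i}ncidence des normes uniformes, qui repose sur la convexit\'e spectrale de~$U$ et sur la borne des racines de~$G$ garantissant que tout point de $\varphi^{-1}(U)$ d\'efinit bien une semi-norme born\'ee sur $\Bs(U)[T]/(G(T))$) est pr\'ecis\'ement celui trait\'e dans la r\'ef\'erence, et votre argument le couvre correctement.
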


\begin{rema}
Dans la preuve de la proposition~5.3.3 de~\cite{asterisque}, on consid\`ere en r\'ealit\'e~$\varphi^{-1}(U)$ comme partie de~$Z_{G} \simeq \Mc(\As[T]/(G(T)))$. Cela ne change rien \`a la preuve.
\end{rema}

Dans~\cite{asterisque}, d\'efinition~5.3.5, nous avons introduit une condition appel\'ee~$(I_{G})$. Nous la modifions ici de la fa\c{c}on suivante.

\begin{defi}
Notons $G(b)(T) = \prod_{i=1}^r h_{i}(T)^{n_{i}}$ la d\'ecomposition en produit de polyn\^omes irr\'eductibles et unitaires de~$G(b)(T)$ dans~$\Hs(b)[T]$. On dit que le point~$b$ de~$B$ satisfait la {\bf condition~$\boldsymbol{(D_{G})}$} s'il existe des polyn\^omes unitaires $H_{1},\ldots,H_{r}$ \`a coefficients dans~$\Os_{B,b}$ tels que
\begin{enumerate}[i)]
\item $G = \prod_{i=1}^r H_{i}$ dans $\Os_{B,b}[T]$ ;
\item quel que soit $i\in \cn{1}{r}$, $H_{i}(b) = h_{i}^{n_{i}}$.
\end{enumerate}
\end{defi}

Comme pr\'ec\'edemment, signalons que nous n'avons utilis\'e la condition~$(I_{G})$ dans~\cite{asterisque}, \S~5.3 que pour d\'emontrer~$(D_{G})$ (\textit{cf.} la discussion qui suit la d\'efinition~5.3.5) et que cette derni\`ere condition suffit dans tous les raisonnements. 

\begin{rema}
G\'eom\'etriquement, la condition~$(D_{G})$ signifie que tous les points du ferm\'e de Zariski~$Z_{G}$ d\'efini par \mbox{$G=0$} au-dessus de~$b$ sont d\'efinis au voisinage de~$b$.

Nous montrerons \`a la section~\ref{sec:D} que la condition~$(D_{G})$ est toujours v\'erifi\'ee.
\end{rema}

\'Enon\c{c}ons maintenant l'analogue du th\'eor\`eme~5.3.8 de~\cite{asterisque}. La d\'emonstration \'etant en tout point identique, nous ne la reprendrons pas.

\begin{theo}\label{theo:morphismefinipoint}
Soit~$b$ un point de~$B$. Supposons que le point~$b$ v\'erifie la condition~$(D_{G})$ et poss\`ede un syst\`eme fondamental de voisinages compacts qui satisfont la condition~$(N_{G})$. Alors le morphisme
\[\alpha_{b} : \begin{array}{ccc}
\Os_{B,b}^p & \to & (\varphi_{*}\Os_{Z_{G}})_{b}\\
(a_{0},\ldots,a_{p-1}) & \mapsto & \disp \sum_{i=0}^p a_{i}\, T^i
\end{array}\]
est un isomorphisme de~$\Os_{B,b}$-modules.
\end{theo}

\section{Hens\'elianit\'e et condition~$(D)$}\label{sec:D}

Nous nous int\'eressons ici \`a des propri\'et\'es d'hens\'elianit\'e dans les espaces analytiques sur~$\As$. Commen\c{c}ons par rappeler un r\'esultat (\textit{cf.}~\cite{asterisque}, corollaire~2.5.2 ; l'hypoth\`ese d'uniformit\'e sur l'anneau de Banach~$\As$ qui y figure est superflue, ce dont on se convainc \`a l'aide du lemme~\ref{lem:separecomplete}).

\begin{theo}
Soient~$(Z,\Os_{Z})$ un espace analytique sur~$\As$ et~$z$ un point de~$Z$. L'anneau local~$\Os_{Z,z}$ est hens\'elien.
\end{theo}

Une l\`eg\`ere modification de la preuve conduit au r\'esultat suivant.

\begin{theo}
Soit~$b$ un point de~$B=\Mc(\As)$. Le corps r\'esiduel~$\kappa(b)$ est un corps hens\'elien.
\end{theo}
\begin{proof}
Si le point~$b$ est archim\'edien, le corps~$\kappa(b)$ est \'egal \`a~$\R$ ou~$\C$ ; il est donc hens\'elien.

Supposons maintenant que le point~$b$ est ultram\'etrique. Il suffit de montrer que l'anneau de valuation~$\kappa(b)^o$ est hens\'elien. Soient~$P$ un polyn\^ome unitaire \`a coefficients dans~$\kappa(b)^o$ et~$f$ un \'el\'ement de~$\kappa(b)^o$ tel que $|P(f)(b)|<1$ et $|P'(f)(b)|=1$. 

Consid\'erons un voisinage compact~$V$ de~$b$ dans~$B$ sur lequel le polyn\^ome~$P$ se rel\`eve en un polyn\^ome \`a coefficients dans~$\Bs(V)$ unitaire et l'\'el\'ement~$f$ se rel\`eve en un \'el\'ement de~$\Bs(V)$. Nous choisissons de tels relev\'es et les notons identiquement.

Notons $M=\|f\|_{V}$ et $N=\|P(f)\|_{V}$. Quitte \`a restreindre~$V$, nous pouvons supposer que $N<1$ et que~$P'(f)$ est inversible sur~$V$. Notons $m=\|P'(f)^{-1}\|_{V}$.

Il existe un polyn\^ome~$Q[T_{1},T_{2}]$ \`a coefficients dans~$\Bs(V)$ tel que, pour tous~$u$ et~$v$ dans~$\Bs(V)$, on ait
\[P(u+v) = P(u) + P'(u)v + v^2Q(u,v).\]
Notons~$C$ le maximum des normes des coefficients du polyn\^ome~$Q$.

Il existe un \'el\'ement~$\lambda$ de l'intervalle~$\of{[}{0,1}{]}$ tel que $\|2\|_{V} \le 2^\lambda$. D'apr\`es le th\'eor\`eme d'Ostrowski, pour tout entier~$n$, nous avons alors $\|n\|_{V}\le n^\lambda$. Le lemme~1.3.2 de~\cite{asterisque} permet d'en d\'eduire que
\[\forall u,v\in\Bs(V), \forall c\in V,\ |(u+v)(c)| \le 2^\lambda\max(|u(c)|,|v(c)|).\]
Notons $r$ le nombre de coefficients du polyn\^ome~$Q$ et~$d$ son degr\'e total. L'in\'egalit\'e pr\'ec\'edente entra\^{\i}ne
\[\forall u,v\in\Bs(V), \forall c\in V,\ |Q(u+v)(c)| \le 2^{\lambda r} C \max(1,|u(c)|^d,|v(c)|^d).\]

Pour tout~$\eps>0$, quitte \`a r\'etr\'ecir le voisinage~$V$, nous pouvons rendre les quantit\'es~$M$, $m$, $C$ et~$2^\lambda$ inf\'erieures \`a~$1+\eps$. Au cours de cette op\'eration, le nombre r\'eel~$N$ ne peut que diminuer. Fixons un nombre r\'eel~$K$ dans l'intervalle $\of{]}{N,1}{[}$. Nous pouvons choisir le voisinage~$V$ de fa\c{c}on que l'on ait $Nm<1$ et $Nm^2\, 2^{\lambda r} C \max(1,M^d) \le K$. 

Soit~$g$ un \'el\'ement de~$\Bs(V)$. Nous avons
\[P(f+P(f)g) = P(f)P'(f) \left( \frac{1}{P'(f)} + g + g^2\, \frac{P(f)}{P'(f)}\, Q(f,P(f)g) \right).\]
Posons
\[R(g) = g^2\, \frac{P(f)}{P'(f)}\, Q(f,P(f)g).\]

Si $\|g\|_{V} \le m$, nous avons, pour tout point~$c$ de~$V$
\[\renewcommand{\arraystretch}{1.5}{\begin{array}{rcl}
|R(g)(c)| & \le & |g(c)|^2\, Nm\, 2^{\lambda r} C \max(1,|f(c)|^d,|P(f)(c)|^d |g(c)|^d)\\
&\le&  |g(c)|\, Nm^2\, 2^{\lambda r} C \max(1,M^d,N^d m^d)\\
&\le& K|g(c)|
\end{array}}\]
et, en particulier,
\[\|R(g)\|_{V} \le K \|g\|_{V}.\]

En utilisant cette derni\`ere in\'egalit\'e et le fait que~$\Bs(V)$ est complet, on montre que la s\'erie $\sum_{n\ge 0} R^{\circ n}(-P'(f)^{-1})$ (o\`u $R^{\circ n}$ d\'esigne la puissance $n^\textrm{\`eme}$ pour la composition) converge vers un \'el\'ement~$s$ de~$\Bs(V)$. Cet \'el\'ement satisfait l'\'egalit\'e $s-R(s)=-P'(f)^{-1}$ et donc $P(f+P(f)s) = 0$.

Calculons maintenant au point~$b$ qui, rappelons-le, est suppos\'e ultram\'etrique. D'apr\`es les in\'egalit\'es qui pr\'ec\`edent, nous avons $|s(b)| = |P'(f)^{-1}(b)|=1$, d'o\`u $|P(f)s(b)|<1$ et $|(f+P(f)s)(b)|\le 1$. L'\'el\'ement $h = f+P(f)s$ de~$\Bs(V)$ d\'efini donc un \'el\'ement de~$\kappa(b)^o$ qui co\"{\i}ncide avec~$f$ modulo~$\kappa(b)^{oo}$ et annule le polyn\^ome~$P$.
\end{proof}

\begin{coro}
Soient~$(Z,\Os_{Z})$ un espace analytique sur~$\As$ et~$z$ un point de~$Z$. Le corps r\'esiduel~$\kappa(z)$ est un corps hens\'elien.
\end{coro}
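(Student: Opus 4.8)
The plan is to reduce the statement, first to the case of an affine analytic space~$\E{m}{\As}$, and then to the theorem just proved for~$B=\Mc(\As)$, by realising a suitable neighbourhood of the point as the spectrum of a Banach ring.

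First I would observe that henselianity of~$\kappa(z)$ is a purely local property of the valued residue field of the local ring~$\Os_{Z,z}$, and that this valued field is insensitive to passing to a local model. Indeed, in a neighbourhood of~$z$ the space~$(Z,\Os_{Z})$ is isomorphic to the support of a sheaf~$\Os_{U}/\Is$, where~$U$ is an open subset of some~$\E{m}{\As}$ and~$\Is$ a finite-type ideal sheaf; let~$x$ denote the point of~$U$ corresponding to~$z$. Then $\Os_{Z,z}\simeq \Os_{\E{m}{\As},x}/\Is_{x}$, and since~$x$ lies in the support of~$\Os_{U}/\Is$ the stalk~$\Is_{x}$ is contained in the maximal ideal~$\mathfrak{m}_{x}$ of~$\Os_{\E{m}{\As},x}$. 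Consequently the quotient map induces an isomorphism on residue fields, so that $\kappa(z)\simeq\kappa(x)$; the absolute values agree as well, both being induced by the inclusion into the common completed residue field $\Hs(z)=\Hs(x)$. It therefore suffices to treat the case of a point~$x$ of an affine space~$\E{m}{\As}$.

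Next I would choose a compact spectrally convex neighbourhood~$V$ of~$x$ in~$\E{m}{\As}$ with $x\in\mathring{V}$ (such neighbourhoods exist, for instance suitable rational compact domains, which are spectrally convex by the first proposition of Section~1). Writing $\varphi:\Mc(\Bs(V))\to\E{m}{\As}$ for the natural morphism, spectral convexity means precisely that~$\varphi$ restricts to an isomorphism of ringed spaces above~$\mathring{V}$; in particular it induces an isomorphism $\Os_{\E{m}{\As},x}\simeq\Os_{\Mc(\Bs(V)),x}$ of valued local rings. Hence~$\kappa(x)$, computed in~$\E{m}{\As}$, coincides with the residue field of the point~$x$ regarded as a point of the spectrum~$\Mc(\Bs(V))$ of the Banach ring~$\Bs(V)$. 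Finally I would apply the theorem just proved, with the base ring~$\As$ replaced by~$\Bs(V)$ and~$B$ by~$\Mc(\Bs(V))$: the residue field of any point of~$\Mc(\Bs(V))$ is henselian, so~$\kappa(x)$ is henselian, and therefore so is~$\kappa(z)$.

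The only place where I expect any genuine obstacle is the verification that residue fields, together with their absolute values, are preserved under the two identifications above: the quotient by~$\Is_{x}$ in the local model, and the isomorphism coming from spectral convexity. Both are routine once one notes that $\Is_{x}\subset\mathfrak{m}_{x}$ and that all the valued structures descend from the common completed residue field~$\Hs(x)$. The substantive content of the corollary is thus entirely contained in the theorem already established for~$\Mc(\As)$; the argument here is merely the bookkeeping needed to transport it from the base to an arbitrary analytic space over~$\As$.
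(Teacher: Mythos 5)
Your argument is correct and is precisely the reduction the paper leaves implicit (the corollary is stated without proof): pass to a local model, observe that $\Is_{x}\subset\m_{x}$ so that the residue field and its absolute value are unchanged, then use a spectrally convex compact neighbourhood to identify~$\Os_{\E{m}{\As},x}$ with a local ring of~$\Mc(\Bs(V))$ and apply the theorem with~$\Bs(V)$ as the base Banach ring. No gaps; the two bookkeeping points you flag (invariance of the valued residue field under the quotient and under spectral convexity) are exactly the ones that need checking, and your justifications are the right ones.
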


\begin{coro}\label{cor:impliqueDG}
Soient~$b$ un point de~$B=\Mc(\As)$ et~$G(T)$ un polyn\^ome unitaire non constant \`a coefficients dans~$\Os_{B,b}$. Alors le point~$b$ satisfait la condition~$(D_{G})$.
\end{coro}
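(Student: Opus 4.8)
Le plan consiste à relever la décomposition primaire de~$G(b)$ sur~$\Hs(b)$ en une factorisation de~$G$ sur l'anneau local~$\Os_{B,b}$, et ce en deux temps. Commençons par observer que, puisque~$G$ est à coefficients dans~$\Os_{B,b}$, le polynôme~$G(b)$ n'est autre que la réduction de~$G$ modulo l'idéal maximal~$\mathfrak{m}_{b}$ de~$\Os_{B,b}$ ; ses coefficients appartiennent donc au corps résiduel~$\kappa(b)$, que l'on voit comme un sous-corps de~$\Hs(b)$, dense pour la valeur absolue, dont~$\Hs(b)$ est le complété. La démonstration se ramène alors à deux points : (A)~montrer que chacune des composantes primaires~$h_{i}^{n_{i}}$, a priori définie seulement sur~$\Hs(b)$, est en réalité à coefficients dans~$\kappa(b)$ ; (B)~relever la factorisation en facteurs deux à deux premiers entre eux ainsi obtenue, depuis~$\kappa(b)$ jusqu'à~$\Os_{B,b}$, à l'aide de l'hensélianité de ce dernier.

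Pour le point~(A), nous utilisons que le corps~$\kappa(b)$ est hensélien (théorème précédent). Nos valuations étant toutes de rang au plus~$1$, un corps valué hensélien est séparablement clos dans son complété : cela résulte du lemme de Krasner, tout élément de~$\Hs(b)$ séparable et algébrique sur~$\kappa(b)$ pouvant être approché d'assez près par un élément de~$\kappa(b)$ pour lui être égal. Factorisons alors~$G(b)$ en produit de polynômes unitaires irréductibles sur~$\kappa(b)$, soit $G(b) = \prod_{k} Q_{k}^{e_{k}}$ avec les~$Q_{k}$ deux à deux distincts. Si~$Q_{k}$ est séparable, il reste irréductible sur~$\Hs(b)$. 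S'il est inséparable, en caractéristique~$p$, on écrit $Q_{k}(T) = \tilde{Q}_{k}(T^{p^{s}})$ avec~$\tilde{Q}_{k}$ séparable irréductible ; ce dernier reste alors irréductible sur~$\Hs(b)$ et, l'extraction des racines~$p^{s}$-ièmes étant bijective en caractéristique~$p$, les racines distinctes de~$Q_{k}$ forment une unique orbite galoisienne, de sorte que~$Q_{k}$ est la puissance~$p^{s}$-ième d'un unique polynôme irréductible~$\ell_{k}$ de~$\Hs(b)[T]$. Dans les deux cas, $Q_{k}^{e_{k}}$ est une puissance d'un seul irréductible de~$\Hs(b)[T]$ ; comme des~$Q_{k}$ distincts restent premiers entre eux sur~$\Hs(b)$, les décompositions primaires de~$G(b)$ sur~$\kappa(b)$ et sur~$\Hs(b)$ se correspondent terme à terme. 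En particulier le nombre~$r$ de facteurs coïncide et chaque~$h_{i}^{n_{i}}$ est égal à un~$Q_{k}^{e_{k}}$, donc appartient à~$\kappa(b)[T]$.

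Pour le point~(B), nous disposons désormais d'une écriture $G(b) = \prod_{i=1}^{r} h_{i}^{n_{i}}$ en facteurs unitaires deux à deux premiers entre eux à coefficients dans $\kappa(b) = \Os_{B,b}/\mathfrak{m}_{b}$. Or l'anneau local~$\Os_{B,b}$ est hensélien : il suffit d'appliquer le premier théorème de cette section à l'espace analytique $B = \Mc(\As)$ sur~$\As$. Le polynôme~$G$ étant unitaire et sa réduction se factorisant en facteurs unitaires premiers entre eux sur le corps résiduel, le lemme de Hensel (par récurrence sur~$r$) fournit des polynômes unitaires $H_{1},\ldots,H_{r} \in \Os_{B,b}[T]$ tels que $G = \prod_{i=1}^{r} H_{i}$ dans~$\Os_{B,b}[T]$ et $H_{i}(b) = h_{i}^{n_{i}}$ pour tout~$i$ ; c'est précisément la condition~$(D_{G})$.

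Le point délicat est le point~(A), et plus particulièrement le contrôle du cas inséparable : c'est là que l'hensélianité de~$\kappa(b)$, via le fait qu'il est séparablement clos dans~$\Hs(b)$, est essentielle pour garantir que la factorisation fine sur~$\Hs(b)$ ne fasse pas apparaître davantage de facteurs que celle sur~$\kappa(b)$. Le cas archimédien, où~$\kappa(b) = \Hs(b)$ vaut~$\R$ ou~$\C$, est immédiat et ne présente aucune difficulté de séparabilité.
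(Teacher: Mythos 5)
Votre preuve est correcte et suit essentiellement la m\^eme d\'emarche que celle du texte : elle combine l'hens\'elianit\'e de l'anneau local~$\Os_{B,b}$ (pour relever la factorisation) et celle du corps~$\kappa(b)$ (pour comparer les d\'ecompositions en irr\'eductibles sur~$\kappa(b)$ et sur~$\Hs(b)$, via l'\'ecriture $Q(T)=\tilde{Q}(T^{p^s})$ dans le cas ins\'eparable), la seule diff\'erence \'etant l'ordre des deux \'etapes et un d\'etail sans cons\'equence (la puissance exacte reliant~$Q_{k}$ \`a son unique facteur irr\'eductible sur~$\Hs(b)$ n'est pas n\'ecessairement $p^{s}$, mais seulement une puissance de~$p$, ce qui suffit).
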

\begin{proof}
Notons~$e$ l'exposant caract\'eristique du corps~$\kappa(b)$. \'Ecrivons la d\'ecomposition de~$G(T)$ en produit de facteurs irr\'eductibles dans~$\kappa(b)[T]$ sous la forme
\[G(T) = \prod_{i=1}^r G_{i}(T)^{n_{i}},\]
o\`u les~$G_{i}$ sont des polyn\^omes unitaires et irr\'eductibles deux \`a deux distincts et les~$n_{i}$ des entiers.

Puisque l'anneau~$\Os_{B,b}$ est hens\'elien, il existe des polyn\^omes $H_{1},\dots,H_{r}$ unitaires \`a coefficients dans~$\Os_{B,b}$ tels que
\begin{enumerate}[i)]
\item $\disp G(T) = \prod_{i=1}^r H_{i}(T)$ dans~$\Os_{B,b}[T]$ ;
\item pour tout~$i$, $H_{i}(T) = G_{i}(T)^{n_{i}}$ dans~$\kappa(b)[T]$.
\end{enumerate}

Soit $i\in\cn{1}{r}$. Il existe un polyn\^ome irr\'eductible et s\'eparable~$g_{i}$ \`a coefficients dans~$\kappa(b)$ et un entier~$a_{i}$ tel que $G_{i}(T) = g_{i}(T^{e^{a_{i}}})$. Puisque le corps~$\kappa(b)$ est hens\'elien, le polyn\^ome~$g_{i}(T)$ est encore irr\'eductible dans~$\Hs(b)[T]$ (\textit{cf.}~\cite{algcom56}, VI, \S~8, exercices~14a et~12b ou ~\cite{bleu}, proposition~2.4.1). Par cons\'equent, il existe un polyn\^ome~$h_{i}$ \`a coefficients dans~$\Hs(b)$ et un entier~$m_{i}\ge 1$ tels que $g_{i}(T^{e^{a_{i}}}) = h_{i}(T)^{m_{i}}$ dans $\Hs(b)[T]$. 

Finalement, l'\'ecriture
\[G(T) = \prod_{i=1}^r h_{i}(T)^{m_{i} n_{i}}\]
est la d\'ecomposition en produit de facteurs irr\'eductibles du polyn\^ome~$G(T)$ dans~$\Hs(b)[T]$. On en d\'eduit le r\'esultat voulu.
\end{proof}

\section{Condition~$(N)$}\label{sec:N}

Soient $d\in\N^*$ et $G \in \As[T]$ un polyn\^ome unitaire de degr\'e~$d$. Nous noterons 
\[G = \sum_{k=0}^{d} g_{k}T^k = T^d + \sum_{k=0}^{d-1} g_{k}T^k.\]

Dans cette section, nous pr\'esentons diff\'erents r\'esultats permettant d'assurer que la condition~$(N_{G})$ est satisfaite.

\subsection{Condition de finitude du bord analytique}

Nous commen\c{c}ons par \'enoncer un crit\`ere particuli\`erement adapt\'e aux espaces ultram\'etriques. On y retrouve l'hypoth\`ese sur le polyn\^ome~$G$ mentionn\'ee \`a la remarque~\ref{rem:NG}.

\begin{defi}
Soit $n\in\N$. Soit~$V$ une partie compacte et spectralement convexe de~$\E{n}{\As}$. On dit qu'une partie ferm\'ee~$\Gamma$ de~$V$ est un \textbf{bord analytique} de~$V$ si elle v\'erifie la condition suivante :
\[\forall f\in\Bs(V),\, \|f\|_{V} = \|f\|_{\Gamma}.\]

On appelle \textbf{bord de Shilov} de~$V$ le plus petit bord analytique, pour la relation d'inclusion, de~$V$, s'il existe.
\end{defi}

\begin{prop}\label{prop:normeresuniformegeneral}
Soit~$U$ une partie compacte et spectralement convexe de~$B$. Supposons qu'il existe des parties $V_{1},\dots,V_{r}$ de~$U$ telles que
\begin{enumerate}[i)]
\item pour tout $i\in\cn{1}{r}$, $V_{i}$ est compacte et satisfait la condition~$(N_{G})$ ;
\item la r\'eunion $\disp \Gamma_{U} = \bigcup_{1 \le i\le r} V_{i}$ est un bord analytique de~$U$.
\end{enumerate}
Alors~$U$ satisfait \'egalement  la condition~$(N_{G})$. 

Notons $\varphi : \Mc(\Bs(U)[T]/(G(T))) \to U$ le morphisme de projection. Alors la partie $\Gamma_{U,G} = \varphi^{-1}(\Gamma_{U})$ est un bord analytique de $\Mc(\Bs(U)[T]/(G(T)))$.
\end{prop}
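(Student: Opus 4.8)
The plan is to control, for every class $F\in\Bs(U)[T]/(G(T))$, four quantities at once: the residual norm $\|F\|_{U,w,\textrm{r\'es}}$, the division norm $\|F\|_{U,\textrm{div}}$, the spectral norm of the Banach algebra $\Bs(U)[T]/(G(T))$ (written $\|F\|_{\textrm{sp},U}$), and the uniform norm $\|F\|_{\Gamma_{U,G}}$. The bridge between the fibrewise picture and these global algebras is a fibrewise spectral radius. For $x\in B$ let $F(x)$ denote the image of $F$ in $\Hs(x)[T]/(G(x)(T))$ and put $N_F(x)=\rho(F(x))$, the spectral radius of $F(x)$ in that fibre algebra. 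Since $\Hs(x)[T]/(G(x))$ is finite over the complete field $\Hs(x)$ it is already complete, so for any compact spectrally convex $W\subset B$ the Berkovich spectrum of $\Bs(W)[T]/(G(T))$ fibres over $W=\Mc(\Bs(W))$ with fibre $\Mc(\Hs(x)[T]/(G(x)))$ above~$x$; hence $\|F\|_{\textrm{sp},W}=\sup_{x\in W}N_F(x)$, and likewise $\|F\|_{\Gamma_{U,G}}=\sup_{x\in\Gamma_U}N_F(x)$ by the very definition $\Gamma_{U,G}=\varphi^{-1}(\Gamma_U)$. I stress that this uses only the general behaviour of $\Mc$ for a finite algebra, \emph{not} $(N_{G})$ for $U$, so no circularity arises.

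First I would record the easy inequalities. The residual norm is a submultiplicative Banach-algebra norm (Corollary~\ref{cor:resnorme}), hence dominates the spectral radius; combined with Corollary~\ref{cor:eqnormedivres} this gives, for every $w\ge v$,
\[\|F\|_{\textrm{sp},U}\ \le\ \|F\|_{U,w,\textrm{r\'es}}\ \le\ \|F\|_{U,\textrm{div}}.\]
It remains to bound $\|F\|_{U,\textrm{div}}$ above by a multiple of $\|F\|_{\textrm{sp},U}$, and this is where the analytic border enters. Writing the distinguished representative $F=\sum_{0\le j<d}a_j T^j$ with $a_j\in\Bs(U)$, the border property applied to each coefficient yields $\|a_j\|_U=\|a_j\|_{\Gamma_U}=\max_{1\le i\le r}\|a_j\|_{V_i}$, and since reduction modulo~$G$ is compatible with restriction to each~$V_i$ we obtain $\|F\|_{U,\textrm{div}}=\max_i\|F\|_{V_i,\textrm{div}}$.

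Now I would bring in the hypothesis on the~$V_i$. As they are finitely many and Corollary~\ref{cor:eqnormedivres} provides a single $C$ and a single $v$ valid for every $\As$-Banach algebra, condition $(N_{G})$ on the~$V_i$ supplies a common constant~$C'$ with $\|F\|_{V_i,\textrm{div}}\le C'\|F\|_{\textrm{sp},V_i}=C'\sup_{V_i}N_F$. Taking the maximum over~$i$ and using $\Gamma_U=\bigcup_i V_i$,
\[\|F\|_{U,\textrm{div}}\ \le\ C'\max_i\sup_{V_i}N_F\ =\ C'\sup_{\Gamma_U}N_F\ =\ C'\,\|F\|_{\Gamma_{U,G}}\ \le\ C'\,\|F\|_{\textrm{sp},U},\]
the last step because $\Gamma_U\subset U$. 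Chaining the two displays shows that $\|F\|_{\textrm{sp},U}$, $\|F\|_{U,w,\textrm{r\'es}}$, $\|F\|_{U,\textrm{div}}$ and $\|F\|_{\Gamma_{U,G}}$ are pairwise equivalent with constants independent of $w\ge v$; the equivalence of $\|.\|_{U,w,\textrm{r\'es}}$ with the spectral norm is precisely condition $(N_{G})$ for~$U$.

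Finally I would upgrade the equivalence $\|F\|_{\Gamma_{U,G}}\le\|F\|_{\textrm{sp},U}\le C'\|F\|_{\Gamma_{U,G}}$ to the equality demanded by the border statement. Both $\|.\|_{\Gamma_{U,G}}$ and $\|.\|_{\textrm{sp},U}$ are suprema of multiplicative seminorms, hence power-multiplicative; applying the equivalence to $F^n$, extracting $n$-th roots and letting $n\to\infty$ forces $\|F\|_{\textrm{sp},U}=\|F\|_{\Gamma_{U,G}}$ for all~$F$, i.e. $\Gamma_{U,G}$ is an analytic border of $\Mc(\Bs(U)[T]/(G(T)))$. I expect the genuine obstacle to lie in the first paragraph, namely the fibrewise identification of the spectral norm and the careful description of $\Mc(\Bs(W)[T]/(G(T)))$ over~$W$; the remaining steps are formal. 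Note also that the analytic-border hypothesis is indispensable: without it one only controls $\|F\|_{U,\textrm{div}}$ through the~$V_i$ and cannot recover it on all of~$U$.
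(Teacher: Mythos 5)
Votre preuve est correcte et suit essentiellement la même démarche que celle du texte : on ramène tout à la majoration de $\|F\|_{U,\textrm{div}}$ par la norme spectrale en appliquant la propriété de bord analytique aux coefficients du représentant distingué, on invoque $(N_{G})$ sur chaque $V_{i}$ avec une constante commune par finitude, puis on conclut la seconde assertion par le passage à $F^n$ et l'extraction de racines $n$-ièmes. La seule différence est de présentation : vous explicitez l'identification fibre à fibre de la norme spectrale via le rayon spectral $N_F$, que le texte utilise tacitement.
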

\begin{proof}
Le corollaire~\ref{cor:resnorme} assure qu'il existe un nombre r\'eel~$v$ tel que, pour tout~$w\ge v$, la semi-norme~$\|.\|_{U,w,\textrm{r\'es}}$ soit une norme sur~$\Bs(U)[T]/(G(T))$ et qu'elle soit \'equivalente \`a la norme~$\|.\|_{U,\textrm{div}}$. Notons~$\|.\|_{U,\infty}$ la norme spectrale associ\'ee. C'est la norme uniforme sur 
\[\left\{\left.x\in\E{1}{\As}\, \right|\, \pi(x)\in U,\, G(x)=0\right\}.\]
Pour montrer que les deux normes sont \'equivalentes, et donc montrer que la condition~$(R_{G})$ est satisfaite, il suffit de montrer qu'il existe une constante~$D\in\R$ telle que, pour tout \'el\'ement~$F$ de~$\Bs(U)[T]/(G(T))$, nous avons
\[ \|F\|_{U,\textrm{div}} \le D\, \|F\|_{U,\infty}. \]

Soit $i\in\cn{1}{r}$. Les normes~$\|.\|_{i,\textrm{div}}$ et~$\|.\|_{i,\infty}$ sur $\Bs(V_{i})[T]/(G(T))$ sont \'equivalentes : il existe un nombre r\'eel~$D_{i}>0$ tel que, pour tout \'el\'ement~$f$ de $\Bs(V_{i})[T]/(G(T))$, on ait
\[ \|f\|_{i,\textrm{div}} \le D_{i}\, \|f\|_{i,\infty}. \]

Soit~$F$ un \'el\'ement de~$\Bs(U)[T]/(G(T))$. Puisque le polyn\^ome~$G$ est unitaire et de degr\'e~$p$, l'\'el\'ement~$F$ poss\`ede un unique repr\'esentant dans~$\Bs(U)[T]$ de la forme
\[F_{0}(T) = \sum_{k=0}^{p-1} a_{k}\, T^k,\]
avec~$a_{0},\ldots,a_{p-1}\in\Bs(U)$. Par d\'efinition, nous avons
\[ \|F\|_{U,\textrm{div}} = \max_{0\le k\le p-1} (\|a_{k}\|_{U}),\]
o\`u~$\|.\|_{U}$ d\'esigne la norme spectrale sur~$\Bs(U)$.

Il existe $j\in\cn{0}{p-1}$ et $i\in\cn{1}{r}$ tels que $\|F\|_{U,\textrm{div}} = \|a_{j}\|_{V_{i}}$, o\`u~$\|.\|_{V_{i}}$ d\'esigne la norme spectrale sur~$\Bs(V_{i})$. Nous avons alors
\[ \|F\|_{U,\textrm{div}} \le D_{i}\, \|F\|_{i,\infty} \le \max_{1\le i\le r} (D_{i})\, \|F\|_{U,\infty}. \]
Ceci d\'emontre la premi\`ere partie du r\'esultat.

Remarquons maintenant que
\[ \|F\|_{U,\infty} \le \|F\|_{U,\textrm{div}} = \max_{1\le i\le r} (\|F\|_{i,\textrm{div}}) \le \max_{1\le i\le r} (D_{i})\, \max_{1\le i\le r} (\|F\|_{i,\infty}),\]
d'o\`u l'on tire
\[\|F\|_{U,\infty} \le \max_{1\le i\le r} (D_{i})\, \max_{x\in \Gamma_{U,G}} (|F(x)|).\]
En \'ecrivant la m\^eme in\'egalit\'e pour~$F^n$, avec $n\in \N^*$, puis en prenant la racine~$n^\textrm{\`eme}$ et en faisant tendre~$n$ vers l'infini, on montre que
\[\|F\|_{U,\infty} \le \max_{x\in \Gamma_{U,G}} (|F(x)|).\]
Par cons\'equent, la partie~$\Gamma_{U,G}$ est un bord analytique de $\Bs(U)[T]/(G(T))$. 
\end{proof}

\begin{coro}\label{cor:normeresuniforme}
Soit~$U$ une partie compacte et spectralement convexe de~$B$. Supposons qu'elle poss\`ede un bord analytique~$\Gamma_{U}$ fini tel que, pour tout point~$\gamma$ de~$\Gamma_{U}$, le polyn\^ome~$G(\gamma)(T)$ soit sans facteurs multiples. Alors elle satisfait la condition~$(N_{G})$. 

En outre, avec les notations de la proposition~\ref{prop:normeresuniformegeneral}, la partie~$\Gamma_{U,G}$ est un bord analytique fini de $\Mc(\Bs(U)[T]/(G(T)))$.
\end{coro}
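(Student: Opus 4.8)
The plan is to derive the corollary directly from Proposition~\ref{prop:normeresuniformegeneral} by taking for the $V_{i}$ the points of the finite border. Write $\Gamma_{U} = \{\gamma_{1},\ldots,\gamma_{r}\}$ and set $V_{i} = \{\gamma_{i}\}$; each is compact, and their union is $\Gamma_{U}$, an analytic border of $U$ by hypothesis, so hypothesis~(ii) of that proposition is met. Everything thus reduces to checking hypothesis~(i): each singleton $\{\gamma\}$ with $\gamma\in\Gamma_{U}$ satisfies~$(N_{G})$.

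To that end, fix $\gamma\in\Gamma_{U}$. A point is spectrally convex, and the uniform norm identifies $\Bs(\{\gamma\})$ with the complete valued field $\Hs(\gamma)$, the image of $\As$ being dense and $\Mc(\Hs(\gamma))$ a single point. The structural morphism $\As\to\Hs(\gamma)$ is norm-decreasing, since $|a(\gamma)|\le\|a\|$, and sends $G$ to $G(\gamma)$, so that $\Bs(\{\gamma\})[T]/(G(T)) = \Hs(\gamma)[T]/(G(\gamma)(T))$ is a finite-dimensional $\Hs(\gamma)$-vector space. Corollary~\ref{cor:eqnormedivres}, applied to the Banach $\As$-algebra $\As'=\Hs(\gamma)$, then shows that the norms $\|.\|_{\{\gamma\},\textrm{div}}$ and $\|.\|_{\{\gamma\},w,\textrm{r\'es}}$ are equivalent for every $w\ge v$. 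Verifying $(N_{G})$ for $\{\gamma\}$ therefore comes down to comparing $\|.\|_{\{\gamma\},\textrm{div}}$ with the spectral norm.

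This is exactly the point foreshadowed in Remark~\ref{rem:NG}. As $G(\gamma)$ has no multiple factors, $\Hs(\gamma)[T]/(G(\gamma)(T))$ is reduced, hence a finite product of finite field extensions of $\Hs(\gamma)$, and its spectral seminorm is separating, i.e.\ a genuine norm. Both norms in question are then norms on a finite-dimensional vector space over the complete valued field $\Hs(\gamma)$, hence equivalent by the classical equivalence-of-norms theorem. This gives $(N_{G})$ for each $\{\gamma_{i}\}$, so Proposition~\ref{prop:normeresuniformegeneral} yields both that $U$ satisfies $(N_{G})$ and that $\Gamma_{U,G}=\varphi^{-1}(\Gamma_{U})$ is an analytic border; its finiteness is immediate, since $\varphi$ is finite with fibers of cardinality at most $\deg G$ and $\Gamma_{U}$ is finite.

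The step needing the most care is this last reduction: one must check that the spectral seminorm on $\Hs(\gamma)[T]/(G(\gamma)(T))$ is separating precisely under the squarefree hypothesis, which is also what makes that hypothesis necessary, the residual norm of Corollary~\ref{cor:resnorme} being always genuine and so incapable of matching a non-separating seminorm. One should also note that $\Hs(\gamma)$ may be trivially valued, a case the usual statement of the equivalence of norms excludes; in finite dimension it is nonetheless easily settled by a direct ultrametric estimate.
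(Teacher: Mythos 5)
Votre preuve est correcte et suit exactement la voie que le texte sous-entend (le corollaire y est énoncé sans démonstration) : on se ramène aux singletons $\{\gamma\}$ via la proposition~\ref{prop:normeresuniformegeneral}, puis on utilise, comme l'annonce la remarque~\ref{rem:NG}, le fait que $\Hs(\gamma)[T]/(G(\gamma)(T))$ est réduite --- donc que la semi-norme spectrale y est une norme --- et l'équivalence classique des normes en dimension finie sur le corps valué complet $\Hs(\gamma)$. Votre précaution concernant le cas trivialement valué est justifiée et correctement réglée (les deux normes y coïncident avec la norme triviale).
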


Introduisons maintenant des notations. Nous noterons~$B_{\textrm{um}}$ la partie ultram\'etrique de~$B$, c'est-\`a-dire l'ensemble des points~$b$ de~$B$ en lesquels la valeur absolue canonique sur le corps~$\Hs(b)$ est ultram\'etrique. Soit~$V$ une partie compacte de~$B_{\textrm{um}}$. Pour tout nombre r\'eel~$r>0$, d\'efinissons l'alg\`ebre $\Bs(V)\{|T|\le r\}$ comme l'alg\`ebre constitu\'ee des s\'eries \`a coefficients dans~$\Bs(V)$ de la forme 
\[\sum_{n\in\N} a_{n} T^n,\] 
telles que la s\'erie $\sum_{n\in\N} \|a_{n}\| r^n$ converge. Cette alg\`ebre est compl\`ete pour la norme d\'efinie par 
\[\left\|\sum_{n\in\N} a_{n} T^n\right\|_{V,r,\textrm{um}} = \max_{n\in\N} (\|a_{n}\|_{V}\, r^n).\]

Pour tous $r,s\in\R$ v\'erifiant $0<r\le s$, d\'efinissons l'alg\`ebre \mbox{$\Bs(V)\{r \le |T|\le s\}$} comme l'alg\`ebre constitu\'ee des s\'eries \`a coefficients dans~$\Bs(V)$ de la forme
\[\sum_{n\in\Z} a_{n}\, T^n\]
telles que la famille~$(\|a_{n}\|_{V}\, \max(r^n,s^n))_{n\in\Z}$ soit sommable. Cette alg\`ebre est compl\`ete pour la norme d\'efinie par
\[\left\|\sum_{n\in\Z} a_{n}\, T^n\right\|_{V,r,s,\textrm{um}} = \max_{n\in\Z} (\|a_{n}\|_{V}\, \max(r^n,s^n)).\]
Nous prolongeons cette d\'efinition en posant \mbox{$\Bs(V)\{0\le  |T|\le s\} = \Bs(V)\{|T|\le s\}$} et \mbox{$\|.\|_{V,0,s,\textrm{um}} = \|.\|_{V,s,\textrm{um}}$}.

\begin{lemm}\label{lem:Bsum}
Soit~$V$ une partie compacte et spectralement convexe de~$B_{\textrm{um}}$. Soient~$r,s\in \R$ v\'erifiant $0 = r < s$ ou $0<r\le s$. Le morphisme 
\[\As[T] \to \Bs(\overline{C}_{V}(r,s))\] 
induit un isomorphisme d'alg\`ebres norm\'ees
\[\Bs(V)\{r\le |T|\le s\} \xrightarrow[]{\sim} \Bs(\overline{C}_{V}(r,s)).\]
\end{lemm}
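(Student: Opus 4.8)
The plan is to construct the isomorphism $\Bs(V)\{r\le|T|\le s\} \xrightarrow{\sim} \Bs(\overline{C}_V(r,s))$ directly, showing that the natural map induced by $\As[T] \to \Bs(\overline{C}_V(r,s))$ is a bijective isometry (or at least a bounded bijection with bounded inverse, hence an admissible isomorphism of normed algebras). Since $V \subset B_{\textrm{um}}$ is spectrally convex, the algebra $\Bs(V)$ is uniform and its norm is the spectral norm on $V = \Mc(\Bs(V))$; the key structural fact I will exploit is that on the ultrametric part $B_{\textrm{um}}$, the norm on disks and annuli is computed \emph{coefficientwise} (the max-formula), in contrast with the archimedean situation handled by Proposition~\ref{prop:comparaisonnormes}. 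Concretely, I expect that for any $f = \sum_{n\in\Z} a_n T^n \in \Bs(V)\{r\le|T|\le s\}$ one has the exact equality
\[
\|f\|_{\overline{C}_V(r,s)} = \max_{n\in\Z}\bigl(\|a_n\|_V \max(r^n,s^n)\bigr) = \|f\|_{V,r,s,\textrm{um}},
\]
so that the map is an \emph{isometry}. This is what makes the ultrametric case cleaner than the general comparison proposition.

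\textbf{First I would} establish the fibrewise computation. For a fixed point $b\in V$, the completed residue field $\Hs(b)$ is a complete ultrametric valued field, and the analogue $\Hs(b)\{r\le|T|\le s\}$ of the annulus algebra over a field satisfies the exact equality of its Gauss-type norm with the uniform norm on the fibre annulus $\overline{C}_b(r,s)$ — this is the classical description of the norm on a Berkovich annulus over a complete ultrametric field, where the supremum over the annulus equals $\max_n(|a_n(b)|\max(r^n,s^n))$. I would cite or reprove this standard one-variable ultrametric fact. Then, because the spectral norm $\|a_n\|_V = \max_{b\in V}|a_n(b)|$ is itself a maximum over the fibres, I combine the fibrewise equalities:
\[
\|f\|_{\overline{C}_V(r,s)} = \max_{b\in V} \|f\|_{\overline{C}_b(r,s)} = \max_{b\in V}\max_{n}\bigl(|a_n(b)| \max(r^n,s^n)\bigr) = \max_n \bigl(\|a_n\|_V \max(r^n,s^n)\bigr).
\]
This chain gives injectivity and the isometry property at once, and shows the image of $\As[T]$ is dense.

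\textbf{The main obstacle} — and the step requiring real care — is \emph{surjectivity}: showing that an arbitrary element of $\Bs(\overline{C}_V(r,s))$, a priori only a uniform limit of rational functions without poles on the annulus, actually admits a Laurent expansion with coefficients in $\Bs(V)$ lying in $\Bs(V)\{r\le|T|\le s\}$. The strategy mirrors the proof of Corollary~\ref{cor:dvptdisque}: I would start from the density just established, take a Cauchy sequence of Laurent polynomials $\sum a_n^{(k)} T^n$ converging uniformly on $\overline{C}_V(r,s)$, and use the isometry to deduce that each coefficient sequence $(a_n^{(k)})_k$ is Cauchy in $\Bs(V)$ for the spectral norm, with the summability condition $(\|a_n\|_V\max(r^n,s^n))_n$ preserved in the limit. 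The completeness of $\Bs(V)$ and of the annulus algebra $\Bs(V)\{r\le|T|\le s\}$ (stated after its definition) then furnishes the limit coefficients $a_n\in\Bs(V)$ and the target element, whose image is the given function. One must handle the two-sided ($r>0$) and one-sided ($r=0$, the disk case) situations uniformly; the convention $\Bs(V)\{0\le|T|\le s\}=\Bs(V)\{|T|\le s\}$ together with the coefficientwise max-norm lets the same argument cover both. I would close by remarking that the isometry makes the inverse automatically bounded, so the isomorphism is admissible.
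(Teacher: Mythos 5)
Your overall strategy --- prove that the map is an isometry by computing the uniform norm fibre by fibre via the classical max-formula on ultrametric disks and annuli, then deduce surjectivity from completeness --- is the right one, and it is essentially the argument the paper delegates to \cite{asterisque}, lemme~5.6.3 (the in-text proof is nothing but that citation). The isometry computation is correct: for each $b\in V$ the supremum of $\sum_n a_nT^n$ over $\overline{C}_b(r,s)$ is attained at the Gauss points $\eta_r,\eta_s$ and equals $\max_n\bigl(|a_n(b)|\max(r^n,s^n)\bigr)$, and since $V$ is spectrally convex the norm on $\Bs(V)$ is the supremum over $V$, so the two suprema commute. This exact equality is indeed what makes the ultrametric case cleaner than Proposition~\ref{prop:comparaisonnormes}, which only yields an inequality with a constant.

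There is, however, a gap in your surjectivity step, precisely where you assert that ``the image of $\As[T]$ is dense'' and then start from ``a Cauchy sequence of Laurent polynomials''. Neither statement has been established. For $r>0$ the image of $\As[T]$ is certainly not dense: the coefficient of $T^{-1}$ in $T^{-1}-P$ equals $1$ for every polynomial $P$, so by your own isometry $\|T^{-1}-P\|_{\overline{C}_V(r,s)}\ge r^{-1}$. Moreover, by definition an element of $\Bs(\overline{C}_V(r,s))$ is a uniform limit of rational fractions $P/Q$ without poles on the annulus, not of Laurent polynomials. You must insert the step that Proposition~\ref{prop:Bcouronne} uses: identify $\overline{C}_V(r,s)$ with $\Mc\bigl(\Bs(V)\{r\le|T|\le s\}\bigr)$, so that a denominator $Q$ which does not vanish on the annulus is invertible in $\Bs(V)\{r\le|T|\le s\}$ by \cite{rouge}, corollaire~1.2.4; then each approximant $P/Q$ already lies in the Laurent-series algebra, the approximating sequence is Cauchy there thanks to the isometry, and completeness of $\Bs(V)\{r\le|T|\le s\}$ finishes the proof. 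With that step restored, your argument is complete and covers the cases $r=0$ and $r>0$ uniformly as you intend.
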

\begin{proof}
On reprend le raisonnement de la preuve du lemme~5.6.3 de~\cite{asterisque}.
\end{proof}

Dans la section~\ref{section:Weierstrass}, nous utiliserons la condition~$(N_{G})$ au-dessus d'une partie de $X=\E{1}{\As}$ et pour un polyn\^ome~$G$ de la forme~$P(S)-T$, o\`u~$T$ d\'esigne la variable sur~$\E{1}{\As}$. Nous allons donc maintenant concentrer nos efforts sur cette situation. 

Introduisons quelques notations pour les lemniscates. Pour toute partie~$W$ de~$B$, tout polyn\^ome $P\in\Bs(W)[T]$ et tous nombres r\'eels~$r$ et~$s$, d\'efinissons
\[\overline{D}_{W}(P;s) = \left\{x\in X_{W}\, \big|\,  |P(T)(x)| \le s\right\}\]
et
\[\overline{C}_{W}(P;r,s) = \left\{x\in X_{W}\, \big|\, r\le |P(T)(x)| \le s\right\}.\]

\begin{prop}\label{prop:couronneum}
Soit~$U$ une partie compacte et spectralement convexe de~$B_{\textrm{um}}$. Supposons qu'elle poss\`ede un bord analytique fini~$\Gamma_{U}$. Soit~$P(S)$ un polyn\^ome unitaire non constant \`a coefficients dans~$\Bs(U)$. Soient $r,s\in \R$ v\'erifiant \mbox{$0=r<s$} ou $0<r\le s$. Alors la couronne $\overline{C}_{U}(r,s)$ satisfait la condition~$(N_{P(S)-T})$.

En outre, la couronne $\overline{C}_{U}(r,s)$ poss\`ede un bord analytique fini, ainsi que la lemniscate $\overline{C}_{U}(P;r,s)$
\end{prop}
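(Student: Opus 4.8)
The plan is to reduce the statement to Corollary~\ref{cor:normeresuniforme} by exhibiting an explicit finite analytic boundary of the crown and checking that $P(S)-T$ has no multiple factors along it. First I would use Lemma~\ref{lem:Bsum} to identify $\Bs(\overline{C}_{U}(r,s))$ with the Laurent series algebra $\Bs(U)\{r\le|T|\le s\}$, normed by $\|\sum_{n} a_{n}T^n\|_{U,r,s,\textrm{um}}=\max_{n}\|a_{n}\|_{U}\max(r^n,s^n)$; the crown is then spectrally convex by the results recalled above. For $\gamma\in\Gamma_{U}$ and $t\in\{r,s\}$ (only $t=s$ if $r=0$), write $\eta_{\gamma,t}$ for the Gauss point of the fibre~$X_{\gamma}$ at radius~$t$, so that $|f(\eta_{\gamma,t})|=\max_{n}|a_{n}(\gamma)|\,t^n$ for $f=\sum_{n}a_{n}T^n$. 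As $\Gamma_{U}$ is a finite analytic boundary of~$U$, we have $\|a_{n}\|_{U}=\max_{\gamma\in\Gamma_{U}}|a_{n}(\gamma)|$, whence
\[\|f\|_{\overline{C}_{U}(r,s)}=\max_{\gamma\in\Gamma_{U}}\max\bigl(|f(\eta_{\gamma,r})|,|f(\eta_{\gamma,s})|\bigr).\]
Thus $\Gamma=\{\eta_{\gamma,r},\eta_{\gamma,s}\mid \gamma\in\Gamma_{U}\}$ is a finite analytic boundary of $\overline{C}_{U}(r,s)$, which already settles that claim of the proposition.

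It then remains to verify the hypothesis of Corollary~\ref{cor:normeresuniforme}: for every $\eta\in\Gamma$, the polynomial $(P(S)-T)(\eta)=P_{\gamma}(S)-\tau\in\Hs(\eta)[S]$ must be squarefree, where $P_{\gamma}$ is $P$ with coefficients evaluated at~$\gamma$ and $\tau=T(\eta)$. The crucial point is that, $\eta$ being a Gauss point, $\tau$ is transcendental over~$\Hs(\gamma)$; hence $P_{\gamma}(S)-\tau$, being of degree one in~$\tau$ and primitive over $\Hs(\gamma)[S]$, is irreducible over $\Hs(\gamma)(\tau)$, so that $L=\Hs(\gamma)(\tau)[S]/(P_{\gamma}(S)-\tau)$ is a field, and squarefreeness over~$\Hs(\eta)$ amounts to reducedness of $\Hs(\eta)\otimes_{\Hs(\gamma)(\tau)}L$. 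When $P_{\gamma}'\ne 0$ this is immediate: a repeated root of $P_{\gamma}(S)-\tau$ would be a common root with $P_{\gamma}'(S)$, forcing $\tau=P_{\gamma}(\beta)$ with $P_{\gamma}'(\beta)=0$, i.e. $\tau$ algebraic over~$\Hs(\gamma)$, a contradiction. The main obstacle is the inseparable case $P_{\gamma}'=0$ in positive residue characteristic~$p$, where $P_{\gamma}\in\Hs(\gamma)[S^p]$ and the derivative criterion is vacuous. Here I would argue directly at the Gauss point: writing $P_{\gamma}(S)=Q(S^{p^k})$ with $Q$ separable, reducedness reduces to showing that the roots of $Q(W)-\tau$ are not $p$-th powers in the corresponding finite separable extensions of~$\Hs(\eta)$, which follows from the fact that adjoining such a $p$-th root strictly enlarges the value group or the residue field of the Gauss point (the value, resp. the reduction, produced not lying in the relevant group, resp. field). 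This is precisely the subtlety concerning imperfect residue fields flagged in the introduction, and is where the argument demands the most care.

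Granting squarefreeness at every $\eta\in\Gamma$, Corollary~\ref{cor:normeresuniforme} yields at once that $\overline{C}_{U}(r,s)$ satisfies $(N_{P(S)-T})$ and that $\Gamma_{U,G}=\varphi^{-1}(\Gamma)$ is a finite analytic boundary of $\Mc(\Bs(\overline{C}_{U}(r,s))[S]/(P(S)-T))$. Finally, Proposition~\ref{prop:Bfini} furnishes an admissible isomorphism $\Bs(\overline{C}_{U}(r,s))[S]/(P(S)-T)\xrightarrow{\sim}\Bs(\overline{C}_{U}(P;r,s))$, the zero locus of $P(S)-T$ over the crown being exactly the lemniscate $\overline{C}_{U}(P;r,s)=\{r\le|P(S)|\le s\}$; transporting~$\Gamma_{U,G}$ through this isomorphism exhibits the desired finite analytic boundary of the lemniscate, which completes the proof.
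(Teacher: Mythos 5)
Your strategy is genuinely different from the paper's, and it runs into trouble exactly where the two routes diverge. The paper does not go through Corollary~\ref{cor:normeresuniforme}: it applies the more flexible Proposition~\ref{prop:normeresuniformegeneral}, taking as the parts $V_{i}$ the whole fibre crowns $\overline{C}_{\gamma}(r,s)$, $\gamma\in\Gamma_{U}$ (an analytic boundary of $\overline{C}_{U}(r,s)$ by Lemma~\ref{lem:Bsum}), and establishes $(N_{P(S)-T})$ on each fibre crown by observing that $\Hs(\gamma)\{s^{-1}T,rT^{-1}\}[S]/(P(S)-T)$ is the affinoid algebra of the lemniscate $\overline{C}_{\gamma}(P;r,s)$, hence a \emph{reduced} $\Hs(\gamma)$-affinoid algebra, so that residual and spectral norms are equivalent by proposition~2.1.4~(ii) of~\cite{rouge}. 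This sidesteps entirely the question of multiple factors of $P_{\gamma}(S)-T(\eta)$ at individual boundary points; the finite boundaries are then produced exactly as you do, from the Shilov boundaries of the fibre crowns and the second half of Proposition~\ref{prop:normeresuniformegeneral}. Your identification of the finite analytic boundary $\Gamma=\{\eta_{\gamma,r},\eta_{\gamma,s}\}$ and your treatment of the separable case are correct and match the paper in substance.

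The gap is in the inseparable case, which is precisely the case your reduction to Corollary~\ref{cor:normeresuniforme} obliges you to confront. Writing $P_{\gamma}(S)=Q(S^{p^{k}})$ with $Q'\neq 0$, the roots $w_{1},\dots,w_{m}$ of $Q(W)-\tau$ are distinct and separable over $\Hs(\eta)$, and squarefreeness of $P_{\gamma}(S)-\tau$ amounts to $w_{i}\notin F^{p}$ where $F=\Hs(\eta)(w_{1},\dots,w_{m})$ is the splitting field --- not merely $w_{i}\notin\Hs(\eta)(w_{i})^{p}$. Your justification (\og adjoining such a $p$-th root strictly enlarges the value group or the residue field of the Gauss point \fg) speaks of the invariants of $\Hs(\eta)$ itself, but the finite separable extension $F/\Hs(\eta)$ already enlarges both the value group and the residue field in ways you do not control, so the argument as stated does not apply to $F$. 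The claim is true, but a genuine argument is needed: if $w_{i}=v^{p}$ with $v\in F$, then $\tau=Q(w_{i})$ yields $\tau^{1/p}\in F\cdot\Hs(\gamma)^{1/p}$; since this field is separable over $\Hs(\eta)\cdot\Hs(\gamma)^{1/p}$ and $\tau^{1/p}$ is purely inseparable over the latter, one is reduced to showing $\tau\notin\Hs(\eta)^{p}\cdot\Hs(\gamma)$, which follows from the Gauss-norm estimate $|\tau g(\tau^{p})-f(\tau^{p})|_{t}\geq t\,|g(\tau^{p})|_{t}$ (the exponents occurring in the two terms are distinct modulo~$p$, so no cancellation is possible). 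Some such separability reduction must precede any value-group or residue-field consideration; without it, the step your whole approach hinges on is not proved.
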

\begin{proof}
Soit~$\gamma\in\Gamma_{U}$. L'alg\`ebre 
\[\Bs(\overline{C}_{\gamma}(r,s))[S]/(Q(S)-T) = \Hs(\gamma)\{s^{-1}T, r T^{-1}\}[S]/(Q(S)-T)\] 
n'est autre que l'alg\`ebre affino\"{\i}de de la lemniscate $\overline{C}_{\gamma}(P,r,s)$. C'est donc une alg\`ebre~$\Hs(\gamma)$-affino\"{\i}de r\'eduite. D'apr\`es la proposition~2.1.4~(ii) de~\cite{rouge}, sa norme r\'esiduelle est \'equivalente \`a sa norme spectrale, autrement dit la couronne $\overline{C}_{\gamma}(t,s)$ satisfait la condition~$(N_{P(S)-T})$. 

D'apr\`es le lemme~\ref{lem:Bsum}, la partie $\bigcup_{\gamma\in\Gamma_{U}} \overline{C}_{\gamma}(r,s)$ est un bord analytique de~$\overline{C}_{U}(r,s)$  et l'on conclut par la proposition~\ref{prop:normeresuniformegeneral}.

Pour~$\gamma\in\Gamma_{U}$, notons~$\Gamma_{\gamma}$ le bord de Shilov de la couronne $\overline{C}_{\gamma}(t,s)$. L'ensemble fini $\bigcup_{\gamma\in\Gamma_{U}} \Gamma_{\gamma}$ est alors un bord analytique de~$\Gamma_{U}$. L'assertion finale d\'ecoule de la proposition~\ref{prop:normeresuniformegeneral}.
\end{proof}

\begin{coro}\label{cor:lemniscateum}
Soit~$U$ une partie compacte et spectralement convexe de~$B_{\textrm{um}}$. Supposons qu'elle poss\`ede un bord analytique fini~$\Gamma_{U}$. Soient~$P(S)$ et~$Q(T)$ des polyn\^omes unitaires et non constants \`a coefficients dans~$\Bs(U)$. Soient $r,s\in \R$ v\'erifiant \mbox{$0=r<s$} ou $0<r\le s$. Alors la lemniscate $\overline{C}_{U}(Q;r,s)$ satisfait la condition~$(N_{P(S)-T})$.
\end{coro}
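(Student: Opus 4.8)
The plan is to reduce Corollary~\ref{cor:lemniscateum} to Proposition~\ref{prop:couronneum} by exhibiting the lemniscate $\overline{C}_{U}(Q;r,s)$ as a corona of the type already treated, but fibered over a different base. The key observation is that the polynomial~$Q(T)$ defines, via the morphism $\psi : X_{U} \to X_{U}$ sending the coordinate~$T$ to~$Q(T)$, a finite morphism, and that the lemniscate $\overline{C}_{U}(Q;r,s) = \{x \mid r \le |Q(T)(x)| \le s\}$ is precisely the preimage under this map of the corona $\overline{C}_{U}(r,s)$ in a new variable. So I would introduce the base $U' = \overline{C}_{U}(r,s)$ and seek to apply Proposition~\ref{prop:couronneum} with~$U$ replaced by~$U'$.

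The first thing I would verify is that $U' = \overline{C}_{U}(r,s)$ is itself a compact, spectrally convex part of $B'_{\textrm{um}}$ possessing a finite analytic border, where $B' = X_{U}$ plays the role of the new base. Proposition~\ref{prop:couronneum} already delivers exactly this: it states that $\overline{C}_{U}(r,s)$ satisfies $(N_{P(S)-T})$ \emph{and} possesses a finite analytic border. Since $U \subset B_{\textrm{um}}$, the corona $\overline{C}_{U}(r,s)$ lies in the ultrametric locus as well, and spectral convexity follows from the corona being (a limit of) rational domains, so the hypotheses of Proposition~\ref{prop:couronneum} transfer to this new base. The point is that the corona over $U$ is the natural base over which the lemniscate $\overline{C}_{U}(Q;r,s)$ becomes an honest corona.

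Concretely, I would set up the identification carefully. Writing~$T'$ for the coordinate of the inner corona, the lemniscate $\overline{C}_{U}(Q;r,s)$ is cut out by $r \le |Q(T)| \le s$, which identifies it with the corona $\overline{C}_{U'}(r,s)$ over the base $U' = \overline{C}_{U}(r,s)$ once we set $T' = Q(T)$; the fibral coordinate is then governed by the relation $Q(T) = T'$. Applying Proposition~\ref{prop:couronneum} to $U'$ with the polynomial~$P(S)$ yields that the corona $\overline{C}_{U'}(r,s)$ satisfies the condition $(N_{P(S)-T'})$. Transporting this back through the identification $\overline{C}_{U}(Q;r,s) \simeq \overline{C}_{U'}(r,s)$, and observing that the variable~$T$ on the fiber corresponds to~$T'$ via $Q$, gives that $\overline{C}_{U}(Q;r,s)$ satisfies $(N_{P(S)-T})$, which is what is claimed.

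\textbf{The main obstacle} I anticipate is making the change of base rigorous: one must check that $\Bs\big(\overline{C}_{U}(Q;r,s)\big)$ coincides with $\Bs\big(\overline{C}_{U'}(r,s)\big)$ as Banach algebras under the substitution $T' = Q(T)$, and that the residual norm on $\Bs(\cdot)[S]/(P(S)-T)$ matches the one over the new base. This requires that the morphism $\psi$ induced by~$Q$ behaves well enough that the corona $\overline{C}_{U}(r,s)$ is genuinely spectrally convex as a base and that Proposition~\ref{prop:couronneum} applies to it without circularity; the finiteness of the analytic border of~$U'$, which Proposition~\ref{prop:couronneum} provides, is exactly what keeps the argument from looping back on itself. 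The remaining verifications—compatibility of norms and identification of the fibral spectra—are routine once the base change is set up, drawing on Lemma~\ref{lem:Bsum} to describe the corona algebras explicitly.
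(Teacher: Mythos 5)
There is a genuine gap: the identification on which your whole reduction rests is false. Setting $T'=Q(T)$, the lemniscate $\overline{C}_{U}(Q;r,s)$ is the preimage of the corona $U'=\overline{C}_{U}(r,s)$ under the finite morphism induced by~$Q$; it is therefore a finite cover of~$U'$ of degree $\deg Q$ (its algebra is $\Bs(U')[T]/(Q(T)-T')$, a free $\Bs(U')$-module of rank $\deg Q$, by Proposition~\ref{prop:Blemniscate}), and not a corona over~$U'$. The object that Proposition~\ref{prop:couronneum} would produce after replacing the base by~$U'$ is a relative corona in a \emph{fresh} variable over~$U'$, of relative dimension one over it; it coincides with the lemniscate only when $\deg Q=1$. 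So the step ``transporting this back through the identification $\overline{C}_{U}(Q;r,s)\simeq\overline{C}_{U'}(r,s)$'' has nothing to transport: the conclusion of Proposition~\ref{prop:couronneum} over the base~$U'$ concerns a different space and a different algebra, and says nothing about the residual and spectral norms on $\Bs(\overline{C}_{U}(Q;r,s))[S]/(P(S)-T)$.

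If you want to push a base-change idea through, the correct observation is that
$\Bs(\overline{C}_{U}(Q;r,s))[S]/(P(S)-T)\simeq \Bs(\overline{C}_{U}(r,s))[S]/\big(Q(P(S))-T'\big)$,
so the condition you need over the corona is $(N_{(Q\circ P)(S)-T'})$ for the composite monic polynomial $Q\circ P$ --- which Proposition~\ref{prop:couronneum} applied to~$U$ does provide --- but you would still have to verify that the two residual norms (one computed with coefficients in $\Bs(\overline{C}_{U}(Q;r,s))$, the other with coefficients in $\Bs(\overline{C}_{U}(r,s))$) are equivalent under this isomorphism, a nontrivial extra step. The paper avoids all of this and argues fiberwise, exactly as in Proposition~\ref{prop:couronneum}: its proof shows that $\bigcup_{\gamma\in\Gamma_{U}}\overline{C}_{\gamma}(Q;r,s)$ is an analytic boundary of $\overline{C}_{U}(Q;r,s)$; over each point~$\gamma$ of the finite boundary~$\Gamma_{U}$ the algebra $\Bs(\overline{C}_{\gamma}(Q;r,s))[S]/(P(S)-T)$ is a reduced $\Hs(\gamma)$-affinoid algebra, so its residual norm is equivalent to its spectral norm by~\cite{rouge}, proposition~2.1.4~(ii); one then concludes with Proposition~\ref{prop:normeresuniformegeneral}.
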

\begin{proof}
D'apr\`es la preuve de la proposition pr\'ec\'edente, la partie $\bigcup_{\gamma\in\Gamma_{U}} \overline{C}_{\gamma}(Q;r,s)$ est un bord analytique de~$\overline{C}_{U}(Q;r,s)$. Nous pouvons donc appliquer le m\^eme raisonnement que dans cette preuve afin de conclure.
\end{proof}

Afin d'aller plus loin, d\'emontrons un r\'esultat sur la topologie de~$X$.

\begin{prop}\label{prop:voisinages}
Soient~$x$ un point de~$X$ et~$U$ un voisinage de~$x$. Alors, il existe un voisinage~$V$ de~$b=\pi(x)$ dans~$B$, un polyn\^ome $P(T)\in \As[T]$ \`a coefficient dominant inversible et deux nombres r\'eels $r,s \ge 0$ v\'erifiant \mbox{$r\prec |P(T)(x)| < s$} tels que
\[ \{y\in X_{V}\, |\, r \le |P(T)(y)| \le s\} \subset U\]
et
\[\pi(\{y\in X_{V}\, |\, r \le |P(T)(y)| \le s\})=V.\]
\end{prop}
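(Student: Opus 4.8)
The plan is to build a compact lemniscate $L=\overline{C}_{V}(P;r,s)$ squeezed inside $U$ by working first in the fibre $X_{b}$ over $b=\pi(x)$ and then spreading out over the base by compactness. Since the topology of $X$ is generated by the functions $|f|$ with $f\in\As[T]$, there are $f_{1},\dots,f_{m}\in\As[T]$ and reals $\alpha_{i}<|f_{i}(x)|<\beta_{i}$ with $x\in W:=\bigcap_{i=1}^{m}\{y\in X\mid\alpha_{i}<|f_{i}(y)|<\beta_{i}\}\subset U$, and it is enough to treat $W$. Writing $g_{i}=f_{i}(b)\in\Hs(b)[T]$ for the images in the fibre, the set $W\cap X_{b}=\bigcap_{i}\{y\in X_{b}\mid\alpha_{i}<|g_{i}(y)|<\beta_{i}\}$ is an open neighbourhood of $x$ in the Berkovich line $X_{b}=\E{1}{\Hs(b)}$, and I first claim there is a monic $P_{0}\in\Hs(b)[T]$ and reals $r_{0}\prec|P_{0}(x)|<s_{0}$ with $\{y\in X_{b}\mid r_{0}\le|P_{0}(y)|\le s_{0}\}\subset W\cap X_{b}$. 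If $b$ is archimedean, $X_{b}$ is $\C$ or $\C/\mathrm{conj}$, the point $x$ is an honest complex number, and one takes $P_{0}$ to be its monic minimal polynomial over $\Hs(b)$, with $r_{0}=0$ and $s_{0}$ small. If $b$ is ultrametric one uses the tree structure of $X_{b}$: the finitely many polynomials $g_{i}$ constrain only finitely many branches at $x$, so one places the roots of $P_{0}$ along precisely those branches and chooses $r_{0}\prec|P_{0}(x)|<s_{0}$ to cut them at the right depth; along every other branch each $|g_{i}|$ remains equal to its value at $x$, so those branches lie in $W\cap X_{b}$ automatically.

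Next I descend $P_{0}$ to a polynomial over $\As$. As $\Hs(b)$ is by definition the completion of $\Frac(\As/\p_{b})$, this field is dense in $\Hs(b)$; perturbing the coefficients of $P_{0}$ slightly moves the compact set $\{r_{0}\le|P_{0}|\le s_{0}\}$ only slightly and hence keeps it inside the open set $W\cap X_{b}$, so I may assume these coefficients lie in $\Frac(\As/\p_{b})$. Multiplying by a common denominator $D\in\As/\p_{b}$ leaves the lemniscate unchanged (it only rescales $r_{0}$ and $s_{0}$ by $|D|$) and produces a polynomial with coefficients in $\As/\p_{b}$; lifting these coefficients arbitrarily to $\As$ yields $P\in\As[T]$ whose reduction at $b$ cuts out the same fibre lemniscate and whose leading coefficient $D$ does not vanish at $b$. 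Renaming the rescaled bounds $r,s$, we have $r\prec|P(T)(x)|<s$.

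Now I spread out over the base. Choose a compact neighbourhood $V_{0}$ of $b$ on which the leading coefficient of $P$ does not vanish; by compactness it is bounded below there, so on $X_{V_{0}}$ the inequality $|P(T)|\le s$ forces $|T|\le t$ for some $t$. Thus $L=\overline{C}_{V_{0}}(P;r,s)$ is a closed subset of $\overline{D}_{V_{0}}(t)=\overline{D}(t)\cap\pi^{-1}(V_{0})$, which is compact since $\overline{D}(t)=\Mc(\As\la|T|\le t\ra)$ is compact and $V_{0}$ is closed in $B$; hence $L$ is compact and $\pi\colon L\to V_{0}$ is closed. As $L\cap X_{b}\subset W$, the set $L\smallsetminus W$ is compact and its image $\pi(L\smallsetminus W)$ is a compact subset of $V_{0}$ avoiding $b$; taking for $V$ a compact neighbourhood of $b$ inside $V_{0}\smallsetminus\pi(L\smallsetminus W)$ gives $\{y\in X_{V}\mid r\le|P(T)(y)|\le s\}\subset W\subset U$. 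Finally, for each $c\in V$ the leading coefficient of $P$ is nonzero in $\Hs(c)$, so $P(c)(T)$ has positive degree; the continuous function $|P(T)|$ on the connected space $X_{c}=\E{1}{\Hs(c)}$ vanishes at the type-$1$ point attached to a root of $P(c)(T)$ in $\overline{\Hs(c)}$ and tends to $+\infty$, hence takes every value in $\of{[}{0,+\infty}{[}\supset\of{[}{r,s}{]}$, so the fibre of the lemniscate over $c$ is nonempty and $\pi(\{y\in X_{V}\mid r\le|P(T)(y)|\le s\})=V$.

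The heart of the argument, and its only genuinely delicate point, is the fibre step: isolating inside an arbitrary neighbourhood a lemniscate neighbourhood defined by a single polynomial rests on the one-dimensional (tree) structure of $X_{b}$ and on the fact that the finitely many data $g_{i}$ constrain only finitely many branches. Once this is granted, the descent to $\As[T]$ is a soft density argument, and the passage from the fibre to a full base neighbourhood is a pure compactness argument resting on the properness of $\overline{D}(t)\to B$.
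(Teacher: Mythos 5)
Your proof is correct and follows essentially the same route as the paper's: produce a lemniscate neighbourhood in the fibre $X_{b}$, descend the coefficients of the polynomial to $\As$ by density and clearing of denominators, then use compactness of $\{r\le|P|\le s\}$ to discard the part of the lemniscate lying outside $U$ by shrinking the base. You in fact supply more detail than the paper, which simply asserts the existence of the fibre lemniscate and leaves the surjectivity of $\pi$ onto $V$ implicit, whereas you justify both.
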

\begin{proof}
Nous pouvons supposer que~$U$ est ouvert. Il existe un polyn\^ome $P(T)\in\Hs(b)[T]$ et deux nombres r\'eels $r,s\ge 0$ tels que $r\prec |P(T)(x)| < s$ et
\[\{y\in X_{b}\, |\, r \le |P(T)(y)| \le s\} \subset U\cap X_{b}.\]
Les coefficients de~$P(T)$ sont limites de quotients d'\'el\'ements de~$\As$ dont les d\'enominateurs ne s'annulent pas en~$b$. Nous pouvons donc supposer que~$P(T)$ est \`a coefficients dans l'anneau total des fractions de~$\As$. Nous pouvons \'egalement supposer que son coefficient dominant ne s'annule pas en~$b$. Soit~$q\in \As$ un d\'enominateur commun \`a tous les coefficients de~$P(T)$. Puisqu'il ne s'annule pas en~$b$, nous avons
\[\{y\in X_{b}\, |\, r \le |P(T)(y)| \le s\} = \{y\in X_{b}\, |\, |q(b)|r \le |qP(T)(y)| \le |q(b)|s\}.\]
Par cons\'equent, nous pouvons supposer que~$P(T)$ est \`a coefficients dans~$\As$.

D'apr\`es~\cite{asterisque}, corollaire~1.1.12, la partie
\[K = \{y\in X\, |\, r \le |P(T)(y)| \le s\}\]
est un voisinage compact du point~$x$ dans~$X$. Par cons\'equent, $K\setminus U$ est une partie compacte de~$X$. Sa projection est une partie compacte de~$B$ qui \'evite le point~$b$. On obtient le r\'esultat annonc\'e en prenant pour~$V$ un voisinage de~$b$ dans  $B \setminus \pi(K\setminus U)$.
\end{proof}

\begin{coro}\label{cor:projouverte}
Pour tout~$n\in\N$, le morphisme de projection 
\[X_{n} = \E{n}{\As} \to \Mc(\As)=B\] 
est ouvert.
\end{coro}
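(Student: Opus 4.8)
The plan is to dispose of the case $n=1$ first, where the statement is a direct consequence of Proposition~\ref{prop:voisinages}, and then to obtain the general case by induction on~$n$, reducing each inductive step to the one-variable situation but over a suitably chosen Banach base ring.

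For $n=1$, I would take an open subset~$U$ of $X=\E{1}{\As}$ and a point $x\in U$, with $b=\pi(x)$. Proposition~\ref{prop:voisinages} then provides a neighborhood~$V$ of~$b$ in~$B$ and a compact set
\[K=\{y\in X_{V}\mid r\le |P(T)(y)|\le s\}\subset U\]
such that $\pi(K)=V$. Hence $V=\pi(K)\subseteq\pi(U)$, so $\pi(U)$ is a neighborhood of~$\pi(x)$; as~$x$ is arbitrary in~$U$, the set~$\pi(U)$ is open and~$\pi$ is an open map.

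For general~$n$ I would argue by induction, the case $n=0$ being trivial. Factor the projection $X_{n}\to B$ as $X_{n}\xrightarrow{p}X_{n-1}\to B$, where~$p$ forgets the last coordinate~$T_{n}$ and the second map is the projection of~$X_{n-1}$; the latter is open by the induction hypothesis, and a composite of open maps is open, so it suffices to prove that~$p$ is open. To that end, fix an open set $U\subseteq X_{n}$, a point $x\in U$, and its image $y=p(x)\in X_{n-1}$. I would choose a spectrally convex compact neighborhood~$W$ of~$y$ with $y\in\mathring{W}$, for instance a rational compact neighborhood (such neighborhoods exist, form a basis, and are spectrally convex, \textit{cf.}~\cite{asterisque}). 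Setting $\As'=\Bs(W)$, the morphism induced by $\As[T_{1},\ldots,T_{n}]\to\Bs(W)[T_{n}]$ should identify~$\E{1}{\As'}$ with~$p^{-1}(W)$ over $W=\Mc(\As')$, compatibly with the projections. Granting this, applying the one-variable case already proved, now over the Banach ring~$\As'$, shows that the image under~$p$ of $U\cap p^{-1}(\mathring{W})$ is a neighborhood of~$y$; since it is contained in~$p(U)$, the set~$p(U)$ is a neighborhood of~$y$, and letting~$y$ range over~$p(U)$ gives that~$p$ is open.

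The main obstacle is precisely the identification of~$p^{-1}(W)$ with~$\E{1}{\As'}$ as topological spaces over~$W$. This rests on the spectral convexity of~$W$, which gives $\Mc(\Bs(W))\cong W$ and ensures that a bounded multiplicative seminorm on $\As[T_{1},\ldots,T_{n}]$ whose restriction to $\As[T_{1},\ldots,T_{n-1}]$ lies in~$W$ extends uniquely to a seminorm on $\Bs(W)[T_{n}]$ bounded by the uniform norm on~$\Bs(W)$; the two topologies then coincide because the functions coming from $\As[T_{1},\ldots,T_{n}]$ are dense enough in $\Bs(W)[T_{n}]$ for the uniform norm to generate the same topology. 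The rest is the bookkeeping with interiors needed to pass from a neighborhood of~$y$ inside~$W$ to a neighborhood of~$y$ in~$X_{n-1}$, which is routine since $y\in\mathring{W}$.
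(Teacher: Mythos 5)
Your proof is correct and is essentially the argument the paper intends: the corollary is stated without proof because the case $n=1$ is immediate from Proposition~\ref{prop:voisinages} exactly as you say, and the general case follows by the standard d\'evissage through $X_{n}\to X_{n-1}\to\dots\to B$, identifying $p^{-1}(W)$ with $\E{1}{\Bs(W)}$ over a spectrally convex compact neighborhood~$W$ (a fact from~\cite{asterisque} that the paper uses implicitly throughout, e.g.\ in the proofs of Theorems~\ref{theo:Weierstrassapproche} and~\ref{theo:noetherienepais}). Your explicit treatment of that identification and of the passage from a neighborhood of~$y$ in~$W$ to one in~$X_{n-1}$ is exactly the bookkeeping the paper leaves to the reader.
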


\begin{defi}
Un point~$b$ de~$B$ est dit \textbf{ultram\'etrique typique} s'il appartient \`a l'int\'erieur de~$B_{\textrm{um}}$ et s'il poss\`ede un syst\`eme fondamental de voisinages compacts, spectralement convexes et admettant un bord analytique fini.
\end{defi}

\begin{prop}\label{prop:umtypiquerecurrence}
Soit~$b$ un point ultram\'etrique typique de~$B$. Soit~$n\in\N$. Tout point de $(X_{n})_{b} = \E{n}{\Hs(b)}$ est ultram\'etrique typique.
\end{prop}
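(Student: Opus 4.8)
Le plan est de proc\'eder par r\'ecurrence sur~$n$, en renfor\c{c}ant l'\'enonc\'e~: nous demanderions que chaque point de~$(X_{n})_{b}$ poss\`ede un syst\`eme fondamental de voisinages compacts \emph{rationnels} dans~$\E{n}{\As}$, contenus dans la partie ultram\'etrique et admettant un bord analytique fini (la rationalit\'e entra\^{\i}nant la convexit\'e spectrale et permettant le changement de base ci-dessous). La condition d'int\'erieur est d'ailleurs automatique~: l'ultram\'etricit\'e d'un point ne d\'epend que de la restriction de sa valeur absolue au sous-anneau premier, de sorte que, notant $\pi_{n} : X_{n} \to B$ la projection, on a $(X_{n})_{\textrm{um}} = \pi_{n}^{-1}(B_{\textrm{um}})$. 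Comme~$\pi_{n}$ est continue (et ouverte, d'apr\`es le corollaire~\ref{cor:projouverte}) et que~$b$ appartient \`a~$\mathring{B_{\textrm{um}}}$, tout point~$x$ de~$(X_{n})_{b}$ v\'erifie $x \in \pi_{n}^{-1}(\mathring{B_{\textrm{um}}}) \subset \mathring{(X_{n})_{\textrm{um}}}$. Il ne resterait donc qu'\`a produire le syst\`eme fondamental de voisinages.

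Le cas~$n=0$ se r\'eduit \`a l'hypoth\`ese, puisque $(X_{0})_{b} = \{b\}$ et que~$b$ est ultram\'etrique typique. Pour l'h\'er\'edit\'e, je consid\'ererais un point~$x$ de~$(X_{n})_{b}$ et la projection $q : X_{n} \to X_{n-1}$ oubliant la derni\`ere coordonn\'ee, qui r\'ealise~$X_{n}$ comme droite analytique relative de variable~$T_{n}$ au-dessus de~$X_{n-1}$~; posons $x' = q(x) \in (X_{n-1})_{b}$. Par l'hypoth\`ese de r\'ecurrence renforc\'ee, le point~$x'$ poss\`ede un syst\`eme fondamental de voisinages compacts rationnels~$W$ dans~$X_{n-1}$, contenus dans~$(X_{n-1})_{\textrm{um}}$ et munis d'un bord analytique fini~$\Gamma_{W}$.

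Je construirais alors les voisinages de~$x$ comme des lemniscates relatives en~$T_{n}$ au-dessus de tels~$W$. Le point~$x$ appartenant \`a la fibre $q^{-1}(x') = \E{1}{\Hs(x')}$, droite analytique sur le corps ultram\'etrique~$\Hs(x')$, la proposition~\ref{prop:voisinages} (dans sa version relative au-dessus de la base~$\Bs(W)$) fournit, dans tout voisinage prescrit de~$x$, un polyn\^ome~$P(T_{n})$ et des r\'eels $r \prec |P(T_{n})(x)| < s$ tels que la lemniscate
\[L = \overline{C}_{W}(P;r,s) = \{y \in q^{-1}(W)\, |\, r \le |P(T_{n})(y)| \le s\}\]
soit contenue dans ce voisinage, pourvu que~$W$ soit pris assez petit~: on contr\^ole le compl\'ementaire exactement comme dans la preuve de la proposition~\ref{prop:voisinages}, en projetant un compact \'evitant~$x'$. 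Quitte \`a chasser les d\'enominateurs on prend~$P$ \`a coefficients dans~$\As$, de sorte que~$L$, intersection des in\'egalit\'es rationnelles d\'efinissant~$W$ et de $\{r \le |P(T_{n})| \le s\}$, soit un compact \emph{rationnel} de~$\E{n}{\As}$, donc spectralement convexe. Ces lemniscates \'epuisant les voisinages de~$x$ lorsque~$W$ et l'encadrement~$(r,s)$ se resserrent, elles forment bien un syst\`eme fondamental.

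La v\'erification que~$L$ admet un bord analytique fini constitue le c\oe ur de la preuve, et son principal obstacle. J'appliquerais pour cela la proposition~\ref{prop:couronneum} (et, au besoin, le corollaire~\ref{cor:lemniscateum}) en prenant pour anneau de Banach de base~$\Bs(W)$~: puisque~$W$ est rationnel, donc spectralement convexe, contenu dans la partie ultram\'etrique et muni du bord analytique fini~$\Gamma_{W}$, toutes les hypoth\`eses sont r\'eunies pour que~$\overline{C}_{W}(P;r,s)$, vue dans la droite relative~$\E{1}{\Bs(W)}$, poss\`ede un bord analytique fini et satisfasse la condition~$(N_{P(S)-T})$. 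Le point d\'elicat est de transporter cette conclusion de~$\E{1}{\Bs(W)}$ vers~$\E{n}{\As}$, et c'est pr\'ecis\'ement la rationalit\'e de~$W$ qui l'autorise, via la proposition de changement de base pour les compacts rationnels (le morphisme naturel entre les anneaux~$\Bs$ calcul\'es dans les deux espaces \'etant un isomorphisme), ce qui assure que la notion de bord analytique co\"{\i}ncide de part et d'autre. L'essentiel de la difficult\'e r\'eside donc dans la mise en place soigneuse de ce changement d'anneau de base et dans le contr\^ole du fait que convexit\'e spectrale, finitude du bord analytique et condition~$(N)$ se transf\`erent correctement entre les deux descriptions~; une fois ce point acquis, on conclut que~$x$ est ultram\'etrique typique, ce qui ach\`eve la r\'ecurrence.
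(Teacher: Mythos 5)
Your proof follows essentially the same route as the paper's (which is very terse here): induction on~$n$ via the projection $X_{n}\to X_{n-1}$, with Proposition~\ref{prop:voisinages} supplying a fundamental system of lemniscate neighbourhoods $\overline{C}_{W}(P;r,s)$ over compact neighbourhoods~$W$ of the projected point, and Proposition~\ref{prop:couronneum} (with Corollaire~\ref{cor:lemniscateum}) supplying the finite analytic boundary. Your two additions are worthwhile: the observation that a point is ultrametric if and only if its image in~$B$ is, so that $(X_{n})_{\textrm{um}}=\pi_{n}^{-1}(B_{\textrm{um}})$ and the interior condition is automatic; and the explicit attention to the comparison between~$\Bs(\overline{C}_{W}(P;r,s))$ computed in~$\E{n}{\As}$ and computed over the base~$\Bs(W)$, a point the paper's two-line proof leaves entirely implicit.

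There is, however, a genuine gap in the device you introduce to handle that comparison. Your strengthened induction hypothesis requires \emph{rational} compact neighbourhoods, but the definition of an ultrametric typical point only provides compact, \emph{spectrally convex} neighbourhoods with finite analytic boundary. The case $n=0$ therefore does not \og se r\'eduire \`a l'hypoth\`ese \fg : nothing guarantees that~$b$ admits arbitrarily small rational neighbourhoods with finite analytic boundary, so the strengthened induction cannot start (and the same problem already reappears at $n=1$, where the lemniscates are taken over a merely spectrally convex $U\subset B$ and are thus not rational either). The fix is to track through the induction exactly the invariant appearing in the definition — compact, spectrally convex, contained in the ultrametric part, with finite analytic boundary — which is what Proposition~\ref{prop:couronneum} propagates without any rationality assumption on~$U$; rationality should only be invoked, if at all, as a local tool (replacing~$\As$ by~$\Bs(W)$ for~$W$ a rational neighbourhood) and not as part of the inductive statement. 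With that adjustment your argument matches the paper's.
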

\begin{proof}
Soit $x\in (X_{n})_{b}$. D'apr\`es la proposition~\ref{prop:voisinages}, il poss\`ede un syst\`eme fondamental de voisinages de la forme $\overline{C}_{U}(P;r,s)$, o\`u~$U$ est un voisinage compact et spectralement convexe de~$b$ dans~$B_{\textrm{um}}$ qui poss\`ede un bord analytique fini, $P$~un polyn\^ome unitaire \`a coefficients dans~$\Bs(U)$ et $r,s\in\R$. On conclut alors par la proposition~\ref{prop:couronneum}.
\end{proof}

\begin{prop}\label{prop:umtypiquedroite}
Soit~$b$ un point ultram\'etrique typique de~$B$. Soit~$P$ un polyn\^ome unitaire non constant \`a coefficients dans~$\As$. Tout point de $(X_{n})_{b}$ poss\`ede un syst\`eme fondamental de voisinages compacts et spectralement convexes qui satisfont la condition~$(N_{P(S)-T})$.
\end{prop}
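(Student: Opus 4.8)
The plan is to recognise this statement as the $(N_{P(S)-T})$-refinement of the preceding Proposition~\ref{prop:umtypiquerecurrence}: I would reuse the very neighbourhoods constructed there and replace the final appeal to Proposition~\ref{prop:couronneum} by one to Corollary~\ref{cor:lemniscateum}. No new estimate should be needed beyond those already packaged in that corollary.

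First I would retrace the proof of Proposition~\ref{prop:umtypiquerecurrence}. Through the topological Proposition~\ref{prop:voisinages}, it exhibits for $x$ a fundamental system of compact neighbourhoods of lemniscate type $\overline{C}_{U}(Q;r,s)$, where $U$ is a compact, spectrally convex neighbourhood of the image of $x$ lying in the ultrametric locus and carrying a finite analytic boundary, $Q$ is a monic non-constant polynomial with coefficients in $\Bs(U)$, and $r,s\in\R$ satisfy $0=r<s$ or $0<r\le s$. To make this uniform in $n$, I would organise the argument as an induction on $n$: peeling off the last coordinate presents $X_{n}$ as a relative analytic line, the projection $x'$ of $x$ inherits good base neighbourhoods $U$ from the inductive hypothesis, and the whole formalism applied over the Banach ring $\Bs(U)$ in place of $\As$ furnishes the lemniscate neighbourhoods of $x$ above $U$.

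The heart of the matter is then a single application of Corollary~\ref{cor:lemniscateum} over the base $U$, that is, with $\As$ replaced by $\Bs(U)$. Its hypotheses are met: $U$ is compact, spectrally convex, contained in the ultrametric locus, and admits a finite analytic boundary; the polynomial $Q$ cutting out the lemniscate is monic and non-constant over $\Bs(U)$; and the given polynomial $P\in\As[S]$, being monic and non-constant, remains so over $\Bs(U)$ through the morphism $\As\to\Bs(U)$. The corollary then yields that each $\overline{C}_{U}(Q;r,s)$ satisfies $(N_{P(S)-T})$. Since $(N_{P(S)-T})$ incorporates spectral convexity by definition, these compact neighbourhoods are automatically spectrally convex; as they form a fundamental system, the proposition follows.

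I expect the only real care to be bookkeeping rather than a genuine difficulty. One must keep the two polynomials distinct---the polynomial $Q$ defining the neighbourhood versus the fixed polynomial $P$ appearing in the condition $(N_{P(S)-T})$---and verify that the ``en outre'' clauses of Proposition~\ref{prop:couronneum}, which guarantee that crowns and lemniscates again possess finite analytic boundaries, propagate this property through each stage of the inductive construction, so that the hypotheses of Corollary~\ref{cor:lemniscateum} remain satisfied throughout.
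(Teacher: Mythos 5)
Votre démonstration est correcte et suit essentiellement la même voie que celle de l'article, qui se réduit à invoquer la proposition~\ref{prop:voisinages} pour obtenir un système fondamental de voisinages de type lemniscate, puis le corollaire~\ref{cor:lemniscateum} pour vérifier la condition~$(N_{P(S)-T})$ sur chacun d'eux. Les précisions que vous apportez (récurrence sur~$n$ en travaillant au-dessus de~$\Bs(U)$, distinction entre le polynôme~$Q$ définissant le voisinage et le polynôme~$P$ de la condition, propagation du bord analytique fini via les clauses finales de la proposition~\ref{prop:couronneum}) explicitent fidèlement ce que l'article laisse implicite.
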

\begin{proof}
On utilise la proposition~\ref{prop:voisinages} et le corollaire~\ref{cor:lemniscateum}.
\end{proof}

\subsection{Condition de s\'eparabilit\'e}

Int\'eressons-nous maintenant \`a une autre condition impliquant~$(N_{G})$ : il s'agit de la condition~$(R_{G})$ que nous avons introduite \`a la d\'efinition~5.2.5 de~\cite{asterisque}. 

\begin{defi}
Soit~$U$ une partie compacte de~$B$. On dit que~$U$ satisfait la {\bf condition~$\boldsymbol{(R_{G})}$} si elle est spectralement convexe et poss\`ede un bord analytique~$\Gamma_{U}$ sur lequel la fonction $|\textrm{R\'es}(G,G')|$ est born\'ee inf\'erieurement par un nombre r\'eel $m_{U}>0$.
\end{defi}

\'Enon\c{c}ons \`a pr\'esent un lemme classique.

\begin{lemm}
Notons $G(T) = T^d + \sum_{k=0}^{d-1} g_{k} T^k$. Pour tout point~$b$ de~$B$ et toute racine~$\alpha$ de~$G(b)(T)$ dans une cl\^oture alg\'ebrique de~$\Hs(b)$, nous avons
\[|\alpha| \le \max\Big(1, \sum_{k=0}^{d-1} |g_{k}(b)| \Big).\]
\end{lemm}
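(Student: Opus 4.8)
Le plan est d'\'etablir la borne classique de Cauchy sur les racines d'un polyn\^ome unitaire, en prenant soin de r\'ediger l'argument de fa\c{c}on \`a ce qu'il s'applique uniform\'ement aux valeurs absolues archim\'ediennes et ultram\'etriques. D'abord, je distinguerais deux cas selon la taille de~$|\alpha|$. Si $|\alpha| \le 1$, la majoration est imm\'ediate, car $1 \le \max(1, \sum_{k=0}^{d-1} |g_{k}(b)|)$.

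Je supposerais donc $|\alpha| > 1$. Puisque $\alpha$ annule~$G(b)(T)$, il v\'erifie l'\'egalit\'e
\[\alpha^d = -\sum_{k=0}^{d-1} g_{k}(b)\, \alpha^k.\]
En passant aux valeurs absolues et en invoquant l'in\'egalit\'e triangulaire --- valable que la valeur absolue sur la cl\^oture alg\'ebrique de~$\Hs(b)$ soit archim\'edienne ou ultram\'etrique --- j'obtiendrais
\[|\alpha|^d \le \sum_{k=0}^{d-1} |g_{k}(b)|\, |\alpha|^k.\]
Comme $|\alpha| > 1$, chaque terme $|\alpha|^k$ est major\'e par $|\alpha|^{d-1}$ pour $0 \le k \le d-1$, de sorte que $|\alpha|^d \le |\alpha|^{d-1} \sum_{k=0}^{d-1} |g_{k}(b)|$. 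En divisant par $|\alpha|^{d-1}$, qui est non nul, je conclurais que $|\alpha| \le \sum_{k=0}^{d-1} |g_{k}(b)|$, d'o\`u la majoration voulue. La r\'eunion des deux cas ach\`eve la preuve.

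Il n'y a pas de v\'eritable obstacle dans cet argument, qui demeure purement \'el\'ementaire. Le seul point m\'eritant attention est de s'assurer que l'in\'egalit\'e triangulaire employ\'ee vaut dans les deux r\'egimes : c'est bien le cas, puisque $|\sum a_{i}| \le \sum |a_{i}|$ est toujours satisfaite, la variante ultram\'etrique faisant intervenir le maximum n'\'etant que plus forte. On notera aussi que la racine~$\alpha$ vit dans une cl\^oture alg\'ebrique de~$\Hs(b)$, sur laquelle la valeur absolue de~$\Hs(b)$ se prolonge de mani\`ere canonique, ce qui l\'egitime les manipulations pr\'ec\'edentes.
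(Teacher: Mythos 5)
Votre argument est correct : c'est la borne de Cauchy classique, et le découpage en deux cas selon que $|\alpha|\le 1$ ou $|\alpha|>1$, combiné à l'inégalité triangulaire (valable dans les deux régimes, archimédien et ultramétrique) et à la majoration $|\alpha|^k\le|\alpha|^{d-1}$, donne exactement l'énoncé voulu. Le texte qualifie ce lemme de classique et n'en donne aucune démonstration ; votre preuve est celle que l'on attend et ne présente aucune lacune.
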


\begin{prop}
Toute partie compacte et spectralement convexe de~$B$ qui v\'erifie la condition~$(R_{G})$ v\'erifie aussi la condition~$(N_{G})$.
\end{prop}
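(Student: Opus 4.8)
Le plan est de ramener la condition~$(N_{G})$ \`a une comparaison entre la norme de division~$\|.\|_{U,\textrm{div}}$ et la norme spectrale sur~$\Bs(U)[T]/(G(T))$, que nous noterons~$\|.\|_{U,\infty}$, puis d'\'etablir cette comparaison fibre \`a fibre au-dessus du bord analytique fourni par~$(R_{G})$. D'apr\`es le corollaire~\ref{cor:resnorme}, il existe~$v>0$ tel que, pour tout~$w\ge v$, la semi-norme~$\|.\|_{U,w,\textrm{r\'es}}$ soit une norme, et le corollaire~\ref{cor:eqnormedivres} assure qu'elle est alors \'equivalente \`a~$\|.\|_{U,\textrm{div}}$. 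Il suffit donc de montrer que~$\|.\|_{U,\textrm{div}}$ est \'equivalente \`a~$\|.\|_{U,\infty}$. Comme~$\|.\|_{U,w,\textrm{r\'es}}$ est une norme d'alg\`ebre, la norme spectrale la minore, d'o\`u l'in\'egalit\'e~$\|F\|_{U,\infty}\le \|F\|_{U,\textrm{div}}$ pour tout~$F$. Reste \`a produire une constante~$D>0$ telle que~$\|F\|_{U,\textrm{div}}\le D\,\|F\|_{U,\infty}$.

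Pour cela, j'exploiterais le fait que~$\Gamma_{U}$ est un bord analytique de~$U$. Tout \'el\'ement~$F$ de~$\Bs(U)[T]/(G(T))$ poss\`ede un unique repr\'esentant~$\sum_{k=0}^{d-1} a_{k}T^k$ de degr\'e~$<d$, et l'\'egalit\'e~$\|a_{k}\|_{U}=\|a_{k}\|_{\Gamma_{U}}$ donne
\[\|F\|_{U,\textrm{div}} = \max_{0\le k\le d-1}\, \sup_{\gamma\in\Gamma_{U}} |a_{k}(\gamma)| = \sup_{\gamma\in\Gamma_{U}} \|F(\gamma)\|_{\gamma,\textrm{div}},\]
o\`u~$\|.\|_{\gamma,\textrm{div}}$ d\'esigne la norme de division sur~$\Hs(\gamma)[T]/(G(\gamma)(T))$. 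De m\^eme, la norme spectrale~$\|F\|_{U,\infty}$ majore le supremum sur~$\gamma\in\Gamma_{U}$ des normes spectrales~$\|F(\gamma)\|_{\gamma,\infty}$ sur la fibre. Il suffit donc d'obtenir, pour chaque~$\gamma$, une majoration~$\|F(\gamma)\|_{\gamma,\textrm{div}}\le D\,\|F(\gamma)\|_{\gamma,\infty}$ avec une constante~$D$ ind\'ependante de~$\gamma$.

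Fixons~$\gamma\in\Gamma_{U}$ et posons~$K=\Hs(\gamma)$. Puisque~$|\textrm{R\'es}(G,G')(\gamma)|\ge m_{U}>0$, le polyn\^ome~$G(\gamma)(T)$ est sans facteurs multiples ; notons~$\alpha_{1},\dots,\alpha_{d}$ ses racines, deux \`a deux distinctes, dans une cl\^oture alg\'ebrique de~$K$. La comparaison des deux normes rel\`eve alors de l'interpolation de Lagrange : la matrice de Vandermonde~$V=(\alpha_{i}^k)_{1\le i\le d,\, 0\le k\le d-1}$ envoie le vecteur des coefficients~$(a_{k})$ sur le vecteur des valeurs~$(F(\gamma)(\alpha_{i}))$, de sorte que chaque~$a_{k}$ s'exprime via~$V^{-1}$ comme combinaison des valeurs, les coefficients de cette combinaison \'etant des cofacteurs divis\'es par~$\det V$. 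Le lemme pr\'ec\'edent borne les~$|\alpha_{i}|$ par~$M=\sup_{b\in U}\max\big(1,\textstyle\sum_{k=0}^{d-1}|g_{k}(b)|\big)$, quantit\'e finie par compacit\'e de~$U$, ce qui majore les cofacteurs par une constante ne d\'ependant que de~$M$ et de~$d$ ; par ailleurs~$|\det V|^2 = |\textrm{disc}(G(\gamma))| = |\textrm{R\'es}(G,G')(\gamma)| \ge m_{U}$, car~$G$ est unitaire. On en d\'eduit~$\|F(\gamma)\|_{\gamma,\textrm{div}}\le D\,\|F(\gamma)\|_{\gamma,\infty}$ avec~$D$ ne d\'ependant que de~$M$, $m_{U}$ et~$d$, d'o\`u la conclusion en prenant le supremum sur~$\gamma$.

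La principale difficult\'e r\'eside dans cette derni\`ere estim\'ee fibre \`a fibre et, surtout, dans son uniformit\'e : il faut contr\^oler l'inverse de Vandermonde par le seul minorant~$m_{U}$ du r\'esultant et le majorant~$M$ des racines, et ce de fa\c{c}on valable aussi bien sur les fibres archim\'ediennes qu'ultram\'etriques. C'est pr\'ecis\'ement la borne inf\'erieure sur~$|\textrm{R\'es}(G,G')|$ impos\'ee par~$(R_{G})$ qui emp\^eche les racines de se rapprocher et rend~$|\det V|$ uniform\'ement non nul ; la compacit\'e de~$U$ fournit le reste.
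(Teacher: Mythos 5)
Votre preuve est correcte et suit essentiellement la m\^eme strat\'egie que celle du texte (qui renvoie \`a la proposition~5.2.7 de~\cite{asterisque}) : ramener la comparaison \`a la fibre au-dessus du bord analytique, puis exprimer les coefficients du reste par interpolation de Lagrange aux racines de~$G(\gamma)$, l'inverse de la matrice de Vandermonde \'etant contr\^ol\'e par la borne sup\'erieure sur les racines et par la minoration uniforme~$m_{U}$ du r\'esultant fournie par~$(R_{G})$. Rien \`a signaler.
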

\begin{proof}
Soit~$U$ une partie compacte et spectralement convexe de~$B$ qui v\'erifie la condition~$(R_{G})$. Consid\'erons le nombre r\'eel~$v$ dont il est question dans le th\'eor\`eme~\ref{theo:Wglobal} appliqu\'e \`a $B=\Bs(U)$. Posons $v' = \max(v,1,\sum_{k=0}^{d-1} \|g_{k}\|)$, avec les notations du lemme pr\'ec\'edent. On reprend alors la preuve de la proposition~5.2.7 de~\cite{asterisque}. (On y consid\'erait  $v' = \max\big(v,\max_{0\le k\le d-1}(\|g_{k}\|^{1/(d-k)})\big)$, ce qui n'est en fait correct que lorsque la partie~$U$ est contenue dans la partie ultram\'etrique de~$B$.)

En deux mots, il faut montrer que la norme d'un polyn\^ome~$F$ calcul\'ee en prenant le maximum des normes des coefficients de son reste~$R$ dans la division euclidienne par~$G$ est major\'ee, \`a une constante multiplicative pr\`es, par sa norme sur le ferm\'e de Zariski~$Z_{G}$ d\'efini par l'\'equation $G=0$. Il est possible d'exprimer le reste~$R$ en fonction de ses valeurs aux points de~$Z_{G}$, et d'en d\'eduire la majoration souhait\'ee, pour peu que l'on connaisse ses valeurs en suffisamment de points. C'est ce qu'assure la condition~$(R_{G})$, en demandant que les points de~$Z_{G}$ correspondent \`a des racines simples de~$G$.
\end{proof}

Comme pr\'ec\'edemment, nous allons maintenant nous restreindre au cas o\`u~$G$ est de la forme~$P(S)-T$.

Si~$Q$ est un polyn\^ome \`a coefficients dans un anneau de Banach, nous noterons~$\|Q\|_{\infty}$ le maximum des normes de ses coefficients.

\begin{lemm}\label{lem:approxdisque}
Soient~$b$ un point de~$B$. Soit~$Q_{0}$ un polyn\^ome s\'eparable et unitaire de degr\'e~$d\ge 1$ \`a coefficients dans~$\Hs(b)$. Il existe $r,\eta >0$ v\'erifiant la propri\'et\'e suivante : pour tout nombre r\'eel $t\in\of{]}{0,r}{]}$, pour tout polyn\^ome~$Q$ \`a coefficients dans~$\Hs(b)$ et de degr\'e~$d$ tel que $\|Q-Q_{0}\|_{\infty} \le \eta$, le disque~$\overline{D}_{b}(t)$ satisfait la condition~$(N_{Q(S)-T})$.
\end{lemm}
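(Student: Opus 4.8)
La strat\'egie est celle de la sous-section en cours : on ram\`ene la condition~$(N_{Q(S)-T})$ \`a la condition~$(R_{Q(S)-T})$, puis l'on invoque la proposition affirmant qu'une partie compacte et spectralement convexe v\'erifiant~$(R_{G})$ v\'erifie aussi~$(N_{G})$, appliqu\'ee ici au-dessus de la base~$X$, pour le polyn\^ome unitaire en~$S$ \mbox{$G = Q(S)-T$} \`a coefficients dans $\As' = \Bs(\overline{D}_{b}(t))$ (anneau qui contient~$\Hs(b)$ et la fonction~$T$). Le disque~$\overline{D}_{b}(t)$ est spectralement convexe : le corollaire~\ref{cor:projouverte} (ouverture de~$\pi$) montre qu'il est d'int\'erieur vide dans~$X$ lorsque~$b$ n'est pas isol\'e dans~$B$, ce qui rend vide la condition portant sur les int\'erieurs ; l'hom\'eomorphisme requis, ainsi que le cas o\`u~$b$ est isol\'e (disque affino\"{\i}de classique sur~$\Hs(b)$), rel\`event de~\cite{asterisque}. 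Il suffit donc de minorer uniform\'ement $|\textrm{R\'es}_{S}(Q(S)-T,Q'(S))|$ sur~$\overline{D}_{b}(t)$ tout entier, ce dernier \'etant choisi comme bord analytique.

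Posons $\Delta_{Q}(T) = \textrm{R\'es}_{S}(Q(S)-T,Q'(S))$. C'est un polyn\^ome de~$\Hs(b)[T]$ dont les coefficients d\'ependent polynomialement de ceux de~$Q$, et dont le terme constant vaut $\Delta_{Q}(0) = \textrm{R\'es}_{S}(Q,Q') = \pm\,\textrm{disc}(Q)$. Le point d\'ecisif est que la s\'eparabilit\'e de~$Q_{0}$ \'equivaut \`a la non-annulation de~$\textrm{disc}(Q_{0})$ (autrement dit, au fait que~$0$ n'est pas une valeur critique de~$Q_{0}$) ; on pose donc $m = |\textrm{disc}(Q_{0})| > 0$.

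Deux estimations \'el\'ementaires ach\`event la preuve. D'une part, l'application $Q\mapsto \textrm{disc}(Q)$ \'etant polynomiale, on choisit $\eta > 0$ tel que $\|Q-Q_{0}\|_{\infty}\le\eta$ entra\^{\i}ne \`a la fois $|\Delta_{Q}(0)|\ge m/2$ et une borne uniforme~$|\delta_{k}|\le M$ sur les coefficients de $\Delta_{Q}(T) = \sum_{k\ge 0}\delta_{k}T^k$. D'autre part, tout point~$x$ de~$\overline{D}_{b}(t)$ v\'erifie $|T(x)|\le t$, de sorte que l'in\'egalit\'e triangulaire (valable indiff\'eremment dans les cas archim\'edien et ultram\'etrique) donne, pour $t\le 1$,
\[|\Delta_{Q}(T)(x)| \ge |\delta_{0}| - \sum_{k\ge 1}|\delta_{k}|\,t^k \ge \frac{m}{2} - (d-1)M\,t.\]
Cette quantit\'e demeure $\ge m/4$ d\`es que $t\le r := \min\big(1,\,m/(4(d-1)M)\big)$. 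Les r\'eels~$r$ et~$\eta$ ne d\'ependant que de~$Q_{0}$, ils conviennent pour tout $t\in\of{]}{0,r}{]}$ et tout polyn\^ome~$Q$ de degr\'e~$d$ tel que $\|Q-Q_{0}\|_{\infty}\le\eta$. La fonction $|\textrm{R\'es}_{S}(Q(S)-T,Q'(S))|$ \'etant alors minor\'ee par~$m/4 > 0$ sur~$\overline{D}_{b}(t)$, la condition~$(R_{Q(S)-T})$ est satisfaite, d'o\`u~$(N_{Q(S)-T})$.

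Le calcul est sans myst\`ere : il ne repose que sur la non-annulation du discriminant d'un polyn\^ome s\'eparable et sur une estimation de continuit\'e, trait\'ee uniform\'ement dans les deux cas puisqu'on n'utilise que des minorations du type $|a+b|\ge|a|-|b|$. La principale difficult\'e me semble r\'esider dans la mise en place correcte du cadre --- s'assurer que la machinerie de la section pr\'ec\'edente, d\'efinie au-dessus d'un anneau de Banach, s'applique au-dessus de la base~$X$ via~$\Bs(\overline{D}_{b}(t))$, et contr\^oler l'uniformit\'e de~$r$ et~$\eta$ vis-\`a-vis de~$t$ et de~$Q$ --- plut\^ot que dans l'estimation elle-m\^eme.
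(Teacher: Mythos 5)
Votre preuve est correcte et suit essentiellement la m\^eme d\'emarche que celle du texte : minorer uniform\'ement $|\textrm{R\'es}_{S}(Q(S)-T,Q'(S))|$ sur un petit disque $\overline{D}_{b}(t)$ en partant de la non-annulation en $T=0$ (s\'eparabilit\'e de~$Q_{0}$), puis en d\'eduire la condition~$(R_{Q(S)-T})$ et donc~$(N_{Q(S)-T})$. Vous \^etes simplement plus explicite sur les constantes et sur la convexit\'e spectrale du disque fibre, points que le texte passe sous silence.
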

\begin{proof}
Par hypoth\`ese, le polyn\^ome $R(T) = \textrm{R\'es}_{S}(Q_{0}(S)-T,Q_{0}'(S))$ de~$\Hs(b)[T]$ ne s'annule pas en~$0$. Il existe donc un nombre r\'eel~$r>0$ tel que~$R(T)$ soit minor\'e par~$|R(0)|/2$ en tout point du disque~$\overline{D}_{b}(r)$. Par cons\'equent, il existe un nombre r\'eel~$\eta>0$ tel que, pour tout polyn\^ome~$Q$ \`a coefficients dans~$\Hs(b)$ et de degr\'e~$d$ tel que $\|Q-Q_{0}\|_{\infty} \le \eta$, le polyn\^ome $\textrm{R\'es}_{S}(Q(S)-T,Q'(S))$ soit minor\'e par~$|R(0)|/4$ sur~$\overline{D}_{b}(r)$. Le disque~$\overline{D}_{b}(r)$ satisfait alors la condition~$(R_{Q(S)-T})$, et donc la condition~$(N_{Q(S)-T})$.
\end{proof}

\begin{lemm}\label{lem:lemniscatesNPS-T}
Soit~$b$ un point de~$B$. Soit $P(S)$ un polyn\^ome unitaire de degr\'e~$d\ge 1$ \`a coefficients dans~$\As$ dont l'image dans~$\Hs(b)[S]$ est s\'eparable. Soit~$Q(T)$ un polyn\^ome unitaire \`a coefficients dans~$\Os_{B,b}$.

Soient~$s,s'\in\R$ tels que $0 < s' < s$. Il existe $s_{1},s_{2} \in \R$ v\'erifiant \mbox{$s' < s_{1} < s_{2} < s$} et un voisinage~$V$ de~$b$ dans~$B$ sur lequel les coefficients de~$Q$ sont d\'efinis tels que, pour tout $v\in \of{[}{s_{1},s_{2}}{]}$ et tout voisinage compact et spectralement convexe~$U$ de~$b$ dans~$V$, la lemniscate~$\overline{D}_{U}(Q;v)$ satisfasse la condition~$(R_{P(S)-T})$ et donc~$(N_{P(S)-T})$.

Soient~$r,r',s,s'\in\R$ tels que $0< r < r' < s' <s$. Il existe $r_{1},r_{2},s_{1},s_{2} \in \R$ v\'erifiant $r < r_{2} < r_{1} < r'$ et $s' < s_{1} < s_{2} < s$ et un voisinage~$V$ de~$b$ dans~$B$  sur lequel les coefficients de~$Q$ sont d\'efinis tels que, pour tout $u\in \of{[}{r_{2},r_{1}}{]}$, tout $v\in \of{[}{s_{1},s_{2}}{]}$ et tout voisinage compact et spectralement convexe~$U$ de~$b$ dans~$V$, la lemniscate~$\overline{C}_{U}(Q;u,v)$ satisfasse la condition~$(R_{P(S)-T})$ et donc~$(N_{P(S)-T})$.
\end{lemm}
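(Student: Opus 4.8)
Posons $R(T) = \textrm{R\'es}_{S}(P(S)-T,P'(S)) \in \As[T]$. Comme le polyn\^ome $G = P(S)-T$ v\'erifie $G' = P'(S)$, la quantit\'e $\textrm{R\'es}(G,G')$ qui figure dans la condition~$(R_{P(S)-T})$ n'est autre que cette fonction~$R(T)$ sur~$X$. En sp\'ecialisant en~$b$, puis en~$T=0$, on obtient $R(b)(0) = \textrm{R\'es}_{S}(P(b)(S),P(b)'(S))$, qui est au signe pr\`es le discriminant de~$P(b)$ ; l'hypoth\`ese de s\'eparabilit\'e assure donc que $R(b)(T)$ est un polyn\^ome non nul de~$\Hs(b)[T]$, dont les racines, en nombre fini, sont les valeurs critiques de~$P(b)$. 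Le plan est de choisir les rayons des lemniscates de fa\c con que leurs bords \'evitent ces valeurs critiques, de sorte que~$R$ y soit minor\'e, puis d'invoquer le fait qu'une partie compacte et spectralement convexe satisfaisant~$(R_{P(S)-T})$ satisfait aussi~$(N_{P(S)-T})$.

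La remarque essentielle est que le lieu des z\'eros de~$R(T)$ dans une fibre~$X_{c}$ se r\'eduit aux points rigides situ\'es au-dessus des racines de~$R(c)(T)$ dans~$\overline{\Hs(c)}$, c'est-\`a-dire au-dessus des valeurs critiques de~$P(c)$ ; en un tel point~$x$, on a $|Q(T)(x)| = |Q(\alpha)|$, o\`u~$\alpha$ est la valeur critique correspondante. Appelons \og rayons interdits \fg{} au-dessus de~$c$ les r\'eels $|Q(\alpha)|$ ainsi obtenus ; au-dessus de~$b$, ils forment un ensemble fini $\{\rho_{1},\dots,\rho_{m}\}$. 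Je choisis alors $s'<s_{1}<s_{2}<s$ (et, pour la couronne, $r<r_{2}<r_{1}<r'$) de mani\`ere que $[s_{1},s_{2}]$ et $[r_{2},r_{1}]$ \'evitent $\{\rho_{1},\dots,\rho_{m}\}$, ce qui est loisible puisque cet ensemble est fini. Il reste \`a contr\^oler les rayons interdits lorsque la fibre varie : les valeurs critiques voisines des racines de~$R(b)$ d\'ependent continu\^ument de~$c$, tandis que les \'eventuelles valeurs critiques qui s'\'echappent \`a l'infini lorsque $c\to b$ (ph\'enom\`ene qui ne peut survenir qu'en caract\'eristique divisant~$d$) donnent, $P$ \'etant unitaire, des modules $|Q(\alpha)|$ tendant vers l'infini, donc sans effet dans la plage born\'ee consid\'er\'ee. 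Quitte \`a choisir un voisinage compact~$V$ de~$b$ assez petit, sur lequel les coefficients de~$Q$ sont d\'efinis, les rayons interdits au-dessus de tout point de~$V$ \'evitent alors $[s_{1},s_{2}]$ et $[r_{2},r_{1}]$ avec une marge strictement positive. C'est, \`a mon sens, dans ce contr\^ole uniforme que r\'eside la principale difficult\'e technique.

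Il s'ensuit que~$R(T)$ ne s'annule en aucun point de la couronne lemniscate compacte $\overline{C}_{V}(Q;s_{1},s_{2})$, ni de $\overline{C}_{V}(Q;r_{2},r_{1})$ : en un tel point, l'annulation de~$R$ contraindrait $|Q(T)|$ \`a \^etre un rayon interdit, ce que le choix pr\'ec\'edent exclut. Par compacit\'e et continuit\'e de~$|R(T)|$, il existe donc un r\'eel $m>0$ tel que $|R(T)|\ge m$ sur ces deux couronnes.

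Soit enfin~$U$ un voisinage compact et spectralement convexe de~$b$ contenu dans~$V$, et soit $v\in[s_{1},s_{2}]$. La lemniscate $\overline{D}_{U}(Q;v)$ est compacte, et spectralement convexe d\`es que~$U$ l'est. Son bord ext\'erieur $\{x\,|\,|Q(T)(x)|=v\}$ en est un bord analytique : pour $f\in\Bs(\overline{D}_{U}(Q;v))$, la norme uniforme sur~$\overline{D}_{U}(Q;v)$ est le supremum, sur les points~$c$ de~$U$, des normes uniformes sur les fibres~$\overline{D}_{c}(Q;v)$, et le principe du maximum, valable aussi bien pour les lemniscates archim\'ediennes que pour les affino\"{\i}des ultram\'etriques, montre que chacune de ces derni\`eres est atteinte l\`a o\`u $|Q(T)|=v$. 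Ce bord \'etant contenu dans~$\overline{C}_{V}(Q;s_{1},s_{2})$, on y a $|R(T)|\ge m>0$ : la condition~$(R_{P(S)-T})$ est satisfaite, et donc~$(N_{P(S)-T})$. Pour la couronne $\overline{C}_{U}(Q;u,v)$, avec $u\in[r_{2},r_{1}]$ et $v\in[s_{1},s_{2}]$, on prend pour bord analytique la r\'eunion des deux bords $\{|Q(T)|=u\}$ et $\{|Q(T)|=v\}$, contenus respectivement dans~$\overline{C}_{V}(Q;r_{2},r_{1})$ et~$\overline{C}_{V}(Q;s_{1},s_{2})$, et l'on conclut de la m\^eme fa\c con.
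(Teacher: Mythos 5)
Votre preuve est correcte et suit essentiellement la même démarche que celle du texte : introduction du résultant $R(T)=\textrm{R\'es}_{S}(P(S)-T,P'(S))$, choix de rayons évitant les valeurs $|Q(\alpha)|$ aux zéros de $R(b)$ dans la fibre, propagation à un voisinage $V$, puis minoration de $|R|$ sur un bord analytique de la lemniscate contenu dans la couronne $\overline{C}_{V}(Q;s_{1},s_{2})$. Signalons seulement que le contrôle uniforme des \og rayons interdits \fg{} lorsque la fibre varie, que vous traitez par un suivi des valeurs critiques (y compris celles qui s'échappent à l'infini), s'obtient plus directement par l'argument de compacité du texte : le lieu des zéros de $R$ dans $\overline{C}_{V_{0}}(Q;s_{1},s_{2})$, pour $V_{0}$ compact, est un compact évitant $X_{b}$, et il suffit de retirer sa projection.
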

\begin{proof}
Nous ne traiterons que le premier cas, le second se d\'emontrant par les m\^emes m\'ethodes. Quitte \`a remplacer~$\As$ par~$\Bs(W)$ o\`u~$W$ d\'esigne un voisinage compact et rationnel de~$b$, nous pouvons supposer que~$Q$ est \`a coefficients dans~$\As$.

Consid\'erons le polyn\^ome $R(T) = \textrm{R\'es}_{S}(P(S)-T,P'(S)) \in \As[T]$. Par hypoth\`ese, le polyn\^ome~$R(b)(T)$ n'est pas nul. Il existe donc $s_{1},s_{2}\in\R$ qui v\'erifient les in\'egalit\'es de l'\'enonc\'e et tels que~$R(b)(T)$ ne s'annule pas sur le compact $\overline{C}_{b}(Q;s_{1},s_{2})$. On en d\'eduit qu'il existe un voisinage~$V$ de~$b$ tel que~$R(T)$ ne s'annule pas sur $\overline{C}_{V}(Q;s_{1},s_{2})$. 
 
Soit~$U$ un voisinage compact et spectralement convexe de~$b$ dans~$V$. Pour tout $v\in \of{[}{s_{1},s_{2}}{]}$, la lemniscate~$\overline{D}_{U}(Q;v)$ poss\`ede un bord analytique contenu dans $\overline{C}_{U}(Q;s_{1},s_{2})$. Par cons\'equent, la lemniscate~$\overline{D}_{U}(Q;v)$ satisfait la condition~$(R_{P(S)-T})$.
\end{proof}

\begin{prop}\label{prop:NPS-T}
Soit~$b$ un point de~$B$. Soit $P(S)$ un polyn\^ome unitaire non constant \`a coefficients dans~$\As$ dont l'image dans~$\Hs(b)[S]$ est s\'eparable. Alors, tout point de~$X_{b}$ poss\`ede un syst\`eme fondamental de voisinages compacts et spectralement convexes qui satisfont la condition~$(R_{P(S)-T})$ et donc~$(N_{P(S)-T})$.
\end{prop}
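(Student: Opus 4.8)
The plan is to deduce the statement by combining the topological description of neighborhoods in~$X$ given by Proposition~\ref{prop:voisinages} with the geometric input of Lemma~\ref{lem:lemniscatesNPS-T}, which is precisely what converts the separability of~$P(S)$ at~$b$ into the condition~$(R_{P(S)-T})$ for sufficiently thin lemniscates. Fix a point~$x$ of~$X_{b}$ and an arbitrary neighborhood~$\mathcal{U}$ of~$x$ in~$X$. By Proposition~\ref{prop:voisinages} there are a neighborhood~$V_{0}$ of~$b$ in~$B$, a polynomial~$Q(T)\in\As[T]$ whose leading coefficient is invertible, and reals~$r,s\geq 0$ with $r\prec|Q(T)(x)|<s$, such that $\overline{C}_{V_{0}}(Q;r,s)\subset\mathcal{U}$. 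Since the leading coefficient of~$Q$ does not vanish at~$b$, it is a unit of~$\Os_{B,b}$ whose absolute value is pinched between two positive constants on a small enough neighborhood of~$b$; dividing~$Q$ by it therefore only rescales the defining function~$|Q(T)|$ by a nearly constant factor near~$b$, so I may assume, after shrinking~$V_{0}$ and adjusting~$r$ and~$s$, that~$Q$ is monic with coefficients in~$\Os_{B,b}$. This is exactly the shape of polynomial handled by Lemma~\ref{lem:lemniscatesNPS-T}.

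First I would treat the two cases~$r=0$ and~$r>0$ separately. If $r=0$ the neighborhood is a disk $\overline{D}_{V_{0}}(Q;s)$; choosing a real~$s'$ with $|Q(T)(x)|<s'<s$ and feeding~$P(S)$, $Q$ and the pair~$s'<s$ into the first part of Lemma~\ref{lem:lemniscatesNPS-T} produces radii $s_{1}<s_{2}$ with $s'<s_{1}<s_{2}<s$ and a neighborhood~$V_{1}$ of~$b$ such that, for every compact spectrally convex neighborhood~$U$ of~$b$ in~$V_{1}$, the disk $\overline{D}_{U}(Q;s_{1})$ satisfies~$(R_{P(S)-T})$. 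Taking~$U\subset V_{0}\cap V_{1}$, the inequality $|Q(T)(x)|<s'<s_{1}$ gives $x\in\overline{D}_{U}(Q;s_{1})$, while $s_{1}<s$ and $U\subset V_{0}$ give $\overline{D}_{U}(Q;s_{1})\subset\overline{D}_{V_{0}}(Q;s)\subset\mathcal{U}$. If $r>0$ I would instead pick reals~$r',s'$ with $r<r'<|Q(T)(x)|<s'<s$ and apply the second part of the lemma, obtaining~$r_{2}<r_{1}$ and~$s_{1}<s_{2}$ with $r<r_{2}<r_{1}<r'$ and $s'<s_{1}<s_{2}<s$, together with a neighborhood~$V_{1}$ of~$b$ over which every annular lemniscate $\overline{C}_{U}(Q;r_{1},s_{1})$, with~$U$ a compact spectrally convex neighborhood of~$b$ in~$V_{1}$, satisfies~$(R_{P(S)-T})$. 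For~$U\subset V_{0}\cap V_{1}$ the chain $r_{1}<r'<|Q(T)(x)|<s'<s_{1}$ shows that~$x$ lies in $\overline{C}_{U}(Q;r_{1},s_{1})$, and $r<r_{1}$, $s_{1}<s$ give $\overline{C}_{U}(Q;r_{1},s_{1})\subset\overline{C}_{V_{0}}(Q;r,s)\subset\mathcal{U}$.

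In both cases the lemniscate produced is a compact neighborhood of~$x$ contained in~$\mathcal{U}$ which, by Lemma~\ref{lem:lemniscatesNPS-T}, satisfies~$(R_{P(S)-T})$ and hence~$(N_{P(S)-T})$; since the condition~$(R_{P(S)-T})$ incorporates spectral convexity, it is in particular a compact spectrally convex neighborhood of the required kind. Letting~$\mathcal{U}$ shrink, the base~$U$ can be taken arbitrarily small (here one uses that~$b$ admits a cofinal family of compact spectrally convex neighborhoods) and the radii pinch~$|Q(T)(x)|$, so these lemniscates form the desired fundamental system of neighborhoods of~$x$.

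I expect the genuine mathematical content to be entirely contained in Lemma~\ref{lem:lemniscatesNPS-T}, where the non-vanishing at~$b$ of the resultant of~$P(S)-T$ and~$P'(S)$ in the variable~$S$ yields condition~$(R_{P(S)-T})$; the present argument only has to organize the bookkeeping. The main point to watch is the matching of the two fundamental systems: normalizing~$Q$ to be monic over~$\Os_{B,b}$, separating the disk and annulus cases, and choosing the auxiliary radii~$s'$ (and~$r'$) so that they strictly separate~$|Q(T)(x)|$ from the endpoints, which is what simultaneously guarantees that the smaller lemniscate still contains~$x$ and that it lies inside the prescribed neighborhood~$\mathcal{U}$.
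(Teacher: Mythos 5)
Your proof is correct and follows exactly the same route as the paper's, which consists of the two sentences ``apply Proposition~\ref{prop:voisinages} to obtain a lemniscate neighbourhood $\overline{C}_{V}(Q;r,s)\subset U$, then conclude by Lemma~\ref{lem:lemniscatesNPS-T}''. The only difference is that you have written out the bookkeeping the paper leaves implicit (normalizing~$Q$ to be monic over~$\Os_{B,b}$, splitting the disk and annulus cases, and pinching the radii around~$|Q(T)(x)|$), all of which is done correctly.
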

\begin{proof}
Soient~ $x$ un point de~$X_{b}$ et~$U$ un voisinage ouvert de~$x$ dans~$X$. D'apr\`es la proposition~\ref{prop:voisinages}, il existe un voisinage compact~$V$ de~$b$ dans~$B$, un polyn\^ome unitaire~$Q(T)$ \`a coefficients dans~$\Os_{B,b}$ et deux nombres r\'eels~$r$ et~$s$ v\'erifiant $0\le r < s$ tels que $r \prec |Q(x)| < s$ et $\overline{C}_{V}(Q;r,s) \subset U$. Le r\'esultat d\'ecoule alors du lemme~\ref{lem:lemniscatesNPS-T}.
\end{proof}

\section{Endomorphismes de la droite}\label{sec:endodroite}

Soient~$d\in\N^*$ et~$P$ un polyn\^ome de degr\'e~$d$ \`a coefficients dans~$\As$ dont le coefficient dominant est inversible. Le morphisme naturel
\[\As[T] \to \As[T,S]/(P(S)-T) \xrightarrow[]{\sim} \As[S],\]
qui envoie~$T$ sur~$P(S)$, induit un morphisme continu~$\varphi$ de la droite~$X$ dans elle-m\^eme. 

Nous pouvons appliquer dans ce contexte certains des r\'esultats de la partie pr\'ec\'edente. Expliquons comment obtenir un analogue de la proposition~\ref{prop:Bfini}.

\begin{prop}\label{prop:Blemniscate}
Soit~$U$ une partie compacte de~$X$ (avec variable~$T$) qui satisfait la condition~$(N_{P(S)-T})$. Les normes~$\|.\|_{U,\textrm{div}}$ et  $\|.\|_{U,w,\textrm{r\'es}}$, pour $w\ge v$, sont alors \'equivalentes \`a la norme spectrale sur \mbox{$\Bs(U)[S]/(P(S)-T)$}. De plus, nous avons un isomorphisme admissible naturel
\[\Bs(U)[S]/(P(S)-T) \xrightarrow[]{\sim} \Bs(\varphi^{-1}(U)).\]

En particulier, pour tous $r,s\ge 0$ et toute partie compacte et spectralement convexe~$W$ de~$X$ tels que~$\overline{C}_{W}(r,s)$ satisfait la condition~$(N_{P(S)-T})$, nous avons un isomorphisme admissible naturel
\[\Bs(\overline{C}_{W}(r,s))[S]/(P(S)-T) \xrightarrow[]{\sim} \Bs(\overline{C}_{W}(P;r,s)).\]
\end{prop}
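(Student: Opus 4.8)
The plan is to deduce the statement from Proposition~\ref{prop:Bfini}, applied over the Banach ring~$\Bs(U)$ in place of~$\As$. The hypothesis that~$U$ satisfies~$(N_{P(S)-T})$ includes spectral convexity, so we may identify~$U$ with $\Mc(\Bs(U))$ and~$\Bs(U)$ is then a uniform Banach ring. Moreover $G=P(S)-T$ lies in~$\Bs(U)[S]$, since the coefficients of~$P$ belong to $\As\subset\Bs(U)$ while~$T$ is a function on~$U$, hence an element of~$\Bs(U)$; as the leading coefficient of~$P$ is invertible, dividing by it changes neither~$Z_{G}$ nor the ideal generated, so we may assume~$G$ monic of degree~$d$ in~$S$. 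With these identifications, the condition~$(N_{P(S)-T})$ imposed on~$U$ is exactly the condition~$(N_{G})$ of Section~\ref{section:morphismesfinis} relative to the full base $\Mc(\Bs(U))$.

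First I would apply Proposition~\ref{prop:Bfini} in this situation. It yields at once the equivalence of the norms~$\|.\|_{U,\textrm{div}}$ and~$\|.\|_{U,w,\textrm{r\'es}}$, for $w\ge v$, with the spectral norm on $\Bs(U)[S]/(P(S)-T)$, together with a natural admissible isomorphism
\[\Bs(U)[S]/(P(S)-T) \xrightarrow[]{\sim} \Bs(Z_{G}),\]
where $Z_{G}\subset\E{1}{\Bs(U)}$ is the Zariski-closed subset defined by $P(S)=T$ and~$\Bs(Z_{G})$ is computed inside $\E{1}{\Bs(U)}$. Restriction of seminorms from $\Bs(U)[S]$ to $\As[S]$ identifies~$Z_{G}$, as a topological space, with the preimage~$\varphi^{-1}(U)$ regarded as a compact subset of~$X=\E{1}{\As}$: a point~$x$ of~$X$ lies in~$\varphi^{-1}(U)$ precisely when $\varphi(x)\in U$, which is exactly the condition that the seminorm attached to~$x$ on $\As[S]$ extend to a seminorm on $\Bs(U)[S]$ annihilating $P(S)-T$.

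The one genuine point to verify is that $\Bs(Z_{G})$ computed in $\E{1}{\Bs(U)}$ coincides with $\Bs(\varphi^{-1}(U))$ computed in $X=\E{1}{\As}$. This is a compatibility of the~$\Bs$ construction under the change of base ring $\As\to\Bs(U)$, of the same nature as the comparison isomorphism for~$\Bs$ recalled at the beginning of the text, and it is where I expect the only real (if modest) difficulty to lie; it rests on the uniformity of~$\Bs(U)$ and the spectral convexity of~$U$, and can be handled exactly as in~\cite{asterisque}. Granting it, the first part of the statement follows. Finally, for the ``in particular'' assertion I would take $U=\overline{C}_{W}(r,s)$, which is compact and satisfies~$(N_{P(S)-T})$ by hypothesis; by construction of~$\varphi$ its preimage $\varphi^{-1}(\overline{C}_{W}(r,s))$ is exactly the lemniscate $\overline{C}_{W}(P;r,s)$, so substituting this identity into the isomorphism just obtained yields the stated $\Bs(\overline{C}_{W}(r,s))[S]/(P(S)-T)\xrightarrow{\sim}\Bs(\overline{C}_{W}(P;r,s))$.
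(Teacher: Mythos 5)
Your opening move --- applying Proposition~\ref{prop:Bfini} with $\Bs(U)$ as the base Banach ring, after observing that $(N_{P(S)-T})$ for $U\subset X$ is exactly condition $(N_{G})$ for $G=P(S)-T$ over $\Mc(\Bs(U))$ --- is the same as the paper's, and it does give the norm equivalences and the admissible isomorphism $\Bs(U)[S]/(P(S)-T)\xrightarrow{\sim}\Bs(Z_{G})$ with $\Bs(Z_{G})$ computed in the relative line over $\Bs(U)$. The divergence, and the problem, is at the step you yourself single out as ``the one genuine point to verify'': the identification of $\Bs(Z_{G})$ computed in $\E{1}{\Bs(U)}$ with $\Bs(\varphi^{-1}(U))$ computed in $X=\E{1}{\As}$. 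You assert that this is ``of the same nature as'' the comparison recalled in Section~1 and ``can be handled exactly as in~\cite{asterisque}'', but that comparison is for a compact sitting inside a spectrally convex compact of the \emph{same} ambient space; here the two ambient spaces have different coordinate rings ($\Bs(U)[S]$, in which $T$ is a scalar, versus $\As[S]$), so the rational functions whose uniform limits define the two $\Bs$'s are genuinely different sets, and an argument is needed (one direction is the inclusion $\As[S]\subset\Bs(U)[S]$; the other requires showing that coefficients in $\Bs(U)$, being uniform limits of rational functions in $T$, become, after the substitution $T=P(S)$ valid on $Z_{G}$, uniform limits of rational functions in $S$ over $\As$ without poles on $\varphi^{-1}(U)$). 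As written this step is deferred, not proved, and it is the only substantive content of the statement beyond Proposition~\ref{prop:Bfini}.

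The paper circumvents this entirely by working in the single ambient space $X_{2}=\E{2}{\As}$ (variables $S$ and $T$). Let $\pi_{T},\pi_{S}:X_{2}\to\E{1}{\As}$ be the two coordinate projections and let $Z$ be the Zariski-closed subset of $\pi_{T}^{-1}(U)$ defined by $P(S)=T$. Proposition~\ref{prop:Bfini} applied to $\pi_{T}$ gives $\Bs(U)[S]/(P(S)-T)\xrightarrow{\sim}\Bs(Z)$; a second application, this time to $\pi_{S}$ with the monic degree-one polynomial $T-P(S)$ in the variable $T$ (for which condition $(N)$ is automatic by Remark~\ref{rem:degre1}), gives $\Bs(\pi_{S}(Z))[T]/(T-P(S))\xrightarrow{\sim}\Bs(Z)$, with both copies of $\Bs(Z)$ computed in $X_{2}$. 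Since $\Bs(\pi_{S}(Z))\simeq\Bs(\pi_{S}(Z))[T]/(T-P(S))$ and $\pi_{S}(Z)=\varphi^{-1}(U)$, the conclusion follows. This two-projection device is precisely what makes your deferred base-change comparison unnecessary; if you wish to keep your route, you must supply that comparison explicitly.
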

\begin{proof}
Consid\'erons le plan $X_{2} = \E{2}{\As}$ avec variables~$S$ et~$T$ et notons $\pi_{T} : X_{2} \to X$ sa projection sur la variable~$T$. Notons~$Z$ le ferm\'e de Zariski de~$\pi_{T}^{-1}(U)$ d\'efini par $P(S)=T$. D'apr\`es la proposition~\ref{prop:Bfini}, nous avons un isomorphisme admissible $\Bs(U)[S]/(P(S)-T) \xrightarrow[]{\sim} \Bs(Z)$. 

Notons $\pi_{S} : X_{2}=\E{2}{\As} \to \E{1}{\As}$ le morphisme de projection sur la variable~$S$. Nous avons un isomorphisme admissible $\Bs(\pi_{S}(Z))[T]/(T-P(S)) \xrightarrow[]{\sim} \Bs(Z)$ (ce~que l'on d\'emontre soit directement, soit en invoquant de nouveau la proposition~\ref{prop:Bfini} et la remarque~\ref{rem:degre1}). On conclut en remarquant que l'on a \'egalement un isomorphisme admissible $\Bs(\pi_{S}(Z)) \simeq \Bs(\pi_{S}(Z))[T]/(T-P(S))$ et que~$\pi_{S}(Z)$ n'est autre que~$\varphi^{-1}(U)$.
\end{proof}

\begin{coro}\label{cor:approximationlemniscate}
Pour tout $s\ge 0$ et toute partie compacte et spectralement convexe~$W$ de~$B$ tels que~$\overline{D}_{W}(s)$ satisfait la condition~$(N_{P(S)-T})$, l'anneau~$\Bs(W)[S]$ est dense dans $\Bs(\overline{D}_{W}(P;s))$ pour la norme uniforme.

Pour tous $r,s\in\R$ v\'erifiant $0<r\le s$ et toute partie compacte et spectralement convexe~$W$ de~$B$ tels que~$\overline{C}_{W}(r,s)$ satisfait la condition~$(N_{P(S)-T})$, l'anneau~$\Bs(W)[S,P(S)^{-1}]$ est dense dans $\Bs(\overline{C}_{W}(P;r,s))$ pour la norme uniforme.
\end{coro}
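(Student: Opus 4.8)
Le plan est de transporter l'\'enonc\'e, gr\^ace \`a l'isomorphisme admissible de la proposition~\ref{prop:Blemniscate}, en une assertion de densit\'e de polyn\^omes en la variable~$T$ dans l'anneau des fonctions d'un disque ou d'une couronne de la base. Traitons en d\'etail le cas du disque, celui de la couronne s'en d\'eduisant de mani\`ere analogue. Puisque $\overline{D}_{W}(s) = \overline{C}_{W}(0,s)$ satisfait la condition~$(N_{P(S)-T})$, la derni\`ere assertion de la proposition~\ref{prop:Blemniscate}, appliqu\'ee avec $r=0$, fournit un isomorphisme admissible
\[\theta : \Bs(\overline{D}_{W}(s))[S]/(P(S)-T) \xrightarrow[]{\sim} \Bs(\overline{D}_{W}(P;s)).\]
Sous cet isomorphisme, la norme uniforme au but correspond \`a la norme spectrale \`a la source, elle-m\^eme \'equivalente, d'apr\`es la m\^eme proposition, \`a la norme de division~$\|.\|_{\overline{D}_{W}(s),\textrm{div}}$, c'est-\`a-dire au maximum des normes uniformes sur~$\overline{D}_{W}(s)$ des coefficients de l'unique repr\'esentant de degr\'e~$<d$ en~$S$.

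L'ingr\'edient crucial est la densit\'e de~$\Bs(W)[T]$ dans~$\Bs(\overline{D}_{W}(s))$ pour la norme uniforme. Par d\'efinition, il suffit d'approcher uniform\'ement toute fraction rationnelle sans p\^oles sur~$\overline{D}_{W}(s)$ par des polyn\^omes \`a coefficients dans~$\Bs(W)$. Or une telle fraction est analytique sur le disque, donc appartient \`a~$\Os_{X}(\overline{D}_{W}(s))$ ; d'apr\`es la description des fonctions sur un disque par des s\'eries enti\`eres (corollaire~\ref{cor:dvptdisque} et le corollaire suivant), elle provient d'une s\'erie $\sum_{n\ge 0} a_{n} T^n$, \`a coefficients dans un~$\Os(W')$ avec $W'\supset W$, convergente sur un disque de rayon~$t>s$. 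En restreignant les~$a_{n}$ \`a~$W$ et en tronquant la s\'erie, on obtient des polyn\^omes \`a coefficients dans~$\Bs(W)$ qui convergent uniform\'ement vers elle sur~$\overline{D}_{W}(s)$, d'o\`u la densit\'e voulue. Dans le cas de la couronne, le m\^eme raisonnement, appliqu\'e \`a la description en s\'eries de Laurent des fonctions sur~$\overline{C}_{W}(r,s)$ (proposition~\ref{prop:Bcouronne} et ses corollaires), fournit la densit\'e de~$\Bs(W)[T,T^{-1}]$ ; on notera ici que~$\Bs(W)$ est uniforme, puisque~$W$ est compacte et spectralement convexe.

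Il ne reste plus qu'\`a conclure. Soit $f\in \Bs(\overline{D}_{W}(P;s))$ et notons $\tilde{f} = \theta^{-1}(f)$, de repr\'esentant $\sum_{i=0}^{d-1} a_{i}\, S^i$ avec $a_{i}\in \Bs(\overline{D}_{W}(s))$. \'Etant donn\'e $\eps>0$, choisissons pour chaque~$i$ un polyn\^ome $b_{i}\in\Bs(W)[T]$ tel que $\|a_{i}-b_{i}\|_{\overline{D}_{W}(s)}\le \eps$, et posons $\tilde{g}=\sum_{i=0}^{d-1} b_{i}\, S^i$. Comme $T=P(S)$ dans le quotient, on a $\tilde{g} = \sum_{i=0}^{d-1} b_{i}(P(S))\, S^i$, de sorte que son image $\theta(\tilde{g})$ appartient \`a~$\Bs(W)[S]$. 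Par ailleurs, l'\'el\'ement $\tilde{f}-\tilde{g} = \sum_{i=0}^{d-1} (a_{i}-b_{i})\, S^i$ \'etant d\'ej\`a de degr\'e~$<d$ en~$S$, nous avons $\|\tilde{f}-\tilde{g}\|_{\overline{D}_{W}(s),\textrm{div}} = \max_{0\le i\le d-1} \|a_{i}-b_{i}\|_{\overline{D}_{W}(s)} \le \eps$. L'\'equivalence des normes transporte cette estimation en une majoration de $\|f-\theta(\tilde{g})\|$ pour la norme uniforme sur~$\overline{D}_{W}(P;s)$, ce qui prouve la densit\'e de~$\Bs(W)[S]$. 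Dans le cas de la couronne, l'\'egalit\'e $T^{-1}=P(S)^{-1}$ dans le quotient change~$\Bs(W)[T,T^{-1}]$ en~$\Bs(W)[S,P(S)^{-1}]$ et le m\^eme argument s'applique. L'obstacle principal n'est pas la densit\'e des polyn\^omes, qui est classique, mais bien le contr\^ole des normes : c'est la condition~$(N_{P(S)-T})$ qui, via la proposition~\ref{prop:Blemniscate}, assure l'\'equivalence entre norme de division et norme uniforme et permet ainsi de d\'eduire une approximation uniforme d'une approximation coefficient par coefficient.
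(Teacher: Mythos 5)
Votre preuve suit la voie que le texte laisse implicite pour ce corollaire : l'isomorphisme admissible de la proposition~\ref{prop:Blemniscate} et l'\'equivalence des normes fournie par la condition~$(N_{P(S)-T})$ ram\`enent l'\'enonc\'e \`a la densit\'e de $\Bs(W)[T]$ (resp. $\Bs(W)[T,T^{-1}]$) dans $\Bs(\overline{D}_{W}(s))$ (resp. $\Bs(\overline{C}_{W}(r,s))$), l'approximation coefficient par coefficient se transportant ensuite en approximation uniforme ; c'est la bonne strat\'egie. Un seul point m\'erite d'\^etre retouch\'e : pour \'etablir cette densit\'e, vous passez par $\Os_{X}(\overline{D}_{W}(s))$ et obtenez des coefficients dans $\Os(W')$ que vous \og restreignez \fg\ \`a~$W$ pour les placer dans~$\Bs(W)$ ; or rien ne garantit a priori qu'une fonction analytique au voisinage de~$W$ d\'efinisse un \'el\'ement de~$\Bs(W)$ (c'est pr\'ecis\'ement la distinction entre \og d\'efini \fg\ et \og $\Bs$-d\'efini \fg\ que le texte prend soin de maintenir). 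Il est plus s\^ur d'observer, comme dans la preuve de la proposition~\ref{prop:Bcouronne}, qu'une fraction rationnelle sans p\^oles sur $\overline{D}_{W}(s) = \Mc(\Bs(W)\la |T|\le s\ra)$ est d\'ej\`a un \'el\'ement de $\Bs(W)\la |T|\le s\ra$ (corollaire~1.2.4 de~\cite{rouge}), dont les troncatures appartiennent \`a $\Bs(W)[T]$ et convergent uniform\'ement sur le disque puisque la norme uniforme est major\'ee par~$\|.\|_{s}$ ; le reste de votre argument s'applique alors tel quel, y compris pour la couronne.
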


Nous pouvons maintenant reprendre les r\'esultats de la fin de la section~\ref{section:morphismesfinis} \textit{mutatis mutandis}, en utilisant les r\'esultats du corollaire~\ref{cor:impliqueDG} et des propositions~\ref{prop:umtypiquedroite} et~\ref{prop:NPS-T}. 

\begin{theo}\label{theo:finiPS-Tseparable}
Soit~$V$ une partie de~$B$ telle qu'en tout point~$b$ qui n'est pas ultram\'etrique typique, le polyn\^ome $P(b)(T)$ soit s\'eparable. Alors, le morphisme
\[\begin{array}{ccc}
\Os_{X_{V}}^d & \to & \varphi_{*}\Os_{X_{V}}\\
(a_{0},\ldots,a_{d-1}) & \mapsto & \disp \sum_{i=0}^d a_{i}\, T^i
\end{array}\]
est un isomorphisme de~$\Os_{X_{V}}$-modules. 

En particulier, pour toute partie~$U$ de~$V$, le morphisme naturel
\[\Os_{X}(U)[S]/(P(S)-T) \to \Os_{X}(\varphi^{-1}(U))\]
est un isomorphisme.
\end{theo}
\begin{proof}
D'apr\`es le corollaire~\ref{cor:impliqueDG} et la proposition~\ref{prop:NPS-T}, tout point de~$X_{V}$ satisfait la condition~$(D_{P(S)-T})$ et poss\`ede un syst\`eme fondamental de voisinages compacts qui satisfont la condition~$(N_{P(S)-T})$.
\end{proof}

\'Enon\c{c}ons un cas particulier dont nous nous servirons par la suite.

\begin{coro}\label{cor:Olemniscate}
Soit~$V$ une partie de~$B$ telle qu'en tout point~$b$ qui n'est pas ultram\'etrique typique, le polyn\^ome $P(b)(T)$ soit s\'eparable. Soient~$r,s\ge 0$. Le morphisme naturel
\[\Os_{X}(\overline{C}_{V}(r,s))[S]/(P(S)-T) \to \Os_{X}(\overline{C}_{V}(P;r,s))\]
est un isomorphisme. 
\end{coro}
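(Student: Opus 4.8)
The plan is to obtain this statement as a direct specialization of Theorem~\ref{theo:finiPS-Tseparable}, the only substantive point being the set-theoretic identification of the relevant lemniscate with a preimage under~$\varphi$.

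First I would record that the compact $\overline{C}_{V}(r,s)$, defined as $\{x\in X\,|\,\pi(x)\in V,\ r\le |T(x)|\le s\}$, projects by~$\pi$ into~$V$; hence it is a subset of $X_{V}=\pi^{-1}(V)$ and is a legitimate choice for the set~$U$ appearing in the ``en particulier'' part of Theorem~\ref{theo:finiPS-Tseparable}. Next I would compute the preimage $\varphi^{-1}(\overline{C}_{V}(r,s))$. Since the endomorphism~$\varphi$ is induced by $T\mapsto P(S)$, it commutes with the projection to~$B$, so that $\pi\circ\varphi=\pi$, and it satisfies $|T(\varphi(y))|=|P(S)(y)|$ for every point~$y$. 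Consequently a point~$y$ lies in $\varphi^{-1}(\overline{C}_{V}(r,s))$ if and only if $\pi(y)\in V$ and $r\le |P(S)(y)|\le s$, that is, if and only if $y\in \overline{C}_{V}(P;r,s)$. This yields the equality $\varphi^{-1}(\overline{C}_{V}(r,s))=\overline{C}_{V}(P;r,s)$.

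With these two observations in hand, I would simply invoke Theorem~\ref{theo:finiPS-Tseparable}: its separability hypothesis---that $P(b)(T)$ be separable at every point~$b$ of~$V$ that is not ultram\'etrique typique---is exactly the hypothesis of the present corollary. Applying the ``en particulier'' isomorphism $\Os_{X}(U)[S]/(P(S)-T)\to\Os_{X}(\varphi^{-1}(U))$ with $U=\overline{C}_{V}(r,s)$ and substituting the identity computed above then gives precisely the asserted isomorphism.

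The honest assessment is that there is no genuine obstacle here: the argument is a bookkeeping specialization of the preceding theorem, and the only point deserving a moment's care is checking that the preimage computation behaves correctly at the boundary value $r=0$, where $\overline{C}_{V}(0,s)$ degenerates to the disk $\overline{D}_{V}(s)$ and $\overline{C}_{V}(P;0,s)$ to $\overline{D}_{V}(P;s)$; the same identity $\varphi^{-1}(\overline{D}_{V}(s))=\overline{D}_{V}(P;s)$ holds by the identical computation, so the disk and annulus cases are handled uniformly.
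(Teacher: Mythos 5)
Votre argument est correct et correspond exactement à la démarche du texte : le corollaire y est énoncé comme cas particulier immédiat du théorème~\ref{theo:finiPS-Tseparable}, appliqué à $U=\overline{C}_{V}(r,s)$ après l'identification $\varphi^{-1}(\overline{C}_{V}(r,s))=\overline{C}_{V}(P;r,s)$ (qui découle de $\pi\circ\varphi=\pi$ et de $|T(\varphi(y))|=|P(S)(y)|$), la convention $\overline{C}_{V}(0,s)=\overline{D}_{V}(s)$ couvrant uniformément le cas $r=0$ comme vous le notez.
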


\section{Th\'eor\`eme de Weierstra{\ss} local}\label{section:Weierstrass}

Ainsi que nous l'avons expliqu\'e dans l'introduction, l'objectif principal de ce texte est de d\'emontrer un th\'eor\`eme de division Weierstra{\ss} donnant acc\`es \`a la structure des anneaux locaux des espaces de Berkovich affines. En g\'eom\'etrie analytique complexe, o\`u ce th\'eor\`eme est connu depuis longtemps et joue un r\^ole majeur, tous les points sont d\'efinis sur~$\C$ et l'on se ram\`ene par translation, \`a traiter le cas du point~$0$. Dans~\cite{asterisque}, nous nous sommes d\'ej\`a int\'eress\'e \`a l'analogue de cette situation et avons d\'emontr\'e un th\'eor\`eme de division de Weierstra{\ss} au voisinage du point~$0$ d'une fibre~$X_{b}$, avec $b\in B$ (\textit{cf.}~th\'eor\`eme~2.2.3). Cependant les fibres~$X_{b}$ contiennent en g\'en\'eral bien d'autres types de points pour lesquels il serait utile de disposer d'un tel th\'eor\`eme.
 
\begin{defi}
Soit~$b$ un point de~$B$. Soit~$n\in\N$. Un point~$x$ de~$\E{n}{\As}$ (avec variables $T_{1},\dots,T_{n}$) au-dessus de~$b$ est dit \textbf{rigide} si~$\Hs(x)$ est une extension finie de~$\Hs(b)$, autrement dit, si $T_{1}(x),\dots,T_{n}(x)$ sont alg\'ebriques sur~$\Hs(b)$.

Un point~$x$ de~$\E{n}{\As}$ au-dessus de~$b$ est dit \textbf{rigide \'epais} si~$\kappa(x)$ est une extension finie de~$\kappa(b)$, autrement dit, si $T_{1}(x),\dots,T_{n}(x)$ sont alg\'ebriques sur~$\kappa(b)$. Un point rigide qui n'est pas \'epais est appel\'e point \textbf{rigide maigre}.
\end{defi}

\begin{defi}
Soient~$b$ un point de~$B$ et~$x$ un point rigide de~$X_{b}$. Il existe un unique polyn\^ome irr\'eductible et unitaire $P(T) \in \Hs(b)[T]$ qui s'annule au point~$x$. Nous l'appellerons \textbf{polyn\^ome minimal} du point~$x$. 

Remarquons que~$x$ est un point rigide \'epais si, et seulement si, $P(T)\in\kappa(b)[T]$.
\end{defi}

Le th\'eor\`eme qui suit est un th\'eor\`eme de division de Weierstra{\ss} au voisinage d'un point rigide~$x$ quelconque d'une fibre~$X_{b}$, avec~$b\in B$. Lorsque le point~$x$ est \'epais, les coefficients de son polyn\^ome minimal~$P$ sont d\'efinis au voisinage de~$b$ et l'on peut utiliser les r\'esultats des sections pr\'ec\'edentes pour \'etudier le morphisme d\'efini par~$P$ d'un voisinage de~$X_{b}$ dans lui-m\^eme. Lorsque le point~$x$ est maigre, en revanche, son polyn\^ome minimal~$P$ n'est d\'efini que sur~$\Hs(b)$ et il nous faudra utiliser des polyn\^omes auxiliaires proches de~$P$ mais qui s'\'etendent au voisinage de~$b$.

\begin{theo}[de division de Weierstra{\ss}]\label{theo:Weierstrassapproche}
Soit~$b$ un point de~$B$. Soit $P(S) \in \Hs(b)[S]$ un polyn\^ome irr\'eductible et unitaire de degr\'e~$d$. Si le polyn\^ome~$P$ est ins\'eparable et si le corps r\'esiduel~$\Hs(b)$ est trivialement valu\'e, supposons que le point~$b$ est ultram\'etrique typique. 

Notons~$x$ le point rigide de la fibre~$X_{b}$ d\'efini par l'\'equation~$P=0$. Soit~$G$ un \'el\'ement de l'anneau local~$\Os_{X,x}$. Supposons que son image dans~$\Os_{X_{b},x}$ n'est pas nulle et notons~$n$ sa valuation $P$-adique.

Alors, pour tout $F \in \Os_{X,x}$, il existe un unique couple $(Q,R) \in {\Os_{X,x}}^2$ tel que
\begin{enumerate}[i)]
\item $F = QG + R$ ;
\item $R \in \Os_{B,b}[S]$ est un polyn\^ome de degr\'e strictement inf\'erieur \`a~$nd$.
\end{enumerate}
\end{theo}
\begin{proof}
Si le degr\'e~$d$ du polyn\^ome~$P$ est nul, le r\'esultat est \'evident. Nous supposerons donc que $d\ge 1$. Il existe $\alpha_{0},\ldots,\alpha_{d-1} \in \Hs(b)$ tels que 
\[ P(S) = S^d + \sum_{i=0}^{d-1} \alpha_{i} S^i. \] 
Soit $F\in \Os_{X,x}$. 

Dans un premier temps, nous supposerons que le corps~$\Hs(b)$ est parfait ou que sa valeur absolue n'est pas triviale. Nous indiquons \`a la fin de la preuve les modifications \`a apporter dans les cas restant. 

Il existe un nombre r\'eel $r>0$ et un voisinage~$U$ de $X_{b} \cap \{|P|\le r\}$ dans~$X$ sur lequel~$F$ et~$G$ soient d\'efinis. Il existe un \'el\'ement inversible~$H$ de~$\Os_{X_{b},x}$ tel que $G=P^n H$. Quitte \`a diminuer~$r$, nous pouvons supposer que~$H$ et~$H^{-1}$ sont d\'efinis sur $X_{b} \cap \{|P|\le r\}$ dans~$X_{b}$. Fixons un nombre r\'eel $s\in \of{]}{0,r}{[}$. Soit~$\eps>0$. Nous imposerons plus tard d'autres conditions (qui ne d\'ependront que de~$s$ et de~$P$) sur cet \'el\'ement.

$\bullet$ Supposons que le polyn\^ome~$P$ est s\'eparable. D'apr\`es le lemme~\ref{lem:approxdisque}, quitte \`a diminuer~$r$, nous pouvons trouver un polyn\^ome~$P_{\eps}(S)$ unitaire de degr\'e~$d$ \`a coefficients dans~$\Os_{B,b}$ v\'erifiant $\|P_{\eps}(b) - P\|_{\infty} \le \eps$ et tel que le disque~$\overline{D}_{b}(r)$ satisfasse la condition~$(N_{P_{\eps}(S)-T})$. 

Nous pouvons en outre supposer que le polyn\^ome~$P_{\eps}(b)(S)$ est s\'eparable. Dans ce cas, d'apr\`es la proposition~\ref{prop:NPS-T}, tout point de~$\overline{D}_{b}(r)$ poss\`ede un syst\`eme fondamental de voisinages compacts et spectralement convexes qui satisfont la condition~$(N_{P_{\eps}(S)-T})$.

$\bullet$ Supposons que la valeur absolue associ\'ee au point~$b$ est ultram\'etrique, mais pas triviale. Nous pouvons trouver un polyn\^ome~$P_{\eps}(S)$ unitaire de degr\'e~$d$ \`a coefficients dans~$\Os_{B,b}$ tel que $\|P_{\eps}(b) - P\|_{\infty} \le \eps$. Nous pouvons supposer que le polyn\^ome $P_{\eps}(b)(S)$ est s\'eparable. Dans ce cas, d'apr\`es la proposition~\ref{prop:NPS-T}, tout point de~$\overline{D}_{b}(r)$ poss\`ede un syst\`eme fondamental de voisinages compacts et spectralement convexes qui satisfont la condition~$(N_{P_{\eps}(S)-T})$.

Notons~$\eta_{r}$ l'unique point du bord de Shilov du disque~$\overline{D}_{b}(r)$. Puisque~$T(\eta_{r})$ est transcendant sur~$\Hs(b)$ et que le polyn\^ome $P_{\eps}(b)(S)$ est s\'eparable, le polyn\^ome $P_{\eps}(b)(S) - T(\eta_{r}) \in \Hs(\eta_{r})[S]$ l'est aussi. Le disque~$\overline{D}_{b}(r)$ satisfait donc la condition~$(N_{P_{\eps}(S)-T})$, en vertu du corollaire~\ref{cor:normeresuniforme}.

$\bullet$ Supposons finalement que le polyn\^ome~$P$ est ins\'eparable et le corps~$\Hs(b)$ trivialement valu\'e. Nous pouvons alors relever~$P$ en un polyn\^ome \`a coefficients dans~$\Os_{B,b}$ que nous noterons~$P_{\eps}$. Par hypoth\`ese, le point~$b$ est ultram\'etrique typique. D'apr\`es la proposition~\ref{prop:couronneum}, le disque~$\overline{D}_{b}(r)$ satisfait la condition~$(N_{P_{\eps}(S)-T})$. D'apr\`es la proposition~\ref{prop:umtypiquedroite}, tout point de~$\overline{D}_{b}(r)$ poss\`ede un syst\`eme fondamental de voisinages compacts et spectralement convexes qui satisfont la condition~$(N_{P_{\eps}(S)-T})$.

\medskip

Quitte \`a diminuer~$\eps$, nous pouvons supposer que $X_{b} \cap \{|P_{\eps}|\le r\}$ est un voisinage du point~$x$ dans~$X_{b}\cap U$ et que~$H$ et~$H^{-1}$ sont d\'efinis sur $X_{b} \cap \{|P_{\eps}|\le r\}$ dans~$X_{b}$. Un argument de compacit\'e montre qu'il existe un voisinage compact~$V$ de~$b$ dans~$B$ tel que $X_{V}  \cap \{|P_{\eps}|\le r\}$ soit contenu dans~$U$. Quitte \`a restreindre~$V$, nous pouvons supposer que $P_{\eps}(S)\in \Bs(V)[S]$.

Consid\'erons l'anneau de Banach~$A=\Bs(V)\la |T|\le r\ra$. D'apr\`es le th\'eor\`eme~\ref{theo:Wglobal} et le corollaire~\ref{cor:resnorme}, il existe un nombre r\'eel~$v>0$ tel que, pour tout~$w\ge v$ et toute $A$-alg\`ebre de Banach~$A'$ telle que le morphisme structural $A\to A'$ diminue les normes, la semi-norme~$\|.\|_{w,\textrm{r\'es}}$ d\'efinie sur le quotient~$A'[S]/(P_{\eps}(S)-T)$ soit une norme. En outre, d'apr\`es le corollaire~\ref{cor:eqnormedivres}, il existe une constante~$C>0$ tel que pour toute telle alg\`ebre~$A'$ et tout \'el\'ement~$F$ de $A'[S]/(P_{\eps}(S)-T)$, nous ayons
\[ \|F\|_{A',w,\textrm{r\'es}} \le \|F\|_{A',\textrm{div}} \le C\, \|F\|_{A',w,\textrm{r\'es}}.\] 
Lorsque l'alg\`ebre~$A'$ sera de la forme $A' = \Bs(W)\la |T|\le t\ra$, nous noterons $\|.\|_{W,t,w,\textrm{r\'es}}$ et $\|.\|_{W,t,\textrm{div}}$ les normes $\|.\|_{A',w,\textrm{r\'es}}$ et $\|.\|_{A',\textrm{div}}$ respectivement.

Par choix de~$P_{\eps}$, le disque~$\overline{D}_{b}(r)$ satisfait la condition~$(N_{P_{\eps}(S)-T})$. Il existe donc un nombre r\'eel~$v'>0$ et une constante~$D>0$ tel que, pour tout $w'\ge v'$ et tout \'element~$f$ de \mbox{$\Bs(\overline{D}_{b}(r))[S]/(P_{\eps}(S)-T)$}, on ait
\[ \|f\|_{w',\textrm{r\'es}} \le D \|f\|_{X_{b} \cap \{|P_{\eps}|\le r\}},\]
o\`u $\|.\|_{w',\textrm{r\'es}}$ d\'esigne la norme r\'esiduelle sur $\Bs(\overline{D}_{b}(r))[S]/(P_{\eps}(S)-T)$ induite par la norme $\|.\|_{w'}$ sur $\Bs(\overline{D}_{b}(r))[S]$, l'alg\`ebre $\Bs(\overline{D}_{b}(r))$ \'etant munie de la norme spectrale.

Soit $w_{0} \ge \max(v,v')$. Dor\'enavant toutes les normes r\'esiduelles seront calcul\'ees avec ce rayon~$w_{0}$.

En utilisant la proposition~\ref{prop:comparaisonnormes} pour comparer les normes sur $\Bs(\overline{D}_{b}(r))$ et $\Hs(b)\la |T|\le s\ra$, on montre que, pour tout~$f$ dans \mbox{$\Hs(b)\la |T|\le r\ra[S]/(P_{\eps}(S)-T)$}, nous avons
\[ \|f\|_{b,s,w_{0},\textrm{r\'es}} \le \frac{r}{r-s}\, D \|f\|_{X_{b} \cap \{|P_{\eps}|\le r\}}. \]

Le corollaire~\ref{cor:Olemniscate} assure que la fonction~$H^{-1}$ poss\`ede un repr\'esentant dans \mbox{$\Os_{X_{b}}(\overline{D}_{b}(r))[S]/(P_{\eps}(S)-T)$}. Par cons\'equent, elle est approchable uniform\'ement sur \mbox{$X_{b} \cap \{|P_{\eps}|\le r\}$} par des \'el\'ements de~$\Os_{B,b}[S]$. Quitte \`a restreindre~$V$, nous pouvons donc supposer qu'il existe $K \in \Bs(V)\la |T|\le r\ra [S]/(P_{\eps}(S)-T)$ tel que 
\[ \|K(b)G(b)-P^n\|_{X_{b} \cap \{|P_{\eps}|\le r\})} \le \frac{1}{2}\, C^{-1}\, D^{-1}\, \left(1 - \frac{s}{r}\right) s^n. \]

D'apr\`es les corollaires~\ref{cor:Olemniscate} et~\ref{cor:dvptdisque}, quitte \`a restreindre~$V$ de nouveau, nous pouvons supposer que les germes des fonctions~$F$ et~$G$ au voisinage de~$x$ poss\`edent \'egalement des repr\'esentants dans $\Bs(V)\la |T|\le r\ra [S]/(P_{\eps}(S)-T)$.

\medskip

Soit~$W$ un voisinage compact de~$b$ dans~$V$. Tout \'el\'ement~$\varphi$ appartenant \`a \mbox{$\Bs(W)\la |T|\le s\ra [S]/(P_{\eps}(S)-T)$} peut s'\'ecrire de fa\c{c}on unique sous la forme
\[\varphi = \sum_{i=0}^{d-1} (\alpha_{i}(\varphi)T^n + \beta_{i}(\varphi)) S^i,\]
o\`u les~$\alpha_{i}(\varphi)$ sont des \'el\'ements de $\Bs(W)\la |T|\le s\ra$ et les~$\beta_{i}(\varphi)$ des \'el\'ements de $\Bs(W)[T]$ de degr\'e strictement inf\'erieur \`a~$n$. Posons
\[\alpha(\varphi) = \sum_{i=0}^{d-1} \alpha_{i}(\varphi) S^i \textrm{ et } \beta(\varphi) = \sum_{i=0}^{d-1} \beta_{i}(\varphi) S^i.\]
Remarquons que~$\beta(\varphi)$ est un polyn\^ome en~$S$ de degr\'e inf\'erieur \`a~$nd-1$. Nous avons
\[\varphi = \alpha(\varphi)T^n + \beta(\varphi).\]
Remarquons que
\[\|\alpha(\varphi)\|_{W,s,\textrm{div}} \le s^{-n}\, \|\varphi\|_{W,s,\textrm{div}},\]
d'o\`u l'on tire
\[\|\alpha(\varphi)\|_{W,s,w_{0},\textrm{r\'es}} \le C\, s^{-n}\, \|\varphi\|_{W,s,w_{0},\textrm{r\'es}}.\]

Consid\'erons, \`a pr\'esent, l'endomorphisme 
\[ A_{W} :
{\renewcommand{\arraystretch}{1.2}\begin{array}{ccc}
\Bs(W)\la |T|\le s\ra [S]/(P_{\eps}(S)-T) & \to &\Bs(W)\la |T|\le s\ra [S]/(P_{\eps}(S)-T)\\
\varphi & \mapsto & \alpha(\varphi)\, KG + \beta(\varphi)
\end{array}}.\]

Quel que soit $\varphi \in \Bs(W)\la |T|\le s\ra [S]/(P_{\eps}(S)-T)$, nous avons 
\[{\renewcommand{\arraystretch}{1.4}\begin{array}{rcl}
\|A_{W}(\varphi)-\varphi\|_{W,s,w_{0},\textrm{r\'es}} &=& \|\alpha(\varphi)\, (KG-T^n)\|_{W,s,w_{0},\textrm{r\'es}}\\
&\le&  \|\alpha(\varphi)\|_{W,s,w_{0},\textrm{r\'es}}\, \|KG-P_{\eps}^n\|_{W,s,w_{0},\textrm{r\'es}}\\
&\le& C s^{-n}\,  \|KG-P_{\eps}^n\|_{W,s,w_{0},\textrm{r\'es}}\,\|\varphi\|_{W,s,w_{0},\textrm{r\'es}}.
\end{array}}\]

Or 
\[{\renewcommand{\arraystretch}{2}\begin{array}{rcl}
\|K(b)G(b)-P^n\|_{b,s,w_{0},\textrm{r\'es}} &\le&  \disp \frac{r}{r-s}\, D \|K(b)G(b)-P^n\|_{X_{b} \cap \{|P_{\eps}|\le s\} }\\
&\le&\disp \frac{1}{2}\, C^{-1}\, s^n
\end{array}}\]
et
\[{\renewcommand{\arraystretch}{1.6}\begin{array}{rcl}
\|P^n-P_{\eps}(b)^n\|_{b,s,w_{0},\textrm{r\'es}} &\le& \disp\|P-P_{\eps}(b)\|_{b,s,w_{0},\textrm{r\'es}}\, \left\|\sum_{i=0}^{n-1} P^i P_{\eps}(b)^{n-1-i}\right\|_{b,s,w_{0},\textrm{r\'es}}\\
&\le& \|P-P_{\eps}(b)\|_{b,s,\textrm{div}} \cdot n \max(\|P\|_{b,s,w_{0},\textrm{r\'es}},\|P_{\eps}(b)\|_{b,s,w_{0},\textrm{r\'es}})^{n-1}\\
&\le& n\,  \|P-P_{\eps}(b)\|_{b,s,\textrm{div}}\, \max(\|P\|_{b,s,\textrm{div}},\|P_{\eps}(b)\|_{b,s,\textrm{div}})^{n-1}\\
&\le& \disp n\, \eps  \max_{0\le i\le d-1} (|\alpha_{i}|+\eps)^{n-1}.
\end{array}}\]
Quitte \`a diminuer~$\eps$, nous pouvons donc supposer que 
\[\|K(b)G(b)-P_{\eps}(b)^n\|_{b,\textrm{r\'es}} < C^{-1}\, s^n\]
puis, quitte \`a restreindre le voisinage~$V$ de~$b$, que 
\[ \|KG-P_{\eps}^n\|_{W,\textrm{r\'es}} < C^{-1}\, s^n.\]

Sous ces conditions, l'endomorphisme $A_{W} = \textrm{Id} + (A_{W}-\textrm{Id})$ est inversible. Les \'el\'ements $Q=\alpha(A_{V}^{-1}(F))K$ et $R=\beta(A_{V}^{-1}(F))$ v\'erifient $QG+R = F$. Ceci prouve l'existence d'un couple $(Q,R)$ v\'erifiant les propri\'et\'es de l'\'enonc\'e.

D\'emontrons \`a pr\'esent l'unicit\'e. Soit un couple $(Q',R') \in {\Os_{X,x}}^2$ v\'erifiant les propri\'et\'es de l'\'enonc\'e. Nous pouvons choisir le voisinage~$U$ de~$x$ au d\'ebut de la preuve de fa\c{c}on que~$Q'$ et~$R'$ soient d\'efinis sur~$U$. D'apr\`es les corollaires~\ref{cor:Olemniscate} et~\ref{cor:dvptdisque}, les germes des fonctions~$Q'$ et~$R'$ au voisinage de~$x$ poss\`edent des repr\'esentants dans $\Bs(W)\la |T|\le r\ra [S]/(P_{\eps}(S)-T)$, pour un certain voisinage compact~$W$ de~$b$ dans~$V$. L'injectivit\'e du morphisme~$A_{W}$ permet alors de conclure.
\end{proof}

\begin{rema}
Nous avons d\'ej\`a \'enonc\'e, dans~\cite{asterisque}, un r\'esultat de division de Weierstra{\ss} partiel au voisinage des points rigides \'epais, sous certaines conditions (\textit{cf.}~th\'eor\`eme~5.4.4). La d\'emonstration en est malheureusement incorrecte (d\^u \`a une r\'eduction liminaire inconsid\'er\'ee). Le th\'eor\`eme propos\'e ici est, de toute fa\c{c}on, plus g\'en\'eral.
\end{rema}

\'Enon\c{c}ons maintenant un corollaire tr\`es utile.

\begin{coro}\label{cor:pointpasepais}
Soit~$b$ un point de~$B$. Si son corps r\'esiduel~$\Hs(b)$ est imparfait et trivialement valu\'e, supposons que le point~$b$ est ultram\'etrique typique. Soit~$x$ un point de~$X_{b}$ qui ne soit pas un point rigide \'epais. Soit~$f$ un \'el\'ement de~$\Os_{X,x}$ tel que~$f(x)=0$. Alors~$f$ est nul dans~$\Os_{X_{b},x}$.
\end{coro}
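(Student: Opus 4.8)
The plan is to argue by contradiction and to split into two cases according to whether $x$ is rigid, the real content lying in the rigid case. Suppose first that $x$ is not rigid, so that $\alpha := T(x)$ is transcendental over~$\Hs(b)$. I would then rely on the classical one-dimensional theory over the complete valued field~$\Hs(b)$: in the fibre~$X_{b}$ the point~$x$ has a neighbourhood basis of discs and annuli, and on such a connected set a nonzero analytic function has only finitely many zeros, all of them rigid. Since~$x$ is not rigid, a germ $g \in \Os_{X_{b},x}$ with $g(x)=0$ must vanish identically; equivalently $\Os_{X_{b},x}$ is a field. Hence $f(x)=0$ forces the image of~$f$ in~$\Os_{X_{b},x}$ to vanish, and there is nothing left to prove. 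No hypothesis on~$\Hs(b)$ is needed here, and for archimedean~$b$ this case is in fact empty.

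Suppose now that~$x$ is rigid. Since it is not thick, $\alpha = T(x)$ is algebraic over~$\Hs(b)$ but transcendental over the residue field~$\kappa(b)$. Let $P \in \Hs(b)[T]$ be its minimal polynomial, of degree~$d$. The standing hypothesis --- that~$b$ is ultram\'etrique typique whenever~$\Hs(b)$ is imperfect and trivially valued --- is exactly what is needed for the division theorem~\ref{theo:Weierstrassapproche} to apply to~$P$, since an inseparable~$P$ forces~$\Hs(b)$ to be imperfect. Assume, for contradiction, that the image~$\bar f$ of~$f$ in~$\Os_{X_{b},x}$ is nonzero. Then~$\bar f$ has a well-defined finite $P$-adic valuation~$n$, and $f(x)=0$ gives $n \ge 1$. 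I would now apply Theorem~\ref{theo:Weierstrassapproche} with~$f$ itself as divisor: every $F \in \Os_{X,x}$ can be written $F = Qf + R$ with $R = \sum_{i=0}^{nd-1} c_{i}\, T^{i}$, $c_{i} \in \Os_{B,b}$, of degree strictly less than~$nd$.

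The contradiction comes from evaluating at~$x$, which I regard as the heart of the argument. Applying the evaluation homomorphism $\mathrm{ev}_{x} \colon \Os_{X,x} \to \Hs(x)$ to $F = Qf + R$ and using $f(x)=0$ gives $F(x) = \sum_{i=0}^{nd-1} c_{i}(b)\,\alpha^{i}$, since each~$c_{i}$, viewed in~$\Os_{X,x}$ through the projection, satisfies $c_{i}(x) = c_{i}(b)$. As the image of~$\Os_{B,b}$ under evaluation at~$b$ is exactly its residue field~$\kappa(b)$, we obtain that $\mathrm{ev}_{x}(\Os_{X,x})$ is contained in the $\kappa(b)$-linear span of $1, \alpha, \dots, \alpha^{nd-1}$ inside~$\Hs(x)$. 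Taking $F = T^{nd}$ shows that~$\alpha^{nd}$ lies in this span, i.e. that~$\alpha$ satisfies a monic relation of degree~$nd$ over~$\kappa(b)$. Thus $\alpha = T(x)$ is algebraic over~$\kappa(b)$, meaning that~$x$ is thick rigid --- contrary to hypothesis. Hence $\bar f = 0$, as required.

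The main obstacle is the thin rigid (rigide maigre) case: there the minimal polynomial~$P$ is defined only over~$\Hs(b)$ and not over the base, so one cannot exhibit an analytic function on~$X$ cutting out~$x$ in the fibre and thereby control directly the vanishing of~$f$. The device that unlocks it is to feed~$f$ into the division theorem as the divisor and to exploit the uniform bound~$nd$ on the degrees of the remainders: a nonzero image~$\bar f$ would confine all the values~$F(x)$ to a finite-dimensional $\kappa(b)$-subspace of~$\Hs(x)$, forcing~$T(x)$ to become algebraic over~$\kappa(b)$. The non-rigid case, by contrast, is immediate from the classical one-dimensional theory over~$\Hs(b)$.
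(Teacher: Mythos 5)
Your proof is correct and follows essentially the same route as the paper: the non-rigid case is dismissed because $\Os_{X_{b},x}$ is a field, and in the thin rigid case one applies the Weierstra{\ss} division theorem with $f$ as divisor and $F=T^{nd}$ to produce a nonzero monic polynomial $T^{nd}-R$ with coefficients in $\Os_{B,b}$ vanishing at $x$, forcing $T(x)$ to be algebraic over $\kappa(b)$ and contradicting the hypothesis that $x$ is not thick. Your reformulation via the evaluation map and the finite-dimensional $\kappa(b)$-span is just a slightly more verbose phrasing of the same contradiction.
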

\begin{proof}
Supposons que~$x$ soit un point de type~2, 3, 4 ou de type~1 non rigide. Dans ce cas, l'anneau local~$\Os_{X_{b},x}$ est un corps et le r\'esultat est imm\'ediat.

Il nous reste \`a traiter le cas o\`u~$x$ est un point rigide maigre. Consid\'erons son polyn\^ome minimal unitaire $P(S)\in\Hs(b)[S]$. Notons~$d$ son degr\'e. Supposons, par l'absurde, que~$f$ n'est pas nul dans~$\Os_{X_{b},x}$. Notons~$n$ sa valuation $P$-adique dans~$\Os_{X_{b},x}$. Appliquons le th\'eor\`eme~\ref{theo:Weierstrassapproche} avec $F=S^{nd}$ et $G=f$. Il assure qu'il existe un polyn\^ome~$R$ \`a coefficients dans~$\Os_{B,b}$ et de degr\'e strictement inf\'erieur \`a~$nd$ tel que $F=Qf+R$. En particulier, le polyn\^ome non nul~$F-R$ s'annule en~$x$. On en d\'eduit que le point~$x$ est \'epais, ce qui contredit l'hypoth\`ese.
\end{proof}

Une fois le th\'eor\`eme de division de Weierstra{\ss} d\'emontr\'e, nous pouvons en d\'eduire, par un raisonnement classique, le th\'eor\`eme de pr\'eparation de Weierstra{\ss}. Remarquons qu'il ne vaut que pour les points rigides \'epais.

\begin{theo}[de pr\'eparation de Weierstra{\ss}]\label{theo:preparationWeierstrass}
Soit~$b$ un point de~$B$. Soit $P(S) \in \kappa(b)[S]$ un polyn\^ome unitaire de degr\'e~$d$ dont l'image dans~$\Hs(b)[S]$ est irr\'eductible. Si le polyn\^ome~$P$ est ins\'eparable et si le corps r\'esiduel~$\Hs(b)$ est trivialement valu\'e, supposons que le point~$b$ est ultram\'etrique typique. 

Notons~$x$ le point rigide de la fibre~$X_{b}$ d\'efini par l'\'equation~$P=0$. Soit~$G$ un \'el\'ement de l'anneau local~$\Os_{X,x}$. Supposons que son image dans~$\Os_{X_{b},x}$ n'est pas nulle et notons~$n$ sa valuation $P$-adique.

Alors il existe un unique couple $(\Omega,E) \in {\Os_{X,x}}^2$ tel que :
\begin{enumerate}[i)]
\item $\Omega \in \Os_{B,b}[S]$ est un polyn\^ome unitaire de degr\'e~$nd$ v\'erifiant $\Omega(b)(S) = P(S)^n$ dans $\Hs(b)[S]$ ;
\item $E$ est inversible dans~$\Os_{X,x}$ ;
\item $G = \Omega E$.
\end{enumerate}
\end{theo}
\begin{proof}
Supposons qu'un tel couple $(\Omega,E)\in {\Os_{X,x}}^2$ existe et \'ecrivons \mbox{$\Omega = P^n + R$}, o\`u $R(S)\in\Os_{B,b}[S]$ est un polyn\^ome de degr\'e strictement inf\'erieur \`a~$nd$. Nous avons alors $P^n = E^{-1} G - R$ dans~$\Os_{X,x}$ et le th\'eor\`eme de division de Weierstra{\ss} assure l'unicit\'e des \'el\'ements~$E$ et~$R$, et donc $\Omega$.

Pour d\'emontrer l'existence, appliquons le th\'eor\`eme de division de Weierstra{\ss} \`a~$P^n$ et~$G$. On en d\'eduit qu'il existe des \'el\'ements~$Q$ de~$\Os_{X,x}$ et~$R$ de~$\Os_{B,b}[S]$, $R$ \'etant de degr\'e strictement inf\'erieur \`a~$nd$, tels que $P^n = QG+R$. On v\'erifie que~$Q$ est inversible dans~$\Os_{X,x}$ et que les \'el\'ements~$E=Q^{-1}$ et $\Omega = P^n-R$ conviennent.
\end{proof}

Une autre cons\'equence classique du th\'eor\`eme de division de Weierstra{\ss} a trait aux morphismes finis. Nous pouvons, en effet, maintenant, g\'en\'eraliser le th\'eor\`eme~\ref{theo:finiPS-Tseparable} en rel\^achant l'hypoth\`ese sur le polyn\^ome~$P$. Cette partie est classique, du moins en ce qui concerne les espaces analytiques complexes, et se trouve r\'edig\'ee dans~\cite{GR}, I, \S~2, par exemple. Nous renvoyons le lecteur d\'esireux de lire les d\'etails dans le cadre plus g\'en\'eral o\`u nous nous sommes plac\'es \`a~\cite{asterisque}, \S~5.5. Nous nous contenterons ici d'indiquer les \'enonc\'es des r\'esultats.

Reprenons le cadre d\'ecrit \`a la section~\ref{sec:endodroite}. Soient~$d\in\N^*$ et~$P$ un polyn\^ome de degr\'e~$d$ \`a coefficients dans~$\As$ dont le coefficient dominant est inversible. Consid\'erons le morphisme~$\varphi$ de la droite affine~$X$ dans elle-m\^eme induit par le polyn\^ome~$P$.

Il peut \^etre d\'ecrit de la fa\c{c}on suivante. Notons $X_{2} = \E{2}{\As}$, avec variables~$S$ et~$T$. Consid\'erons le ferm\'e de Zariski~$Z$ de~$X_{2}$ d\'efini par le polyn\^ome $G(S,T) = P(S)-T$ et identifions-le \`a la droite affine~$X$ (avec variable~$S$). Le morphisme~$\varphi$ est alors la compos\'ee du plongement de~$X$ dans~$X_{2}$ et de la projection sur la deuxi\`eme coordonn\'ee~$T$.

\begin{theo}
Soient~$b$ un point de~$B$ et~$x$ un point de~$X_{b}$ (avec variable~$T$). Si son corps r\'esiduel~$\Hs(x)$ est imparfait et trivialement valu\'e, supposons que le point~$x$ est ultram\'etrique typique. Notons $\varphi^{-1}(x) = \{y_{1},\dots,y_{t}\} \subset Z$. Soit $(f_{1},\dots,f_{t}) \in \prod_{i=1}^t \Os_{X_{2},y_{i}}$. Alors, il existe un unique \'el\'ement $(r,q_{1},\dots,q_{t})$ de $\Os_{X,x}[S] \times \prod_{i=1}^t \Os_{X_{2},y_{i}}$ v\'erifiant les propri\'et\'es suivantes :
\begin{enumerate}[i)]
\item pour tout $i\in\cn{1}{t}$, nous avons $f_{i} = q_{i}G + r$ dans~$\Os_{X_{2},y_{i}}$ ;
\item le polyn\^ome~$r$ est de degr\'e strictement inf\'erieur \`a~$d$.
\end{enumerate}
\end{theo}

\begin{theo}\label{theo:finiPS-T}
Supposons que tout point~$b$ de~$B$ dont le corps r\'esiduel~$\Hs(b)$ est de caract\'eristique non nulle et trivialement valu\'e est ultram\'etrique typique. Alors, le morphisme
\[\begin{array}{ccc}
\Os_{X}^d & \to & \varphi_{*}\Os_{X}\\
(a_{0},\ldots,a_{d-1}) & \mapsto & \disp \sum_{i=0}^d a_{i}\, T^i
\end{array}\]
est un isomorphisme de~$\Os_{X}$-modules. 

En particulier, pour toute partie~$U$ de~$X$, le morphisme naturel
\[\Os_{X}(U)[S]/(P(S)-T) \to \Os_{X}(\varphi^{-1}(U))\]
est un isomorphisme.
\end{theo}

\begin{coro}
Supposons que tout point~$b$ de~$B$ dont le corps r\'esiduel~$\Hs(b)$ est de caract\'eristique non nulle et trivialement valu\'e est ultram\'etrique typique. Supposons que le faisceau~$\Os_{X}$ est coh\'erent. Alors, pour toute partie~$U$ de~$X$ et tout faisceau coh\'erent~$\Fs$ sur~$\varphi^{-1}(U)$, le faisceau~$(\varphi_{U})_{*}\Fs$ est coh\'erent.
\end{coro}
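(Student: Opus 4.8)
Le plan est de ramener la coh\'erence de $(\varphi_{U})_{*}\Fs$ \`a l'existence, au voisinage de chaque point, d'une pr\'esentation finie par des $\Os_{X}$-modules libres. En effet, la coh\'erence \'etant une propri\'et\'e locale et le faisceau $\Os_{X}$ \'etant suppos\'e coh\'erent, tout $\Os_{X}$-module localement de pr\'esentation finie est automatiquement coh\'erent. Il suffit donc de construire une telle pr\'esentation de $(\varphi_{U})_{*}\Fs$ au voisinage de chaque point $x$ de $U$.

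Fixons $x\in U$ et notons $\varphi^{-1}(x) = \{y_{1},\dots,y_{t}\}$, ensemble fini puisque le morphisme $\varphi$ est fini. Le point topologique essentiel est que $\varphi$ est propre : les espaces en jeu \'etant s\'epar\'es, on choisit des voisinages ouverts deux \`a deux disjoints $W_{i}$ des $y_{i}$, puis on pose $V = U \setminus \varphi\big(\varphi^{-1}(U)\setminus \bigcup_{i} W_{i}\big)$. Comme $\varphi$ est ferm\'e, $V$ est un voisinage de $x$ et l'on a $\varphi^{-1}(V) = \bigsqcup_{i} (\varphi^{-1}(V)\cap W_{i})$. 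J'en d\'eduirais imm\'ediatement que, pour tout faisceau sur $\varphi^{-1}(U)$, la fibre de son image directe en $x$ est la somme directe des fibres aux points $y_{i}$ ; en particulier, le foncteur $\varphi_{*}$ est exact.

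Puisque $\Fs$ est coh\'erent, il poss\`ede au voisinage de chaque $y_{i}$ une pr\'esentation finie ; la fibre \'etant finie, on les rassemble, quitte \`a r\'etr\'ecir $V$, en une suite exacte $\Os_{\varphi^{-1}(V)}^{q} \to \Os_{\varphi^{-1}(V)}^{p} \to \Fs \to 0$ au-dessus de $\varphi^{-1}(V)$. J'appliquerais alors le foncteur exact $\varphi_{*}$ ainsi que l'isomorphisme $\varphi_{*}\Os_{X} \simeq \Os_{X}^{d}$ fourni par le th\'eor\`eme~\ref{theo:finiPS-T}, qui identifie de fa\c{c}on $\Os_{V}$-lin\'eaire l'image directe $\varphi_{*}\big(\Os_{\varphi^{-1}(V)}^{p}\big)$ au module libre $\Os_{V}^{dp}$. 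On obtiendrait la suite exacte
\[\Os_{V}^{dq} \to \Os_{V}^{dp} \to (\varphi_{U})_{*}\Fs \to 0\]
au-dessus de $V$, c'est-\`a-dire la pr\'esentation finie voulue, ce qui ach\`everait l'argument.

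La principale difficult\'e me para\^{\i}t r\'esider dans la v\'erification soigneuse des \'enonc\'es topologiques (propret\'e du morphisme fini $\varphi$ et d\'ecomposition de $\varphi^{-1}(V)$ en r\'eunion disjointe) dont d\'ecoulent l'exactitude de $\varphi_{*}$ et la formule pour les fibres, ainsi que dans le fait que l'isomorphisme du th\'eor\`eme~\ref{theo:finiPS-T} transforme bien les fl\`eches $\Os_{\varphi^{-1}(V)}$-lin\'eaires de la pr\'esentation en des fl\`eches $\Os_{V}$-lin\'eaires entre modules libres apr\`es application de $\varphi_{*}$. Tout le reste est formel. Cet argument est classique en g\'eom\'etrie analytique complexe (\textit{cf.}~\cite{GR}, I, \S~2) et se transpose sans changement dans notre cadre.
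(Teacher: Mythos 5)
Votre argument est correct et co\"{\i}ncide avec celui que le papier a en vue : l'article ne r\'edige pas de preuve pour ce corollaire et renvoie explicitement \`a l'argument classique de~\cite{GR}, I, \S~2 (exactitude de~$\varphi_{*}$ pour un morphisme fini via la formule sur les fibres, puis transport d'une pr\'esentation finie au moyen de l'isomorphisme $\varphi_{*}\Os_{X}\simeq\Os_{X}^d$ du th\'eor\`eme~\ref{theo:finiPS-T}), qui est exactement celui que vous reproduisez. Les points \og d\'elicats \fg\ que vous signalez (propret\'e du morphisme fini, d\'ecomposition de~$\varphi^{-1}(V)$ en union disjointe, $\Os_{V}$-lin\'earit\'e des fl\`eches apr\`es image directe) sont bien les seuls \`a v\'erifier et ne posent pas de difficult\'e dans ce cadre.
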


Remarquons que le r\'esultat du th\'eor\`eme~\ref{theo:finiPS-T} s'\'etend en dimension sup\'erieure.

\begin{coro}\label{cor:finidimsup}
Supposons que tout point~$b$ de~$B$ dont le corps r\'esiduel~$\Hs(b)$ est de caract\'eristique non nulle et trivialement valu\'e est ultram\'etrique typique. Soit~$n\in\N^*$. Pour tout~$m\in \cn{1}{n}$, soit $P_{m} \in \As[T_{1},\dots,T_{m}]$ un polyn\^ome qui, vu comme polyn\^ome en la variable~$T_{m}$ n'est pas constant et poss\`ede un coefficient dominant inversible. Consid\'erons l'endomorphisme de $\As[T_{1},\dots,T_{n}]$ qui, pour tout $m\in\cn{1}{n}$, envoie~$T_{m}$ sur~$P_{m}$. Notons $\psi : \E{n}{\As} \to \E{n}{\As}$ le morphisme entre espaces analytiques associ\'e.

Alors, pour toute partie~$U$ de~$\E{n}{\As}$, le morphisme naturel
\[\Os_{\E{n}{\As}}(U)[S_{1},\dots,S_{n}]/(P_{1}(S_{1})-T_{1},\dots,P_{n}(S_{1},\dots,S_{n})-T_{n}) \to \Os_{\E{n}{\As}}(\varphi^{-1}(U))\]
est un isomorphisme.
\end{coro}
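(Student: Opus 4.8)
The plan is to iterate theorem~\ref{theo:finiPS-T} one coordinate at a time, after a preliminary factorisation of~$\psi$. Because $P_m$ involves only $T_1,\dots,T_m$, the endomorphism is triangular and factors as a composite $\psi = a_1\circ\cdots\circ a_n$ in which $a_k$ alters only the $k$-th coordinate, sending it to $P_k(S_1,\dots,S_k)$ and fixing the others, and in which $a_n$ is performed first and $a_1$ last. Substituting stepwise, one checks that this recovers $T_m = P_m(S_1,\dots,S_m)$ for every~$m$: at the moment $a_k$ acts, coordinates $1,\dots,k$ are still untouched, so $P_k$ receives exactly $S_1,\dots,S_k$.

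Each $a_k$ is a one-dimensional endomorphism relative to the base~$\E{n-1}{\As}$ formed by the coordinates other than~$k$: in the $k$-th variable, $P_k$ is non-constant with invertible leading coefficient. Granting the analogue of theorem~\ref{theo:finiPS-T} over this affine base, $a_k$ yields, for every part~$U'$, an isomorphism $\Os(U')[S_k]/(P_k-T_k) \xrightarrow[]{\sim} \Os(a_k^{-1}(U'))$. Feeding the successive preimages into one another for $k=1,\dots,n$ and composing these presentations gives, for an arbitrary part~$U$,
\[\Os_{\E{n}{\As}}(U)[S_1,\dots,S_n]/(P_1(S_1)-T_1,\dots,P_n(S_1,\dots,S_n)-T_n) \xrightarrow[]{\sim} \Os_{\E{n}{\As}}(\psi^{-1}(U)),\]
the relations reassembling correctly precisely because of the triangular order of the factorisation. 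This is the desired statement.

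The one genuinely new point is the relative form of theorem~\ref{theo:finiPS-T}, where the base~$\Mc(\As)$ is replaced by~$\E{n-1}{\As}$; its only hypothesis is that every base point whose residue field is of nonzero characteristic and trivially valued be ultrametric typical. I would first secure this by a propagation argument: under the hypothesis of the corollary, every point~$c$ of any~$\E{m}{\As}$ with $\Hs(c)$ of nonzero characteristic and trivially valued is ultrametric typical. Indeed, if $b$ denotes the image of~$c$ in~$B$, the inclusion $\Hs(b)\hookrightarrow\Hs(c)$ forces $\Hs(b)$ to be of nonzero characteristic and, its absolute value being induced by that of~$\Hs(c)$, trivially valued; hence $b$ is ultrametric typical by hypothesis, and proposition~\ref{prop:umtypiquerecurrence} shows that $c\in(X_m)_b=\E{m}{\Hs(b)}$ is ultrametric typical as well.

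The hard part will be the relative theorem itself, which I expect to handle by localising on the base rather than by reproving the whole machinery. I would cover~$\E{n-1}{\As}$ by compact spectrally convex neighbourhoods~$V$; over each such~$V$ the ring~$\Bs(V)$ is an honest Banach ring with $\Mc(\Bs(V))\simeq V$, and its points inherit the propagated hypothesis, so theorem~\ref{theo:finiPS-T} applies to~$\Bs(V)$ verbatim. The finiteness statement for the relative affine line above~$V$ is then identified with the one for $\E{1}{\Bs(V)}$ through the comparison isomorphisms of section~1, in the spirit of proposition~\ref{prop:Bfini}, and the statement over an arbitrary part~$U$ is recovered by passing to the colimit over the neighbourhoods~$V$, using the descriptions of section rings and germs recalled there. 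I expect the main effort to lie in this gluing and bookkeeping, no further idea being required.
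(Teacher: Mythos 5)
Le texte n'écrit pas de démonstration pour ce corollaire (il est introduit par \og Remarquons que le résultat du théorème~\ref{theo:finiPS-T} s'étend en dimension supérieure \fg), mais votre argument est précisément celui qui est sous-entendu : factorisation triangulaire de~$\psi$ en substitutions d'une seule variable, application itérée du théorème~\ref{theo:finiPS-T} relativement à la base~$\E{n-1}{\As}$ via le remplacement de~$\As$ par~$\Bs(V)$ pour des voisinages rationnels~$V$ (exactement la réduction utilisée ailleurs dans le texte, par exemple dans la preuve du théorème~\ref{theo:noetherienepais}), et propagation de l'hypothèse \og ultramétrique typique \fg\ par la proposition~\ref{prop:umtypiquerecurrence}. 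Votre vérification que l'ordre de composition ($a_{n}$ d'abord, $a_{1}$ en dernier) fait bien réapparaître les relations $P_{m}(S_{1},\dots,S_{m})-T_{m}$ est correcte, et la démonstration est complète au niveau de détail que le texte lui-même aurait fourni.
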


\begin{rema}
En nous pla\c{c}ant au voisinage d'un point de~$\E{n}{\As}$, nous pourrions rel\^acher les conditions sur les polyn\^omes~$P_{m}$.
\end{rema}

\section{Premi\`eres propri\'et\'es des anneaux locaux}

Dans cette partie, nous revenons \`a notre objectif initial : l'\'etude des anneaux locaux des espaces affines analytiques sur~$\Z$ et les anneaux d'entiers de corps de nombres. Nous d\'emontrons ici qu'ils sont noeth\'eriens et r\'eguliers. Les preuves que nous proposons valent en fait pour les anneaux locaux des espaces~$\E{n}{\As}$, o\`u~$\As$ appartient \`a une classe d'anneaux de Banach plus g\'en\'erale, qui contient notamment les corps valu\'es complets (archim\'ediens ou non) et les anneaux de valuation discr\`ete.

Bien entendu, ces r\'esultats sont d\'ej\`a connus pour les espaces de Berkovich sur un corps valu\'e ultram\'etrique complet (\textit{cf.}~\cite{bleu}, th\'eor\`emes~2.1.4 et~2.2.1). Le cas d'un point rationnel se d\'emontre par des m\'ethodes classiques de g\'eom\'etrie rigide (\textit{cf.}~\cite{BGR}, proposition~7.3.2/7), qui reposent \textit{in fine} sur un th\'eor\`eme de division de Weierstra{\ss} global : il vaut pour les alg\`ebres de Tate, qui sont des alg\`ebres de fonctions sur des disques de rayon strictement positif, et se d\'emontre par des arguments de r\'eduction (\textit{cf.}~\cite{BGR}, th\'eor\`eme~5.2.1/2). Pour traiter le cas d'un point~$x$ quelconque, V.~Berkovich effectue le changement de base $k \to \Hs(x)$, op\'eration qui rend le point~$x$ rationnel. Un r\'esultat de L.~Gruson (\textit{cf.}~\cite{Gruson}) montre que ce changement de base est fid\`element plat et permet de conclure. Il semble difficile d'adapter ces arguments pour les espaces de Berkovich globaux (en particulier le dernier, puisque le changement de base $\As \to \Hs(x)$ n'est pas plat en g\'en\'eral). C'est pourquoi nous avons choisi de mettre en {\oe}uvre une m\'ethode purement locale, bas\'ee sur les th\'eor\`emes de Weierstra{\ss} d\'emontr\'es \`a la section pr\'ec\'edente. Notre m\'ethode pr\'esente, de surcro\^{\i}t, l'avantage d'unifier les approches complexe et $p$-adique.

\medskip

Nous d\'emontrerons les r\'esultats de noeth\'erianit\'e et r\'egularit\'e annonc\'es en supposant que les anneaux locaux de l'espace de base~$B$ sont assez simples. Pr\'ecisons.

\begin{defi}
Soit~$C$ une partie compacte de~$B$. Soit~$\Us$ un syst\`eme fondamental de voisinages compacts et spectralement convexes de~$C$. On dit qu'un id\'eal~$I$ de~$\Bs(C)^\dag$ est \textbf{$\Bs$-fortement de type fini} relativement \`a~$\Us$ s'il existe des \'el\'ements $f_{1},\ldots, f_{p}$ de~$I$ qui sont $\Bs$-d\'efinis sur tous les \'el\'ements de~$\Us$ et v\'erifient la propri\'et\'e suivante : pour tout voisinage compact~$V$ de~$C$, il existe une famille  de nombres r\'eels strictement positifs $(K_{U,V})_{U\in\Us}$ telle que, pour tout \'el\'ement~$f$ de~$I$ $\Bs$-d\'efini sur~$V$ et tout \'el\'ement~$U$ de~$\Us$ contenu dans~$\overset{\circ}{V}$, il existe des \'el\'ements $a_{1},\ldots,a_{p}$ de~$\Bs(U)$ tels que
\[\begin{cases}
f = a_{1}f_{1} + \dots +a_{p} f_{p} \textrm{ dans } \Bs(U)\ ;\\
\forall i \in \cn{1}{p}, \|a_{i}\|_{U} \le K_{U,V} \|f\|_{V}.
\end{cases}\]
Une famille $(f_{1},\ldots,f_{p})$ v\'erifiant les propri\'et\'es pr\'ec\'edentes est appel\'ee \textbf{\mbox{$\Bs$-syst\`eme} de g\'en\'erateurs fort} de l'id\'eal~$I$ relativement \`a~$\Us$. On dit \'egalement qu'elle \textbf{engendre $\Bs$-fortement} l'id\'eal~$I$ relativement \`a~$\Us$. 
\end{defi}

\begin{defi}
Soit~$C$ une partie de~$B$. Nous dirons qu'un syst\`eme fondamental de voisinages~$\Us$ de~$C$ est \textbf{fin} s'il contient un syst\`eme fondamental de voisinages de tous ses \'el\'ements.
\end{defi}

\begin{defi}
Soit~$b$ un point de~$B$. Nous dirons qu'un anneau local noeth\'erien~$\Os_{B,b}$ de dimension~$n$ est \textbf{fortement r\'egulier} s'il existe des \'el\'ements $f_{1},\dots,f_{n}$ de~$\m_{b}$ et un syst\`eme fondamental fin~$\Us$ de voisinages compacts et spectralement convexes de~$b$ dans~$B$ tels que la famille $(f_{1},\dots,f_{n})$ engendre $\Bs$-fortement l'id\'eal maximal~$\m_{b}$ relativement \`a~$\Us$.

Dans ces conditions, nous dirons que l'anneau local~$\Os_{B,b}$ est un \textbf{corps fort} (resp. un \textbf{anneau fortement de valuation discr\`ete}) si $n=0$ (resp. $n=1$). 
\end{defi}

\begin{rema}
Un corps fort est un corps et la condition suppl\'ementaire que nous imposons s'apparente au principe du prolongement analytique. Remarquons que, si~$B$ est localement connexe et si le principe du prolongement analytique y vaut, alors tout anneau local~$\Os_{B,b}$ qui est un corps est un corps fort.

Un anneau fortement de valuation discr\`ete est un anneau de valuation discr\`ete et la condition suppl\'ementaire est l'analogue de la condition~$(U)$ introduite dans~\cite{asterisque}, d\'efinition~2.2.9. Remarquons que si la condition est v\'erifi\'ee pour une uniformisante, alors elle l'est pour toutes. Cette condition nous semble naturelle \`a imposer dans un cadre analytique : elle permet, par exemple, de montrer qu'une s\'erie dont tous les coefficients sont multiples d'une uniformisante~$\pi$ est elle-m\^eme multiple de~$\pi$.
\end{rema}

\begin{defi}\label{defi:debase}
Un \textbf{anneau de Banach}~$(\As,\|.\|)$ est dit \textbf{de base} si, pour tout point~$b$ de son spectre $B=\Mc(\As)$, il existe un syst\`eme fondamental fin de voisinages compacts et spectralement convexes~$\Us_{b}$ v\'erifiant les propri\'et\'es suivantes :
\begin{enumerate}[i)]
\item l'anneau local~$\Os_{B,b}$ est un anneau local noeth\'erien, fortement r\'egulier relativement \`a~$\Us_{b}$ et de dimension inf\'erieure \`a~$1$, c'est-\`a-dire un corps fort ou un anneau fortement de valuation discr\`ete ;
\item si $\Hs(b)$ est de caract\'eristique non nulle et trivialement valu\'e, tout \'el\'ement de~$\Us_{b}$ est contenu dans~$B_{\textrm{um}}$ et poss\`ede un bord analytique fini (ce qui entra\^{\i}ne que le point~$b$ est ultram\'etrique typique).
\end{enumerate}
\end{defi}

\begin{rema}\label{rem:liste}
Dans la suite, nous allons nous int\'eresser aux points de~$\E{n}{\As}$ sur un anneau de Banach de base au sens de la d\'efinition qui pr\'ec\`ede. Indiquons quelques exemples importants d'anneaux de Banach $(\As,\|.\|)$ qui v\'erifient cette propri\'et\'e et o\`u nos r\'esultats valent donc inconditionnellement : 
\begin{enumerate}[i)]
\item un corps~$k$ muni d'une valeur absolue (archim\'edienne ou non) pour laquelle il est complet ;
\item un corps~$k$ muni de la norme $\max(|.|_{0},|.|)$, o\`u~$|.|$ est une valeur absolue sur~$k$ (\textit{cf.}~\cite{BerkovichW0});
\item un anneau de valuation discr\`ete muni d'une norme associ\'ee \`a la valuation ;
\item un anneau d'entiers de corps de nombres~$A$ muni de la norme~$\max_{\sigma}(|\sigma(.)|_{\infty})$, o\`u~$\sigma$ d\'ecrit l'ensemble des plongements complexes de~$K=\Frac(A)$ (\textit{cf.}~\cite{asterisque}, \S~3.1), par exemple l'anneau~$\Z$ muni de la valeur absolue usuelle~$|.|_{\infty}$.
\end{enumerate} 
\end{rema}

Dans un premier temps, nous allons \'etudier la propri\'et\'e de g\'en\'eration forte et montrer qu'elle passe des anneaux locaux de~$B=\Mc(\As)$ \`a ceux de~$X=\E{1}{\As}$. Nous commencerons par quelques r\'esultats sur les fonctions d\'efinies au voisinage de couronnes.

\begin{prop}\label{prop:fortcouronne}
Soient~$b$ un point de~$B$ et~$\Us$ un syst\`eme fondamental fin de voisinages compacts et spectralement convexes de~$b$. Supposons que l'id\'eal~$\m_{b}$ de~$\Os_{B,b}$ poss\`ede un $\Bs$-syst\`eme de g\'en\'erateurs forts $(f_{1},\ldots,f_{p})$ relativement \`a~$\Us$. 

Soient~$r$ et~$s$ deux nombres r\'eels v\'erifiant $0\le r\le s$ et $s>0$. Notons~$\Vs$ l'ensemble des couronnes compactes de la forme $\overline{C}_{U}(u,v)$, avec $U\in\Us$ et \mbox{$0\le u \prec r\le s < v$}. C'est un syst\`eme fondamental fin de voisinages compacts et spectralement convexes de~$\overline{C}_{b}(r,s)$ et la famille $(f_{1},\ldots,f_{p})$ engendre $\Bs$-fortement relativement \`a~$\Vs$ l'id\'eal~$I$ de $\Bs(\overline{C}_{b}(r,s))^\dag$ form\'e des \'el\'ements qui s'annulent sur $\overline{C}_{b}(r,s)$. En outre, si $n=1$ et si $\bigcap_{m\ge 0} \m_{b}^m = (0)$, alors $\bigcap_{m\ge 0} I^m = (0)$.
\end{prop}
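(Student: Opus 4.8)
The plan is to prove the three assertions in turn, the crux being the strong generation (second assertion). First I would check that $\Vs$ is a fine fundamental system of compact, spectrally convex neighborhoods of $\overline{C}_b(r,s)$. Each $\overline{C}_U(u,v)$ with $U\in\Us$ and $0\le u\prec r\le s<v$ is compact, and since $U$ is spectrally convex so is the corona over it (by the discussion following Proposition~\ref{prop:Bcouronne} and~\cite{asterisque}). That $\Vs$ is a fundamental system follows from the compactness of $\overline{C}_b(r,s)$: any neighborhood of it contains a corona $\overline{C}_W(u,v)$ with $W$ a neighborhood of $b$ and $u\prec r\le s<v$, which we shrink to an element of $\Us$ to land in $\Vs$. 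Fineness follows from that of $\Us$: given $\overline{C}_U(u,v)\in\Vs$, choosing $U'\in\Us$ with $U\subset\mathring{U'}$ and radii $u'\prec u$, $v'>v$ yields coronas $\overline{C}_{U'}(u',v')\in\Vs$ (note $u'\prec u\prec r$ gives $u'\prec r$, and $v'>v>s$) forming a neighborhood basis of $\overline{C}_U(u,v)$.

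The main step is the strong generation of $I$ by $(f_1,\dots,f_p)$ relative to $\Vs$. The $f_i$ lie in $\m_b$, hence vanish at $b$ and, being constant along the fibres, on all of $\overline{C}_b(r,s)$; so they belong to $I$ and are $\Bs$-defined on every element of $\Vs$. Given a compact neighborhood $V$ of $\overline{C}_b(r,s)$ and a corona $\overline{C}_{U_1}(u_1,v_1)\in\Vs$ inside $\mathring V$, I would interpose $U_0\in\Us$ with $U_1\subset\mathring{U_0}$ and radii $u_0\prec u_1$, $v_1<v_0$, so that $\overline{C}_{U_1}(u_1,v_1)$ lies in the interior of $\overline{C}_{U_0}(u_0,v_0)\subset\mathring V$. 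For $f\in I$ $\Bs$-defined on $V$, Proposition~\ref{prop:Bcouronne} (together with~\cite{asterisque}) gives a Laurent expansion $f=\sum_{k\in\Z}c_k\,T^k$ with $c_k\in\Bs(U_0)$; vanishing of $f$ on $\overline{C}_b(r,s)$ forces $c_k(b)=0$, i.e. $c_k\in\m_b$, each $c_k$ being $\Bs$-defined on $U_0$. Since $\Bs(U_0)$ is uniform, Proposition~\ref{prop:comparaisonnormes} yields a constant $C_0>0$ depending only on $u_0,u_1,v_0,v_1$ with
\[\sum_{k\in\Z}\|c_k\|_{U_0}\,\max(u_1^k,v_1^k)\le C_0\,\|f\|_{\overline{C}_{U_0}(u_0,v_0)}\le C_0\,\|f\|_{V}.\]
Applying the strong generation of $\m_b$ relative to $\Us$ to each coefficient (with outer neighborhood $U_0$ and inner $U_1$) produces $a_{i,k}\in\Bs(U_1)$ with $c_k=\sum_{i=1}^p a_{i,k}f_i$ and $\|a_{i,k}\|_{U_1}\le K_{U_1,U_0}\|c_k\|_{U_0}$, the constant being uniform in $k$. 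Setting $a_i=\sum_{k\in\Z}a_{i,k}T^k$, the bound
\[\sum_{k\in\Z}\|a_{i,k}\|_{U_1}\,\max(u_1^k,v_1^k)\le K_{U_1,U_0}\,C_0\,\|f\|_{V}\]
shows that $a_i$ converges in $\Bs(\overline{C}_{U_1}(u_1,v_1))$ with norm at most $K_{U_1,U_0}C_0\|f\|_V$, and comparing Laurent coefficients gives $f=\sum_{i=1}^p a_i f_i$. Taking $K_{\overline{C}_{U_1}(u_1,v_1),V}=K_{U_1,U_0}C_0$ establishes the strong generation. The only delicate bookkeeping I anticipate here is that the single base constant $K_{U_1,U_0}$ must serve all coefficients simultaneously, and that the geometric estimate of Proposition~\ref{prop:comparaisonnormes} makes the reassembled series converge on the smaller corona.

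Finally, assume $n=1$ and $\bigcap_{m\ge0}\m_b^m=(0)$. Then $\Os_{B,b}$ is a discrete valuation ring with principal maximal ideal $\m_b=(\pi)$, and the previous step identifies $I$ with the ideal of $\Bs(\overline{C}_b(r,s))^\dag$ generated by $(f_1,\dots,f_p)$, that is by $\pi$; hence $I^m=(\pi^m)$. For $g\in\bigcap_m I^m$ with Laurent expansion $g=\sum_k c_k T^k$, the relation $g=\pi^m h_m$ on a neighborhood of $\overline{C}_b(r,s)$ forces, upon comparing coefficients, $c_k\in(\pi^m)=\m_b^m$ in $\Os_{B,b}$ for every $m$, so that $c_k\in\bigcap_m\m_b^m=(0)$: every coefficient germ vanishes. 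The hard part is to deduce $g=0$ from this: the vanishing of each coefficient germ at $b$ must be promoted to vanishing on a common neighborhood of $b$, which is precisely the analytic continuation principle attached to the strong discrete valuation structure of $\Os_{B,b}$ — the quantitative content of the observation that a series all of whose coefficients are divisible by $\pi$ is itself divisible by $\pi$. Once this injectivity of coefficient extraction is secured, $g=0$ follows and the proof is complete.
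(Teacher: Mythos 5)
Your proof follows the paper's argument essentially verbatim: expand $f$ into its Laurent coefficients over a slightly larger corona via Proposition~\ref{prop:Bcouronne}, apply the strong generation of~$\m_{b}$ coefficient by coefficient with a constant uniform in~$k$, and reassemble the series on the smaller corona using the norm comparison of Proposition~\ref{prop:comparaisonnormes} --- exactly the paper's three steps. For the final assertion the paper is just as terse as you are: it observes that every coefficient lies in $\bigcap_{m\ge 0}(f_{1}^m)=(0)$ as a germ and concludes $f=0$ in a single clause, so the passage you flag as the hard part (promoting the vanishing of each coefficient germ to vanishing on a common neighbourhood) receives no further treatment there either.
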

\begin{proof}
Soient~$V$ un voisinage compact de~$\overline{C}_{b}(r,s)$, $f$~un \'el\'ement de~$I$ qui est $\Bs$-d\'efini sur~$V$ et $\overline{C}_{U}(u,v)$ un \'el\'ement de~$\Vs$ contenu dans~$\overset{\circ}{V}$. L'int\'erieur du voisinage~$V$ contient une partie de la forme $\overline{C}_{U_{0}}(u_{0},v_{0})$, o\`u~$U_{0}$ est un voisinage compact et spectralement convexe de~$U$ dans~$B$ et $0\le u_{0} \prec u\le v < v_{0}$. D'apr\`es la proposition~\ref{prop:Bcouronne}, nous pouvons \'ecrire~$f$ sous la forme $\sum_{k\in\Z} a_{k}\, T^k$, o\`u~$(a_{k})_{k\in\Z}$ est une famille d'\'el\'ements de~$\Bs(U_{0})$ telle que la famille $\big(\|a_{k}\|_{U_{0}} \max(u_{0}^k,v_{0}^k)\big)_{k\in\Z}$ est sommable. 

Puisque~$\Us$ est fin, nous pouvons choisir un \'el\'ement~$U_{1}$ de~$\Us$ tel que \mbox{$\overset{\circ}{U_{0}} \supset U_{1} \supset \overset{\circ}{U_{1}} \supset U$}. Puisque~$f$ est un \'el\'ement de~$I$, pour tout $k\in\Z$, $a_{k}$ s'annule en~$b$. Par cons\'equent, il existe des \'el\'ements $\alpha_{1,k},\ldots,\alpha_{p,k}$ de~$\Bs(U_{1})$ tels que
\[\begin{cases}
a_{k} = \alpha_{1,k} f_{1} + \dots \alpha_{p,k} f_{p} \textrm{ dans } \Bs(U_{1})\ ;\\
\forall i \in \cn{1}{p}, \|\alpha_{i,k}\|_{U_{1}} \le K_{U_{1},U_{0}} \|a_{k}\|_{U_{0}}.
\end{cases}\]

Par cons\'equent, pour tout $i \in \cn{1}{p}$, la famille $(\|\alpha_{i,k}\|_{U_{1}} \max(u_{0}^k,v_{0}^k))_{k\in\Z}$ est sommable et la s\'erie $A_{i} = \sum_{k\in\Z} \alpha_{i,k}\, T^k$ d\'efinit donc un \'el\'ement de~$\Bs(\overline{C}_{U}(u,v))$. De plus, nous avons
\[f = \sum_{i=1}^p A_{i} f_{i} \textrm{ dans } \Bs(\overline{C}_{U}(u,v))\]
et, pour tout $i \in \cn{1}{p}$,
\[\renewcommand{\arraystretch}{1.5}{\begin{array}{ccc}
\|A_{i}\|_{\overline{C}_{U}(u,v)} &\le& \disp K_{U_{1},U_{0}} \sum_{k\in\Z}  \|a_{k}\|_{U_{1}} \max(u_{0}^k,v_{0}^k)\\
& \le&\disp K_{U_{1},U_{0}} \left(\frac{u_{0}}{u-u_{0}} + \frac{v_{0}}{v_{0}-v}\right) \|f\|_{\overline{C}_{U_{1}}(r,s)}\\
& \le & \disp K_{U_{1},U_{0}} \left(\frac{u_{0}}{u-u_{0}} + \frac{v_{0}}{v_{0}-v}\right) \|f\|_{V}, 
\end{array}}\]
d'apr\`es la proposition~\ref{prop:comparaisonnormes}.

Supposons maintenant que~$p=1$ et $\bigcap_{m\ge 0} \m_{b}^m = (0)$. Si $f \in \bigcap_{m\ge 0} I^m$, alors, pour tout~$k\in\Z$, $a_{k} \in \bigcap_{m\ge 0} (f_{1}^m)$, d'o\`u nous d\'eduisons que $f=0$.
\end{proof}

\begin{coro}\label{cor:fortlemniscate}
Soient~$b$ un point de~$B$ et~$U_{0}$ un voisinage compact de~$b$. Soient $P$ un polyn\^ome unitaire \`a coefficients dans~$\Bs(U_{0})$ et~$r$ et~$s$ deux nombres r\'eels v\'erifiant $0\le r\le s$ et $s>0$. Soit~$\Us$ un syst\`eme fondamental fin de voisinages compacts et spectralement convexes de~$b$ dans~$U_{0}$ et $u$, $v$ deux nombres r\'eels tels que $0\le u\prec r \le s < v$. Supposons que, pour tout \'el\'ement~$U$ de~$\Us$ et pour tous $u'$, $v'$ v\'erifiant $u\prec u' \prec r$ et $s < v' < v$, la couronne $\overline{C}_{U}(u',v')$ satisfait la condition~$(N_{P(S)-T})$. Notons~$\Vs$ l'ensemble des couronnes compactes de la forme $\overline{C}_{U}(P;u',v')$, avec $U\in\Us$, $u\prec u' \prec r$ et $s < v' < v$.  

Supposons que l'id\'eal~$\m_{b}$ poss\`ede un $\Bs$-syst\`eme de g\'en\'erateurs forts $(f_{1},\ldots,f_{p})$ relativement \`a~$\Us$. Alors la famille $(f_{1},\ldots,f_{p})$ est un $\Bs$-syst\`eme de g\'en\'erateurs fort relativement \`a~$\Vs$ pour l'id\'eal~$I$ de $\Bs(\overline{C}_{b}(P;r,s))^\dag$ form\'e des \'el\'ements qui s'annulent sur $\overline{C}_{b}(P;r,s)$. En outre, si $p=1$ et si $\bigcap_{m\ge 0} \m_{b}^m = (0)$, alors $\bigcap_{m\ge 0} I^m = (0)$.
\end{coro}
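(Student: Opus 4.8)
The plan is to deduce everything from the annulus case of Proposition~\ref{prop:fortcouronne} by transporting it along the finite free extension attached to~$P$. Under the standing hypothesis that the annuli $\overline{C}_U(u',v')$ satisfy $(N_{P(S)-T})$, Proposition~\ref{prop:Blemniscate} supplies admissible isomorphisms
\[\Bs(\overline{C}_U(u',v'))[S]/(P(S)-T) \xrightarrow[]{\sim} \Bs(\overline{C}_U(P;u',v')),\]
so that the lemniscate algebra is free of rank $d=\deg P$ over the annulus algebra, with basis $1,S,\dots,S^{d-1}$; passing to the limit over the fine systems yields a free decomposition $\Bs(\overline{C}_b(P;r,s))^\dag = \bigoplus_{j=0}^{d-1}\Bs(\overline{C}_b(r,s))^\dag\,S^j$. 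First I would record that $\Vs$ is indeed a fine fundamental system of compact, spectrally convex neighbourhoods of $\overline{C}_b(P;r,s)$: fineness follows from that of~$\Us$ and the open ranges of $u',v'$, spectral convexity from the displayed isomorphism together with the finiteness of the morphism, and the $f_i\in\m_b$, being pulled back from~$B$, are $\Bs$-defined on every element of~$\Vs$.

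The heart of the proof is the identification of the ideal~$I$. Write $J\subset\Bs(\overline{C}_b(r,s))^\dag$ for the ideal of elements vanishing on the fibre-annulus, the object governed by Proposition~\ref{prop:fortcouronne}. Given $f\in I$, expand it in the free basis as $f=\sum_{j=0}^{d-1} f_j\,S^j$ with $f_j\in\Bs(\overline{C}_b(r,s))^\dag$, and observe that restriction to the fibre over~$b$ is compatible with this basis. Indeed, applying Proposition~\ref{prop:Blemniscate} over the field~$\Hs(b)$ shows that the function ring of the fibre-lemniscate is again free over that of the (reduced) fibre-annulus with basis $1,S,\dots,S^{d-1}$, and the image of~$f$ there is $\sum_j \bar f_j\,S^j$, where $\bar f_j$ is the restriction of~$f_j$ to the fibre-annulus. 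Since $f$ vanishes on the fibre-lemniscate, freeness forces $\bar f_j=0$ for every~$j$, that is $f_j\in J$; conversely each such combination lies in~$I$, so $I=\bigoplus_j J\,S^j$. This step is the main obstacle: its content is precisely that the free basis survives reduction modulo the fibre and that the fibre-annulus algebra is reduced, so that vanishing on the fibre genuinely means vanishing in~$\Bs$ — both points I would extract from Proposition~\ref{prop:Blemniscate} applied over~$\Hs(b)$.

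Once the coefficients are known to lie in~$J$, the quantitative statement is bookkeeping. Given a compact neighbourhood~$V$ of $\overline{C}_b(P;r,s)$, an element $f\in I$ that is $\Bs$-defined on~$V$, and $\overline{C}_U(P;u',v')\in\Vs$ contained in~$\mathring{V}$, I would pick a slightly larger annulus $\overline{C}_{U_0}(u_0,v_0)$ (with $U\subset\mathring{U_0}$, $u_0\prec u'$, $v'<v_0$) whose lemniscate preimage still lies in~$\mathring{V}$, expand $f=\sum_j f_j S^j$ over it, and use the equivalence of the div-norm (the maximum of the coefficient norms) with the spectral norm — the content of $(N_{P(S)-T})$ via Proposition~\ref{prop:Blemniscate} — to get $\|f_j\|_{\overline{C}_{U_0}(u_0,v_0)}\le C\,\|f\|_V$. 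Applying Proposition~\ref{prop:fortcouronne} to each~$f_j$ produces $a_{i,j}\in\Bs(\overline{C}_U(u',v'))$ with $f_j=\sum_i a_{i,j}f_i$ and $\|a_{i,j}\|_{\overline{C}_U(u',v')}\le K'\,\|f_j\|_{\overline{C}_{U_0}(u_0,v_0)}$, and setting $a_i=\sum_j a_{i,j}S^j$ gives $f=\sum_i a_i f_i$ with, again by the norm equivalence on the lemniscate, $\|a_i\|_{\overline{C}_U(P;u',v')}\le C'\max_j\|a_{i,j}\|_{\overline{C}_U(u',v')}\le K_{U,V}\,\|f\|_V$. Finally, for the last assertion, when $p=1$ the free decomposition gives $I^m=\bigoplus_j J^m\,S^j$, so any element of $\bigcap_{m\ge 0} I^m$ has all its coefficients in $\bigcap_{m\ge 0} J^m=(0)$ (Proposition~\ref{prop:fortcouronne}), whence $\bigcap_{m\ge 0} I^m=(0)$.
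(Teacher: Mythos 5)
Your proof is correct and follows exactly the paper's route: the published proof consists of the single sentence \emph{on utilise le corollaire pr\'ec\'edent et l'isomorphisme admissible d\'emontr\'e \`a la proposition~\ref{prop:Blemniscate}}, and your argument (free decomposition in the basis $1,S,\dots,S^{d-1}$, identification $I=\bigoplus_{j} J\,S^{j}$, then the norm bookkeeping via the equivalence of the div and spectral norms supplied by $(N_{P(S)-T})$) is a faithful expansion of precisely that plan. The only step to secure is the fibrewise injectivity you use to conclude $\bar f_{j}=0$: the hypothesis $(N_{P(S)-T})$ is stated for the annuli $\overline{C}_{U}(u',v')$ with $U\in\Us$ and not literally for the fibre $\overline{C}_{b}(u',v')$, but the required injectivity does hold because the fibre-lemniscate algebra over $\Hs(b)$ is the (reduced) affinoid, resp.\ complex-analytic, algebra of an affinoid subdomain of the line --- the fact recorded in the proof of Proposition~\ref{prop:couronneum} --- so vanishing on the fibre-lemniscate indeed kills each coefficient.
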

\begin{proof}
On utilise le corollaire pr\'ec\'edent et l'isomorphisme admissible d\'emontr\'e \`a la proposition~\ref{prop:Blemniscate}.
\end{proof}

Introduisons encore un peu de vocabulaire.

\begin{defi}
Soit~$b$ un point de~$B$. Soit~$x$ un point de~$X_{b}$. Nous dirons que le point~$x$ est \textbf{transcendant} si l'extension $\Hs(x)/\Hs(b)$ est transcendante. Nous dirons que le point~$x$ est \textbf{localement transcendant} si l'extension $\kappa(x)/\kappa(b)$ est transcendante. 

Soit~$x$ un point de~$X_{n}$, avec $n\in\N$, au-dessus de~$b$. Nous dirons que le point~$x$ est \textbf{purement localement transcendant} si les \'el\'ements $1,T_{1}(x),\dots,T_{n}(x)$ de~$\Hs(x)$ sont alg\'ebriquement ind\'ependants sur~$\kappa(b)$.
\end{defi}

\begin{rema}
Lorsque~$n=1$, un point~$x$ est localement transcendant si, et seulement si, il n'est pas rigide \'epais, c'est-\`a-dire si et seulement s'il est de type~2, 3, 4, de type~1 transcendant ou rigide maigre.

Revenons \`a un entier~$n\in\N$ quelconque. Pour $m\in\cn{0}{n}$, notons~$x_{m}\in X_{m}$, la projection de~$x$ sur ses $m$~premi\`eres coordonn\'ees. Dans ce cas, le point~$x$ est purement localement transcendant si, pour tout $m\in\cn{1}{n}$, le point~$x_{m}$ est localement transcendant sur~$x_{m-1}$. Cela ne d\'epend pas des coordonn\'ees choisies.
\end{rema}

\begin{coro}\label{cor:fortpasepais}
Soient~$b$ un point de~$B$ et~$x$ un point de~$X_{n}$, avec $n\in\N^*$, au-dessus de~$b$. Soit~$\Us$ un syst\`eme fondamental fin de voisinages compacts et spectralement convexes du point~$b$. Si~$\Hs(b)$ est de caract\'eristique non nulle et trivialement valu\'e, supposons que tout \'el\'ement de~$\Us$ est contenu dans~$B_{\textrm{um}}$ et poss\`ede un bord analytique fini. 

Supposons que l'id\'eal~$\m_{b}$ poss\`ede un $\Bs$-syst\`eme de g\'en\'erateurs forts $(f_{1},\ldots,f_{p})$ relativement \`a~$\Us$ et que le point~$x$ est purement localement transcendant. Alors il existe un syst\`eme fondamental fin de voisinages compacts et spectralement convexes~$\Vs$ du point~$x$ tel que $(f_{1},\ldots,f_{p})$ soit \'egalement un \mbox{$\Bs$-syst\`eme} de g\'en\'erateurs fort pour~$\m_{x}$ relativement \`a~$\Vs$. En outre, si $p=1$ et si $\bigcap_{m\ge 0} \m_{b}^m = (0)$, alors $\bigcap_{m\ge 0} \m_{x}^m = (0)$.

En particulier, si~$\Os_{B,b}$ est un corps fort, alors~$\Os_{X_{n},x}$ est un corps fort et si~$\Os_{B,b}$ est un anneau fortement de valuation discr\`ete d'uniformisante~$\pi$, alors~$\Os_{X_{n},x}$ est \'egalement un anneau fortement de valuation discr\`ete d'uniformisante~$\pi$.

Si~$\Hs(x)$ est de caract\'eristique non nulle et trivialement valu\'e, nous pouvons choisir~$\Vs$ de sorte que tout \'el\'ement de~$\Vs$ soit contenu dans~$(X_{n})_{\textrm{um}}$ et poss\`ede un bord analytique fini.
\end{coro}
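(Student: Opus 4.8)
The plan is to argue by induction on~$n$ and reduce everything to a single relative variable. Writing~$x_{m}$ for the projection of~$x$ onto its first~$m$ coordinates, the hypothesis that~$x$ is purement localement transcendant says exactly that~$x_{m}$ is localement transcendant over~$x_{m-1}$ for each~$m\in\cn{1}{n}$. The inductive step is then the relative--dimension--one case: starting from a point whose id\'eal maximal is $\Bs$-fortement engendr\'e par $(f_{1},\dots,f_{p})$, one passes to a localement transcendant point of the analytic line above it. The base case is the point~$b$ of~$B$ with ground ring~$\As$; the higher steps are the same statement with~$\As$ replaced, in a neighborhood of~$x_{m-1}$, by the Banach algebras~$\Bs(U)$ produced below, so it suffices to treat~$n=1$. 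The conditions on~$B_{\textrm{um}}$ and on finite bords analytiques propagate to the fibres by proposition~\ref{prop:umtypiquerecurrence}, which also yields the final assertion about~$(X_{n})_{\textrm{um}}$.

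For~$n=1$, fix the localement transcendant point~$x$ of~$X$ above~$b$. Proposition~\ref{prop:voisinages} furnishes a polynomial~$P$ with invertible leading coefficient, which I normalise to be unitary, such that the lemniscates~$\overline{C}_{U}(P;u',v')$ (with~$U\in\Us$ and appropriate radii) form a fine fundamental system~$\Vs$ of connected neighborhoods of~$x$. The first task is to secure the condition~$(N_{P(S)-T})$ on all the crowns involved: if~$\Hs(x)$ is not at once imparfait and trivialement valu\'e one perturbs~$P$ into a separable polynomial and invokes lemme~\ref{lem:approxdisque} and proposition~\ref{prop:NPS-T}, whereas in the remaining case one uses that~$x$ is ultram\'etrique typique together with propositions~\ref{prop:couronneum} and~\ref{prop:umtypiquedroite}. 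This case split is precisely the one already carried out in the proof of th\'eor\`eme~\ref{theo:Weierstrassapproche}.

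Next I would transport the strong generation from~$b$ to~$x$. Corollaire~\ref{cor:fortlemniscate} says that~$(f_{1},\dots,f_{p})$ $\Bs$-fortement engendre, relatively to~$\Vs$, the ideal of functions vanishing on any fixed fibre lemniscate~$\overline{C}_{b}(P;u',v')$, the clause~$\bigcap_{m\ge 0}I^{m}=(0)$ being preserved when~$p=1$. These vanishing ideals localise at~$x$ to~$\m_{x}$: since~$x$ is not rigide \'epais, corollaire~\ref{cor:pointpasepais} (applicable because our hypothesis makes~$b$ ultram\'etrique typique whenever~$\Hs(b)$ is imparfait trivialement valu\'e) shows that every element of~$\m_{x}$ becomes zero in~$\Os_{X_{b},x}$, and the classical principe du prolongement analytique on the connected curve~$X_{b}$ upgrades this germwise vanishing to vanishing on the whole connected fibre trace of any neighborhood. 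Concretely, for~$f\in\m_{x}$ $\Bs$-d\'efini sur~$V$ and~$U\in\Vs$ with~$U\subset\mathring{V}$, analytic continuation forces~$f$ to vanish on~$V\cap X_{b}$, hence on a fibre lemniscate~$L\subset\mathring{U}\cap X_{b}$ chosen to depend only on~$U$; corollaire~\ref{cor:fortlemniscate} applied with base lemniscate~$L$ then produces a decomposition~$f=\sum_{i}a_{i}f_{i}$ on~$U$ with~$\|a_{i}\|_{U}\le K_{U,V}\|f\|_{V}$. This is the desired strong generation of~$\m_{x}$ relatively to~$\Vs$, and the intersection statement transfers along the way. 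The remaining assertions are then formal: a corps fort has~$\m_{b}=(0)$, so~$\m_{x}=(0)$ and~$\Os_{X_{n},x}$ is again a corps fort; an anneau fortement de valuation discr\`ete has~$p=1$ and~$\m_{b}=(\pi)$, so~$\m_{x}=(\pi)$ with~$\bigcap_{m}\m_{x}^{m}=(0)$, giving the same structure with the same uniformisante.

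The main obstacle is uniformity in this last transfer. Corollaire~\ref{cor:fortlemniscate} only applies to functions vanishing on a full fibre lemniscate, and its constants~$K_{U,V}$ genuinely depend on that lemniscate; the whole point is therefore to choose the base lemniscate~$L$ as a function of~$U$ alone and never of~$f$, so that the bounds are uniform over all~$f\in\m_{x}$. This is exactly what the principe du prolongement analytique in the fibre buys us, through the connectedness of the sets~$V\cap X_{b}$. The second delicate point is arranging~$(N_{P(S)-T})$ uniformly across the lemniscates in the imparfait trivialement valu\'e case, but this runs parallel to the corresponding step of th\'eor\`eme~\ref{theo:Weierstrassapproche}.
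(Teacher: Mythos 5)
Votre preuve suit essentiellement la d\'emonstration du texte : r\'eduction \`a $n=1$ par r\'ecurrence, construction d'un syst\`eme fondamental de lemniscates connexes via un polyn\^ome~$P$ (avec le m\^eme d\'ecoupage en cas pour assurer $(N_{P(S)-T})$ --- notez au passage que ce d\'ecoupage porte sur~$\Hs(b)$ et non sur~$\Hs(x)$), puis passage de l'annulation au point \`a l'annulation sur toute la trace fibr\'ee par le corollaire~\ref{cor:pointpasepais} et le prolongement analytique dans la fibre connexe, avant de conclure par le corollaire~\ref{cor:fortlemniscate}. L'argument est correct et identifie bien le point d\'elicat (l'uniformit\'e des constantes, obtenue parce que la lemniscate de base ne d\'epend que de~$U$ et non de~$f$).
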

\begin{proof}
Une r\'ecurrence permet de se ramener au cas o\`u $n=1$. Nous allons traiter le cas o\`u le point~$x$ est un point de type~2 ou~3 ou~4. Le cas o\`u c'est un point de type~1 se traite de fa\c{c}on similaire, en rempla\c{c}ant, dans le raisonnement qui suit, les couronnes par des disques.

Soit~$\Us$ un syst\`eme fondamental fin de voisinages compacts et spectralement convexes du point~$b$ par rapport auquel la famille $(f_{1},\dots,f_{p})$ engendre fortement~$\m_{b}$. Soit~$W$ un voisinage ouvert du point~$x$ dans~$X$. Il existe un polyn\^ome unitaire $P_{0}(S) \in \Hs(b)[S]$ tel que $0 < r < |P_{0}(S)(x)| < s$ et $\overline{C}_{b}(P_{0};r,s)$ soit une partie connexe de~$W$. Nous allons distinguer trois cas, selon les propri\'et\'es du corps r\'esiduel~$\Hs(b)$.

$\bullet$ Supposons que la valuation de~$\Hs(b)$ n'est pas triviale. Nous pouvons approcher les coefficients de~$P_{0}$ par des \'el\'ements de~$\Os_{B,b}$ de fa\c{c}on \`a obtenir un polyn\^ome unitaire $P(S) \in \Os_{B,b}[S]$ satisfaisant encore les propri\'et\'es \'enonc\'ees. Nous pouvons, en outre, supposer que $P(b)(S)\in\Hs(b)[S]$ est s\'eparable. D'apr\`es le corollaire~\ref{cor:projouverte}, le morphisme de projection~$\pi$ est ouvert. On en d\'eduit qu'il existe un voisinage ouvert~$U_{0}$ de~$b$ dans~$B$ sur lequel les coefficients de~$P$ sont d\'efinis et tel que $\overline{C}_{U_{0}}(P;r,s) \subset W$. 

D'apr\`es le lemme~\ref{lem:lemniscatesNPS-T}, il existe un voisinage ouvert~$U_{1}$ de~$b$ dans~$U_{0}$ et des nombres r\'eels $r_{1},r_{2},s_{1},s_{2}$ v\'erifiant $0 < r \le r_{2} < r_{1} <  |P(S)(x)| < s_{1} < s_{2} \le s$ tels que, pour tout voisinage compact et spectralement convexe~$U$ de~$b$ dans~$U_{1}$, tout $u\in\of{[}{r_{2},r_{1}}{]}$ et tout $v\in\of{[}{s_{1},s_{2}}{]}$, le disque $\overline{C}_{U}(u,v)$ satisfasse la condition~$(N_{P(S)-T})$. Remarquons que, puisque~$\overline{C}_{b}(P;r,s)$ est connexe, pour tous $u,v$ v\'erifiant $u < |P(S)(x)| < v$, la lemniscate $\overline{C}_{b}(P;u,v)$ est encore connexe. 

Notons~$\Vs_{W}$ l'ensemble des parties de la forme $\overline{C}_{U}(P;r,s)$ avec $U\in\Us$, $U\subset U_{1}$, $u\in\of{]}{r_{1},r_{2}}{[}$ et $v\in\of{]}{s_{1},s_{2}}{[}$. On v\'erifie qu'il est fin. Notons~$\Vs$ la r\'eunion des ensembles~$\Vs_{W}$, pour~$W$ d\'ecrivant l'ensemble des voisinages ouvert de~$x$ dans~$X$. C'est un syst\`eme fondamental fin de voisinages compacts et spectralement convexes de~$x$ dans~$X$.

Soient~$V$ un voisinage compact de~$x$ dans~$X$ et $\overline{C}_{U}(P;u,v)$ un \'el\'ement de~$\Vs$ contenu dans~$\overset{\circ}{V}$. Soit~$f$ un \'el\'ement de~$\Bs(V)$ tel que $f(x)=0$. D'apr\`es le corollaire~\ref{cor:pointpasepais}, l'image de~$f$ dans~$\Os_{X_{b},x}$ est nulle. Par le principe du prolongement analytique, la fonction~$f$ est nulle sur $\overline{C}_{b}(P;u,v)$, car $\overline{C}_{b}(P;u,v)$ est connexe. On conclut alors \`a l'aide du corollaire~\ref{cor:fortlemniscate}.

$\bullet$ Supposons que le corps~$\Hs(b)$ est de caract\'eristique nulle. Nous pouvons alors supposer que le polyn\^ome $P_{0}(S) \in \Hs(b)[S]$ est s\'eparable. Il est alors possible de trouver un polyn\^ome $P(S) \in \Os_{B,b}[S]$ satisfaisant les m\^emes propri\'et\'es que pr\'ec\'edemment et, en particulier, le fait que $P(b)(S)\in\Hs(b)[S]$ soit s\'eparable. Le raisonnement se poursuit identiquement.

$\bullet$ Supposons que le corps~$\Hs(b)$ est de caract\'eristique non nulle et trivialement valu\'e. Nous pouvons alors relever le polyn\^ome $P_{0}(S) \in \Hs(b)[S]$ en un polyn\^ome $P(S) \in \Os_{B,b}[S]$.  
Par hypoth\`ese, tout \'el\'ement de~$\Us$ est contenu dans~$B_{\textrm{um}}$ et poss\`ede un bord analytique fini. La proposition~\ref{prop:couronneum} permet alors d'adapter le raisonnement pr\'ec\'edent.
\end{proof}

Int\'eressons-nous maintenant aux points rigides \'epais. Le raisonnement est similaire, mais un peu plus simple.

\begin{prop}\label{prop:fortdisque}
Soient~$b$ un point de~$B$ et~$\Us$ un syst\`eme fondamental fin de voisinages compacts et spectralement convexes de~$b$. Supposons que l'id\'eal~$\m_{b}$ de~$\Os_{B,b}$ poss\`ede un $\Bs$-syst\`eme de g\'en\'erateurs forts $(f_{1},\ldots,f_{p})$ relativement \`a~$\Us$. Notons~$x$ le point~$0$ de la fibre~$X_{b}$. 

Notons~$\Vs$ l'ensemble des disques compacts de la forme $\overline{D}_{U}(v)$, avec $U\in\Us$ et $v >0$. C'est un syst\`eme fondamental fin de voisinages compacts et spectralement convexes du point~$x$ et la famille $(f_{1},\ldots,f_{p},T)$ engendre $\Bs$-fortement relativement \`a~$\Vs$ l'id\'eal maximal~$\m_{x}$ de~$\Os_{X,x}$. 

Si tout \'el\'ement de~$\Us$ est contenu dans~$B_{\textrm{um}}$ et poss\`ede un bord analytique fini, alors tout \'el\'ement de~$\Vs$ est contenu dans~$X_{\textrm{um}}$ et poss\`ede un bord analytique fini.
\end{prop}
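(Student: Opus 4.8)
Le plan est de reproduire la strat\'egie de la proposition~\ref{prop:fortcouronne} en rempla\c{c}ant les couronnes par des disques et en adjoignant la variable~$T$ au syst\`eme de g\'en\'erateurs. Le fait que~$\Vs$ soit un syst\`eme fondamental de voisinages compacts et spectralement convexes de~$x=0_{b}$ r\'esulte de la description de la topologie de~$X$ (les~$\overline{D}_{U}(v)$, pour~$U$ voisinage de~$b$ et~$v>0$, en forment un) et des propri\'et\'es de convexit\'e spectrale des disques d\'ej\`a acquises ; sa finesse se d\'eduirait de celle de~$\Us$, en formant, \`a partir d'un syst\`eme fondamental de voisinages de~$U$ pris dans~$\Us$, les disques associ\'es de rayon l\'eg\`erement sup\'erieur \`a~$v$.

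Le c{\oe}ur de la preuve serait la g\'en\'eration forte de~$\m_{x}$. \'Etant donn\'es un voisinage compact~$V$ de~$x$, un \'el\'ement~$f\in\m_{x}$ qui est~$\Bs$-d\'efini sur~$V$ et un \'el\'ement~$\overline{D}_{U}(v)\in\Vs$ contenu dans~$\overset{\circ}{V}$, je choisirais dans l'int\'erieur de~$V$ une partie~$\overline{D}_{U_{0}}(v_{0})$, avec~$U_{0}$ voisinage compact et spectralement convexe de~$U$ dans~$B$ et~$v<v_{0}$. Un d\'eveloppement en s\'erie du type de celui fourni par le corollaire~\ref{cor:dvptdisque} (dans sa version en famille) permettrait d'\'ecrire~$f=\sum_{k\ge0} a_{k}\,T^k$, o\`u~$a_{k}\in\Bs(U_{0})$ et~$\sum_{k}\|a_{k}\|_{U_{0}}\,v_{0}^k<\infty$. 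Comme~$f(0_{c})=a_{0}(c)$ pour tout~$c$ et que~$f(x)=0$, le coefficient~$a_{0}$ appartiendrait \`a~$\m_{b}$ ; interposant par finesse un \'el\'ement~$U_{1}\in\Us$ avec~$U\subset\overset{\circ}{U_{1}}\subset U_{1}\subset\overset{\circ}{U_{0}}$, la g\'en\'eration forte de~$\m_{b}$ par~$(f_{1},\dots,f_{p})$ fournirait une \'ecriture~$a_{0}=\sum_{i=1}^p \alpha_{i} f_{i}$ dans~$\Bs(U_{1})$ avec~$\|\alpha_{i}\|_{U_{1}}\le K_{U_{1},U_{0}}\,\|a_{0}\|_{U_{0}}$. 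En posant~$A_{i}=\alpha_{i}$ pour~$i\le p$ et~$A_{p+1}=\sum_{k\ge1} a_{k}\,T^{k-1}$, on obtiendrait l'identit\'e~$f=\sum_{i=1}^p A_{i} f_{i} + A_{p+1}\,T$ dans~$\Bs(\overline{D}_{U}(v))$, qui exhibe~$(f_{1},\dots,f_{p},T)$ comme candidat g\'en\'erateur.

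L'obstacle principal serait la majoration des normes des~$A_{i}$ par~$\|f\|_{V}$, \`a une constante ne d\'ependant que de~$U$ et~$V$ pr\`es. Pour~$i\le p$, l'\'el\'ement~$\alpha_{i}$ ne d\'ependant pas de~$T$, on aurait~$\|A_{i}\|_{\overline{D}_{U}(v)}=\|\alpha_{i}\|_{U}\le K_{U_{1},U_{0}}\,\|a_{0}\|_{U_{0}}$, et l'\'evaluation de~$f$ le long de la section nulle donnerait~$\|a_{0}\|_{U_{0}}=\sup_{c\in U_{0}}|f(0_{c})|\le\|f\|_{\overline{D}_{U_{0}}(v_{0})}\le\|f\|_{V}$. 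Pour le terme restant, je majorerais la norme uniforme de~$A_{p+1}$ par sa norme en tant que s\'erie, soit~$\sum_{k\ge1}\|a_{k}\|_{U_{0}}v^{k-1}\le v^{-1}\sum_{k\ge0}\|a_{k}\|_{U_{0}}v^k$ ; la proposition~\ref{prop:comparaisonnormes}, appliqu\'ee \`a l'anneau de Banach uniforme~$\Bs(U_{0})$, bornerait cette derni\`ere s\'erie par~$\frac{v_{0}}{v_{0}-v}\,\|f\|_{\overline{D}_{U_{0}}(v_{0})}\le\frac{v_{0}}{v_{0}-v}\,\|f\|_{V}$. Les constantes obtenues ne d\'ependant que de~$U$ et~$V$, ceci \'etablirait que~$(f_{1},\dots,f_{p},T)$ engendre~$\Bs$-fortement~$\m_{x}$ relativement \`a~$\Vs$.

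Enfin, pour la derni\`ere assertion, je supposerais tout \'el\'ement de~$\Us$ contenu dans~$B_{\textrm{um}}$ et muni d'un bord analytique fini. Chaque disque~$\overline{D}_{U}(v)\in\Vs$ serait alors contenu dans~$X_{\textrm{um}}$, et le lemme~\ref{lem:Bsum}, identifiant~$\Bs(\overline{D}_{U}(v))$ \`a~$\Bs(U)\{|T|\le v\}$, ram\`enerait la norme uniforme \`a~$\max_{n}(\|a_{n}\|_{U}\,v^n)$. En notant, pour~$\gamma$ parcourant un bord analytique fini de~$U$, le point maximal~$\eta_{v,\gamma}$ du disque de rayon~$v$ au-dessus de~$\gamma$, l'\'egalit\'e~$\max_{\gamma}|g(\eta_{v,\gamma})|=\max_{n}\bigl(\max_{\gamma}|a_{n}(\gamma)|\bigr)\,v^n=\max_{n}\|a_{n}\|_{U}\,v^n=\|g\|_{\overline{D}_{U}(v)}$ montrerait que l'ensemble fini des~$\eta_{v,\gamma}$ est un bord analytique de~$\overline{D}_{U}(v)$.
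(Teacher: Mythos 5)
Votre preuve est correcte et suit pour l'essentiel la m\^eme d\'emarche que celle du texte : d\'eveloppement en s\'erie de~$f$ sur un disque l\'eg\`erement plus grand via le corollaire~\ref{cor:dvptdisque}, traitement du coefficient constant~$a_{0}$ par la g\'en\'eration forte de~$\m_{b}$, factorisation du reste par~$T$, et contr\^ole des normes par la proposition~\ref{prop:comparaisonnormes}. Les seules divergences sont cosm\'etiques (majoration de~$\|a_{0}\|$ par \'evaluation sur la section nulle plut\^ot que par la comparaison des normes, et facteur~$\tfrac{v_{0}}{v_{0}-v}$ suppl\'ementaire dans la constante au lieu de l'argument de principe du maximum, ce qui est sans cons\'equence pour la d\'efinition de g\'en\'eration forte).
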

\begin{proof}
Soient~$V$ un voisinage compact de~$x$, $f$~un \'el\'ement de~$\m_{x}$ $\Bs$-d\'efini sur~$V$ et $\overline{D}_{U}(v)$ un \'el\'ement de~$\Vs$ contenu dans~$\overset{\circ}{V}$. L'int\'erieur du voisinage~$V$ contient une partie de la forme $\overline{D}_{U}(v_{0})$ avec $v_{0}>v$. D'apr\`es la proposition~\ref{prop:Bcouronne}, nous pouvons \'ecrire~$f$ sous la forme $\sum_{k\ge 0} a_{k}\, T^k$, o\`u~$(a_{k})_{k\in\Z}$ est une famille d'\'el\'ements de~$\Bs(U)$ telle que la s\'erie $\sum_{k\ge 0} \|a_{k}\|_{U}\, v_{0}^k$ converge. 

Puisque~$f$ est un \'el\'ement de~$\m_{x}$, $a_{0}$ s'annule en~$b$. Par cons\'equent, il existe des \'el\'ements $\alpha_{1},\ldots,\alpha_{p}$ de~$\Bs(U)$ tels que
\[\begin{cases}
a_{0} = \alpha_{1} f_{1} + \dots \alpha_{p} f_{p} \textrm{ dans } \Bs(U)\ ;\\
\forall i \in \cn{1}{p}, \|\alpha_{i}\|_{U} \le K_{U,V} \|a_{0}\|_{V}.
\end{cases}\]

Or, d'apr\`es la proposition~\ref{prop:comparaisonnormes}, nous avons
\[ \|a_{0}\|_{U} \le \|f\|_{U,v} \le  \frac{v_{0}}{v_{0}-v}\, \|f\|_{\overline{D}_{U}(v_{0})} \le \frac{v_{0}}{v_{0}-v}\, \|f\|_{V} .\]
Ce raisonnement nous permet de remplacer~$f$ par $f-a_{0}$, et donc de supposer que $a_{0}=0$.

La s\'erie $\sum_{k\ge 0} \|a_{k+1}\|_{U}\, v^k$ converge et la s\'erie $\sum_{k\ge 0} a_{k+1}\, T^k$ d\'efinit donc un \'el\'ement~$g$ de~$\Bs(\overline{D}_{U}(v))$. De plus, nous avons $f=Tg$ dans $\Bs(\overline{D}_{U}(v))$ et
\[ \|g\|_{\overline{D}_{U}(v)} \le \frac{1}{v}\, \|f\|_{\overline{D}_{U}(v)} \le  \frac{1}{v}\, \|f\|_{V},\]
car le maximum de la fonction~$g$ sur toute fibre est atteint sur le cercle $\{|T|=v\}$.
On en d\'eduit le r\'esultat voulu.

L'assertion finale de l'\'enonc\'e d\'ecoule du lemme~\ref{lem:Bsum}.
\end{proof}

En reprenant, \`a pr\'esent, le raisonnement des corollaires~\ref{cor:fortlemniscate} et~\ref{cor:fortpasepais}, nous obtenons le r\'esultat suivant.

\begin{coro}\label{cor:fortepais}
Soit~$b$ un point de~$B$. Soit~$\Us$ un syst\`eme fondamental fin de voisinages compacts et spectralement convexes du point~$b$. Si~$\Hs(b)$ est de caract\'eristique non nulle et trivialement valu\'e, supposons que tout \'el\'ement de~$\Us$ est contenu dans~$B_{\textrm{um}}$ et poss\`ede un bord analytique fini. 

Supposons que l'id\'eal~$\m_{b}$ poss\`ede un $\Bs$-syst\`eme de g\'en\'erateurs forts $(f_{1},\ldots,f_{p})$ relativement \`a~$\Us$. Soit~$x$ un point rigide \'epais de~$X_{b}$. Alors il existe un syst\`eme fondamental fin de voisinages compacts et spectralement convexes~$\Vs$ du point~$x$ tel que la famille $(f_{1},\ldots,f_{p},P(T))$, o\`u $P(T)\in \Os_{B,b}[T]$ est un polyn\^ome unitaire relevant le polyn\^ome minimal du point~$x$, soit un $\Bs$-syst\`eme de g\'en\'erateurs fort pour~$\m_{x}$ relativement \`a~$\Vs$. Dans le cas o\`u~$\Hs(b)$ est de caract\'eristique non nulle et trivialement valu\'e, nous pouvons choisir~$\Vs$ de sorte que tout \'el\'ement de~$\Vs$ soit contenu dans~$X_{\textrm{um}}$ et poss\`ede un bord analytique fini. 

Soit~$n\in\N$. Soit~$x$ un point rigide \'epais de~$X_{n}$ au-dessus de~$b$. Alors il existe un syst\`eme fondamental fin de voisinages compacts et spectralement convexes~$\Vs$ du point~$x$ et des \'el\'ements $g_{1},\dots,g_{n}$ de~$\m_{x}$ tels que la famille $(f_{1},\dots,f_{p},g_{1},\dots,g_{n})$ engendre $\Bs$-fortement l'id\'eal~$\m_{x}$ relativement \`a~$\Vs$. Dans le cas o\`u~$\Hs(b)$ est de caract\'eristique non nulle et trivialement valu\'e, nous pouvons choisir~$\Vs$ de sorte que tout \'el\'ement de~$\Vs$ soit contenu dans~$(X_{n})_{\textrm{um}}$ et poss\`ede un bord analytique fini. 
\end{coro}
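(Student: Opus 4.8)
Le plan est de se ramener d'abord au cas~$n=1$ par récurrence sur~$n$, comme dans la preuve du corollaire~\ref{cor:fortpasepais}. Notant~$x_{m}$ la projection de~$x$ sur ses~$m$ premières coordonnées (avec~$x_{0}=b$), chaque~$x_{m}$ est rigide épais au-dessus de~$x_{m-1}$, l'élément~$T_{m}(x)$ étant algébrique sur~$\kappa(x_{m-1})$. J'appliquerais alors le cas~$n=1$ avec~$x_{m-1}$ comme point de base — dont l'idéal maximal est $\Bs$-fortement engendré par hypothèse de récurrence — afin d'adjoindre à chaque étape un générateur~$g_{m}=P_{m}(T_{m})$, où~$P_{m}$ relève le polynôme minimal de~$x_{m}$ sur~$\kappa(x_{m-1})$. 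La propagation de la propriété « ultramétrique typique » assurée par la dernière assertion de l'énoncé permet à la récurrence de se poursuivre.

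Pour le cas~$n=1$, le point~$x$ étant rigide épais, son polynôme minimal~$\bar P$ sur~$\Hs(b)$ est à coefficients dans~$\kappa(b)$, irréductible et unitaire de degré~$d=[\Hs(x):\Hs(b)]$ ; je le relèverais en~$P\in\Os_{B,b}[T]$. Je prendrais pour~$\Vs$ le système des lemniscates~$\overline{D}_{U}(P;v)$, avec~$U\in\Us$ et~$v>0$ petit, qui forment un système fondamental fin de voisinages de~$x$ d'après la proposition~\ref{prop:voisinages}. Le c{\oe}ur de la preuve généralise celui de la proposition~\ref{prop:fortdisque} (cas du point~$0$, où~$P=T$) : étant donné~$f\in\m_{x}$ défini sur~$V$ et~$\overline{D}_{U}(P;v)\subset\overset{\circ}{V}$, l'isomorphisme admissible de la proposition~\ref{prop:Blemniscate} fournit un développement
\[f=\sum_{j=0}^{d-1}\sum_{k\ge 0}c_{j,k}\,P^k\,T^j,\qquad c_{j,k}\in\Bs(U),\]
et je poserais~$R=\sum_{j=0}^{d-1}c_{j,0}\,T^j$, de sorte que~$f-R$ soit multiple de~$P$. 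Comme~$f(x)=0$ et~$P(x)=0$, on a~$R(x)=0$ ; or~$R(x)=\tilde R(T(x))$ avec~$\tilde R=\sum_{j}c_{j,0}(b)\,T^j\in\Hs(b)[T]$ de degré~$<d$, et~$T(x)$ admet pour polynôme minimal~$\bar P$, de degré~$d$, sur~$\Hs(b)$ ; donc~$\tilde R=0$, c'est-à-dire~$c_{j,0}(b)=0$ pour tout~$j$. La génération $\Bs$-forte de~$\m_{b}$ par~$(f_{1},\dots,f_{p})$ écrit alors chaque~$c_{j,0}=\sum_{i}\alpha_{i,j}f_{i}$ sur un élément plus petit de~$\Us$, d'où
\[f=\sum_{i=1}^{p}\Big(\sum_{j=0}^{d-1}\alpha_{i,j}T^j\Big)f_{i}+\Big(\sum_{j=0}^{d-1}\sum_{k\ge 1}c_{j,k}P^{k-1}T^j\Big)P.\]
Les majorations de normes requises découleraient de la proposition~\ref{prop:comparaisonnormes} et de l'équivalence des normes de la proposition~\ref{prop:Blemniscate}, exactement comme au corollaire~\ref{cor:fortlemniscate}.

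Il resterait à garantir la condition~$(N_{P(S)-T})$ sur les lemniscates, afin d'appliquer la proposition~\ref{prop:Blemniscate} ; je procéderais par la même disjonction que dans le théorème~\ref{theo:Weierstrassapproche} et le corollaire~\ref{cor:fortpasepais}. Si~$\bar P$ est séparable (en particulier si~$\Hs(b)$ est de caractéristique nulle), la proposition~\ref{prop:NPS-T} suffit. Si~$\Hs(b)$ est de caractéristique non nulle et trivialement valué, l'hypothèse que~$b$ est ultramétrique typique et la proposition~\ref{prop:couronneum} donnent la condition pour le relèvement~$P$ lui-même, et le lemme~\ref{lem:Bsum} permet de choisir~$\Vs$ contenu dans~$X_{\textrm{um}}$ avec bords analytiques finis.

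La difficulté principale sera le cas où~$\bar P$ est inséparable et~$\Hs(b)$ non trivialement valué, car, la dérivée~$P'(b)$ étant nulle, la condition~$(N_{P(S)-T})$ échoue pour~$P$ et l'on ne dispose pas de l'hypothèse « ultramétrique typique ». Comme dans le théorème~\ref{theo:Weierstrassapproche}, je contournerais l'obstacle en introduisant un polynôme~$P_{\eps}\in\Os_{B,b}[T]$ séparable vérifiant~$\|P_{\eps}(b)-\bar P\|_{\infty}\le\eps$, avec~$\eps$ arbitrairement petit : les lemniscates~$\overline{D}_{U}(P_{\eps};v)$ satisfont alors~$(N_{P_{\eps}(S)-T})$ et fournissent à la fois le développement de~$f$ et l'équivalence des normes, tandis que le reste~$R$ serait extrait modulo le \emph{véritable} polynôme~$P$, ce qui préserve l'égalité~$R(x)=0$ et donc l'argument de rigidité épaisse. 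La réduction modulo~$P$ demeure contrôlée en norme car les coefficients de~$P-P_{\eps}$ peuvent être rendus arbitrairement petits devant~$v$ sur~$U$, de sorte que ceux du reste restent majorés par~$\|f\|_{V}$ à une constante multiplicative près.
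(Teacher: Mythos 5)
Votre démonstration est correcte et suit pour l'essentiel la voie que l'article indique (il se contente de renvoyer au raisonnement de la proposition~\ref{prop:fortdisque} et des corollaires~\ref{cor:fortlemniscate} et~\ref{cor:fortpasepais}) : développement via l'isomorphisme admissible de la proposition~\ref{prop:Blemniscate}, extraction du reste polynomial de degré~$<d$, annulation de ses coefficients en~$b$ par indépendance linéaire de $1,T(x),\dots,T(x)^{d-1}$ sur~$\Hs(x)$, puis génération forte de~$\m_{b}$ et contrôle des normes par les propositions~\ref{prop:comparaisonnormes} et~\ref{prop:Blemniscate}. Vous identifiez de surcroît correctement le point délicat que l'article passe sous silence — le cas d'un polynôme minimal inséparable sur un corps~$\Hs(b)$ non trivialement valué — et le traitez par l'approximation~$P_{\eps}$ et la perturbation de type série de Neumann, exactement comme dans la preuve du théorème~\ref{theo:Weierstrassapproche}.
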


Approfondissons maintenant notre \'etude des points rigides \'epais.

\begin{lemm}\label{lem:divisionparpi}
Soient~$b$ un point de $B=\Mc(\As)$. Soient~$n\in\N$ et~$x$ un point rigide \'epais de $X_{n} = \E{n}{\As}$ au-dessus de~$b$. Supposons que l'anneau local~$\Os_{B,b}$ est un anneau fortement de valuation discr\`ete d'uniformisante~$\pi$. 

Alors, pour tout \'el\'ement non nul~$f$ de~$\Os_{X_{n},x}$, il existe un unique couple $(v,g)$ dans $\N\times \Os_{X_{n},x}$ v\'erifiant les propri\'et\'es suivantes :
\begin{enumerate}[i)]
\item $f = \pi^v g$ dans $\Os_{X_{n},x}$ ;
\item $g\ne 0$ dans $\Os_{(X_{n})_{b},x}$.
\end{enumerate}
\end{lemm}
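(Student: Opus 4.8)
The plan is to reduce everything to the behaviour of the restriction map to the fibre. Write $\Os = \Os_{X_{n},x}$ and let $\rho \colon \Os \to \Os_{(X_{n})_{b},x}$ be the natural morphism restricting germs to the fibre $(X_{n})_{b}$. Since $\pi$ lies in $\m_{b}$, it vanishes identically on $(X_{n})_{b}$, so $\rho(\pi)=0$ and $\pi\Os \subseteq \ker\rho$. I claim the statement follows from three facts: (a) $\ker\rho = \pi\Os$; (b) $\bigcap_{v\ge 0}\pi^{v}\Os = (0)$; (c) $\pi$ is a non-zero-divisor in $\Os$. Granting these, existence follows by setting $v = \max\{k : f\in\pi^{k}\Os\}$, which is finite by (b); writing $f=\pi^{v}g$, the element $g$ satisfies $\rho(g)\neq 0$, for otherwise (a) would give $g\in\pi\Os$, hence $f\in\pi^{v+1}\Os$, contradicting maximality. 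For uniqueness, suppose $\pi^{v}g=\pi^{v'}g'$ with $v\le v'$ and $\rho(g),\rho(g')\neq 0$. By (c) the element $\pi^{v}$ is a non-zero-divisor, so $g=\pi^{v'-v}g'$; applying $\rho$ gives $\rho(g)=\rho(\pi)^{v'-v}\rho(g')$, which would be $0$ if $v'>v$. As $\rho(g)\neq 0$ this forces $v=v'$, and then $g=g'$ by (c) again.

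The heart of the matter is (a), and this is where I expect the main difficulty. I would mimic the mechanism of Proposition~\ref{prop:fortdisque} and Corollary~\ref{cor:fortepais}. Using the fundamental system of lemniscate neighbourhoods of $x$ provided by Corollary~\ref{cor:fortepais} (whose construction iterates, across the $n$ coordinates, the one-variable developments of Corollary~\ref{cor:dvptpoint} and the admissible isomorphism of Proposition~\ref{prop:Blemniscate}), I would expand any $f\in\ker\rho$ as a Weierstra\ss-type series whose ultimate coefficients are germs in the base ring $\Os_{B,b}$. The condition $\rho(f)=0$ means precisely that each such coefficient vanishes at $b$, i.e.\ lies in $\m_{b}=(\pi)$. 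The \emph{strong} generation of $\m_{b}$ by $\pi$ then permits factoring $\pi$ out of each coefficient with a uniform control of norms, and Proposition~\ref{prop:comparaisonnormes} guarantees that the resulting quotient series still converges on a slightly smaller lemniscate, yielding $g$ with $f=\pi g$. Controlling the constants $K_{U,V}$ so that the quotient series converges is exactly the point where the hypothesis ``fortement de valuation discr\`ete'' is essential, and is the step I expect to be delicate; everything else is parallel to the proofs already carried out in this section.

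Finally, (b) and (c) are handled coefficientwise along the same expansion, as in Propositions~\ref{prop:fortcouronne} and~\ref{prop:fortdisque}. For (c), if $\pi h=0$ then, reading off the development of $h$, each coefficient $a_{\alpha}\in\Os_{B,b}$ satisfies $\pi a_{\alpha}=0$; since $\Os_{B,b}$ is a discrete valuation ring, hence a domain, every $a_{\alpha}$ vanishes and $h=0$. For (b), membership $f\in\pi^{v}\Os$ forces each coefficient into $(\pi^{v})\subseteq\Os_{B,b}$, so an element of $\bigcap_{v}\pi^{v}\Os$ has all its coefficients in $\bigcap_{v}\pi^{v}\Os_{B,b}=(0)$ and is therefore zero. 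Combining (a), (b) and (c) with the argument of the first paragraph completes the proof.
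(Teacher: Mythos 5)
Your proposal is correct and follows essentially the same route as the paper: the paper proves the lemma by induction on $n$, reducing via the finite morphism of Theorem~\ref{theo:finiPS-T} to the point~$0$ of a fibre and then working coefficientwise on the series development of Corollary~\ref{cor:dvptpoint}, with the \og fortement de valuation discr\`ete \fg\ hypothesis used precisely where you place it, namely to guarantee that the series obtained after dividing by $\pi^{v}$ still converges. Your packaging of the argument into the three ring-theoretic facts (a), (b), (c) is a clean reorganisation of what the paper leaves implicit, but the substance — iterated series expansions, coefficientwise vanishing at~$b$, and strong generation of $\m_{b}$ for norm control — is the same.
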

\begin{proof}
Le r\'esultat se d\'emontre par r\'ecurrence sur la dimension~$n$. Pour $n=0$, c'est \'evident.

Supposons maintenant que $n\ge 1$ et que le r\'esultat est v\'erifi\'e en dimension~$n-1$. Notons $y\in X_{n-1}$ la projection de~$x$ sur ses $n-1$ premi\`eres coordonn\'ees. Notons~$x'$ le point~$0$ de la fibre de~$X_{n}$ au-dessus de~$y$. Soit $P(S) \in \kappa(y)[S]$ le polyn\^ome minimal du point rigide \'epais~$x$ au-dessus de~$y$. Relevons-le en un polyn\^ome unitaire \`a coefficients dans~$\Os_{X_{n-1},y}$ que nous noterons identiquement. D'apr\`es le th\'eor\`eme~\ref{theo:finiPS-T}, nous disposons d'un isomorphisme naturel $\Os_{X_{n},x'}[S]/(P(S)-T) \simeq \Os_{X_{n},x}$. Il suffit donc de d\'emontrer le r\'esultat pour le point~$x'$. Or tout \'el\'ement de~$\Os_{X_{n},x'}$ poss\`ede un d\'eveloppement en s\'erie convergente \`a coefficients dans~$\Os_{X_{n-1},y}$ (\textit{cf.}~corollaire~\ref{cor:dvptpoint}), ce qui permet de conclure par r\'ecurrence. (Le fait que l'anneau soit fortement de valuation discr\`ete est utilis\'e pour montrer qu'en divisant par la puissance de~$\pi$ choisie, on obtient encore une s\'erie convergente.)
\end{proof}

\begin{lemm}\label{lem:chgtvar}
Soient~$b$ un point de $B=\Mc(\As)$. Soient~$n\in\N$ et~$x$ un point rigide de $X_{n} = \E{n}{\As}$ (avec variables $T_{1},\dots,T_{n}$) au-dessus de~$b$. Soit~$f$ un \'el\'ement de~$\Os_{X_{n},x}$ dont l'image dans~$\Os_{(X_{n})_{b},x}$ n'est pas nulle. 

Alors il existe un changement de variables de la forme 
\[\left\{\begin{array}{rcl}
T_{1} &\mapsto& T_{1} + T_{n}^{u_{1}},\\
&\vdots&\\ 
T_{n-1}&\mapsto& T_{n-1} + T_{n}^{u_{n-1}},\\
T_{n} &\mapsto& T_{n},
\end{array}\right.\]
avec $u_{1},\dots,u_{n-1} \in \N$ tel que, apr\`es ce changement de variables, si l'on note $y\in X_{n-1}$ le projet\'e de~$x$ sur ses $n-1$ premi\`eres coordonn\'ees, l'image de~$f$ dans~$\Os_{(X_{n})_{y},x}$ ne soit pas nulle. 
\end{lemm}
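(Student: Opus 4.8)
The plan is to carry out the whole argument inside the fibre $(X_n)_b = \E{n}{\Hs(b)}$ over the complete valued field $K = \Hs(b)$, after translating the conclusion into the non‑vanishing of a one–variable germ. Write $\alpha_i = T_i(x) \in \Hs(x)$; as $x$ is rigid, $\Hs(x)/K$ is finite. For a tuple $(u_1,\dots,u_{n-1})\in\N^{n-1}$ the substitution $T_i\mapsto T_i+T_n^{u_i}$ ($i<n$), $T_n\mapsto T_n$, is an automorphism over $B$, and the fibre through $x$ of the projection to the first $n-1$ new coordinates is, in the original coordinates, the curve $T_i=\alpha_i+(T_n^{u_i}-\alpha_n^{u_i})$. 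Setting $t=T_n-\alpha_n$ and $S_i=T_i-\alpha_i$, restricting to this fibre is the substitution
\[ S_i \longmapsto S_i(t) = (\alpha_n+t)^{u_i}-\alpha_n^{u_i}\in\Hs(x)[[t]],\qquad S_n\longmapsto t. \]
First I would record that the nonzero image $\bar f$ of $f$ in $\Os_{(X_n)_b,x}$ is faithfully represented by a nonzero convergent power series $F=\sum_\nu c_\nu S^\nu$ with $c_\nu\in\Hs(x)$ — this follows from the development results (Corollaire~\ref{cor:dvptpoint} and its higher‑dimensional form, together with the finite morphism theorem~\ref{theo:finiPS-T} and Corollaire~\ref{cor:finidimsup} used to centre the expansion at $x$; alternatively one base‑changes along the field extension $K\to\Hs(x)$, under which $x$ becomes rational and $\bar f$ stays nonzero) — and that the image of $f$ in $\Os_{(X_n)_y,x}$ is nonzero exactly when the germ $g(t)=F(S_1(t),\dots,S_{n-1}(t),t)$ is nonzero. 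Everything thus reduces to choosing the $u_i$ so that $g\neq 0$.

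The combinatorial core is an elementary fact that I would isolate and prove by induction on the number of variables: if $R$ is an integral domain and $\Phi=\sum_{a,b}h_{a,b}Y^aZ^b\in R[[Y,Z]]$ is nonzero, then for every integer $u$ exceeding the second coordinate $b^*$ of the lexicographically smallest element $(a^*,b^*)$ of the support of $\Phi$, one has $\Phi(Z^u,Z)\neq 0$. Indeed $(a^*,b^*)$ is then the unique minimiser of $(a,b)\mapsto ua+b$ on the support, so the coefficient of $Z^{ua^*+b^*}$ in $\Phi(Z^u,Z)$ equals $h_{a^*,b^*}\neq 0$. Applying this repeatedly, peeling off one variable at a time over the domains $\Hs(x)[[S_1,\dots,S_j]]$, produces integers $u_1,\dots,u_{n-1}$ for which the clean monomial substitution satisfies $F(t^{u_1},\dots,t^{u_{n-1}},t)\neq 0$ in $\Hs(x)[[t]]$.

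It remains to pass from this clean substitution to the genuine one $S_i(t)=(\alpha_n+t)^{u_i}-\alpha_n^{u_i}$; the two agree when $\alpha_n=0$ but differ otherwise, and bridging this gap is the main obstacle, where the characteristic enters. When $K$ has characteristic $p>0$ I would take the exponents among the powers of $p$: the combinatorial step only requires each $u_i$ to exceed a bound depending on the series already obtained, and the powers of $p$ are cofinal in $\N$, so one may set $u_i=p^{e_i}$; then, by additivity of the Frobenius, $(\alpha_n+t)^{p^{e_i}}-\alpha_n^{p^{e_i}}=t^{p^{e_i}}$ \emph{exactly}, whatever the value of $\alpha_n$, so $S_i(t)=t^{u_i}$ and the sublemma gives $g\neq 0$. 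When $K$ has characteristic $0$ and $\alpha_n\neq 0$, the field is infinite and I would argue on the leading form: with $d$ the order of $F$ and $F_d$ its lowest homogeneous part, the curve $t\mapsto(S_1(t),\dots,S_{n-1}(t),t)$ is smooth with tangent vector $\lambda=(u_1\alpha_n^{u_1-1},\dots,u_{n-1}\alpha_n^{u_{n-1}-1},1)$, whence $g(t)=F_d(\lambda)\,t^d+O(t^{d+1})$. The dehomogenisation $\phi(X_1,\dots,X_{n-1})=F_d(X_1,\dots,X_{n-1},1)$ is a nonzero polynomial, the values $\{u\,\alpha_n^{u-1}:u\in\N^*\}$ form an infinite subset of $\Hs(x)$ in characteristic $0$, so their $(n-1)$–fold product is Zariski dense and $\phi$ cannot vanish on it; choosing $u$ with $F_d(\lambda)\neq 0$ makes the leading coefficient of $g$ nonzero. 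The remaining subcase, characteristic $0$ with $\alpha_n=0$, again has $S_i(t)=t^{u_i}$ and is covered by the sublemma (the case $n=1$ being trivial, with $g=F$). In every case $g\neq 0$, which is the assertion.
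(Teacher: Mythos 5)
Your argument follows the same path as the paper's own proof: restrict to the fibre $(X_{n})_{b}$, pass to a finite field extension over which $x$ becomes rational, expand $f$ as a convergent power series centred at $x$, and reduce the lemma to the non-vanishing of the one-variable series obtained from the substitution $S_{i}\mapsto \alpha_{n}^{u_{i}}-T_{n}^{u_{i}}$. The difference is that at the point where the paper contents itself with ``en s'armant de courage, on d\'emontre le r\'esultat recherch\'e'', you actually carry out the verification, and you do so correctly: the lexicographic sublemma handling the clean substitution $S_{i}\mapsto t^{u_{i}}$ is right (the lex-minimal exponent is the unique minimiser of $ua+b$ once $u$ is large enough), the choice $u_{i}=p^{e_{i}}$ in characteristic~$p$ makes the genuine substitution literally monomial via the Frobenius, and the leading-form/Zariski-density argument in characteristic~$0$ is sound. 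Two minor points. First, the fibre over~$y$ is $T_{i}=\alpha_{i}+\alpha_{n}^{u_{i}}-T_{n}^{u_{i}}$, so $S_{i}(t)$ should be $\alpha_{n}^{u_{i}}-(\alpha_{n}+t)^{u_{i}}$; the sign changes nothing in any of your cases. Second, the assertion that $\left\{u\,\alpha_{n}^{u-1}\ :\ u\in\N^{*}\right\}$ is infinite in characteristic~$0$ for $\alpha_{n}\neq 0$ is true but deserves a line: if three exponents $u<v<w$ gave the same value, combining $\alpha_{n}^{v-u}=u/v$ and $\alpha_{n}^{w-v}=v/w$ would yield $u^{w-v}w^{v-u}=v^{w-u}$ in $\Q\subset\Hs(x)$, which is impossible by the strict weighted arithmetic--geometric mean inequality (the weighted mean of $u$ and $w$ with weights $w-v$ and $v-u$ is exactly $v$); hence each value is attained at most twice and the set is infinite.
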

\begin{proof}
Remarquons que le r\'esultat concerne la restriction de la fonction~$f$ \`a~$X_{b}$. Nous pouvons donc supposer que~$\As$ est un corps~$K$. Remarquons encore qu'il suffit de d\'emontrer le r\'esultat apr\`es extension des scalaires. Nous pouvons donc supposer que~$K$ est alg\'ebriquement clos. Le point~$x$ est alors un point $(\alpha_{1},\dots,\alpha_{n})$ de~$K^n$ et, d'apr\`es le corollaire~\ref{cor:dvptpoint}, la fonction~$f$ s'\'ecrit comme une s\'erie 
\[f = \sum_{(k_{1},\dots,k_{n})\in\N^n} a_{k_{1},\dots,k_{n}} (T_{1}-\alpha_{1})^{k_{1}}\dots(T_{n}-\alpha_{n})^{k_{n}}\] 
dont le rayon de convergence est strictement positif. Par hypoth\`ese, l'un des coefficients~$a_{k_{1},\dots,k_{n}}$ n'est pas nul. Soient $u_{1},\dots,u_{n-1} \in \N$. Montrer que le changement de base de l'\'enonc\'e convient revient \`a montrer que la s\'erie
\[\sum_{(k_{1},\dots,k_{n})\in\N^n} a_{k_{1},\dots,k_{n}} (\alpha_{n}^{u_{1}} - T_{n}^{u_{1}})^{k_{1}}\dots(\alpha_{n}^{u_{n-1}} - T_{n}^{u_{n-1}})^{k_{n-1}}(T_{n}-\alpha_{n})^{k_{n}}\] 
n'est pas nulle. En s'armant de courage, on d\'emontre le r\'esultat recherch\'e.
\end{proof}

\begin{rema}
Les fastidieux calculs finaux de la preuve pr\'ec\'edente peuvent \^etre \'evit\'es si l'on s'autorise \`a utiliser les r\'esultats connus sur les espaces analytiques au-dessus d'un corps.
\end{rema}

\begin{theo}\label{theo:noetherienepais}
Soit~$b$ un point de $B=\Mc(\As)$. Soit~$\Us$ un syst\`eme fondamental fin de voisinages compacts et spectralement convexes du point~$b$. Si~$\Hs(b)$ est de caract\'eristique non nulle et trivialement valu\'e, supposons que tout \'el\'ement de~$\Us$ est contenu dans~$B_{\textrm{um}}$ et poss\`ede un bord analytique fini. 

Supposons que l'anneau local~$\Os_{B,b}$ est un anneau local noeth\'erien, fortement r\'egulier relativement \`a~$\Us$ et de dimension inf\'erieure \`a~1. Soient $n\in\N$ et~$x$ un point rigide \'epais de~$\E{n}{\As}$ au-dessus de~$b$.

Alors l'anneau local~$\Os_{\E{n}{\As},x}$ est noeth\'erien et fortement r\'egulier et nous avons
\[\dim(\Os_{\E{n}{\As},x}) = \dim(\Os_{B,b})+n.\]

Si~$\Hs(x)$ est de caract\'eristique non nulle et trivialement valu\'e, il existe un syst\`eme fondamental fin~$\Vs$ de voisinages compacts et spectralement convexes de~$x$ tel que tout \'el\'ement de~$\Vs$ soit contenu dans~$(\E{n}{\As})_{\textrm{um}}$ et poss\`ede un bord analytique fini et tel que l'anneau local~$\Os_{X_{n},x}$ soit fortement r\'egulier relativement \`a~$\Vs$. 
\end{theo}
\begin{proof}
Nous allons nous contenter de d\'emontrer ce r\'esultat lorsque~$\Os_{B,b}$ est un anneau fortement de valuation discr\`ete, le cas o\`u c'est un corps fort se traitant de fa\c{c}on similaire mais plus simple. Soit~$\pi$ une uniformisante forte de~$\Os_{B,b}$. 

Nous ne traiterons pas le cas o\`u~$\Hs(b)$ est de caract\'eristique non nulle et trivialement valu\'e, ni l'assertion finale, o\`u il faut s'assurer que la propri\'et\'e d'\^etre ultram\'etrique typique reste v\'erifi\'ee \`a chaque \'etape du raisonnement. Cela n'entra\^{\i}ne aucune difficult\'e suppl\'ementaire. 

Commen\c{c}ons par montrer que~$\Os_{X_{n},x}$ est noeth\'erien. Nous allons proc\'eder par r\'ecurrence sur~$n$. Si~$n=0$, cela d\'ecoule des hypoth\`eses.

Supposons maintenant que~$n\ge 1$ et que le r\'esultat est vrai en dimension~\mbox{$n-1$}. Soit~$I$ un id\'eal de~$\Os_{X_{n},x}$. Nous pouvons supposer qu'il n'est pas nul. En appliquant le lemme~\ref{lem:divisionparpi}, on montre qu'il existe un entier~$v$ et un id\'eal~$J$ de~$\Os_{X_{n},x}$ tels que $I = \pi^{v}\, J$ et~$J$ contient un \'el\'ement~$f$ dont l'image dans~$\Os_{(X_{n})_{b},x}$ n'est pas nulle. 

Il suffit de d\'emontrer que l'id\'eal~$J$ est de type fini. Notons~$y$ la projection de~$x$ sur ses $n-1$ premi\`eres coordonn\'ees. D'apr\`es le lemme~\ref{lem:chgtvar}, quitte \`a effectuer un changement de variables, nous pouvons supposer que l'image de~$f$ dans~$\Os_{(X_{n})_{y},x}$ n'est pas nulle. Nous pouvons alors utiliser le th\'eor\`eme de division de Weierstra{\ss}~\ref{theo:Weierstrassapproche} (en choisissant pour~$B$ une alg\`ebre de la forme~$\Bs(W)$ o\`u~$W$ est un voisinage rationnel de~$y$ dans~$X_{n-1}$ assez petit et pour~$b$ le point~$y$) et diviser tout \'el\'ement de~$\Os_{X_{n},x}$ par~$f$ avec un reste polynomial. On se ram\`ene donc \`a montrer qu'un id\'eal d'un anneau de polyn\^omes sur~$\Os_{X_{n-1},y}$ est de type fini. Par r\'ecurrence, l'anneau~$\Os_{X_{n-1},y}$ est noeth\'erien et le r\'esultat s'ensuit.

Int\'eressons-nous \`a pr\'esent \`a la r\'egularit\'e et la dimension de l'anneau~$\Os_{X_{n},x}$. Si~$n=0$, les r\'esultats annonc\'es sont vrais. 

Supposons que $n\ge 1$ et qu'ils sont v\'erifi\'es en dimension~$n-1$. Notons~$x'$ le point~$0$ de la fibre de~$X_{n}$ au-dessus de~$y$. Soit $P(S) \in \kappa(y)[S]$ le polyn\^ome minimal du point rigide \'epais~$x$ au-dessus de~$y$. Relevons-le en un polyn\^ome unitaire \`a coefficients dans~$\Os_{X_{n-1},y}$ que nous noterons identiquement. D'apr\`es le th\'eor\`eme~\ref{theo:finiPS-T}, il existe un isomorphisme naturel $\Os_{X_{n},x'}[S]/(P(S)-T) \simeq \Os_{X_{n},x}$. Sur l'expression explicite de~$\Os_{X_{n},x'}$ comme anneau de s\'eries convergentes, on remarque qu'il est de dimension sup\'erieure \`a $\dim(\Os_{X_{n-1},y})+ 1 = n +1$, par hypoth\`ese de r\'ecurrence. Le m\^eme r\'esultat vaut pour~$\Os_{X_{n},x}$. 

D'apr\`es le corollaire~\ref{cor:fortepais}, l'id\'eal maximal~$\m_{x}$ est engendr\'e $\Bs$-fortement par $n+1$~\'el\'ements. On en d\'eduit que l'anneau local est de dimension~$n+1$, r\'egulier et m\^eme fortement r\'egulier.
\end{proof}

Nous arrivons finalement au th\'eor\`eme annonc\'e. Les hypoth\`eses en sont notamment v\'erifi\'ees lorsque l'on choisit pour anneau de Banach~$\As$ l'un de ceux qui figurent \`a la remarque~\ref{rem:liste}. Le r\'esultat vaut donc en particulier sur~$\Z$ et les anneaux d'entiers de corps de nombres.

\begin{theo}\label{theo:noetherien}
Soit~$b$ un point de $B=\Mc(\As)$. Soit~$\Us$ un syst\`eme fondamental fin de voisinages compacts et spectralement convexes du point~$b$. Si~$\Hs(b)$ est de caract\'eristique non nulle et trivialement valu\'e, supposons que tout \'el\'ement de~$\Us$ est contenu dans~$B_{\textrm{um}}$ et poss\`ede un bord analytique fini. 

Supposons que l'anneau local~$\Os_{B,b}$ est noeth\'erien, fortement r\'egulier relativement \`a~$\Us$ et de dimension inf\'erieure \`a~$1$. Soient $n\in\N$ et~$x$ un point de $\E{n}{\As}$ au-dessus de~$b$. Alors l'anneau local~$\Os_{\E{n}{\As},x}$ est noeth\'erien et fortement r\'egulier.

Si~$\Hs(x)$ est de caract\'eristique non nulle et trivialement valu\'e, il existe un syst\`eme fondamental fin~$\Vs$ de voisinages compacts et spectralement convexes de~$x$ tel que tout \'el\'ement de~$\Vs$ soit contenu dans~$(\E{n}{\As})_{\textrm{um}}$ et poss\`ede un bord analytique fini et tel que l'anneau local~$\Os_{X_{n},x}$ soit fortement r\'egulier relativement \`a~$\Vs$. 
\end{theo}
\begin{proof}
Comme pr\'ec\'edemment, nous ne traiterons pas le cas o\`u~$\Hs(b)$ est imparfait et trivialement valu\'e, qui n'ajoute pas de difficult\'e r\'eelle. 

Pour tout~$m\in\cn{0}{n}$, notons $x_{m}\in \E{m}{\As}$ la projection du point~$x$ sur ses $m$~premi\`eres coordonn\'ees. Quitte \`a changer l'ordre des coordonn\'ees, nous pouvons supposer qu'il existe un entier $k\in\cn{0}{n}$ v\'erifiant les propri\'et\'es suivantes :
\begin{enumerate}[i)]
\item le point~$x_{k}$ est purement localement transcendant au-dessus de~$b$ ;
\item le point~$x$ est rigide \'epais au-dessus de~$x_{k}$.
\end{enumerate}

D'apr\`es le corollaire~\ref{cor:fortpasepais}, si l'anneau local~$\Os_{B,b}$ est noeth\'erien et fortement r\'egulier de dimension inf\'erieure \`a~$1$, alors il en est de m\^eme pour l'anneau local~$\Os_{\E{k}{\As},x_{k}}$. On conclut alors par le th\'eor\`eme~\ref{theo:noetherienepais}.
\end{proof}

\begin{coro}
Soit~$\As$ un anneau de Banach de base au sens de la d\'efinition~\ref{defi:debase}. Soit~$(Z,\Os_{Z})$ un espace analytique sur~$\As$. Alors, en tout point~$z$ de~$Z$, l'anneau local~$\Os_{Z,z}$ est noeth\'erien et universellement cat\'enaire.
\end{coro}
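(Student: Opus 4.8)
L'\'enonc\'e \'etant de nature locale et essentiellement formelle, l'id\'ee est de se ramener \`a un quotient d'un anneau local d'espace affine, puis d'invoquer le th\'eor\`eme~\ref{theo:noetherien} conjointement \`a quelques r\'esultats standards d'alg\`ebre commutative. Je commencerais par exploiter la d\'efinition m\^eme des espaces analytiques sur~$\As$ : le point~$z$ poss\`ede un voisinage ouvert isomorphe \`a un mod\`ele local, donc au support d'un faisceau~$\Os_{U}/\Is$, o\`u~$U$ est un ouvert d'un espace affine~$\E{n}{\As}$ et~$\Is$ un faisceau d'id\'eaux de type fini de~$\Os_{U}$. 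En passant aux germes, l'anneau local~$\Os_{Z,z}$ s'identifie alors \`a un quotient $\Os_{\E{n}{\As},x}/\Is_{x}$, o\`u~$x$ d\'esigne le point de~$U$ correspondant \`a~$z$ et~$\Is_{x}$ un id\'eal (engendr\'e par un nombre fini d'\'el\'ements) de~$\Os_{\E{n}{\As},x}$.

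Je noterais ensuite~$b$ l'image de~$x$ dans~$B=\Mc(\As)$ et v\'erifierais que les hypoth\`eses du th\'eor\`eme~\ref{theo:noetherien} sont bien remplies. C'est ici qu'intervient l'hypoth\`ese selon laquelle~$\As$ est un anneau de base : elle fournit pr\'ecis\'ement un syst\`eme fondamental fin~$\Us_{b}$ de voisinages compacts et spectralement convexes de~$b$ au-dessus duquel l'anneau local~$\Os_{B,b}$ est noeth\'erien, fortement r\'egulier et de dimension inf\'erieure \`a~$1$, la condition technique sur~$B_{\textrm{um}}$ et le bord analytique fini \'etant \'egalement garantie lorsque~$\Hs(b)$ est imparfait et trivialement valu\'e. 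Le th\'eor\`eme~\ref{theo:noetherien} assure alors que~$\Os_{\E{n}{\As},x}$ est noeth\'erien et fortement r\'egulier, en particulier r\'egulier.

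Il resterait \`a faire descendre ces propri\'et\'es au quotient~$\Os_{Z,z}$. La noeth\'erianit\'e est imm\'ediate, un quotient d'anneau noeth\'erien \'etant noeth\'erien. Pour la cat\'enarit\'e universelle, j'utiliserais qu'un anneau local r\'egulier est de Cohen-Macaulay, donc universellement cat\'enaire, puis le fait classique que cette propri\'et\'e se transmet aux quotients : toute alg\`ebre de type fini sur~$\Os_{Z,z}$ \'etant aussi une alg\`ebre de type fini sur~$\Os_{\E{n}{\As},x}$, elle est cat\'enaire. La difficult\'e essentielle ayant d\'ej\`a \'et\'e absorb\'ee par le th\'eor\`eme~\ref{theo:noetherien}, le seul point auquel je pr\^eterais attention est que c'est bien la r\'egularit\'e de~$\Os_{\E{n}{\As},x}$, et non sa seule noeth\'erianit\'e, qui fournit la cat\'enarit\'e universelle.
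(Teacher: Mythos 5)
Votre démonstration est correcte et suit exactement la voie que le papier laisse implicite (le corollaire y est énoncé sans preuve) : réduction à un modèle local, identification de $\Os_{Z,z}$ avec un quotient $\Os_{\E{n}{\As},x}/\Is_{x}$, application du théorème~\ref{theo:noetherien} dont les hypothèses sont précisément fournies par la définition~\ref{defi:debase}, puis descente de la noethérianité et de la caténarité universelle au quotient. Votre remarque finale est juste : c'est bien la régularité (via Cohen--Macaulay), et non la seule noethérianité, qui assure la caténarité universelle.
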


\section{Excellence des anneaux locaux}

Dans cette partie, nous allons nous int\'eresser \`a l'excellence des anneaux locaux des espaces analytiques sur~$\Z$ et les anneaux d'entiers de corps de nombres. Pour les d\'efinitions et propri\'et\'es de base des anneaux excellents, nous renvoyons \`a~\cite{Matsumura}, \S~34. En g\'eom\'etrie analytique complexe, ce r\'esultat est bien connu et se d\'emontre g\'en\'eralement \`a l'aide de crit\`eres jacobiens (\textit{cf.}~\cite{Matsumura}, th\'eor\`eme~102 et remarque suivante). Sur un corps valu\'e complet, c'est un r\'esultat difficile, en particulier en caract\'eristique non nulle, d\^u en toute g\'en\'eralit\'e \`a A.~Ducros (\textit{cf.}~\cite{excellence}, th\'eor\`eme~2.13).

Comme dans la partie pr\'ec\'edente, nos r\'esultats valent en r\'ealit\'e pour les anneaux locaux des espaces~$\E{n}{\As}$, o\`u~$\As$ appartient \`a une classe d'anneaux de Banach plus g\'en\'erale. Celle-ci contient tous les exemples indiqu\'es \`a la remarque~\ref{rem:liste}, avec l'hypoth\`ese suppl\'ementaire que les anneaux consid\'er\'es soient de caract\'eristique nulle. 

Signalons que, cette fois-ci, nous ne red\'emontrons pas les r\'esultats d'excellence pour les anneaux locaux des espaces sur les corps valu\'es, mais les utilisons dans notre preuve (pour les corps de caract\'eristique non nulle uniquement). 

\begin{theo}
Soit~$b$ un point de $B$. Soit~$\Us$ un syst\`eme fondamental fin de voisinages compacts et spectralement convexes du point~$b$. Si~$\Hs(b)$ est de caract\'eristique non nulle et trivialement valu\'e, supposons que tout \'el\'ement de~$\Us$ est contenu dans~$B_{\textrm{um}}$ et poss\`ede un bord analytique fini. 

Supposons que l'anneau local~$\Os_{B,b}$ est un corps fort relativement \`a~$\Us$ de caract\'eristique nulle. Soient $n\in\N$ et~$x$ un point de $\E{n}{\As}$ au-dessus de~$b$. Alors l'anneau local~$\Os_{\E{n}{\As},x}$ est excellent.
\end{theo}
\begin{proof}
Nous allons d\'emontrer ce r\'esultat par r\'ecurrence sur~$n$. Si $n=0$, il est v\'erifi\'e. Supposons maintenant que~$n\ge 1$ et que le r\'esultat est vrai en dimension~$n-1$.

En proc\'edant comme dans la preuve du th\'eor\`eme~\ref{theo:noetherien}, on se ram\`ene au cas o\`u~$x$ est un point rigide \'epais au-dessus de~$b$. D'apr\`es le corollaire~\ref{cor:finidimsup} et la remarque qui le suit, l'anneau local~$\Os_{X_{n},x}$ est fini sur l'anneau local~$\Os_{X_{n},0}$ au point~$0$ de la fibre~$(X_{n})_{b}$.  Puisque l'excellence est stable par passage \`a une alg\`ebre de type fini, nous pouvons supposer que~$x=0$. L'anneau local~$\Os_{X_{n},x}$ poss\`ede alors une description explicite comme alg\`ebre de s\'eries convergentes \`a coefficients dans le corps~$\Os_{B,b}$ (\textit{cf.}~corollaire~\ref{cor:dvptpoint}).

Nous savons d\'ej\`a que~$\Os_{X_{n},x}$ est noeth\'erien et universellement cat\'enaire (car r\'egulier). Il nous reste \`a montrer que ses fibres formelles sont g\'eom\'etriquement r\'eguli\`eres. Soit~$\p$ un id\'eal premier de~$\Os_{X_{n},x}$. Nous voulons montrer que l'alg\`ebre $\widehat{\Os_{X_{n},x}} \otimes_{\Os_{X_{n},x}} \kappa(\p)$ est g\'eom\'etriquement r\'eguli\`ere sur~$\kappa(\p)$. 

Supposons tout d'abord que~$\p$ n'est pas nul. Il contient alors une s\'erie~$f$ non constante. Notons $y\in X_{n-1}$ la projection de~$x$ sur ses $n-1$ premi\`eres coordonn\'ees. D'apr\`es le lemme~\ref{lem:chgtvar}, quitte \`a changer l'ordre des variables, nous pouvons supposer que la restriction de~$f$ \`a~$(X_{n})_{y}$ n'est pas nulle. Le th\'eor\`eme de pr\'eparation de Weierstra{\ss}~\ref{theo:preparationWeierstrass} permet alors de supposer que~$f$ est un polyn\^ome en~$T_{n}$. D'apr\`es le th\'eor\`eme de division de Weierstra{\ss}, nous avons un isomorphisme
\[\Os_{X_{n},x}/(f) \simeq \Os_{X_{n-1},y}[T_n]/(f).\] 
Pour montrer que $\widehat{\Os_{X_{n},x}} \otimes_{\Os_{X_{n},x}} \kappa(\p)$ est g\'eom\'etriquement r\'eguli\`ere sur~$\kappa(\p)$, nous pouvons remplacer~$\Os_{X_{n},x}$ par $\Os_{X_{n},x}/(f)$. Par r\'ecurrence, ce dernier anneau est excellent, donc le r\'esultat est v\'erifi\'e.

Supposons maintenant que~$\p=(0)$. Notons~$K$ le corps des fractions de~$\Os_{X_{n},x}$. Il nous reste \`a montrer que l'alg\`ebre $\widehat{\Os_{X_{n},x}}\otimes_{\Os_{X_{n},x}} K$ est g\'eom\'etriquement r\'eguli\`ere sur~$K$. Remarquons d\'ej\`a qu'elle est r\'eguli\`ere, puisque c'est un localis\'e de~$\widehat{\Os_{X_{n},x}}$. Puisque nous nous sommes plac\'es en caract\'eristique nulle, cela suffit pour conclure.
\end{proof}

\begin{theo}
Soit~$b$ un point de $B$. Soit~$\Us$ un syst\`eme fondamental fin de voisinages compacts et spectralement convexes du point~$b$. Si~$\Hs(b)$ est de caract\'eristique non nulle et trivialement valu\'e, supposons que tout \'el\'ement de~$\Us$ est contenu dans~$B_{\textrm{um}}$ et poss\`ede un bord analytique fini. 

Supposons que l'anneau local~$\Os_{B,b}$ est noeth\'erien, fortement r\'egulier relativement \`a~$\Us$, de dimension inf\'erieure \`a~$1$ et de caract\'eristique nulle. Soient $n\in\N$ et~$x$ un point de $\E{n}{\As}$ au-dessus de~$b$. Alors l'anneau local~$\Os_{\E{n}{\As},x}$ est excellent.
\end{theo}
\begin{proof}
Le th\'eor\`eme qui pr\'ec\`ede traitant le cas des corps forts, nous pouvons supposer que~$\Os_{B,b}$ est un anneau fortement de valuation discr\`ete. Choi\-sis\-sons-en une uniformisante~$\pi$. Nous allons d\'emontrer le r\'esultat par r\'ecurrence sur~$n$. Si $n=0$, il est v\'erifi\'e. Supposons maintenant que~$n\ge 1$ et que le r\'esultat est vrai en dimension~$n-1$.

En proc\'edant comme pr\'ec\'edemment, on se ram\`ene au cas du point~$x=0$ de la fibre~$(X_{n})_{b}$ et \`a montrer \`a montrer que les fibres formelles de~$\Os_{X_{n},x}$ sont g\'eom\'etriquement r\'eguli\`eres. Soit~$\p$ un id\'eal premier de~$\Os_{X_{n},x}$. Nous voulons montrer que l'alg\`ebre $\widehat{\Os_{X_{n},x}} \otimes_{\Os_{X_{n},x}} \kappa(\p)$ est g\'eom\'etriquement r\'eguli\`ere sur~$\kappa(\p)$. 

Si~$\p$ contient~$\m_{b}$, alors nous pouvons remplacer l'anneau local $\Os_{X_{n},x}$ par $\Os_{X_{n},x}/\m_{b} \simeq \Os_{(X_{n})_{b},x}$. Le r\'esultat d\'ecoule alors du th\'eor\`eme~2.13 de~\cite{excellence}.

Supposons, \`a pr\'esent, que~$\p$ ne contient pas~$\m_{b}$, autrement dit, ne contient pas~$\pi$. Si~$\p$ n'est pas nul, il contient une s\'erie qui n'est pas divisible par~$\pi$. Nous pouvons alors appliquer le raisonnement du th\'eor\`eme pr\'ec\'edent, bas\'e sur les th\'eor\`emes de Weierstra{\ss}, pour faire d\'ecro\^{\i}tre la dimension et conclure par r\'ecurrence.

Finalement, le cas o\`u~$\p=(0)$ se traite comme pr\'ec\'edemment en utilisant la r\'egularit\'e de $\widehat{\Os_{X_{n},x}}$ et le fait que le corps des fractions de~$\Os_{X_{n},x}$ est de caract\'eristique nulle. 
\end{proof}

\begin{coro}
Soit~$\As$ un anneau de Banach de base au sens de la d\'efinition~\ref{defi:debase} et supposons que, pour tout point~$b$ de~$\Mc(\As)$, l'anneau local~$\Os_{\Mc(\As),b}$ est de caract\'eristique nulle. Soit~$(Z,\Os_{Z})$ un espace analytique sur~$\As$. Alors, en tout point~$z$ de~$Z$, l'anneau local~$\Os_{Z,z}$ est excellent.
\end{coro}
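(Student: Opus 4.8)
The plan is to reduce the general statement to the affine case $\E{n}{\As}$ settled by the preceding excellence theorem, and then to descend along the local-model definition of an analytic space over~$\As$. Excellence of a local ring is a property of that ring alone, so the question is local on~$Z$: it suffices to treat a point~$z$ lying in an open subset isomorphic to a local model. By the definition recalled in Section~1, such a model is the support of a sheaf $\Os_U/\Is$, where $U$ is an open subset of some $\E{n}{\As}$ and $\Is \subset \Os_U$ is a sheaf of ideals of finite type. Writing~$x$ for the point of $\E{n}{\As}$ corresponding to~$z$, I would identify the stalk $\Os_{Z,z}$ with the quotient $\Os_{\E{n}{\As},x}/\Is_x$, the ideal $\Is_x$ being finitely generated since $\Is$ is of finite type.

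Next I would check that the hypotheses of the preceding excellence theorem hold at the base point~$b$ lying below~$z$. As $\As$ is a base Banach ring in the sense of Definition~\ref{defi:debase}, the point~$b$ carries a fine fundamental system $\Us_b$ of compact, spectrally convex neighborhoods relative to which $\Os_{\Mc(\As),b}$ is noetherian, strongly regular and of dimension at most~$1$, while condition~(ii) of that definition supplies the finite-boundary property in the remaining trivially valued, positive-characteristic case. Together with the characteristic-zero hypothesis assumed here, these are exactly the assumptions under which $\Os_{\E{n}{\As},x}$ was shown to be excellent (the corps fort case corresponds to dimension~$0$, which is covered); hence that ring is excellent.

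Finally, I would invoke the stability of excellence under passage to a finitely generated algebra, and in particular under taking quotients (see \cite{Matsumura}, \S~34): being a quotient $\Os_{\E{n}{\As},x}/\Is_x$ of the excellent ring $\Os_{\E{n}{\As},x}$, the local ring $\Os_{Z,z}$ is itself excellent. I do not expect a genuine obstacle here, since all the substance sits in the affine case already proved and the corollary is a formal descent through the local-model definition. The only points demanding care are the bookkeeping that matches Definition~\ref{defi:debase} with the hypotheses of the excellence theorem (specifically, that the characteristic-zero condition on $\Os_{\Mc(\As),b}$ is precisely what lets the theorem apply fibre by fibre), together with citing the correct permanence property, namely that the class of excellent rings is stable under quotient.
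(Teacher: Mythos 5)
Votre démonstration est correcte et suit exactement la déduction que le texte laisse implicite (le corollaire est énoncé sans preuve) : on identifie $\Os_{Z,z}$ à un quotient $\Os_{\E{n}{\As},x}/\Is_{x}$ via la définition des modèles locaux, on applique le théorème d'excellence précédent (dont les hypothèses sont précisément celles de la définition~\ref{defi:debase} jointes à la caractéristique nulle), puis on conclut par la stabilité de l'excellence par passage au quotient.
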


On peut imaginer diverses applications de ces r\'esultats. Les anneaux des espaces analytiques sur~$\Z$ \'etant hens\'eliens et excellents, ils v\'erifient l'approximation d'Artin, d'apr\`es~\cite{Popescuapproximation}. Nous pouvons par exemple appliquer ce r\'esultat \`a l'anneau local~$\Os_{0}$ au point~$0$ de la fibre de~$\E{n}{\Z}$ au-dessus de la valeur absolue triviale. Cet anneau poss\`ede une description explicite : il est constitu\'e des s\'eries de la forme 
\[\sum_{\bk\in\N^n} a_{\bk}\, \bT^\bk \in \Z\left[\frac 1 N \right] \llbracket\bT\rrbracket,\]
l'entier~$N$ d\'ependant de la s\'erie, qui poss\`edent un rayon de convergence strictement positif en toutes les places (\textit{cf.}~\cite{asterisque}, proposition~3.2.7 pour une description valant pour tous les anneaux d'entiers de corps de nombres et une preuve). On en d\'eduit que tout syst\`eme d'\'equations polynomiales \`a coefficients dans~$\Os_{0}$ (par exemple un syst\`eme \`a coefficients dans~$\Q[\bT]$) qui poss\`ede une solution dans $\hat{\Os_{0}} = \Q\llbracket \bT \rrbracket$ en poss\`ede une dans~$\Os_{0}$, et m\^eme que l'on peut trouver une solution dans~$\Os_{0}$ arbitrairement proche (pour la topologie $\bT$-adique) d'une solution formelle donn\'ee. C'est une vaste g\'en\'eralisation d'un th\'eor\`eme d'Eisenstein concernant le cas d'une \'equation en une variable.

\section{Coh\'erence du faisceau structural}

Pour finir, nous nous int\'eressons \`a la coh\'erence du faisceau structural des espaces analytiques sur~$\Z$ et les anneaux d'entiers de corps de nombres. C'est l'analogue du th\'eor\`eme fondateur de K.~Oka en g\'eom\'etrie anaytique complexe. De nouveau, la preuve que nous proposons vaut dans un cadre plus g\'en\'eral, par exemple pour les espaces analytiques sur les corps valu\'es complets (archim\'ediens ou non) et les anneaux de valuation discr\`ete.

La coh\'erence du faisceau structural est d\'ej\`a connue pour les espaces de Berkovich sur un corps valu\'e ultram\'etrique complet. On la d\'eduit ais\'ement de la noeth\'erianit\'e des alg\`ebres affino\"{\i}des, elle-m\^eme d\'ecoulant du th\'eor\`eme de division de Weierstra{\ss} global pour les alg\`ebres de Tate (\textit{cf.}~\cite{excellence}, lemme~0.1 pour une preuve d\'etaill\'ee). Cette m\'ethode ne nous semble pas pouvoir \^etre adapt\'ee pour des espaces sur un anneau de Banach quelconque. En effet, sur~$\C$ d\'ej\`a, la noeth\'erianit\'e de l'anneau des fonctions analytiques au voisinage d'un disque de dimension sup\'erieure \`a~$2$ est un r\'esultat difficile (\textit{cf.}~\cite{Frisch}, th\'eor\`eme~(I,9)), dont la preuve utilise d'ailleurs la coh\'erence du faisceau structural. C'est pourquoi nous avons choisi d'utiliser plut\^ot des m\'ethodes locales adapt\'ees de la g\'eom\'etrie analytique complexe.

\medskip

Commen\c{c}ons par quelques r\'esultats de prolongement analytique.

\begin{defi}
Soit~$(S,\Os_{S})$ un espace localement annel\'e. Soit~$s$ un point de~$S$. Nous dirons que \textbf{le principe du prolongement analytique vaut au voisinage de~$s$} si, pour tout \'el\'ement non nul~$f$ de~$\Os_{S,s}$, il existe un voisinage ouvert~$U$ de~$s$ v\'erifiant les propri\'et\'es suivantes :
\begin{enumerate}[i)]
\item $f$ est d\'efinie sur~$U$ ;
\item pour tout point~$t$ de~$U$, l'image de~$f$ dans~$\Os_{S,t}$ n'est pas nulle. 
\end{enumerate}

Nous dirons que \textbf{le principe du prolongement analytique vaut sur~$S$} s'il vaut au voisinage de tous ses points.
\end{defi}

\begin{rema}
Soit~$(S,\Os_{S})$ un espace localement annel\'e sur lequel vaut le principe du prolongement analytique. La version classique du principe du prolongement analytique est alors v\'erifi\'ee. Soient~$U$ une partie ouverte et connexe de~$S$ et~$f$ un \'el\'ement de~$\Os_{S}(U)$. S'il existe un point~$s$ de~$U$ tel que~$f$ soit nulle dans~$\Os_{S,s}$, alors~$f$ est nulle dans~$\Os_{S}(U)$.

Remarquons \'egalement que le principe du prolongement analytique vaut au voisinage de tout point~$s$ en lequel l'anneau local~$\Os_{S,s}$ est un corps.
\end{rema}

\begin{lemm}\label{lem:prolongementpurloctr}
Soit~$b$ un point de~$B$. Soit~$\Us$ un syst\`eme fondamental fin de voisinages compacts et spectralement convexes du point~$b$. Si~$\Hs(b)$ est de caract\'eristique non nulle et trivialement valu\'e, supposons que tout \'el\'ement de~$\Us$ est contenu dans~$B_{\textrm{um}}$ et poss\`ede un bord analytique fini. Supposons que~$\Os_{B,b}$ est un anneau fortement de valuation discr\`ete relativement \`a~$\Us$ et que le principe du prolongement analytique vaut au voisinage de~$b$.

Soit~$x$ un point de~$X_{n}$, avec $n\in\N$. Si le point~$x$ est purement localement transcendant, alors l'anneau local~$\Os_{X_{n},x}$ est un anneau fortement de valuation discr\`ete et le principe du prolongement analytique vaut au voisinage de~$x$. 
\end{lemm}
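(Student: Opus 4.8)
La première assertion ne demandera aucun travail supplémentaire : puisque le point~$x$ est purement localement transcendant et que~$\Os_{B,b}$ est un anneau fortement de valuation discrète d'uniformisante~$\pi$, le corollaire~\ref{cor:fortpasepais} affirme directement que~$\Os_{X_{n},x}$ est lui aussi un anneau fortement de valuation discrète, d'uniformisante~$\pi$. Il fournit de surcroît un système fondamental fin~$\Vs$ de voisinages compacts et spectralement convexes de~$x$ relativement auquel la famille~$(\pi)$ engendre $\Bs$-fortement l'idéal~$\m_{x}$, et l'on a $\bigcap_{m\ge 0}\m_{x}^m = (0)$. (Les hypothèses techniques sur les points de caractéristique non nulle trivialement valués ne servent qu'à garantir l'applicabilité de ce corollaire.)

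Pour le principe du prolongement analytique, je partirais d'un élément non nul~$f$ de~$\Os_{X_{n},x}$ et utiliserais la structure d'anneau de valuation discrète pour l'écrire sous la forme $f = \pi^v\, u$, avec $v\in\N$ et~$u$ inversible dans~$\Os_{X_{n},x}$. L'élément~$u$ étant inversible, il l'est sur tout un voisinage ouvert de~$x$, et son image dans~$\Os_{X_{n},t}$ est donc non nulle pour tout~$t$ voisin de~$x$ ; comme~$u$ est inversible, l'annulation de~$f$ dans~$\Os_{X_{n},t}$ équivaut à celle de~$\pi^v$. Le problème se ramène ainsi entièrement à montrer que le germe de~$\pi^v$ reste non nul dans~$\Os_{X_{n},t}$ lorsque~$t$ parcourt un voisinage de~$x$.

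Ici intervient le point délicat, qui constituera l'obstacle principal. Comme~$\pi$ appartient à~$\m_{b}$, il s'annule en valeur sur toute fibre ($\pi(c)=0$ dans~$\Hs(c)$) ; un argument fibre à fibre est donc exclu et la non-annulation de~$\pi^v$ est de nature transverse. Je raisonnerais alors comme suit : l'anneau~$\Os_{B,b}$ étant intègre, on a $\pi^v\neq 0$ dans~$\Os_{B,b}$, et le principe du prolongement analytique au voisinage de~$b$ fournit un voisinage compact~$V$ de~$b$ tel que l'image de~$\pi^v$ dans~$\Os_{B,c}$ soit non nulle pour tout~$c\in V$. Il resterait à vérifier que, pour~$t$ au-dessus d'un tel~$c$, le morphisme naturel $\Os_{B,c}\to\Os_{X_{n},t}$ est injectif. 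Cette injectivité se lirait sur les descriptions explicites de~$\Os_{X_{n},t}$ comme limite inductive d'algèbres de séries (éventuellement de Laurent) à coefficients dans des algèbres de la forme~$\Bs(U)$, $U$~voisinage compact de~$c$ (propositions~\ref{prop:Bcouronne} et~\ref{prop:Blemniscate}, corollaire~\ref{cor:dvptpoint}, appliqués aux couronnes et lemniscates fournies par le corollaire~\ref{cor:fortpasepais}) : le morphisme envoie un germe sur la série constante correspondante, et un germe non nul de~$\Os_{B,c}$ est représenté par un élément non nul de~$\Bs(U)$ pour~$U$ assez petit, dont le terme constant est non nul.

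En combinant ces points, j'obtiendrais que~$\pi^v$, donc $f=\pi^v u$, reste d'image non nulle dans~$\Os_{X_{n},t}$ pour tout~$t$ dans un voisinage de~$x$, ce qui est exactement le principe du prolongement analytique au voisinage de~$x$. Le passage de~$n$ quelconque au cas~$n=1$ se ferait par la même récurrence que dans le corollaire~\ref{cor:fortpasepais}, en travaillant au-dessus de la base~$\Bs(W)$ associée à un voisinage de la projection de~$x$ ; alternativement, l'argument d'injectivité s'applique directement en~$n$ variables en itérant les descriptions en séries. La vérification de cette injectivité du morphisme de changement de base sur les anneaux locaux est, à mes yeux, le c\oe ur technique de la preuve.
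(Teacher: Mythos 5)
Votre réduction est exactement celle du texte : le corollaire~\ref{cor:fortpasepais} donne directement que $\Os_{X_{n},x}$ est fortement de valuation discr\`ete d'uniformisante~$\pi$, puis l'\'ecriture $f=\pi^v u$ avec $u$ inversible au voisinage de~$x$ ram\`ene tout \`a la non-annulation du germe de~$\pi$ dans~$\Os_{X_{n},t}$ pour~$t$ proche de~$x$. Jusque-l\`a, rien \`a redire.

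Le point que vous identifiez vous-m\^eme comme le c\oe ur technique --- l'injectivit\'e de $\Os_{B,c}\to\Os_{X_{n},t}$ --- est bien la bonne chose \`a d\'emontrer, mais la m\'ethode que vous proposez ne l'atteint pas. Les descriptions explicites en s\'eries (propositions~\ref{prop:Bcouronne} et~\ref{prop:Blemniscate}, corollaire~\ref{cor:dvptpoint}) d\'ecrivent l'anneau des sections \emph{sur un disque, une couronne ou une lemniscate}, ou le germe au point~$0$ d'une fibre ; elles ne d\'ecrivent pas l'anneau local en un point~$t$ \emph{quelconque} d'un tel voisinage, et les points~$t$ proches de~$x$ n'ont aucune raison d'\^etre purement localement transcendants ni de se pr\^eter \`a une telle description (penser \`a un point rigide de polyn\^ome minimal ins\'eparable sur un corps trivialement valu\'e). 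Montrer que la s\'erie constante de coefficient non nul est non nulle dans $\Bs(U)\la s'\le|T|\le t'\ra$ donne l'injectivit\'e dans $\Bs(\overline{C}_{c}(s,t))^\dag$, pas dans~$\Os_{X_{n},t}$, qui en est une limite inductive ult\'erieure dont les fl\`eches ne sont pas injectives a priori. La preuve du texte contourne enti\`erement cette difficult\'e : si le germe de~$\pi$ \'etait nul dans~$\Os_{X_{n},t}$, la fonction~$\pi$ s'annulerait (en tant que fonction, les sections \'etant de vraies applications \`a valeurs dans les~$\Hs(z)$) sur tout un voisinage ouvert~$W$ de~$t$ ; la projection \'etant ouverte (corollaire~\ref{cor:projouverte}) et $\Hs(c')\hookrightarrow\Hs(z)$ pour $z$ au-dessus de~$c'$, la fonction~$\pi$ s'annulerait sur l'ouvert $\pi(W)\ni c$ de~$B$, donc serait nulle dans~$\Os_{B,c}$, ce qui contredit le choix du voisinage de~$b$ fourni par le principe du prolongement analytique. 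C'est cet argument, et non les d\'eveloppements en s\'eries, qu'il faut substituer \`a votre \'etape finale ; le reste de votre r\'edaction tient.
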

\begin{proof}
Soit~$\pi$ une uniformisante de~$\Os_{B,b}$. D'apr\`es le corollaire~\ref{cor:fortpasepais}, l'anneau local~$\Os_{X_{n},x}$ est un anneau fortement de valuation discr\`ete d'uniformisante~$\pi$. Pour montrer que le principe du prolongement analytique vaut au voisinage de~$x$, il suffit de montrer qu'il existe un voisinage ouvert~$U$ de~$x$ sur lequel son uniformisante~$\pi$ est d\'efinie et tel que, pour tout point~$y$ de~$U$, l'image de~$\pi$ dans~$\Os_{X_{n},y}$ n'est pas nulle. Choisissons pour~$U$ l'image r\'eciproque d'un voisinage ouvert de~$b$ sur lequel la m\^eme propri\'et\'e est v\'erifi\'ee.  On montre que l'ouvert~$U$ convient en utilisant le fait que le morphisme de projection est ouvert (\textit{cf.}~corollaire~\ref{cor:projouverte}).
\end{proof}

\begin{prop}\label{prop:prolongementfibre}
Soient~$b$ un point de~$B$ et~$x$ un point rigide \'epais de~$X_{b}$. Soit~$f$ un \'el\'ement de~$\Os_{X,x}$ dont l'image dans~$\Os_{X_{b},x}$ n'est pas nulle. Alors il existe un voisinage~$U$ de~$x$ tel que, pour tout point~$y$ de~$U$, l'image de~$f$ dans~$\Os_{X_{\pi(y)},y}$ ne soit pas nulle.
\end{prop}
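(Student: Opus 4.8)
The plan is to use the Weierstra{\ss} preparation theorem to reduce $f$ to a distinguished polynomial, and then to invoke the identity principle one fibre at a time. Since $x$ is a rigide \'epais point of $X_{b}$, its minimal polynomial $P(T)$ lies in $\kappa(b)[T]$ and is irreducible over $\Hs(b)$, so the hypotheses of Theorem~\ref{theo:preparationWeierstrass} are met (in the exceptional case where $P$ is inseparable and $\Hs(b)$ is trivially valued one invokes that $b$ is ultram\'etrique typique, as guaranteed by the second condition of Definition~\ref{defi:debase}). Applying that theorem to $G = f$, whose image in $\Os_{X_{b},x}$ is nonzero by hypothesis, yields a factorization $f = \Omega E$ in $\Os_{X,x}$, where $E$ is a unit and $\Omega \in \Os_{B,b}[T]$ is a monic polynomial of degree $N = nd$ with $\Omega(b)(T) = P(T)^{n}$.

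First I would choose a neighbourhood $U_{0}$ of $x$ on which both $\Omega$ and $E$ are defined and on which $E$ is invertible; this is possible precisely because $E$ is a unit of the local ring $\Os_{X,x}$. For every $y \in U_{0}$ the germ of $E$ in $\Os_{X,y}$ is a unit, hence its image in $\Os_{X_{\pi(y)},y}$ is a unit as well, and in particular nonzero. Writing $b' = \pi(y)$, the problem thus reduces to showing that the image of the distinguished polynomial $\Omega$ in $\Os_{X_{b'},y}$ is nonzero for every $y$ close to $x$.

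The coefficients of $\Omega$ lie in $\Os_{B,b}$, hence are defined on a neighbourhood $V$ of $b$ in $B$, and since $\Omega$ is monic its restriction to any fibre $X_{b'}$ with $b' \in V$ is the polynomial $\Omega(b')(T) \in \Hs(b')[T]$, still monic and therefore nonzero. The crucial point is that a nonzero polynomial of $\Hs(b')[T]$ has nonzero image in every local ring $\Os_{X_{b'},y}$ of the fibre: its zero locus in $X_{b'}$ is a finite set of rigid points, so it cannot vanish on any neighbourhood of $y$, i.e. its germ at $y$ is nonzero. Equivalently, the natural map $\Hs(b')[T] \to \Os_{X_{b'},y}$ is injective, which is the identity principle on the Berkovich line over $\Hs(b')$; recall that $\Os_{X_{b'},y}$ is a field at the non-rigid points and a discrete valuation ring at the rigid points (\emph{cf.} the proof of Corollary~\ref{cor:pointpasepais}), so in either case a nonzero polynomial maps to a nonzero element. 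Setting $U = U_{0} \cap \pi^{-1}(V)$, for every $y \in U$ the image of $f = \Omega E$ in $\Os_{X_{b'},y}$ is the product of a nonzero element by a unit, hence nonzero, which is exactly the claim.

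The real work is hidden in Theorem~\ref{theo:preparationWeierstrass}: once the clean factorization $f = \Omega E$ is in hand, the argument is formal, and the case $n = 0$ (where $f$ is already a unit and $\Omega = 1$) is trivial. The only genuine point to check is that $\Omega$ stays fibrewise nonzero, and I expect this to be the main, albeit minor, obstacle; it rests on applying the identity principle fibre by fibre, together with the observation that monicity forces $\Omega(b')$ to be nonzero (its leading coefficient is $1$) in every nearby fibre, so that it never degenerates to the zero polynomial.
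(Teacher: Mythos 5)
Your argument is correct in substance but follows a genuinely different route from the paper's. The paper does not use Weierstra{\ss} preparation here at all: it first treats the case where $x$ is a rational point of its fibre (i.e.\ $x=\alpha$ with $\alpha\in\Os_{B,b}$), using the explicit description of the local ring as an algebra of convergent power series (Corollary~\ref{cor:dvptpoint}) to produce a neighbourhood $U$ on which the non-vanishing locus of $f$ is dense in every fibre $X_{c}\cap U$; it then reduces the general rigid \'epais case to the rational one by pulling back along the finite surjective morphism $\E{1}{\As'}\to X$ attached to the universal splitting algebra $\As'$ of a polynomial annihilating $x$. Your factorization $f=\Omega E$ with $E$ a unit and $\Omega$ monic in $\Os_{B,b}[T]$, followed by the observation that a monic polynomial restricts to a nonzero polynomial on every nearby fibre and that a nonzero element of $\Hs(b')[T]$ has nonzero germ at every point of $X_{b'}$ (its zero locus being finite), is a clean and valid alternative; it buys a very transparent fibrewise argument at the cost of invoking the much heavier Theorem~\ref{theo:preparationWeierstrass}.

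The one genuine caveat is generality. The proposition is stated for an arbitrary Banach ring $\As$ and an arbitrary point $b$, whereas Theorem~\ref{theo:preparationWeierstrass} carries the extra hypothesis that $b$ be ultram\'etrique typique when the minimal polynomial $P$ is inseparable and $\Hs(b)$ is trivially valued. You discharge this by appealing to Definition~\ref{defi:debase}, but being a base Banach ring is not among the hypotheses of the proposition; the paper's proof, which never divides by $f$, needs no such assumption. Your proof therefore establishes the statement only under this additional hypothesis --- harmless in every later application of the proposition (where $\As$ is indeed assumed to be a base Banach ring, and the relevant points of $X_{n-1}$ are then ultram\'etrique typiques by Proposition~\ref{prop:umtypiquerecurrence}), but strictly weaker than what is claimed.
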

\begin{proof}
Supposons tout d'abord qu'il existe $\alpha\in \Os_{B,b}$ tel que $x=\alpha$ dans~$X_{b}$. Dans ce cas, on d\'emontre le r\'esultat en utilisant la description explicite de l'anneau local (\textit{cf.}~corollaire~\ref{cor:dvptpoint}) et la description des voisinages comme des disques. Plus pr\'ecis\'ement, nous pouvons montrer l'existence d'un voisinage ouvert~$U$ de~$x$ tel que, pour tout $c\in\pi(U)$, l'ensemble des points~$y$ de $X_{c}\cap U$ o\`u~$f(y)$ n'est pas nulle est dense dans $X_{c}\cap U$.

Passons maintenant au cas o\`u le point~$x$ est un point rigide \'epais. Il est annul\'e par un polyn\^ome $P(T)\in\Os_{B,b}[T]$. Nous pouvons supposer que $P(T)\in\As[T]$. Consid\'erons l'alg\`ebre de d\'ecomposition universelle~$\As'$ du polyn\^ome~$P$ sur~$\As$. C'est une alg\`ebre finie sur~$\As$, que l'on peut munir d'une norme de Banach de fa\c{c}on que le morphisme $\As \to \As'$ soit born\'e. Consid\'erons le morphisme $ \varphi : X' = \E{1}{\As'} \to X$ associ\'e. C'est un morphisme fini au sens topologique (\textit{i.e.}~continu, ferm\'e et \`a fibres finies) et surjectif. Posons $\varphi^{-1}(x) = \{x'_{1},\dots,x'_{r}\}$. Soit $i\in\cn{1}{r}$. Il existe $\alpha'_{i}\in \As'$ tel que le point~$x'_{i}$ soit \'egal \`a~$\alpha'_{i}$ dans sa fibre au-dessus de~$\Mc(\As')$. D'apr\`es le raisonnement effectu\'e au d\'ebut de la preuve, il poss\`ede donc un voisinage ouvert~$U'_{i}$ tel que l'ensemble des points ou~$f$ n'est pas nulle soit dense dans chaque fibre. Puisque le morphisme~$\varphi$ est fini, l'image r\'eciproque de la r\'eunion des~$U'_{i}$ est un voisinage de~$x$ dans~$X$. Il satisfait la propri\'et\'e requise.
\end{proof}

\begin{coro}
Soit~$b$ un point de~$B$. Soit~$\Us$ un syst\`eme fondamental fin de voisinages compacts et spectralement convexes du point~$b$. Si~$\Hs(b)$ est de caract\'eristique non nulle et trivialement valu\'e, supposons que tout \'el\'ement de~$\Us$ est contenu dans~$B_{\textrm{um}}$ et poss\`ede un bord analytique fini. Supposons que l'anneau local~$\Os_{B,b}$ est noeth\'erien, fortement r\'egulier relativement \`a~$\Us$ et de dimension inf\'erieure \`a~$1$. 

Soit~$x$ un point de~$X_{n}$, avec $n\in\N$, au-dessus de~$b$. Supposons que le principe du prolongement analytique vaut au voisinage de~$b$. Alors il vaut au voisinage du point~$x$.
\end{coro}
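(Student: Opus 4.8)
The plan is to reduce, by the coordinate decomposition used in the proof of Th\'eor\`eme~\ref{theo:noetherien}, to two successive propagations: first through the purely locally transcendent coordinates, over the one-dimensional base~$B$, and then through the rigide \'epais coordinates. After reordering the variables, I fix $k$ such that $x_{k}$ is purement localement transcendant above~$b$ and $x$ is rigide \'epais above~$x_{k}$.

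I would first treat the transcendent part. If $\Os_{B,b}$ is a corps fort, then by Corollaire~\ref{cor:fortpasepais} the ring $\Os_{X_{k},x_{k}}$ is again a corps fort, in particular a field, so the principle holds near~$x_{k}$ by the remark that it holds at any point whose local ring is a field. If $\Os_{B,b}$ is an anneau fortement de valuation discr\`ete, then Lemme~\ref{lem:prolongementpurloctr} applies verbatim and yields both that $\Os_{X_{k},x_{k}}$ is an anneau fortement de valuation discr\`ete and that the principle holds near~$x_{k}$. Either way, after replacing~$\As$ by $\Bs(W)$ for a compact rational neighborhood~$W$ of~$x_{k}$ in~$X_{k}$, the base point~$x_{k}$ again satisfies all the hypotheses of the statement, and I am reduced to the case where $x$ is rigide \'epais above the base point.

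For this case I would induct on the number $j=n-k$ of remaining variables, the inductive hypothesis being that the principle holds near the projection~$x_{n-1}$ of~$x$ (itself rigide \'epais above~$x_{k}$ with $j-1$ variables). Given a nonzero germ $f\in\Os_{X_{n},x}$, Th\'eor\`eme~\ref{theo:finiPS-T} reduces me, through the finite morphism attached to the minimal polynomial of~$x$ above~$x_{n-1}$, to the case where $x$ is the point~$0$ of the fibre above~$x_{n-1}$; there Corollaire~\ref{cor:dvptpoint} writes $f=\sum_{i\ge 0}a_{i}\,T_{n}^{i}$ with $a_{i}\in\Os_{X_{n-1},x_{n-1}}$ not all zero. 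Let $a_{i_{0}}$ be the lowest nonzero coefficient. By the inductive hypothesis there is a neighborhood~$V$ of~$x_{n-1}$ on which $a_{i_{0}}$ is nowhere locally zero; since the projection is open (Corollaire~\ref{cor:projouverte}) I pull~$V$ back to a neighborhood~$U$ of~$x$ made of disks over~$V$. If $f$ vanished in~$\Os_{X_{n},y}$ for some $y\in U$ over $y'\in V$, it would vanish on a whole disk bundle over a neighborhood of~$y'$, and the uniqueness of the coefficients of a convergent expansion (the isomorphism following Corollaire~\ref{cor:dvptdisque}) would force every~$a_{i}$, hence $a_{i_{0}}$, to be zero as a germ at~$y'$, a contradiction. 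Thus $f$ is nowhere locally zero on~$U$, and transporting this back through the finite morphism gives the principle near~$x$.

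The crux is this rigide \'epais step, and specifically the point that one must follow the lowest Taylor coefficient as a \emph{germ} rather than as a value: when $f$ vanishes along the fibre above~$x_{n-1}$ this coefficient lies in~$\m_{x_{n-1}}$, so Proposition~\ref{prop:prolongementfibre}, which only propagates fibrewise nonvanishing horizontally, does not by itself close the argument; it is the inductive analytic continuation near~$x_{n-1}$, applied to that germ, that does. The one technical verification I expect to be delicate is that the reduction to the origin through the finite morphism of Th\'eor\`eme~\ref{theo:finiPS-T} preserves the principle of analytic continuation, the morphism being finite, surjective and dominant between integral local rings; one must also check, as in Corollaire~\ref{cor:fortpasepais} and Th\'eor\`eme~\ref{theo:noetherien}, that the hypothesis on trivially valued residue fields of positive characteristic persists at every step.
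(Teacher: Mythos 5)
Your opening reduction --- reordering coordinates so that $x$ is rigide \'epais over a purely locally transcendent point $x_{k}$, settling the transcendent part by Corollaire~\ref{cor:fortpasepais} and Lemme~\ref{lem:prolongementpurloctr}, then replacing the base by $\Bs(W)$ for a rational neighbourhood of $x_{k}$ --- is exactly the paper's first step. For the rigide \'epais part you then diverge: the paper does not expand $f$ and follow its lowest coefficient, but writes $f=\pi^{v}g$ with $g$ nonzero on the fibre (Lemme~\ref{lem:divisionparpi}), applies the change of variables of Lemme~\ref{lem:chgtvar}, and invokes Proposition~\ref{prop:prolongementfibre}, which propagates \emph{fibrewise} non-vanishing to a full neighbourhood; together with the local non-vanishing of $\pi$ and the integrality of the local rings (Th\'eor\`eme~\ref{theo:noetherien}) this closes the argument in one stroke. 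Your diagnosis that one must track the coefficient as a germ rather than a value is correct, and your coefficientwise induction is a legitimate substitute for the $\pi$-factorisation. But note that Proposition~\ref{prop:prolongementfibre} is proved directly at an arbitrary rigide \'epais point, by base change to the universal splitting algebra of its minimal polynomial, precisely so that the paper never has to move the problem to the origin of the fibre.

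That move is where your proof has a genuine gap. The principle of analytic continuation at $x$ is a statement about every germ in $\Os_{X_{n},x}$, and Th\'eor\`eme~\ref{theo:finiPS-T} only identifies $\Os_{X_{n},x}$ with a direct factor of the free $\Os_{X_{n},0}$-module $\Os_{X_{n},0}[S]/(P(S)-T)$: a nonzero germ at $x$ is not a germ at $0$, so "reducing to the case $x=0$" is not a formal consequence of the finite morphism. To close this you would have to, say, pass to the norm $h\in\Os_{X_{n},0}$ of the element $(f,1,\dots,1)$ of that free module, check $h\neq 0$ using regularity, apply your disk argument to $h$, and descend using the fact that the product decomposition of $(\varphi_{*}\Os)_{z}$ over $\varphi^{-1}(z)$ persists at nearby points $z$ --- or simply quote Proposition~\ref{prop:prolongementfibre}, which is the paper's packaged answer to exactly this difficulty. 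A second, smaller repair: when you deduce that all $a_{i}$ vanish as germs at $y'$ from the vanishing of the germ of $f$ at some $y$, you are tacitly using the identity theorem on the fibre disks $X_{c}\cap U$; a neighbourhood of an arbitrary $y$ is not itself a disk bundle, so you must first argue that the vanishing spreads to the whole connected fibre disk for all $c$ in an open set of parameters, and only then read off $a_{i}(c)=0$ for all $i$ and $c$, hence the vanishing of the germs. Both repairs are feasible, but as written the rigide \'epais step is incomplete precisely at the point you flagged.
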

\begin{proof}
Nous supposerons que l'anneau local~$\Os_{B,b}$ est un anneau fortement de valuation discr\`ete, le cas o\`u c'est un corps fort se traitant de fa\c{c}on similaire, mais plus simple.

Quitte \`a changer l'ordre des variables, nous pouvons supposer qu'il existe un entier $m\in\cn{0}{n}$ tel que, en notant~$y$ la projection de~$x$ sur ses $m$~premi\`eres coordonn\'ees, nous ayons les propri\'et\'es suivantes : $x$ est un point rigide \'epais au-dessus de~$y$ et~$y$ est purement localement transcendant au-dessus de~$b$. D'apr\`es le lemme~\ref{lem:prolongementpurloctr}, l'anneau local~$\Os_{X_{m},y}$ est un anneau fortement de valuation discr\`ete et le principe du prolongement analytique vaut au voisinage du point~$y$. Quitte \`a remplacer~$b$ par~$y$ et~$B$ par un voisinage compact rationnel de~$y$, nous pouvons supposer que le point~$x$ est rigide \'epais au-dessus de~$b$.

Si $n=0$, le r\'esultat est imm\'ediat. Supposons donc que~$n\ge 1$. Soit~$f$ un \'el\'ement de~$\Os_{X_{n},x}$. Soit~$\pi$ une uniformisante de~$\Os_{B,b}$. D'apr\`es le lemme~\ref{lem:divisionparpi}, il existe~$v\in\N$ et~$g\in\Os_{X_{n},x}$ tels que $f=\pi^v g$ dans $\Os_{X_{n},x}$ et $g\ne 0$ dans $\Os_{(X_{n})_{b},x}$. Nous voulons montrer qu'il existe un voisinage ouvert~$U$ de~$x$ sur lequel~$f$ est d\'efini et tel que~$f$ soit non nulle dans~$\Os_{X_{n},z}$, pour tout $z\in U$. Il suffit de d\'emontrer ce r\'esultat pour~$g$. D'apr\`es le lemme~\ref{lem:chgtvar}, quitte \`a effectuer un changement de variables, nous pouvons supposer que l'image de~$g$ dans~$\Os_{(X_{n})_{x'},x}$, o\`u~$x'$ est la projection de~$x$ sur ses $n-1$~premi\`eres coordonn\'ees n'est pas nulle. La proposition qui pr\'ec\`ede permet alors de conclure. 
\end{proof}

Nous allons, \`a pr\'esent, nous atteler \`a la preuve du r\'esultat de coh\'erence annonc\'e. Pour ce faire, il nous faudra imposer des conditions sur l'espace \mbox{$B=\Mc(\As)$}. Comme dans la preuve de la noeth\'erianit\'e et de la r\'egularit\'e des anneaux locaux, nous supposerons qu'en tout point~$b$ de~$B$, l'anneau local~$\Os_{B,b}$ est un corps fort ou un anneau fortement de valuation discr\`ete. Nous supposerons en outre que le principe du prolongement analytique vaut sur~$B$. Ces conditions sont v\'erifi\'ees dans le cas des espaces dont nous avons dress\'e la liste \`a la remarque~\ref{rem:liste}. 

Commen\c{c}ons par un lemme, de d\'emonstration imm\'ediate, qui nous permettra, entre autres, de traiter le cas des points o\`u l'anneau local est un corps.

\begin{lemm}\label{lem:coherentcorps}
Soient~$n\in\N$ et~$x$ un point de~$X_{n}$. Soient~$U$ un voisinage ouvert de~$x$ dans~$X_{n}$ et $f_{1},\dots,f_{p}$, avec $p\in\N^*$, des \'el\'ements de~$\Os(U)$. Notons $(e_{1},\dots,e_{p})$ la base canonique de~$\Os_{X}^p$. Notons~$\Rs$ le noyau du morphisme 
\[\sum_{i=1}^p a_{i}e_{i} \in \Os_{U}^p \mapsto \sum_{i=1}^p a_{i}f_{i} \in \Os_{U}.\]

Supposons qu'il existe un indice~$i$ tel que l'image de~$f_{i}$ dans~$\Os_{X_{n},x}$ soit inversible. Alors il existe un voisinage ouvert~$V$ de~$x$ dans~$U$ tel que, pour tout $y\in V$, la famille $(f_{j}e_{i} - f_{i} e_{j})_{1\le i < j \le p}$ de~$\Os_{X,y}^p$ engendre le germe~$\Rs_{y}$.
\end{lemm}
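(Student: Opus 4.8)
Le plan est d'exploiter l'inversibilit\'e locale de l'un des~$f_{i}$ pour ramener toute syzygie aux relations de Koszul \og \'evidentes \fg, ce qui explique le caract\`ere imm\'ediat annonc\'e. Notons~$i_{0}$ un indice tel que l'image de~$f_{i_{0}}$ dans~$\Os_{X_{n},x}$ soit inversible (j'introduis la notation~$i_{0}$ pour \'eviter la collision avec l'indice~$i$ servant \`a param\'etrer la famille). Je commencerais par fixer le voisinage~$V$~: les fonctions~$f_{i_{0}}$ et~$f_{i_{0}}^{-1}$ \'etant toutes deux d\'efinies au voisinage de~$x$, le germe de $f_{i_{0}}\, f_{i_{0}}^{-1} - 1$ en~$x$ est nul, de sorte que cette section s'annule sur un voisinage ouvert~$V$ de~$x$ dans~$U$. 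Sur ce~$V$, l'\'el\'ement~$f_{i_{0}}$ est alors inversible dans chaque anneau local~$\Os_{X_{n},y}$, pour~$y\in V$.

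Le c{\oe}ur de la preuve sera ensuite un calcul direct. Fixant un point~$y$ de~$V$ et une syzygie $(a_{1},\dots,a_{p})\in\Rs_{y}$, c'est-\`a-dire une famille d'\'el\'ements de~$\Os_{X_{n},y}$ v\'erifiant $\sum_{k=1}^p a_{k} f_{k} = 0$, je formerais la combinaison
\[ \sigma = \sum_{k\ne i_{0}} a_{k}\, f_{i_{0}}^{-1}\, (f_{i_{0}} e_{k} - f_{k} e_{i_{0}}). \]
En la d\'eveloppant et en rempla\c{c}ant $\sum_{k\ne i_{0}} a_{k} f_{k}$ par $-a_{i_{0}} f_{i_{0}}$ gr\^ace \`a la relation de syzygie, on obtient $\sigma = \sum_{k\ne i_{0}} a_{k} e_{k} + a_{i_{0}} e_{i_{0}} = \sum_{k=1}^p a_{k} e_{k}$. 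Autrement dit, la syzygie de d\'epart s'\'ecrit comme combinaison, \`a coefficients dans~$\Os_{X_{n},y}$, des \'el\'ements $f_{i_{0}} e_{k} - f_{k} e_{i_{0}}$, pour~$k\ne i_{0}$. Comme chacun de ceux-ci co\"{\i}ncide, au signe pr\`es, avec le g\'en\'erateur $f_{j}e_{i}-f_{i}e_{j}$ index\'e par $\{i,j\}=\{k,i_{0}\}$, il appartient au sous-module engendr\'e par la famille $(f_{j}e_{i}-f_{i}e_{j})_{1\le i<j\le p}$. On conclut que cette famille engendre~$\Rs_{y}$.

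Il n'y a pas v\'eritablement d'obstacle dans cette preuve~: la seule v\'erification \`a soigner est la propagation de l'inversibilit\'e de~$x$ \`a tout un voisinage, qui rel\`eve de la d\'efinition m\^eme du faisceau structural (un germe nul s'annule sur un voisinage). On remarquera par ailleurs que les $p-1$ relations faisant intervenir l'indice~$i_{0}$ suffisent d\'ej\`a \`a engendrer~$\Rs_{y}$, le reste de la famille \'etant superflu, et qu'il convient simplement de tenir compte de l'ordre $i<j$ dans l'indexation afin de g\'erer correctement les signes.
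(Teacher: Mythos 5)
Votre preuve est correcte et correspond exactement \`a l'argument que le papier d\'eclare \og de d\'emonstration imm\'ediate \fg\ sans le r\'ediger : propagation de l'inversibilit\'e de~$f_{i_{0}}$ \`a un voisinage, puis \'ecriture de toute syzygie comme combinaison des relations de Koszul $f_{i_{0}}e_{k}-f_{k}e_{i_{0}}$ via le calcul $\sigma=\sum_{k\ne i_{0}} a_{k}f_{i_{0}}^{-1}(f_{i_{0}}e_{k}-f_{k}e_{i_{0}})=\sum_{k}a_{k}e_{k}$. Rien \`a redire.
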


Le lemme suivant nous permettra de traiter le cas des points o\`u l'anneau local est de valuation discr\`ete.

\begin{lemm}\label{lem:coherentavd}
Soit~$b$ un point de~$B$. Soit~$\Us$ un syst\`eme fondamental fin de voisinages compacts et spectralement convexes du point~$b$. Si~$\Hs(b)$ est de caract\'eristique non nulle et trivialement valu\'e, supposons que tout \'el\'ement de~$\Us$ est contenu dans~$B_{\textrm{um}}$ et poss\`ede un bord analytique fini. Supposons que~$\Os_{B,b}$ est un anneau fortement de valuation discr\`ete relativement \`a~$\Us$ et que le principe du prolongement analytique vaut au voisinage de~$b$. 

Soient~$n\in\N$ et~$x$ un point de~$(X_{n})_{b}$ purement localement transcendant. Soient~$U$ un voisinage ouvert de~$x$ dans~$X_{n}$ et $f_{1},\dots,f_{p}$, avec $p\in\N^*$, des \'el\'ements de~$\Os(U)$. Notons $(e_{1},\dots,e_{p})$ la base canonique de~$\Os_{X}^p$. Notons~$\Rs$ le noyau du morphisme 
\[\sum_{i=1}^p a_{i}e_{i} \in \Os_{U}^p \mapsto \sum_{i=1}^p a_{i}f_{i} \in \Os_{U}.\]

Supposons qu'il existe un indice~$i$ tel que l'image de~$f_{i}$ dans~$\Os_{X_{n},x}$ ne soit pas nulle. Alors il existe un voisinage ouvert~$V$ de~$x$ dans~$U$ tel que, pour tout $y\in V$, la famille $(f_{j}e_{i} - f_{i} e_{j})_{1\le i < j \le p}$ de~$\Os_{X,y}^p$ engendre le germe~$\Rs_{y}$.
\end{lemm}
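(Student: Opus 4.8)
The plan is to reduce the statement to the case already settled in Lemma~\ref{lem:coherentcorps}, where one of the generators is invertible. First I would record the local structure at $x$: since $x$ is purely locally transcendent and $\Os_{B,b}$ is a strong discrete valuation ring, Corollary~\ref{cor:fortpasepais} (equivalently Lemma~\ref{lem:prolongementpurloctr}) shows that $\Os_{X_{n},x}$ is itself a strong discrete valuation ring with the same uniformizer $\pi$, and that the analytic continuation principle holds in a neighbourhood of $x$. In such a ring every nonzero element is a power of $\pi$ times a unit: concretely, Lemma~\ref{lem:divisionparpi} writes each $f_{i}$ that is nonzero at $x$ as $f_{i}=\pi^{v_{i}}u_{i}$, where $u_{i}$ is nonzero in the fibre $\Os_{(X_{n})_{b},x}$, hence not divisible by $\pi$, hence invertible. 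Setting $v=\min_{i}v_{i}$ and $h_{i}=\pi^{\,v_{i}-v}u_{i}$, I obtain $f_{i}=\pi^{v}h_{i}$ in $\Os_{X_{n},x}$, where at least one $h_{i_{0}}$—the one realizing the minimum—is a unit.

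Next I would spread these relations out to an actual neighbourhood. The equalities $f_{i}=\pi^{v}h_{i}$ hold between germs, hence between sections on some open $V\ni x$; shrinking $V$ I may assume that $h_{i_{0}}$ is invertible on $V$ (invertibility being an open condition) and, using the analytic continuation principle near $x$ together with the openness of the projection $X_{n}\to B$ (Corollary~\ref{cor:projouverte}), that $\pi$ has nonzero image in every stalk $\Os_{X_{n},y}$ with $y\in V$. For such $y$ the element $\pi^{v}$ is a non-zero-divisor, so multiplication by $\pi^{v}$ identifies the relation module of $(f_{1},\dots,f_{p})$ with that of $(h_{1},\dots,h_{p})$: a tuple $(a_{1},\dots,a_{p})$ annihilates $\sum a_{i}f_{i}$ if and only if it annihilates $\sum a_{i}h_{i}$. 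Thus $\Rs_{y}$ is exactly the syzygy germ of $(h_{1},\dots,h_{p})$ at $y$.

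Lemma~\ref{lem:coherentcorps}, applied to $(h_{1},\dots,h_{p})$ with the invertible coordinate $h_{i_{0}}$, then furnishes a neighbourhood of $x$ on which the elementary relations $h_{j}e_{i}-h_{i}e_{j}$ generate the syzygy germ $\Rs_{y}$ at every point. Since these are finitely many sections, linked to the elementary relations of the original family through the identity $f_{j}e_{i}-f_{i}e_{j}=\pi^{v}\,(h_{j}e_{i}-h_{i}e_{j})$, the reduced system provides the desired finite generating family for $\Rs$ near $x$. The main obstacle—and the step deserving the most care—is this uniform passage to a neighbourhood: one must arrange simultaneously on a single open $V$ the factorization $f_{i}=\pi^{v}h_{i}$, the invertibility of $h_{i_{0}}$, and the non-zero-divisor property of $\pi$, and then keep exact track of the common factor $\pi^{v}$ relating the two families of elementary relations. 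The analytic continuation principle near $x$ (Lemma~\ref{lem:prolongementpurloctr}) and the openness of the projection $X_{n}\to B$ (Corollary~\ref{cor:projouverte}) are precisely the tools that make this propagation possible.
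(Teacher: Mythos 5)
Your proof is correct and follows essentially the same route as the paper's: reduce to Lemma~\ref{lem:coherentcorps} by factoring out the common power $\pi^{v}$ of the uniformizer on a suitable neighbourhood, and use that the stalks are integral domains (equivalently, that $\pi^{v}$ is a non-zero-divisor) to see that the syzygy module is unchanged. One small remark: you do not need Lemma~\ref{lem:divisionparpi}, which is stated for rigid \'epais points rather than purely locally transcendent ones --- the factorization $f_{i}=\pi^{v_{i}}u_{i}$ with $u_{i}$ a unit is immediate from the fact that $\Os_{X_{n},x}$ is a discrete valuation ring, which is exactly how the paper proceeds.
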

\begin{proof}
D'apr\`es le lemme~\ref{lem:prolongementpurloctr}, l'anneau local~$\Os_{X_{n},x}$ est un anneau de valuation discr\`ete et le principe du prolongement analytique vaut au voisinage de~$x$. Choisissons une uniformisante~$\pi$ de~$\Os_{X_{n},x}$. Quitte \`a restreindre~$U$, nous pouvons supposer que l'image de~$\pi$ n'est nulle dans aucun des anneaux locaux~$\Os_{X_{n},z}$, avec $z\in U$. Il existe un entier positif~$v$ et des \'el\'ements $g_{1},\dots,g_{p}$ de~$\Os_{X_{n},x}$, tel que l'on ait $f_{i} = \pi^v g_{i}$, pour tout~$i$, et que l'un des~$g_{i}$ soit inversible dans~$\Os_{X_{n},x}$. Quitte \`a restreindre~$U$, nous pouvons supposer que la factorisation y vaut. Remarquons que, puisque les anneaux locaux de~$X_{n}$ sont int\`egres (car r\'eguliers), le noyau du morphisme de l'\'enonc\'e ne change pas lorsque l'on remplace les~$f_{i}$ par les~$g_{i}$. Le lemme pr\'ec\'edent permet alors de conclure.
\end{proof}

Nous pouvons maintenant adapter la preuve de la coh\'erence du faisceau structural sur les espaces affines complexes. Nous nous sommes inspir\'e de celle propos\'ee dans l'ouvrage~\cite{GuRo}. 

\begin{theo}[Lemme d'Oka]\label{theo:lemmeOka}
Soit~$n\in\N^*$. Soient~$y$ un point de~$X_{n-1}$ et~$x$ un point rigide \'epais de~$(X_{n})_{y}$. Soient $p,q\in\N^*$. Soit~$\lambda$ une matrice de taille $p\times q$ \`a coefficients dans~$\Os_{X_{n-1},y}[T_{n}]_{\le \ell}$, l'ensemble des polyn\^omes de degr\'e inf\'erieur \`a~$\ell$, avec $\ell\in\N$. Soit~$\mu$ une matrice de taille $q\times 1$ \`a coefficients dans~$\Os_{X_{n-1},y}[T_{n}]_{\le m}$, avec $m\in\N$. Soit $d\ge \max(\ell,m)$. Supposons que la suite
\[ \big(\Os_{X_{n-1},y}[T_{n}]_{\le d-\ell}\big)^p \xrightarrow[]{\lambda} \big(\Os_{X_{n-1},y}[T_{n}]_{\le d}\big)^q  \xrightarrow[]{\mu} \Os_{X_{n-1},y}[T_{n}]_{\le d+m} \]
est exacte. Supposons encore que l'un des coefficients de~$\mu$ n'est pas nul dans $\Os_{(X_{n})_{y},x}$. Alors la suite
\[\Os_{X_{n},x}^p \xrightarrow[]{\lambda} \Os_{X_{n},x}^q \xrightarrow[]{\mu} \Os_{X_{n},x} \]
est exacte. 
\end{theo}
\begin{proof}
On reprend la preuve, qui repose exclusivement sur le th\'eor\`eme de division de Weierstra{\ss}, du th\'eor\`eme II.C.3 de~\cite{GuRo}. 
\end{proof}

Nous arrivons finalement au th\'eor\`eme annonc\'e. Les hypoth\`eses en sont v\'erifi\'ees lorsque l'on choisit pour anneau de Banach~$\As$ l'un de ceux qui figurent \`a la remarque~\ref{rem:liste}. Le r\'esultat vaut donc en particulier sur~$\Z$ et les anneaux d'entiers de corps de nombres.

\begin{theo}
Soit~$\As$ un anneau de Banach de base au sens de la d\'efinition~\ref{defi:debase} et supposons que le principe du prolongement analytique vaut sur \mbox{$B = \Mc(\As)$}. Alors, pour tout~$n\in\N$, le faisceau structural sur~$X_{n} = \E{n}{\As}$ est coh\'erent.
\end{theo}
\begin{proof}
Nous reprenons ici la strat\'egie de la preuve du  th\'eor\`eme IV.C.1 de~\cite{GuRo}\footnote{Le lecteur prendra garde au fait que ce que nous appelons, selon la terminologie classique, faisceau coh\'erent est appel\'e dans cet ouvrage \og faisceau d'Oka \fg.} en lui apportant les modifications n\'ecessaires.

On proc\`ede par r\'ecurrence. Si $n=0$, le r\'esultat d\'ecoule des hypoth\`eses faites sur~$B$ ainsi que du lemme~\ref{lem:coherentavd}.

Supposons que~$n\ge1$ et que le faisceau structural sur~$X_{n-1}$ est coh\'erent. Soient~$b$ un point de~$B$, $x$~un point de~$X_{n}$ au-dessus de~$b$ et~$U$ un voisinage ouvert de~$x$ dans~$X_{n}$. Soient $f_{1},\dots,f_{p} \in\Os_{X_{n}}(U)$, avec $p\in\N^*$. Consid\'erons le morphisme $\mu : \Os_{U}^p \to \Os_{U}$ associ\'e. Nous voulons montrer qu'il existe un voisinage ouvert~$V$ de~$x$ dans~$U$ sur lequel le noyau de ce morphisme est de type fini. Si tous les~$f_{i}$ sont nuls dans~$\Os_{X_{n},x}$, c'est \'evident. Nous supposerons donc que tel n'est pas le cas. Si le point~$x$ est purement localement transcendant, cela d\'ecoule du lemme~\ref{lem:coherentavd}.

Supposons maintenant que le point~$x$ n'est pas purement localement transcendant. Pour $k\in\cn{0}{n}$, notons $x_{k} \in X_{k}$ la projection du point~$x$ sur ses $k$~premi\`eres coordonn\'ees. Quitte \`a changer l'ordre des variables, nous pouvons supposer qu'il existe $m\in\cn{0}{n-1}$ tel que le point~$x_{m}$ soit purement localement transcendant au-dessus de~$b$ et que le point~$x$ soit rigide \'epais au-dessus de~$x_{m}$. Dans ce cas, l'anneau local~$\Os_{X_{m},x_{m}}$ est un corps fort ou un anneau fortement de valuation discr\`ete et le principe du prolongement analytique vaut au voisinage de~$x_{m}$.

Supposons que~$\Os_{X_{m},x_{m}}$ est un anneau fortement de valuation discr\`ete. Choi\-sis\-sons-en une uniformisante~$\pi$. Quitte \`a restreindre~$U$, nous pouvons supposer qu'elle est d\'efinie sur~$U$ et nulle dans aucun anneau local~$\Os_{X_{n},z}$, avec $z\in U$. D'apr\`es le lemme~\ref{lem:divisionparpi}, il existe un entier positif~$v$ et des \'el\'ements $g_{1},\dots,g_{p}$ de~$\Os_{X_{n},x}$, tel que l'on ait $f_{i} = \pi^v g_{i}$, pour tout~$i$, et que l'un des ~$g_{i}$ ne soit pas nul dans~$\Os_{(X_{n})_{x_{m}},x}$.  En raisonnant comme dans la preuve du lemme~\ref{lem:coherentavd}, on montre que l'on peut remplacer les~$f_{i}$ par les~$g_{i}$ et donc supposer que l'un des~$f_{i}$ n'est pas nul dans~$\Os_{(X_{n})_{x_{m}},x}$. Remarquons que cette condition est automatiquement v\'erifi\'ee dans le cas o\`u $\Os_{X_{m},x_{m}}$ est un corps. Nous supposerons d\'esormais que~$f_{1}$ n'est pas nul dans~$\Os_{(X_{n})_{x_{m}},x}$.

D'apr\`es le lemme~\ref{lem:chgtvar}, nous pouvons supposer que~$f_{1}$ poss\`ede une image non nulle dans $\Os_{(X_{n})_{x_{n-1}},x}$. Le th\'eor\`eme de division de Weierstra{\ss} permet alors, par une manipulation alg\'ebrique simple, de supposer que $f_{2},\ldots,f_{p}$ sont des polyn\^omes \`a coefficients dans~$\Os_{X_{n-1},x_{n-1}}$. Le th\'eor\`eme de pr\'eparation de Weierstra{\ss} permet de supposer que~$f_{1}$ lui-m\^eme est un tel polyn\^ome. Soit~$m\in\N$ un majorant du degr\'e de tous ces polyn\^omes. Le morphisme~$\mu$ induit alors, sur un voisinage ouvert~$V$ de~$x_{n-1}$ un morphisme
\[ \big(\Os_{V}[T_{n}]_{\le m}\big)^p \xrightarrow[]{\mu} \Os_{V}[T_{n}]_{\le 2m}.\]
D'apr\`es l'hypoth\`ese de r\'ecurrence, les faisceaux qui apparaissent ci-dessus sont coh\'erents et le noyau du morphisme est donc de type fini. On peut donc compl\'eter le morphisme en une suite exacte
\[ \Os_{V}^r \xrightarrow[]{\lambda} \big(\Os_{V}[T_{n}]_{\le m}\big)^p \xrightarrow[]{\mu} \Os_{V}[T_{n}]_{\le 2m}.\]
Quitte \`a restreindre~$U$, nous pouvons supposer que sa projection sur~$X_{n-1}$ est contenue dans~$V$. D'apr\`es la proposition~\ref{prop:prolongementfibre}, nous pouvons \'egalement supposer que, pour tout point~$y$ de~$U$, l'image de~$f_{1}$ dans~$\Os_{X_{y_{n-1}},y}$ n'est pas nulle, o\`u~$y_{n-1}$ d\'esigne la projection de~$y$ sur ses $n-1$~premi\`eres coordonn\'ees.

Soit~$y$ un point de~$U$. S'il est rigide \'epais sur~$y_{n-1}$, le th\'eor\`eme~\ref{theo:lemmeOka} assure que la suite pr\'ec\'edente induit une suite exacte
\[ \Os_{U,y}^r \xrightarrow[]{\lambda} \Os_{U,y}^p \xrightarrow[]{\mu} \Os_{U,y}.\]
Maintenant, si le point~$y$ n'est pas rigide \'epais sur~$y_{n-1}$, d'apr\`es le corollaire~\ref{cor:pointpasepais}, nous avons $f_{1}(y)\ne 0$. Le lemme~\ref{lem:coherentcorps} assure alors que le noyau du morphisme $\mu : \Os_{U,y}^p \to \Os_{U,y}$ est engendr\'e par la famille $(f_{j}e_{i} - f_{i} e_{j})_{1\le i < j \le p}$, o\`u $(e_{1},\dots,e_{p})$ d\'esigne la base canonique de~$\Os_{U}^p$. En regroupant ces r\'esultats, nous montrons que le noyau du morphisme~$\mu$ est de type fini sur~$U$, ce qui conclut la preuve. 
\end{proof}

\begin{coro}
Soit~$\As$ un anneau de Banach de base au sens de la d\'efinition~\ref{defi:debase} et supposons que le principe du prolongement analytique vaut sur~$\Mc(\As)$. Soit~$(Z,\Os_{Z})$ un espace analytique sur~$\As$. Alors le faisceau structural~$\Os_{Z}$ est coh\'erent.
\end{coro}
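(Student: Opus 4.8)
The plan is to deduce the corollary formally from the coherence of $\Os_{\E{n}{\As}}$ proved in the preceding theorem, by propagating coherence through the two operations used to build a general analytic space from the affine ones: restriction to an open subset and passage to a closed analytic subspace cut out by a finite-type ideal sheaf. Since coherence of a sheaf of rings is a local property on the space, it suffices to check it in a neighbourhood of each point $z \in Z$. By the very definition of an analytic space over~$\As$ recalled in Section~1, such a neighbourhood is isomorphic to a local model; so I may assume from the outset that $(Z,\Os_{Z})$ is the support of $\Os_{U}/\Is$ equipped with the sheaf $\Os_{U}/\Is$, where $U$ is an open subset of some $\E{n}{\As}$ and $\Is \subset \Os_{U}$ is a sheaf of ideals of finite type.

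First I would record that $\Os_{U}$ is coherent: the preceding theorem gives coherence of $\Os_{\E{n}{\As}}$, and coherence of a sheaf of rings is inherited by its restriction to any open subset. From this I would deduce coherence of $\Is$ and of the quotient. Since $\Is$ is, by hypothesis, a finite-type subsheaf of the coherent sheaf $\Os_{U}$, it is itself coherent, a finitely generated submodule of a coherent module over a coherent sheaf of rings being coherent. The exact sequence $0 \to \Is \to \Os_{U} \to \Os_{U}/\Is \to 0$ then exhibits $\Os_{U}/\Is$ as the quotient of the coherent module $\Os_{U}$ by the coherent submodule $\Is$, so $\Os_{U}/\Is$ is a coherent $\Os_{U}$-module.

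The remaining step, and the only one carrying genuine content beyond the preceding theorem, is to transfer this to the structure sheaf on the closed subspace. Write $j : S \hookrightarrow U$ for the closed immersion of $S = \mathrm{Supp}(\Os_{U}/\Is)$, so that $\Os_{Z} = \Os_{S} := (\Os_{U}/\Is)|_{S}$ and $j_{*}\Os_{S} = \Os_{U}/\Is$. I would invoke the standard comparison lemma for closed ringed subspaces: a sheaf $\Fs$ of $\Os_{S}$-modules is $\Os_{S}$-coherent if and only if $j_{*}\Fs$ is $\Os_{U}$-coherent. The point is that finite generation over $\Os_{S}$ coincides with finite generation over $\Os_{U}$ (as $\Os_{S}$ is a quotient of the restriction of $\Os_{U}$ to $S$), and that the kernel of a morphism between such modules is the same whether computed over $\Os_{S}$ or over $\Os_{U}$. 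Applying this to $\Fs = \Os_{S}$ and using that $j_{*}\Os_{S} = \Os_{U}/\Is$ is $\Os_{U}$-coherent by the previous paragraph yields that $\Os_{S}$ is coherent over itself, that is, $\Os_{Z}$ is coherent. The main obstacle is thus purely formal: justifying this comparison lemma, for which one may cite the corresponding statement in the complex-analytic or scheme-theoretic literature, its proof being insensitive to the base ring. Once it is granted, the corollary follows from the preceding theorem with no further geometric input.
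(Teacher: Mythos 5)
Votre démonstration est correcte et correspond à l'argument (laissé implicite dans l'article, qui énonce ce corollaire sans preuve) : réduction aux modèles locaux, cohérence de $\Os_{U}/\Is$ comme $\Os_{U}$-module à partir du théorème précédent, puis transfert au sous-espace fermé par le lemme de comparaison standard pour les immersions fermées. Rien à redire.
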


Suivant une suggestion de P.~Schapira, nous d\'emontrons maintenant que le faisceau structural d'un espace analytique sur~$\Z$, ou un anneau d'entiers de corps de nombres, est noeth\'erien au sens de M.~Kashiwara. Rappelons tout d'abord la d\'efinition de cette notion (\textit{cf.}~\cite{KashiwaraDmodules}, d\'efinition~A.7).

\begin{defi}
Soit~$(S,\Os_{S})$ un espace localement annel\'e. Un faisceau~$\Fs$ de $\Os_{S}$-modules est dit \textbf{noeth\'erien} s'il v\'erifie satisfait les propri\'et\'es suivantes :
\begin{enumerate}[i)]
\item $\Fs$ est un faisceau de $\Os_{S}$-modules coh\'erent ;
\item pour tout point~$s$ de~$S$, le $\Os_{S,s}$-module~$\Fs_{s}$ est noeth\'erien ;
\item pour tout ouvert~$U$ de~$S$ et toute famille $(\Fs_{i})_{i\in I}$ de faisceaux de sous-$\Os_{U}$-modules coh\'erents de~$\Fs_{U}$, le faisceau de~$\Os_{U}$-modules~$\sum_{i\in I} \Fs_{i}$ est coh\'erent. 
\end{enumerate}

Le faisceau structural~$\Os_{S}$ est dit noeth\'erien s'il est noeth\'erien sur lui-m\^eme.
\end{defi}

Nous avons d\'ej\`a d\'emontr\'e les deux premi\`eres propri\'et\'es. La derni\`ere se d\'eduit ais\'ement du r\'esultat suivant (\textit{cf.}~\cite{Demailly}, II (3.22)).

\begin{prop}
Soit~$\As$ un anneau de Banach de base au sens de la d\'efinition~\ref{defi:debase} et supposons que le principe du prolongement analytique vaut sur $B=\Mc(\As)$. 

Soit~$(Z,\Os_{Z})$ un espace analytique sur~$\As$. Soit~$U$ un ouvert de~$Z$, $\Fs$ un faisceau de $\Os_{U}$-modules coh\'erent et $(\Fs_{k})_{k\ge 0}$ une suite croissante de sous-faisceaux coh\'erents de~$\Fs$. Alors, tout point~$x$ de~$U$ poss\`ede un voisinage~$V$ sur lequel la suite stationne.
\end{prop}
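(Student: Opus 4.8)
La question \'etant locale, je commencerais par me ramener \`a l'espace affine. Au voisinage de~$x$, l'espace~$Z$ s'identifie au support d'un faisceau $\Os_{W}/\Is$, o\`u~$\Is$ est un faisceau coh\'erent d'id\'eaux sur un ouvert~$W$ de $X_{n}=\E{n}{\As}$. Le faisceau~$\Os_{X_{n}}$ \'etant coh\'erent (th\'eor\`eme de coh\'erence \'etabli plus haut), la donn\'ee d'un faisceau coh\'erent sur~$Z$ \'equivaut \`a celle d'un faisceau coh\'erent sur~$X_{n}$ annul\'e par~$\Is$, et une suite croissante de sous-faisceaux coh\'erents se transporte de m\^eme ; la stationnarit\'e locale est pr\'eserv\'ee par ces op\'erations. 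Je me ram\`ene ainsi \`a l'\'enonc\'e suivant : pour~$\Fs$ coh\'erent sur un ouvert de~$X_{n}$ et $(\Fs_{k})_{k\ge 0}$ une suite croissante de sous-faisceaux coh\'erents, tout point poss\`ede un voisinage sur lequel la suite stationne. Je le d\'emontrerais par r\'ecurrence sur~$n$, le cas $n=0$ (ainsi que le traitement des points purement localement transcendants) reposant sur le lemme~\ref{lem:coherentavd} et le fait que~$\Os_{B,b}$ soit noeth\'erien de dimension inf\'erieure \`a~$1$.

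Fixons~$x$ au-dessus de~$b$. L'anneau local $\Os_{X_{n},x}$ \'etant noeth\'erien (th\'eor\`eme~\ref{theo:noetherien}), le module de type fini~$\Fs_{x}$ l'est aussi, donc la suite croissante $\big((\Fs_{k})_{x}\big)_{k}$ de sous-modules stationne : il existe~$k_{0}$ tel que $(\Fs_{k})_{x}=(\Fs_{k_{0}})_{x}$ pour tout $k\ge k_{0}$. Il reste \`a propager cette \'egalit\'e \`a un voisinage \emph{ind\'ependant de~$k$}, ce qui revient \`a montrer que la r\'eunion croissante $\bigcup_{k}\Fs_{k}$ co\"{\i}ncide avec~$\Fs_{k_{0}}$ au voisinage de~$x$. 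En suivant la r\'eduction d\'ej\`a employ\'ee pour la coh\'erence (changement de variables du lemme~\ref{lem:chgtvar}, prolongement analytique, dichotomie corps fort / anneau fortement de valuation discr\`ete pour l'anneau local de la projection), je me ram\`enerais au cas o\`u~$x$ est un point rigide \'epais au-dessus de sa projection~$x_{n-1}$ et o\`u l'on dispose d'une section, d\'efinie pr\`es de~$x$ et dont l'image dans~$\Os_{(X_{n})_{x_{n-1}},x}$ n'est pas nulle.

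Le c{\oe}ur de la preuve est alors une application des th\'eor\`emes de Weierstra{\ss}, calqu\'ee sur le lemme d'Oka~\ref{theo:lemmeOka}. Apr\`es pr\'eparation (th\'eor\`eme~\ref{theo:preparationWeierstrass}) d'une telle section distingu\'ee, le th\'eor\`eme de division~\ref{theo:Weierstrassapproche} repr\'esente tout germe pr\`es de~$x$ par un polyn\^ome en~$T_{n}$, \`a coefficients dans~$\Os_{X_{n-1},x_{n-1}}$, de degr\'e born\'e par une constante~$d$ \emph{ne d\'ependant pas de~$k$}. De la sorte, la suite $(\Fs_{k})_{k}$ se traduit en une suite croissante de sous-faisceaux coh\'erents du faisceau coh\'erent \emph{fixe} $\big(\Os_{X_{n-1}}[T_{n}]_{\le d}\big)^{q}$ sur un voisinage de~$x_{n-1}$, situation \`a laquelle s'applique l'hypoth\`ese de r\'ecurrence : on obtient un unique voisinage de~$x_{n-1}$ sur lequel cette suite stationne. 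La proposition~\ref{prop:prolongementfibre} garantit que la section distingu\'ee demeure non nulle dans les fibres voisines, de sorte que la borne de degr\'e reste valable et que le r\'esultat redescend en un voisinage de~$x$ ind\'ependant de~$k$ ; aux points~$y$ voisins qui ne sont pas rigides \'epais au-dessus de leur projection, le corollaire~\ref{cor:pointpasepais} et le lemme~\ref{lem:coherentcorps} permettent de conclure directement.

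La principale difficult\'e est pr\'ecis\'ement cette \emph{uniformit\'e en~$k$} du voisinage : la stabilisation au point~$x$ se propage sans peine \`a chaque~$k$ fix\'e, mais rien n'emp\^eche a priori le voisinage obtenu de r\'etr\'ecir lorsque~$k$ cro\^{\i}t. Tout l'int\'er\^et de la r\'eduction \`a la Weierstra{\ss} est de convertir le probl\`eme en une question portant sur une famille de sous-faisceaux coh\'erents d'un module coh\'erent de \emph{rang fini fixe} (les polyn\^omes de degr\'e born\'e) sur la base~$X_{n-1}$ de dimension inf\'erieure, o\`u l'hypoth\`ese de r\'ecurrence fournit, elle, un voisinage uniforme. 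Une fois la proposition acquise, la propri\'et\'e~(iii) de la d\'efinition des faisceaux noeth\'eriens s'en d\'eduit aussit\^ot, ce qui ach\`eve d'\'etablir que~$\Os_{Z}$ est noeth\'erien au sens de M.~Kashiwara.
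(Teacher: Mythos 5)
Votre démonstration est correcte et suit pour l'essentiel la même stratégie que celle du texte : réduction à l'espace affine, récurrence sur~$n$, traitement direct des points localement transcendants, puis division de Weierstra{\ss} par une section distinguée de~$\Fs_{k_{0}}$ pour convertir la suite en une suite croissante de sous-faisceaux cohérents du module fixe des polynômes en~$T_{n}$ de degré borné sur~$X_{n-1}$, à laquelle s'applique l'hypothèse de récurrence (c'est bien là que se gagne l'uniformité en~$k$ du voisinage, comme vous le soulignez). Les seuls écarts sont cosmétiques : le texte réduit d'abord au cas $\Fs=\Os_{Z}$ via une présentation libre, et conclut directement en appliquant l'hypothèse de récurrence à $(\Fs_{k}\cap\Ps)_{k\ge 0}$ sans discussion séparée des points voisins non rigides épais.
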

\begin{proof}
Le faisceau~$\Fs$ est localement quotient d'un $\Os_{Z}$-module libre~$\Os_{Z}^q$. En tirant en arri\`ere, on se ram\`ene \`a traiter le cas o\`u~$\Fs=\Os_{Z}^q$, puis $\Fs=\Os_{Z}$. En utilisant le fait que~$\Os_{Z}$ est lui-m\^eme localement quotient du faisceau structural sur un espace affine $X_{n} = \E{n}{\As}$, on se ram\`ene finalement \`a ce dernier cas.

Nous allons proc\'eder par r\'ecurrence sur l'entier~$n$. Si~$n=0$, le r\'esultat d\'ecoule facilement des hypoth\`eses.

Supposons maintenant que $n\ge 1$ et que le r\'esultat est v\'erifi\'e pour~$X_{n-1}$. Soit~$x$ un point de~$U$. Si pour tout~$k\ge 0$, nous avons~$\Fs_{k,x}=0$, alors la suite est constamment nulle au voisinage de~$x$. Supposons donc qu'il existe~$k_{0}\ge 0$ tel que~$\Fs_{k_{0},x} \ne 0$. Soit~$f$ un \'el\'ement non nul de~$\Fs_{k_{0},x}$. Si le point~$x$ est localement transcendant sur~$b$, alors~$\Os_{X_{n},x}$ est un corps et~$f$ est inversible sur un voisinage~$V$ de~$x$ dans~$U$. Dans ce cas, la suite stationne \`a~$\Os_{V}$ sur~$V$.

Il nous reste \`a traiter le cas o\`u~$x$ n'est pas localement transcendant. Quitte \`a effectuer un changement de variables, nous pouvons supposer que le point~$x$ est rigide \'epais sur sa projection~$x_{n-1}$ sur les $n-1$ premi\`eres coordonn\'ees et que l'image de~$f$ dans~$\Os_{(X_{n})_{x_{n-1}},x}$ n'est pas nulle. En utilisant le th\'eor\`eme de pr\'eparation de {Weiestra\ss}, nous pouvons nous ramener au cas o\`u~$f$ est un polyn\^ome, dont nous noterons~$d$ le degr\'e. D'apr\`es la proposition~\ref{prop:prolongementfibre}, il existe un voisinage~$V$ de~$x$ dans~$U$ tel que, pour tout point~$y$ de~$U$, l'image de~$f$ dans~$\Os_{X_{y_{n-1}},y}$ n'est pas nulle, o\`u~$y_{n-1}$ d\'esigne la projection de~$y$ sur ses $n-1$~premi\`eres coordonn\'ees. 

Notons~$\Ps$ le $\Os_{X_{n-1}}$-module coh\'erent form\'e des polyn\^omes en~$T_{n}$ de degr\'e strictement inf\'erieur \`a~$d$. Le th\'eor\`eme de division de Weierstra{\ss} assure que, pour tout point~$y$ de~$V$ et tout entier~$k\ge k_{0}$, la fibre~$\Fs_{k,y}$ est engendr\'ee par~$f$ et $\Fs_{k,y} \cap \Ps$. On conclut en appliquant l'hypoth\`ese de r\'ecurrence \`a la suite de~$\Os_{X_{n-1}}$-modules coh\'erents $(\Fs_{k,y} \cap \Ps)_{k\ge 0}$.
\end{proof}

\begin{coro}
Soit~$\As$ un anneau de Banach de base au sens de la d\'efinition~\ref{defi:debase} et supposons que le principe du prolongement analytique vaut sur~$\Mc(\As)$. Soit~$(Z,\Os_{Z})$ un espace analytique sur~$\As$. Alors le faisceau structural~$\Os_{Z}$ est noeth\'erien.
\end{coro}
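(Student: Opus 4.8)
Le plan est de v\'erifier, pour le faisceau $\Fs = \Os_{Z}$, les trois propri\'et\'es figurant dans la d\'efinition de faisceau noeth\'erien. Les deux premi\`eres sont d\'ej\`a acquises : la coh\'erence de~$\Os_{Z}$ n'est autre que le r\'esultat de coh\'erence \'etabli ci-dessus, et le fait que chaque module~$\Os_{Z,z}$ soit noeth\'erien sur lui-m\^eme, c'est-\`a-dire que l'anneau~$\Os_{Z,z}$ soit noeth\'erien, d\'ecoule du th\'eor\`eme~\ref{theo:noetherien}. Tout le travail se concentre donc sur la troisi\`eme propri\'et\'e.

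Fixons un ouvert~$U$ de~$Z$ et une famille $(\Fs_{i})_{i\in I}$ de sous-$\Os_{U}$-modules coh\'erents de~$\Os_{U}$; posons $\mathscr{S} = \sum_{i\in I}\Fs_{i}$ et, pour toute partie finie~$J\subset I$, $\mathscr{S}_{J} = \sum_{i\in J}\Fs_{i}$. Chaque~$\mathscr{S}_{J}$, \'etant somme finie de sous-faisceaux coh\'erents du faisceau coh\'erent~$\Os_{U}$, est coh\'erent. Comme la coh\'erence est une propri\'et\'e locale, il suffit de montrer qu'au voisinage de tout point~$x$ de~$U$ le faisceau~$\mathscr{S}$ co\"{\i}ncide avec l'un des~$\mathscr{S}_{J}$. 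Or l'anneau local~$\Os_{U,x}$ est noeth\'erien et~$\mathscr{S}_{x}$ est un sous-module du module de type fini~$\Os_{U,x}$; il est donc de type fini, et par suite engendr\'e par les images d'un nombre fini des~$\Fs_{i,x}$. Il existe ainsi une partie finie~$J\subset I$ telle que $\mathscr{S}_{J,x} = \mathscr{S}_{x}$.

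Le point d\'elicat, et l'obstacle principal, est de propager cette \'egalit\'e de fibres en~$x$ en une \'egalit\'e sur tout un voisinage de~$x$, uniform\'ement en l'indice~$i$; c'est pr\'ecis\'ement ce que permet la proposition qui pr\'ec\`ede. Je raisonnerais par l'absurde. Si $\mathscr{S}_{J}$ ne co\"{\i}ncidait avec~$\mathscr{S}$ sur aucun voisinage de~$x$, je construirais, le long d'une suite d\'ecroissante $(W_{m})_{m\ge 1}$ de voisinages de~$x$, une suite croissante de parties finies $J = L_{0}\subseteq L_{1}\subseteq\cdots$ et des points $y_{m}\in W_{m}$ tels que, pour un indice~$i_{m}$ convenable, on ait $\Fs_{i_{m},y_{m}}\not\subseteq \mathscr{S}_{L_{m-1},y_{m}}$, en posant $L_{m} = L_{m-1}\cup\{i_{m}\}$. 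L'inclusion $\mathscr{S}_{L_{m-1}}\subseteq\mathscr{S}_{L_{m}}$ serait alors stricte au point~$y_{m}$, de sorte que la suite croissante de sous-faisceaux coh\'erents $(\mathscr{S}_{L_{m}})_{m\ge 0}$ ne stationnerait sur aucun voisinage de~$x$ --- en contradiction avec la proposition qui pr\'ec\`ede. On en d\'eduit que $\mathscr{S} = \mathscr{S}_{J}$ au voisinage de~$x$, ce qui \'etablit la coh\'erence de~$\mathscr{S}$ et ach\`eve la v\'erification de la troisi\`eme propri\'et\'e. La seule d\'elicatesse r\'eside dans l'extraction d'une suite croissante \`a partir de la famille filtrante~$\{\mathscr{S}_{J'}\}_{J'}$, rendue licite par le choix des points~$y_{m}$ tendant vers~$x$.
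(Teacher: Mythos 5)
Votre strat\'egie d'ensemble est la bonne et correspond \`a celle du texte : les propri\'et\'es (i) et (ii) de la d\'efinition sont d\'ej\`a acquises (coh\'erence de~$\Os_{Z}$ et noeth\'erianit\'e des anneaux locaux), et la propri\'et\'e (iii) se d\'eduit de la proposition pr\'ec\'edente en se ramenant, gr\^ace \`a la noeth\'erianit\'e de~$\Os_{U,x}$, \`a une partie finie~$J$ telle que $\mathscr{S}_{J,x}=\mathscr{S}_{x}$, puis en propageant cette \'egalit\'e \`a un voisinage au moyen d'une suite croissante de sous-faisceaux coh\'erents. Cette r\'eduction, ainsi que la remarque selon laquelle il suffit d'\'etablir $\mathscr{S}=\mathscr{S}_{J'}$ au voisinage de chaque point pour une partie finie~$J'$ convenable, sont correctes.

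Le point faible est pr\'ecis\'ement celui que vous signalez en conclusion : l'extraction de points~$y_{m}$ \og tendant vers~$x$ \fg. Pour que la suite $(\mathscr{S}_{L_{m}})_{m}$ ne stationne sur aucun voisinage de~$x$, il faut que tout voisinage de~$x$ contienne une infinit\'e des~$y_{m}$, autrement dit que la suite~$(W_{m})$ soit cofinale dans le filtre des voisinages de~$x$. Or les espaces consid\'er\'es ne poss\`edent pas, en g\'en\'eral, de syst\`eme fondamental d\'enombrable de voisinages : d\'ej\`a pour un corps valu\'e complet~$k$ de corps r\'esiduel non d\'enombrable (par exemple $k=\C(\!(t)\!)$, qui rel\`eve bien des hypoth\`eses du corollaire), le point de Gau{\ss} de~$\E{1}{k}$ n'en admet aucun, et le fait que~$x$ adh\`ere \`a l'ensemble des points o\`u $\mathscr{S}_{L_{m-1}}\ne\mathscr{S}$ ne fournit alors aucune suite de tels points convergeant vers~$x$. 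Tel quel, l'argument ne vaut donc que pour des bases particuli\`eres (comme~$\Z$, o\`u l'espace est m\'etrisable). La r\'eparation est simple et n'utilise que la compacit\'e locale : fixez un voisinage compact~$K$ de~$x$ et choisissez les~$y_{m}$ dans~$\mathring{K}$ (si \`a une \'etape $\mathscr{S}_{L_{m-1}}=\mathscr{S}$ sur~$\mathring{K}$, la preuve est termin\'ee) ; la proposition pr\'ec\'edente, appliqu\'ee en chaque point de~$K$ puis combin\'ee \`a l'extraction d'un recouvrement fini, montre que la suite $(\mathscr{S}_{L_{m}})$ stationne sur~$K$ tout entier \`a partir d'un rang~$m_{0}$, ce qui contredit la croissance stricte en $y_{m_{0}+1}\in K$. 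Signalons enfin que la construction des~$L_{m}$ requiert \`a chaque \'etape que~$\mathscr{S}_{L_{m-1}}$ diff\`ere de~$\mathscr{S}$ sur le voisinage consid\'er\'e, ce qui ne d\'ecoule pas de l'hypoth\`ese faite sur~$J$ seul ; mais si cette condition tombe en d\'efaut, la partie finie~$L_{m-1}$ convient et l'on conclut directement, de sorte que ce point est b\'enin.
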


\nocite{}
\bibliographystyle{smfalpha}
\bibliography{biblio}

\end{document}